\numberwithin{equation}{subsection}
\DeclareMathOperator{\locM}{S}
\DeclareMathOperator{\diff}{d}
\DeclareMathOperator{\bfR}{R}
\DeclareMathOperator{\bfL}{L}
\DeclareMathOperator{\bfD}{D}
\DeclareMathOperator{\bfK}{K}
\DeclareMathOperator{\Ch}{Ch}
\DeclareMathOperator{\coH}{H}
\DeclareMathOperator{\Ho}{Ho}
\DeclareMathOperator{\inc}{inc}
\DeclareMathOperator{\fold}{fold}
\DeclareMathOperator{\even}{even}
\DeclareMathOperator{\odd}{odd}
\DeclareMathOperator{\ac}{ac}
\DeclareMathOperator{\cf}{cf}
\DeclareMathOperator{\sg}{sg}
\DeclareMathOperator{\Acyc}{Acyc}
\DeclareMathOperator{\dgProj}{dg-Proj}
\DeclareMathOperator{\dgHom}{dg-Hom}
\DeclareMathOperator{\dgInj}{dg-Inj}
\DeclareMathOperator{\co}{co}
\DeclareMathOperator{\sing}{sing}
\DeclareMathOperator{\ctr}{ctr}
\DeclareMathOperator{\opproj}{proj}
\DeclareMathOperator{\gproj}{G-proj}
\DeclareMathOperator{\ginj}{G-inj}
\DeclareMathOperator{\opinj}{inj}
\DeclareMathOperator{\Noeth}{Noeth}
\DeclareMathOperator{\Inj}{Inj}
\DeclareMathOperator{\compact}{c}
\DeclareMathOperator{\id}{id}
\DeclareMathOperator{\Proj}{Proj}
\DeclareMathOperator{\cone}{Cone}
\DeclareMathOperator{\gldim}{gl.dim}
\DeclareMathOperator{\Alg}{-Alg}
\DeclareMathOperator{\gen}{span}
\DeclareMathOperator{\subobj}{Subobj}
\DeclareMathOperator{\Perf}{Perf}
\DeclareMathOperator{\QCoh}{QCoh}
\DeclareMathOperator{\Mod}{-Mod}
\DeclareMathOperator{\smod}{-mod}
\DeclareMathOperator{\Mor}{Mor}
\DeclareMathOperator{\Hom}{Hom}
\DeclareMathOperator{\cof}{Cof}
\DeclareMathOperator{\fib}{Fib}
\DeclareMathOperator{\weak}{W}
\DeclareMathOperator{\bbar}{bar}
\DeclareMathOperator{\coker}{coker}
\DeclareMathOperator{\image}{im}
\DeclareMathOperator{\coimage}{coim}
\DeclareMathOperator{\filt}{filt-}
\DeclareMathOperator{\tot}{Tot}
\DeclareMathOperator{\ext}{Ext}
\newcommand {\ueven} {\ensuremath{^{\even}}}
\newcommand  {\uodd} {\ensuremath{^{\odd}}}
\newcommand   {\lac} {\ensuremath{_{\ac}}}
\newcommand   {\lcf} {\ensuremath{_{\cf}}}
\newcommand   {\lsg} {\ensuremath{_{\sg}}}
\newcommand   {\uco} {\ensuremath{^{\co}}}
\newcommand {\lsing} {\ensuremath{_{\sing}}}
\newcommand  {\uctr} {\ensuremath{^{\ctr}}}
\newcommand {\lproj} {\ensuremath{_{\opproj}}}
\newcommand {\uproj} {\ensuremath{^{\opproj}}}
\newcommand{\ugproj} {\ensuremath{^{\gproj}}}
\newcommand {\uginj} {\ensuremath{^{\ginj}}}
\newcommand  {\linj} {\ensuremath{_{\opinj}}}
\newcommand  {\uinj} {\ensuremath{^{\opinj}}}
\newcommand   {\inj} {\ensuremath{\calI}}
\newcommand  {\proj} {\ensuremath{\calP}}
\newcommand    {\ZZ} {\ensuremath{{\mathbb Z}}}
\newcommand  {\calO} {\ensuremath{{\mathcal O}}}
\newcommand  {\calX} {\ensuremath{{\mathcal X}}}
\newcommand  {\calY} {\ensuremath{{\mathcal Y}}}
\newcommand  {\calM} {\ensuremath{{\mathcal M}}}
\newcommand  {\calP} {\ensuremath{{\mathcal P}}}
\newcommand  {\calK} {\ensuremath{{\mathcal K}}}
\newcommand  {\calI} {\ensuremath{{\mathcal I}}}
\newcommand  {\calC} {\ensuremath{{\mathcal C}}}
\newcommand  {\calD} {\ensuremath{{\mathcal D}}}
\newcommand  {\calE} {\ensuremath{{\mathcal E}}}
\newcommand  {\calF} {\ensuremath{{\mathcal F}}}
\newcommand  {\calG} {\ensuremath{{\mathcal G}}}
\newcommand  {\calS} {\ensuremath{{\mathcal S}}}
\newcommand  {\calL} {\ensuremath{{\mathcal L}}}
\newcommand  {\calT} {\ensuremath{{\mathcal T}}}
\newcommand  {\calW} {\ensuremath{{\mathcal W}}}
\newcommand   {\invlim} {\ensuremath{\varprojlim}}
\renewcommand{\projlim} {\ensuremath{\varinjlim}}
\newcommand {\scA} {\ensuremath{{\mathscr A}}}
\newcommand {\scB} {\ensuremath{{\mathscr B}}}
\newcommand {\scE} {\ensuremath{{\mathscr E}}}
\newcommand {\scC} {\ensuremath{{\mathscr C}}}
\newcommand   {\p}    {\ensuremath{^{\prime}}}
\newcommand  {\pp}    {\ensuremath{^{\prime\prime}}}
\newcommand  {\ua}    {\ensuremath{^{\ast}}}
\newcommand  {\la}    {\ensuremath{_{\ast}}}
\newcommand  {\us}    {\ensuremath{^{\sharp}}}
\newcommand  {\ui}    {\ensuremath{^{-1}}}
\newcommand  {\uc}    {\ensuremath{^{\compact}}}
\newcommand  {\ol} [1]{\ensuremath{\overline{#1}}}
\newcommand   {\sbar} {\ensuremath{\underline{\bbar}}}
\newcommand  {\uprod} {\ensuremath{^{\Pi}}}      
\newcommand {\uoplus} {\ensuremath{^{\oplus}}}  
\newcommand  {\monad} {\ensuremath{{\perp}}}
\newcommand{\rec}{\ensuremath{
    \begin{tikzpicture}[baseline]
      \useasboundingbox (0,0) rectangle (4.75mm,2mm);
      \draw[->](0.5mm,1.1mm)--(4.25mm,1.1mm);
      \draw[->](4.25mm,0.2mm)--(0.5mm,0.2mm);
      \draw[->](4.25mm,2mm)--(0.5mm,2mm);
    \end{tikzpicture}
  }
}
\newcommand{\bigrec}{\ensuremath{
    \begin{tikzpicture}[baseline]
      \useasboundingbox (0,0) rectangle (9.75mm,2.3mm);
      \draw[->](0.5mm,1.2mm)--(9.25mm,1.2mm);
      \draw[->](9.25mm,0.1mm)--(0.5mm,0.1mm);
      \draw[->](9.25mm,2.3mm)--(0.5mm,2.3mm);
    \end{tikzpicture}
  }
}
\theoremstyle{plain}
\newtheorem{prop}{Proposition}[subsection]
\newtheorem{lem}[prop]{Lemma}
\newtheorem{theorem}[prop]{Theorem}
\newtheorem{cor}[prop]{Corollary}
\newtheorem{fact}[prop]{Fact}
\theoremstyle{definition}
\newtheorem{definition}[prop]{Definition}
\newtheorem{notation}[prop]{Notation}
\theoremstyle{remark}
\newtheorem{rem}[prop]{Remark}
\newtheorem{ex}[prop]{Example}
\title{Models for singularity categories}
\author{Hanno Becker}
\address{
Mathematisches Institut Universit\"at Bonn\\
Endenicher Allee 60\\
53115 Bonn\\
}
\email{habecker@math.uni-bonn.de}
\date{\today}
\begin{document}

\begin{abstract}
In this article we construct various models for singularity categories of modules over differential graded
  rings. The main technique is the connection between abelian model structures, cotorsion pairs and deconstructible
  classes, and our constructions are based on more general results about localization and transfer of abelian model
  structures. We indicate how recollements of triangulated categories can be obtained model categorically,
  discussing in detail Krause's recollement $\bfK\lac(\Inj(R))\to\bfK(\Inj(R))\to\bfD(R)$. In the special 
  case of curved mixed $\ZZ$-graded complexes, we show that one of our singular models is Quillen equivalent to
  Positselski's contraderived model for the homotopy category of matrix factorizations. 
\end{abstract}
\maketitle

\section*{Introduction}
\normalfont
Let $R$ be a Noetherian ring and $\bfD\lsg(R)= \bfD^b(R\smod)/\Perf(R)$ its singularity category. We
ask if it is possible to realize $\bfD\lsg(R)$ as the homotopy category of a stable model category attached to
$R$. Firstly, the singularity category is essentially small, whereas the homotopy category of a model category in the sense of
\cite{Hovey_ModelCategories} always has arbitrary small coproducts \cite[Example 1.3.11]{Hovey_ModelCategories}. This forces us to think first about
how to define a ``large'' singularity category for $R$ (admitting arbitrary small coproducts) in which $\bfD\lsg(R)$
naturally embeds. Secondly, if this is done, we can try to find a model for this large singularity category. 

Given a locally Noetherian Grothendieck category $\scA$ with compactly generated derived category $\bfD(\scA)$, Krause
\cite{Krause_StableDerived} proved that the singularity category $\bfD^b(\Noeth(\scA))/\bfD(\scA)\uc$ of $\scA$ (the
Verdier quotient of the bounded derived category of Noetherian objects of $\scA$ by the subcategory of compact 
objects of $\bfD(\scA)$) is up to direct summands equivalent to the subcategory of compact objects in the homotopy
category $\bfK\lac(\Inj(\scA))$ of acyclic complexes of injectives, and that there is even a recollement
$\bfK\lac(\Inj(\scA))\rec\bfK(\Inj(\scA))\rec\bfD(\scA)$. This suggests firstly that we should attempt 
to construct a model for $\bfK\lac(\Inj(\scA))$ and secondly that such a model might be obtained by localizing a
suitable model for $\bfK(\Inj(\scA))$ with respect to $\bfD(\scA)$, whatever this should mean precisely. 

If $\scA=R\Mod$ for a Noetherian ring $R$, Positselski \cite[Theorem 3.7]{Positselski_TwoKindsOfDerivedCategories}
showed that $\bfK(\Inj(\scA))$ is equivalent to what he calls the \textit{coderived category}
$\bfD\uco(R)$ of $R$, defined as the Verdier quotient $\bfK(R)/\Acyc\uco(R)$, where
$\Acyc\uco(R)$ is the localizing subcategory of $\bfK(R)$ generated by the total complexes of short exact sequences of complexes of
$R$-modules; objects of $\Acyc\uco(R)$ are called \textit{coacyclic complexes}. In particular, Krause's ``large''
singularity category $\bfK\lac(\Inj(R))$ is equivalent to a Verdier quotient $\bfD\uco(R)/\bfD(R)$. 

All in all, the last paragraphs suggest that a model for the singularity category could be obtained by lifting the quotient
$\bfD\uco(R)/\bfD(R)$ to the world of model categories. For $\bfD(R)$ there are the 
well-known projective and injective models, and for $\bfD\uco(R)$ a model has been constructed by Positselski
\cite{Positselski_TwoKindsOfDerivedCategories}. Moreover, these models are \textit{abelian},
i.e. they are compatible with the abelian structure of $\Ch(R\Mod)$ in the sense of \cite[Definition
2.1]{Hovey_Cotorsion}. By \cite[Theorem 2.2]{Hovey_Cotorsion} an abelian model structure is completely determined by the
classes $\calC$, $\calW$, $\calF$ of cofibrant, weakly trivial and fibrant objects, respectively, and the triples
$(\calC,\calW,\calF)$ arising in this way are precisely those for which $\calW$ is thick and both
$(\calC,\calW\cap\calF)$ and $(\calC\cap\calW,\calF)$ are complete cotorsion pairs (see Definitions
\ref{def_thick} and \ref{def_cotorsionpair} for the definition of thickness and cotorsion pairs, respectively). For
example, in the injective model $\calM\uinj(R)$ for $\bfD(R)$, everything is 
cofibrant, the weakly trivial objects $\calW\uinj$ are the acyclic complexes and the fibrant objects $\calF\uinj$ are the
dg-injectives. In Positselski's coderived model $\calM\uco(R)$ for $\bfD\uco(R)$, again everything is cofibrant, but the
weakly trivial objects $\calW\uco$ are the coacyclic complexes (see Proposition \ref{prop_cocontraderivedmodel}) and the fibrant objects
$\calF\uco$ are the componentwise injective complexes of $R$-modules. In particular, we see that both model structures
are \textit{injective} in the sense that everything is cofibrant, and that $\calW\uco(R)\subset\calW\uinj(R)$ and
$\calF\uinj(R)\subset\calF\uco(R)$. 

In order to construct the desired localization, we show (Proposition \ref{prop_localization}) that given an abelian
category $\scA$ with two injective abelian model structures $\calM_i = (\scA,\calW_i,\calF_i)$, $i=1,2$, satisfying
$\calF_2\subset\calF_1$ (hence $\calW_1\subset\calW_2$), there is another new abelian model structure $\calM_1/\calM_2$ on $\scA$ with $\calC = \calW_2$ 
and $\calF = \calF_1$ (the class $\calW$ of weakly trivials is determined by this and described explicitly in the
Proposition), called the \textit{right localization} of $\calM_1$ with respect to $\calM_2$. Moreover, we show
(Proposition \ref{prop_bousfield}) that $\calM_1/\calM_2$ is a right Bousfield localization of $\calM_1$ with respect to
$\{0\to X\ |\ X\in\calF_2\}$ in the sense of \cite[Definition 3.3.1(2)]{Hirschhorn_LocalizationOfModelCategories}, and
that on the level of homotopy categories we get a colocalization sequence \cite[Definition 3.1]{Krause_StableDerived} of
triangulated categories $\Ho(\calM_2)\to\Ho(\calM_1)\to\Ho(\calM_1/\calM_2)$. 

Applied to the injective model $\calM\uinj(R)$ for the ordinary derived category $\bfD(R)$ and Positselski's coderived
model $\calM\uco(R)$ for the contraderived category $\bfD\uco(R)$, we get 
another abelian model structure $\calM\uco\lsing(R)=\calM\uco(R)/\calM\uinj(R)$ on $\Ch(R\Mod)$, called the
\textit{(absolute) singular coderived model}, where the cofibrant objects are the acyclic complexes of 
$R$-modules and the fibrant objects are the componentwise injective complexes of $R$-modules. In particular,
$\Ho(\calM\uco\lsing(R))\cong\bfK\lac(\Inj(R))$ and there is a colocalization sequence
$\bfD(R)\to\bfD\uco(R)\cong\bfK(\Inj(R))\to\bfK\lac(\Inj(R))$.

More generally, we construct a \textit{relative singular coderived model} $\calM\uco\lsing(A/R)$ for any morphism of dg rings
$\varphi: R\to A$ as follows: first we show that the coderived model structure $\calM\uco(R)$ on $R\Mod$ pulls back to a
model structure $\varphi\ua\calM\uco(R)$ on $A\Mod$ (Proposition \ref{prop_pullback}), and then (Definition
\ref{def_relativesingmodel}) we define $\calM\uco\lsing(A/R)$ as the right localization
$\calM\uco(A)/\varphi\ua\calM\uco(R)$. In case $R$ is an ordinary ring of finite left-global dimension, this 
will be seen to be equal to the absolute singular coderived model $\calM\uco\lsing(A)$ as defined above (Proposition
\ref{prop_finglobdim}).   

At this point we have succeeded in constructing models for singularity categories, but we cannot yet explain from the
model categorical perspective why the sequence $\bfK\lac(\Inj(A))\to\bfK(\Inj(A))\to\bfD(A)$ is not only a localization
sequence but in fact a recollement, as is known at least in the case $A$ is an ordinary Noetherian ring by 
\cite[Proposition 3.6]{Krause_StableDerived}. For this, we show that the absolute (it is important to restrict to the
absolute case) singular model structure $\calM\uco\lsing(A)$, which is a ``mixed'' model
structure in the sense that usually neither everything is fibrant nor everything is cofibrant, admits a certain (Quillen equivalent)
injective variant ${^{i}}\calM\uco\lsing(A)$. The construction of this model structure is presented in Proposition
 \ref{prop_singmodelcoctr}. The point is that while the the identity on $A\Mod$ is right Quillen
$\calM\uco(A)\to\calM\uco\lsing(A)$ and provides a right adjoint of $\bfK\lac(\Inj(A))\to\bfK(\Inj(A))$, it is
\textit{left} Quillen $\calM\uco(A)\to{^{i}}\calM\uco\lsing(A)$, providing a left adjoint of
$\bfK\lac(\Inj(A))\to\bfK(\Inj(A))$ and proving that $\bfK\lac(\Inj(A))\to\bfK(\Inj(A))\to\bfD(A)$ is a recollement
(Corollary \ref{cor_injrec}).  

Moreover, we can now right-localize $\calM\uinj(A)$ at ${^{i}}\calM\uco\lsing(A)$ to obtain another ``mixed'' model
structure ${^{m}}\calM\uinj(A)$, which turns out to be another model for $\bfD(A)$ Quillen equivalent to the injective
model $\calM\uinj(A)$, explaining the existence of the left adjoint of $\bfK(\Inj(A))\to\bfD(A)$. We see that the
recollement $\bfK\lac(\Inj(A))\to\bfK(\Inj(A))\to\bfD(A)$ unfolds to a butterfly of model structures and Quillen
functors as follows (L denotes left Quillen functors and R denotes right Quillen functors). For more details on the
properties of the butterfly, see Proposition \ref{prop_butterfly}. 
\begin{align*}\label{eq:butterfly}
\begin{tikzpicture}[description/.style={fill=white,inner sep=2pt}]
    \matrix (m) [matrix of math nodes, row sep=2em,
                 column sep=4em, text height=1.5ex, text depth=0.25ex,
                 inner sep=0pt, nodes={inner xsep=0.3333em, inner ysep=0.3333em}]
    {
       \calM\uco\lsing(A) \pgfmatrixnextcell\pgfmatrixnextcell \calM\uinj(A)\\
       \pgfmatrixnextcell \calM\uco(A) \pgfmatrixnextcell \\
       {^{i}}\calM\uco\lsing(A) \pgfmatrixnextcell\pgfmatrixnextcell {^{m}}\calM\uinj(A)\\
    };
    \draw[->] ($(m-1-1.south) + (1mm,0)$) -- node[scale=0.75,right]{L} ($(m-3-1.north) + (1mm,0)$);
    \draw[->] ($(m-3-1.north) - (+1mm,0)$) -- node[scale=0.75,left]{R} ($(m-1-1.south) - (1mm,0)$);
    \draw[->] ($(m-1-3.south) + (1mm,0)$) -- node[scale=0.75,right]{R} ($(m-3-3.north) + (1mm,0)$);
    \draw[->] ($(m-3-3.north) + (-1mm,0)$) -- node[scale=0.75,left]{L} ($(m-1-3.south) - (1mm,0)$);
    \draw[->] ($(m-1-1.south east) - (4mm,0)$) -- node[scale=0.75,below]{L} ($(m-2-2.north west) + (0mm,0)$);
    \draw[->] ($(m-2-2.north west) + (4mm,0)$) -- node[scale=0.75,above]{R} ($(m-1-1.south east) - (0mm,0)$);
    \draw[->] ($(m-3-1.north east) + (0mm,0)$) -- node[scale=0.75,below]{R} ($(m-2-2.south west) + (4mm,0)$);
    \draw[->] ($(m-2-2.south west) + (0mm,0)$) -- node[scale=0.75,above]{L} ($(m-3-1.north east) - (4mm,0)$);
    \draw[->] ($(m-2-2.north east) + (0mm,0)$) -- node[scale=0.75,below]{L} ($(m-1-3.south west) + (4mm,0)$);
    \draw[->] ($(m-1-3.south west) + (0mm,0)$) -- node[scale=0.75,above]{R} ($(m-2-2.north east) - (4mm,0)$);
    \draw[->] ($(m-2-2.south east) - (4mm,0)$) -- node[scale=0.75,below]{R} ($(m-3-3.north west) + (0mm,0)$);
    \draw[->] ($(m-3-3.north west) + (4mm,0)$) -- node[scale=0.75,above]{L} ($(m-2-2.south east) + (0mm,0)$);
\end{tikzpicture}
\end{align*}

All the constructions mentioned so far also work in the projective/contraderived setting, yielding absolute and relative
singular contraderived model structures on categories of modules over a dg ring, as well as a projective variant and a
butterfly unfolding the recollement $\bfK\lac(\Proj(A))\to\bfK(\Proj(A))\to\bfD(A)$. 

We discuss two examples. Firstly, let $R$ be a Gorenstein ring in the sense of
\cite{Buchweitz}, i.e. $R$ is Noetherian and of finite injective dimension both as a left and as a right module over
itself. Then the $0$-th cosyzygy functor $\Ch(R\Mod)\to R\Mod$ is a (left) Quillen equivalence between the
absolute singular contraderived model $\calM\uctr\lsing(R)$ on $\Ch(R\Mod)$ and Hovey's Gorenstein projective
model structure on $R\Mod$ \protect{\cite[Theorem 8.6]{Hovey_Cotorsion}}. Similarly, the $0$-th syzygy functor is a
(right) Quillen equivalence between the absolute singular coderived model $\calM\uco\lsing(R)$ and Hovey's Gorenstein
injective model on $R\Mod$. These two results are proved in Section \ref{subsection_gorenstein}.

Secondly, we consider matrix factorizations. Fix any ring $S$ with a central element $w\in Z(S)$ and let $K_{S,w} =
S[s]/(s^2)$ be the \textit{Koszul algebra} of $(S,w)$, i.e. $\deg(s)=-1$ and $\diff(s)=w$. Modules over $K_{S,w}$ can be
identified with complexes of $S$-modules $X$ equipped with a square-zero nullhomotopy $s: X\to \Sigma^{-1} X$ for
$X\stackrel{\cdot w}{\longrightarrow} X$, i.e. they can be thought of as ``curved'' mixed complexes with curvature
$w$. For any such curved mixed complex $(X,\diff,s)$ we can form the sequences $\prod
X\ueven\stackrel{\diff+s}{\longrightarrow}\prod X\uodd\stackrel{\diff+s}{\longrightarrow}\prod X\ueven$ and $\bigoplus
X\ueven\stackrel{\diff+s}{\longrightarrow}\bigoplus X\uodd\stackrel{\diff+s}{\longrightarrow}\bigoplus X\ueven$, called the
\textit{folding with products} and \textit{folding with sums} of $(X,\diff,s)$ and denoted $\fold\uprod X$ and
$\fold\uoplus X$, respectively. Since $\diff s + s\diff = w$ we see that $(\diff+s)^2=w$, and hence $\fold\uoplus(X)$ and
$\fold\uprod(X)$ are $(S,w)$-duplexes, i.e. matrix factorizations of type $(S,w)$ with possibly non-free
components. The category of $(S,w)$-duplexes is the same as the category of curved dg modules over the $\ZZ/2\ZZ$-graded
curved dg ring $S_w$ with $(S_w)^{\ol{0}} = S$, $(S_w)^{\ol{1}}=0$ and curvature $w\in Z(S)$, and in particular it
carries Positselski's contraderived model structure $\calM\uctr(S_w)$. We then prove that $\fold\uoplus$ and
$\fold\uprod$ are left resp. right Quillen equivalences $\calM\uctr\lsing(K_{S,w}/S)\to\calM\uctr(S_w)$. 
\vskip1mm

\textbf{Structure:} In Sections \ref{subsection_basicdefs} and \ref{subsection_cotorsion}
we recall the definition of abelian model categories as well as their relation to complete cotorsion pairs and deconstructible classes. In Section
\ref{subsection_fourmodels} we use this relation to give self-contained constructions of the injective, projective,
contraderived and coderived model structures on the category of modules over a dg ring. Next, in Section
\ref{subsection_localization} we prove Proposition \ref{prop_localization} providing a method for the construction of
localizations of abelian model structures. In the intermediate Section \ref{subsection_bousfield}, which is not needed
anywhere else in this article, we show that these new model structures can be described as Bousfield
localizations in the classical sense (Proposition \ref{prop_bousfield}). Then, in Section \ref{subsection_gendefs} we
turn to the construction of the relative and absolute singular contraderived and coderived model structures as well as
their projective and injective variants. In Section \ref{subsection_butterfly} we construct the
butterfly of Quillen functors lifting Krause's recollement to the level of model categories. Sections
\ref{subsection_gorenstein} and \ref{subsection_mf} contain the discussion of the examples of Gorenstein rings and
matrix factorizations. In Appendix \ref{appendix_pullback} we prove that pullbacks of deconstructible classes along
cocontinuous, monadic functors between Grothendieck categories are deconstructible (Proposition
\ref{prop_pullbackmonadic}), a fact which is used several times in Section \ref{subsection_fourmodels}.
\vskip1mm

\textbf{Acknowledgments:} This work is part of my PhD studies under supervision of Prof. Dr. Catharina Stroppel at the
University of Bonn. I thank her cordially for her support and help. I also thank Joanna Meinel, Olaf Schn\"urer and Jan Weidner for helpful discussions and numerous corrections.

\section{Abelian model categories}\label{section_abelianmodelcategories}

\subsection{Basic definitions} We begin by recalling the definition of (abelian) model structures and their homotopy
categories, focusing on the abelian case.

\begin{definition}
A \textit{model structure} $\calM$ on a category $\scC$ is a triple $(\cof,\weak,\fib)$ of classes of morphisms, called
\textit{cofibrations}, \textit{weak equivalences} and \textit{fibrations}, respectively, 
such that the following axioms are satisfied:
\begin{enumerate}
\item $\weak$ satisfies the $2$-out-of-$3$ axiom, i.e. given two composable morphisms $f,g$ in $\calM$, if
  two of $f,g,gf$ belong to $\weak$, then so does the third.
\item $\cof,\weak$ and $\fib$ are closed under retracts.
\item In any commutative square
\begin{equation*}\begin{tikzpicture}[description/.style={fill=white,inner sep=2pt}]
    \matrix (m) [matrix of math nodes, row sep=1.5em,
                 column sep=1.5em, text height=1.5ex, text depth=0.25ex,
                 inner sep=0pt, nodes={inner xsep=0.3333em, inner ysep=0.3333em}]
    {
       A & X \\
       B & Y \\
    };
    \draw[->] (m-1-1) -- (m-1-2);
    \draw[->] (m-2-1) -- (m-2-2);
    \draw[->] (m-1-1) -- node[left,scale=0.75]{$f$} (m-2-1);
    \draw[->] (m-1-2) -- node[right,scale=0.75]{$g$} (m-2-2);
    \draw[dashed,->] (m-2-1) -- (m-1-2);
\end{tikzpicture}\end{equation*}
the dashed arrow exists, making everything commutative, provided that either $f\in\cof$ and $g\in\weak\cap\fib$ or
$f\in\cof\cap\weak$ and $g\in\fib$. 
\item Any morphism $f$ factors as $f = \beta\circ\alpha$ with $\alpha\in\cof$,
  $\beta\in\weak\cap\fib$.
\item Any morphism $f$ factors as $f = \beta\circ\alpha$ with $\alpha\in\cof\cap\weak$,
  $\beta\in\fib$.
\end{enumerate}
A \textit{model category} is a bicomplete category (i.e. a category possessing arbitrary small limits and colimits)
equipped with a model structure. Given a model category, we will sometimes drop the classes $\cof,\weak,\fib$ from the
notation. 
\end{definition}

\begin{notation}
Given a model category $(\scC,\calM)$, an object $X\in\scC$ is called \textit{weakly trivial} if $0\to X\in\weak$
(equivalently, $X\to 0\in\weak$). Similarly, it is called \textit{cofibrant} if $0\to X\in\cof$, and it is called 
\textit{fibrant} if $X\to 0\in\fib$. The classes of cofibrant, weakly trivial, and fibrant objects will be denoted
$\calC$, $\calW$ and $\calF$, respectively. The \textit{homotopy category} is the localization $\scC[\weak\ui]$ and is
denoted $\Ho(\calM)$. 
\end{notation}

In this article we will mainly be concerned with model structures on abelian categories ``compatible'' with the abelian
structure in the following way:

\begin{definition}\label{def_defabelianmodelstructure}
A model structure on an abelian category is called \textit{abelian} if cofibrations equal monomorphism with cofibrant
kernel and fibrations equal epimorphisms with fibrant kernel. An \textit{abelian model 
  category} is a bicomplete abelian category equipped with an abelian model structure.
\end{definition}

\begin{rem} There are other definitions of abelian model structures which seem different at first. In
  \cite{Hovey_Cotorsion} a model structure on an abelian category is said to be compatible with the abelian structure if
  every cofibration is a monomorphism and a 
morphism is a (trivial) fibration if and only if it is an epimorphism with (trivially) fibrant kernel. In
\cite{Gillespie_ModelExact}, Gillespie requires in addition that a morphism is a (trivial) cofibration if and only if it
is a monomorphism with (trivially) cofibrant cokernel. The connection between these definitions is drawn in
\cite[Proposition 4.2]{Hovey_Cotorsion}: Assuming that every cofibration is a monomorphism and every fibration is an
epimorphism, the four possible conditions on the characterization (trivial) (co)fibration in terms of their (co)kernels
come in two pairs: Assuming that cofibrations equal monomorphisms with cofibrant cokernel is equivalent to assuming that
trivial fibrations are epimorphisms with trivially fibrant kernel, and assuming that trivial cofibrations equal
monomorphisms with trivially cofibrant cokernel is equivalent to assuming that fibrations are epimorphisms with fibrant
kernel. In particular, our Definition \ref{def_defabelianmodelstructure} is equivalent to \cite{Hovey_Cotorsion} is
equivalent to \cite{Gillespie_ModelExact}. \end{rem}

Requiring that any cofibration (resp. fibration) should be a monomorphism (resp. epimorphism) is not as automatic as
it might appear at first: for example, given a ring $R$ the standard projective model structure on $\Ch_{\geq 0}(R\Mod)$
\cite{Quillen_HomotopicalAlgebra} is \textit{not} abelian since fibrations are required to be epimorphisms only in
positive degrees. As a positive example, the standard injective and projective model structures on the category
$\Ch(R\Mod)$ of \textit{unbounded} chain complexes of $R$-modules are abelian:

\begin{prop}[\protect{\cite{Hovey_ModelCategories}}]\label{prop_chinjproj}
Let $R$ be a ring.
\begin{enumerate}
\item There exists a cofibrantly generated abelian model structure on $\Ch(R\Mod)$ with $\calC =
  \Ch(R\Mod)$, $\calW = \Acyc(R\Mod)$ and $\calF = \dgInj(R)$, called the \textit{standard injective model structure} on $\Ch(R\Mod)$.
\item There exists a cofibrantly generated abelian model structure on $\Ch(R\Mod)$ with $\calF =
  \Ch(R\Mod)$, $\calW = \Acyc(R\Mod)$ and $\calC = \dgProj(R)$, called the \textit{standard projective model structure} on $\Ch(R\Mod)$.
\end{enumerate}
The standard projective and injective model structures on $\Ch(R\Mod)$ are denoted $\calM\uproj(R)$ and $\calM\uinj(R)$,
respectively. 
\end{prop}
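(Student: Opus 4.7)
The plan is to invoke the Hovey correspondence \cite[Theorem 2.2]{Hovey_Cotorsion} alluded to in the introduction: a triple $(\calC,\calW,\calF)$ underlies an abelian model structure on $\Ch(R\Mod)$ precisely when $\calW$ is thick and both $(\calC,\calW\cap\calF)$ and $(\calC\cap\calW,\calF)$ are complete cotorsion pairs in $\Ch(R\Mod)$. Proving (1) and (2) therefore reduces in each case to: (i) thickness of $\calW=\Acyc(R\Mod)$, which is immediate from the long exact homology sequence; (ii) identifying the two orthogonality classes; (iii) proving completeness of the two cotorsion pairs; and (iv) exhibiting generating sets to secure cofibrant generation.

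For (ii) in part (1), a routine computation with the internal $\Hom$ of chain complexes identifies $\dgInj(R)$ with $\Acyc(R\Mod)^{\perp}$ (a complex $I$ is dg-injective iff $\Ext^1_{\Ch(R\Mod)}(A,I)=0$ for every acyclic $A$), while $\dgInj(R)\cap\Acyc(R\Mod)$ is easily seen to consist of the contractible complexes of injectives and to coincide with $\Ch(R\Mod)^{\perp}$, the right side of the small cotorsion pair. The dual calculations handle (2), yielding $\dgProj(R)={}^{\perp}\Acyc(R\Mod)$ and identifying $\dgProj(R)\cap\Acyc(R\Mod)$ with the contractible complexes of projectives.

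For (iii) and (iv), I would simultaneously run Quillen's small object argument on explicit generating sets. In the projective case one takes the classical
\[ \{ S^{n-1}(R) \hookrightarrow D^n(R) : n\in\ZZ\} \quad\text{and}\quad \{ 0 \hookrightarrow D^n(R) : n\in\ZZ\} \]
as generating cofibrations and generating trivial cofibrations; a direct lifting check identifies their right-orthogonal classes with the (trivial) fibrations of the candidate model structure, and transfinite compositions of pushouts yield the required functorial factorizations. In the injective case, since cofibrations are to be \emph{all} monomorphisms, one invokes a cardinality argument: fix a cardinal $\kappa$ such that every monomorphism of complexes of $R$-modules is a transfinite composition of pushouts of monomorphisms with $\kappa$-presentable codomain, and take the generating (trivial) cofibrations to range over a representative set of such monomorphisms cutting out the corresponding (trivially) fibrant complexes.

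The main obstacle is completeness of the \emph{large} cotorsion pair in each case, namely producing, for every complex $X$, a short exact sequence $0\to X\to I\to A\to 0$ with $I$ dg-injective and $A$ acyclic, respectively $0\to A\to P\to X\to 0$ with $P$ dg-projective and $A$ acyclic. In the unbounded setting these are precisely dg-injective / dg-projective resolutions of $X$, and their existence rests on the Grothendieck-category form of the small object argument together with a cardinality bound ensuring that every complex (respectively every acyclic complex) is a transfinite extension of objects of bounded size — that is, on the \emph{deconstructibility} of the relevant classes. This is exactly the phenomenon the article will axiomatize in the subsequent sections, thereby turning the ad hoc arguments sketched here into a reusable tool for all the abelian model structures constructed later.
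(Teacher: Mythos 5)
Your proof is correct in outline, but it takes a genuinely different route from the paper's own argument, which consists of a single citation to Hovey's book \cite[Theorems 2.3.11, 2.3.13, Propositions 2.3.9, 2.3.20]{Hovey_ModelCategories}, where these model structures are constructed directly via the small object argument without the intermediate language of cotorsion pairs. What you sketch is essentially the machinery the paper develops in Sections \ref{subsection_cotorsion} and \ref{subsection_fourmodels} (Theorem \ref{thm_hovey}, Proposition \ref{prop_cogset}, deconstructibility) specialized to the case of an ordinary ring; indeed, Proposition \ref{prop_projectiveinjectivemodel} later proves this result in the broader generality of dg rings by precisely this method. So your approach is more self-contained and anticipates the paper's methodology, whereas the paper at this stage simply quotes a known result to avoid a forward reference to Theorem \ref{thm_hovey} (which has not yet been stated when Proposition \ref{prop_chinjproj} appears). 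Two minor caveats worth flagging in your write-up: the identification $\dgInj(R)=\Acyc(R\Mod)^{\perp}$ is a theorem (Hovey's Propositions 2.3.4--2.3.5) requiring the splitting $\ext^1_{\Ch}(A,I)\cong[\Omega A,I]$ for $I$ degreewise injective, not merely "a routine computation with the internal $\Hom$"; and for the injective model structure, completeness of the cotorsion pair $(\Acyc,\dgInj)$ does not follow from enough injectives alone but genuinely requires deconstructibility of $\Acyc(R\Mod)$ plus the Saor\'in--\v{S}t'ov\'i\v{c}ek argument (Proposition \ref{prop_autocomplete}), since $\Ch(R\Mod)$ as an abelian category has no a priori reason to admit special $\Acyc$-preenvelopes.
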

\begin{proof}
The existence and cofibrant generation of injective and projective model structures on $\Ch(R\Mod)$ is proved in
\cite[Theorems 2.3.11 and 2.3.13]{Hovey_ModelCategories}, and \cite[Propositions 2.3.9 and
2.3.20]{Hovey_ModelCategories} show that they are abelian. 
\end{proof}

Another example of an abelian model structure is Hovey's model for the singularity category of a Gorenstein ring. Recall
that a ring $R$ is \textit{Gorenstein} \cite{Buchweitz} if $R$ is Noetherian and of finite injective dimension both as a left and
as a right module over itself. An $R$-module is called \textit{Gorenstein projective} if it arises as the 
$0$-th syzygy of an acyclic complex of projective $R$-modules, and it is called \textit{Gorenstein injective} if it arises as the
$0$-th syzygy of an acyclic complex of injective $R$-modules. The classes of Gorenstein projective and Gorenstein
injective $R$-modules are denoted $\gproj(R)$ and $\ginj(R)$, respectively.
\begin{prop}[\protect{\cite[Theorem 8.6]{Hovey_Cotorsion}}]\label{prop_gorensteinmodels}
Let $R$ be a Gorenstein ring.
\begin{enumerate}
\item There exists an abelian model structure on $R\Mod$, called the \textit{Gorenstein projective} model
  structure and denoted $\calM\ugproj(R)$, with $\calC = \gproj(R)$, $\calW = \proj^{<\infty}(R)$ (the modules of finite
  projective dimension) and $\calF=R\Mod$.  
\item There exists an abelian model structure on $R\Mod$, called the \textit{Gorenstein injective} model
  structure and denoted $\calM\uginj(R)$, with $\calC=R\Mod$, $\calW = \proj^{<\infty}(R)$ and $\calF = \ginj(R)$.
\end{enumerate}
Moreover, both $\calM\ugproj(R)$ and $\calM\uginj(R)$ are cofibrantly generated.
\end{prop}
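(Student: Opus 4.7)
The plan is to apply the correspondence \cite[Theorem 2.2]{Hovey_Cotorsion} between abelian model structures on a bicomplete abelian category and triples $(\calC,\calW,\calF)$ for which $\calW$ is thick and both $(\calC,\calW\cap\calF)$ and $(\calC\cap\calW,\calF)$ are complete cotorsion pairs. For each of (1) and (2) I would verify these three conditions for the indicated classes, and then deduce cofibrant generation from the fact that the resulting cotorsion pairs are cogenerated by sets.

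For the Gorenstein projective model, I would first identify the intersections: $\calW\cap\calF=\proj^{<\infty}(R)$ is immediate, while $\calC\cap\calW=\gproj(R)\cap\proj^{<\infty}(R)=\opproj(R)$, because a Gorenstein projective $G$ of projective dimension at most $n$ satisfies $\ext^{1}(G,N)\cong\ext^{n+1}(\syz^{n}G,N)=0$ for every $N$ and is therefore projective. The two cotorsion pairs to establish are then $(\opproj(R),R\Mod)$, which is trivially complete (surjections from projectives give special precovers, the identity $M\xrightarrow{=}M$ gives special preenvelopes), and the nontrivial pair $(\gproj(R),\proj^{<\infty}(R))$. For this latter pair I would appeal to the standard orthogonality identities $\gproj(R)^{\perp}=\proj^{<\infty}(R)$ and ${}^{\perp}\proj^{<\infty}(R)=\gproj(R)$, each of which uses essentially that $R$ has finite self-injective dimension on both sides.

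Thickness of $\proj^{<\infty}(R)$ is routine from the long exact sequence in $\ext^{\bullet}(-,N)$ and closure under retracts. The Gorenstein injective case (2) proceeds dually: the nontrivial cotorsion pair is $(\proj^{<\infty}(R),\ginj(R))$, and $\calW\cap\calF=\proj^{<\infty}(R)\cap\ginj(R)=\Inj(R)$ by the Gorenstein identity that finite projective dimension coincides with finite injective dimension over $R$. Cofibrant generation in both cases reduces to exhibiting a set of cogenerators for the nontrivial cotorsion pair, which follows from the deconstructibility of $\gproj(R)$ and $\ginj(R)$ together with the Eklof--Trlifaj version of the small object argument.

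The main obstacle is the completeness, equivalently the existence of a cogenerating set, for the nontrivial cotorsion pair $(\gproj(R),\proj^{<\infty}(R))$ (respectively $(\proj^{<\infty}(R),\ginj(R))$). This is the one place where the Gorenstein hypothesis is genuinely used, and it requires the construction of special Gorenstein-projective precovers (or, dually, Gorenstein-injective preenvelopes). Once these deep inputs from Gorenstein homological algebra are in hand, everything else is formal bookkeeping through Hovey's correspondence.
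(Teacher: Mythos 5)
The paper does not actually prove this proposition — it simply cites \cite[Theorem 8.6]{Hovey_Cotorsion}, so there is no ``paper's own proof'' to compare against. Your proposal reconstructs the outline of Hovey's argument, and the overall strategy (apply Theorem \ref{thm_hovey}, reduce to two cotorsion pairs, invoke the Gorenstein orthogonality identities, then get cofibrant generation from set-cogeneration) is the correct one. However, two specific steps are not right as written.

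First, your justification that $\gproj(R)\cap\proj^{<\infty}(R)=\opproj(R)$ uses a dimension shift in the wrong direction. If $G$ is Gorenstein projective and $\syz^n G$ denotes its $n$-th cosyzygy inside a complete projective resolution, then the shift reads $\ext^{1}(G,N)\cong\ext^{n+1}(\syz^n G,N)$, but nothing forces $\syz^n G$ to have projective dimension $\le n$; all you know a priori is $\opproj\text{dim}(\syz^n G)\le\opproj\text{dim}(G)+n$. The correct argument is the resolving one: if $\opproj\text{dim}(G)=n\ge 1$, then $\Omega G$ is again Gorenstein projective (kernel of an epimorphism onto $G$ from a projective) of projective dimension $n-1$, so by induction $\Omega G$ is projective; then $0\to\Omega G\to P_0\to G\to 0$ splits because $\ext^1(G,\Omega G)=0$, as $G$ is Gorenstein projective and $\Omega G$ is projective.

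Second, the cofibrant-generation claim for the Gorenstein injective model is attributed to the wrong class. To see that $(\proj^{<\infty}(R),\ginj(R))$ is cogenerated by a set, one needs $\ginj(R)=\calS^{\perp}$ for some set $\calS\subseteq\proj^{<\infty}(R)$; deconstructibility of $\ginj(R)$ has no bearing on this (it would cogenerate a cotorsion pair with $\ginj(R)$ on the \emph{left}). What is actually needed is deconstructibility of $\proj^{<\infty}(R)$, or equivalently a concrete cogenerating set — e.g., exploiting that over a Gorenstein ring of self-injective dimension $d$ one has $\proj^{<\infty}(R)=\proj^{\le d}(R)$, and this class is cogenerated by $d$-th syzygies of a representative set of finitely generated modules. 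This is exactly what makes the Gorenstein hypothesis essential for cofibrant generation, and the gap in your sketch is precisely where that hypothesis should enter.
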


Right from the definition we know that an abelian model structure is determined by the triple of cofibrant,
weakly trivial and fibrant objects. The question which such triples actually give rise to abelian model structures was
solved in \cite{Hovey_Cotorsion} in terms of complete cotorsion pairs:

\begin{definition}[\protect{\cite[Definition 2.3]{Hovey_Cotorsion}}]\label{def_cotorsionpair}
For an abelian category $\scA$, a \textit{cotorsion pair} in $\scA$ is a pair $(\calD,\calE)$ of classes of objects such
that the following hold: 
\begin{enumerate}
\item $\calD = {^\perp\calE} := \{X\in\scA\ |\ \ext^1_\scA(X,\calE)=0\}$.
\item $\calE = \calD^\perp := \{Y\in\scA\ |\ \ext^1_\scA(\calD,Y)=0\}$.
\end{enumerate}
In this case, we call $\calD$ the \textit{cotorsion class} and $\calE$ the \textit{cotorsionfree class}. A cotorsion
pair $(\calD,\calE)$ is called \textit{complete} if the following two conditions are satisfied: 
\begin{enumerate}
\item[(3)] $(\calD,\calE)$ has \textit{enough projectives}, i.e. for each $Z\in\scA$ there exists an exact sequence $0\to Y\to
  X\to Z\to 0$ such that $X\in\calD$ and $Y\in\calE$.
\item[(4)] $(\calD,\calE)$ has \textit{enough injectives}, i.e. for each $Z\in\scA$ there exists an exact sequence
$0\to Z\to Y\to X\to 0$ such that $Y\in\calE$ and $X\in\calD$.
\end{enumerate}
A cotorsion pair $(\calD,\calE)$ is called \textit{resolving} if $\calD$ is closed under taking kernels of
epimorphisms, and it is called \textit{coresolving} if $\calE$ is closed under taking cokernels of monomorphisms. It is
called \textit{hereditary} if it is both resolving and coresolving.
\end{definition}

For example, denoting $\calI$ the class of injectives, the pair $(\scA,\inj)$ is a hereditary cotorsion pair with enough
projectives. It is complete if and only if $\scA$ has enough injectives in the usual sense. Similarly, denoting $\proj$
the class of projectives, the pair $(\proj,\scA)$ is a hereditary cotorsion pair with enough injectives, and it  is
complete if and only if $\scA$ has enough projectives.  

\begin{definition}\label{def_thick}
A subcategory $\calW$ of an abelian category $\scA$ is called \textit{thick} if it is closed under summands and
if it satisfies the \textit{$2$-out-of-$3$ property}, i.e. whenever two out of three terms in a short exact sequence lie
in $\calW$, then so does the third. 
\end{definition}

\begin{theorem}[\protect{\cite[Theorem 2.2]{Hovey_Cotorsion}}]\label{thm_hovey}
Let $\scA$ be a bicomplete abelian category and $\calC$, $\calW$ and $\calF$ classes of objects in $\scA$. Then the
following are equivalent: 
\begin{enumerate}
\item[(i)] There exists an abelian model structure on $\scA$ where $\calC$ is the class of cofibrant, $\calF$ is the class
  of fibrant, and $\calW$ is the class of weakly trivial objects.
\item[(ii)] $\calW$ is thick and both $(\calC,\calF\cap\calW)$ and $(\calC\cap\calW,\calF)$ are complete cotorsion pairs.
\end{enumerate}
Slightly abusing the notation, given a triple $(\calC,\calW,\calF)$ as above we will often denote
its induced abelian model structure $(\calC,\calW,\calF)$ as well.
\end{theorem}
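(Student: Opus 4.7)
The plan is to prove both implications. The direction (i) $\Rightarrow$ (ii) is largely a bookkeeping exercise translating the model structure axioms into Ext-language, while (ii) $\Rightarrow$ (i) is the substantive direction where one must synthesize a model structure from cotorsion data.

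For (i) $\Rightarrow$ (ii), I would first use the characterization of (trivial) cofibrations and fibrations recalled in the remark after Definition \ref{def_defabelianmodelstructure}: cofibrations are monos with cokernel in $\calC$, fibrations are epis with kernel in $\calF$, and the trivial variants have kernel/cokernel in $\calC\cap\calW$ and $\calF\cap\calW$ respectively. The lifting axiom (3) then translates directly into $\ext^1_\scA(\calC\cap\calW,\calF) = 0$ and $\ext^1_\scA(\calC,\calF\cap\calW) = 0$, using the standard identification of $\ext^1$-elements with short exact sequences and their splittings. Applying the two factorization axioms (4) and (5) to the morphisms $0\to Z$ and $Z\to 0$ produces the short exact sequences realizing the ``enough projectives'' and ``enough injectives'' conditions of both cotorsion pairs. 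To establish the pairs are actually cotorsion (rather than just Ext-orthogonal), I would use that the Ext-vanishing plus the completeness sequences force the orthogonality to be sharp, via a standard Salce-type argument. For thickness of $\calW$: closure under summands is immediate from retract-closure of $\weak$ applied to $0\to X$; the 2-out-of-3 property for an exact sequence $0\to A\to B\to C\to 0$ follows by choosing trivial cofibration/fibration factorizations of $0\to A$, $0\to B$, $0\to C$ and applying 2-out-of-3 for weak equivalences to the resulting triangle.

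For (ii) $\Rightarrow$ (i), I would define cofibrations and fibrations by the kernel/cokernel conditions dictated by the two cotorsion pairs, and declare a morphism $f: X\to Y$ to be a weak equivalence if it admits a factorization $f = p\circ i$ where $i$ is a monomorphism with cokernel in $\calW$ and $p$ is an epimorphism with kernel in $\calW$. The factorization axioms (4) and (5) are then direct consequences of the completeness of the cotorsion pairs $(\calC,\calF\cap\calW)$ and $(\calC\cap\calW,\calF)$: apply enough-injectives to embed the given map into a diagram producing a cofibration followed by a trivial fibration, and dually for the other factorization. Retract closure of all three classes follows because monomorphisms, epimorphisms and the classes $\calC$, $\calW$, $\calF$ (being defined by Ext-vanishing and thickness) are all closed under retracts. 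The lifting axiom reduces, via the characterization of (trivial) (co)fibrations through short exact sequences, to the Ext-vanishing built into the two cotorsion pairs: a lifting problem with $f\in\cof$ and $g\in\weak\cap\fib$ gives rise to an element of $\ext^1(\coker(f),\ker(g))$, which vanishes because $\coker(f)\in\calC$ and $\ker(g)\in\calF\cap\calW$.

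The main obstacle is the 2-out-of-3 property for weak equivalences together with showing that the class is well-defined (independent of the chosen factorization). Both rest essentially on the thickness of $\calW$. For well-definedness, given two factorizations $p_1 i_1 = p_2 i_2$ one uses the pullback/pushout to compare them, producing short exact sequences whose terms can be controlled by thickness. For 2-out-of-3, given composable $f,g$ and factorizations thereof, I would form a joint factorization of $f$, $g$, and $gf$ via pullback-pushout squares, extracting short exact sequences that link the kernels/cokernels of the three maps; thickness of $\calW$ propagates membership through these sequences in exactly the pattern demanded by 2-out-of-3. This step, together with the verification that the definition of weak equivalence is symmetric enough to admit both the cofibration-then-trivial-fibration and trivial-cofibration-then-fibration factorizations, is where the bulk of the argument lies and must be executed carefully via the snake lemma and repeated use of the Ext-vanishing coming from the two cotorsion pairs.
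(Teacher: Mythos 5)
The paper does not prove this theorem: it is cited directly from Hovey's work, so there is no internal proof to compare against. Your sketch does follow the structure of Hovey's original argument (in particular, the definition of weak equivalences as morphisms factoring as a monomorphism with cokernel in $\calW$ followed by an epimorphism with kernel in $\calW$ is exactly Hovey's, as is the strategy of deducing 2-out-of-3 via pullback/pushout diagrams and the thickness of $\calW$). That said, two points in your write-up are off.

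First, your treatment of retract closure conflates two different situations. For cofibrations and fibrations, the reasoning you give is fine: these are defined as monomorphisms (resp.\ epimorphisms) whose cokernel (resp.\ kernel) lies in a class closed under summands, and such classes are retract-closed. But weak equivalences are not defined by a kernel/cokernel condition on a single short exact sequence; they are defined by the \emph{existence of a factorization} $f=pi$. A retract of such an $f$ does not inherit the factorization in any obvious way, so the argument ``$\calW$ is closed under retracts, hence so is $\weak$'' simply does not apply. In a candidate model structure where 2-out-of-3, lifting, and factorization already hold, retract closure of weak equivalences does follow for free (the retract argument: factor the retracted map $g$ as trivial cofibration then fibration, lift against the factorization of $f$, and observe the fibration piece becomes a retract of a trivial fibration), but this requires an explicit, separate argument and cannot be dismissed as you do. Second, your concern about the definition of weak equivalences being ``well-defined (independent of the chosen factorization)'' is misplaced: the definition is purely existential, so there is no well-definedness question to resolve. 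What actually has to be checked in that spot is closure under composition (which is one of the inclusions in 2-out-of-3), and the real work there is in showing that the composite of two admissible factorizations can be reorganized into a single admissible factorization, using the completeness of the two cotorsion pairs and the 2-out-of-3 property of $\calW$.

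Aside from these points, the remaining steps (Salce-type sharpening of Ext-orthogonality to a full cotorsion pair, derivation of the lifting axiom from $\ext^1$-vanishing, derivation of the factorization axioms from completeness) are standard and your outline of them is sound.
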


We call an abelian model structure $\calM=(\calC,\calW,\calF)$ \textit{hereditary} if their associated cotorsion pairs
$(\calC,\calW\cap\calF)$ and $(\calC\cap\calW,\calF)$ are hereditary. In view of the $2$-out-of-$3$ property of $\calW$,
this is equivalent to saying that $\calC$ is closed under taking kernels of epimorphisms and $\calF$ is closed under
taking cokernels of monomorphisms. Note that Gillespie \cite{Gillespie_ModelExact} even obtained a version of Theorem
\ref{thm_hovey} for exact categories endowed with model structures compatible with the exact structure. Moreover, he
does \textit{not} assume the existence of arbitrary small colimits and limits, as is done here and in
\cite{Hovey_ModelCategories}, for example.  

Let us consider the extreme cases of \textit{projective} (resp. \textit{injective}) abelian model structures, i.e. model structures where
everything is fibrant (resp. cofibrant). 

\begin{cor}\label{cor_injprojmodel}
Let $\scA$ be a bicomplete abelian category and $\calC,\calW\subset\scA$ classes of objects in $\scA$. Then the
following are equivalent:
\begin{enumerate}
\item[(i)] $(\calC,\calW,\scA)$ gives rise to an abelian model structure on $\scA$.
\item[(ii)] $\scA$ has enough projectives, $(\calC,\calW)$ is a complete cotorsion pair with $\calC\cap\calW = \proj(\scA)$
  and $\calW$ satisfies the $2$-out-of-$3$ property. 
\end{enumerate}
Dually, for classes of objects $\calW,\calF\subseteq\scA$ the following are equivalent:
\begin{enumerate}
\item[(i)] $(\scA,\calW,\calF)$ gives rise to an abelian model structure on $\scA$.
\item[(ii)] $\scA$ has enough injectives, $(\calW,\calF)$ is a complete cotorsion pair with $\calW\cap\calF =
  \inj(\scA)$ and $\calW$ satisfies the $2$-out-of-$3$ property.
\end{enumerate}
\end{cor}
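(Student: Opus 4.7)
The plan is to derive both equivalences by applying Hovey's theorem (Theorem~\ref{thm_hovey}) with one of its two cotorsion pairs specialised to a trivial one. Since the two statements are dual, I will only lay out the projective case.

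Setting $\calF = \scA$ in condition~(ii) of Theorem~\ref{thm_hovey}, the two required cotorsion pairs $(\calC,\calF\cap\calW)$ and $(\calC\cap\calW,\calF)$ specialise to $(\calC,\calW)$ and $(\calC\cap\calW,\scA)$. So the task reduces to proving two things: (a)~the pair $(\calC\cap\calW,\scA)$ is a complete cotorsion pair if and only if $\scA$ has enough projectives and $\calC\cap\calW = \proj(\scA)$; and (b)~under the hypothesis that $(\calC,\calW)$ is a cotorsion pair, thickness of $\calW$ reduces to the 2-out-of-3 property.

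For~(a), the crucial observation is that ${}^{\perp}\scA = \proj(\scA)$ by the very definition of projectivity, so for $(\calC\cap\calW,\scA)$ to be a cotorsion pair at all one is forced to have $\calC\cap\calW = \proj(\scA)$. Completeness of $(\proj(\scA),\scA)$ is then equivalent just to $\scA$ having enough projectives: the ``enough injectives'' clause of Definition~\ref{def_cotorsionpair} is trivially fulfilled by the sequence $0\to Z\to Z\to 0\to 0$ (with $0\in\proj(\scA)$), while the ``enough projectives'' clause is literally the statement that $\scA$ has enough projectives.

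For~(b), I will invoke the fact that the cotorsion-free class $\calE = \calD^{\perp}$ of any cotorsion pair is automatically closed under direct summands, since $\ext^{1}_{\scA}(\calD,-)$ is additive. Hence once $(\calC,\calW)$ is known to be a cotorsion pair, the thickness requirement in Theorem~\ref{thm_hovey}(ii) is equivalent to the 2-out-of-3 property alone, which is exactly the weaker condition imposed in~(ii) of the corollary. The only mildly nontrivial point, and therefore the only ``obstacle,'' is noticing this automatic closure under summands; everything else is bookkeeping. The injective statement then follows by the evident dualisation, exchanging projectives and injectives and replacing the trivial approximation $0\to Z\to Z\to 0\to 0$ by $0\to 0\to Z\to Z\to 0$.
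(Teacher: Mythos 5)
Your proof is correct and follows essentially the same route as the paper's: both arguments plug $\calF=\scA$ into Theorem~\ref{thm_hovey} and observe that $(\calC\cap\calW,\scA)$ is a complete cotorsion pair precisely when $\scA$ has enough projectives and $\calC\cap\calW=\proj(\scA)$. You are somewhat more careful than the paper in one respect: the published proof silently replaces the ``thickness'' requirement of Theorem~\ref{thm_hovey} (closure under summands plus 2-out-of-3) by the 2-out-of-3 property alone, and you correctly justify this by noting that $\calW=\calC^{\perp}$ is automatically closed under summands since $\ext^1_\scA(\calC,-)$ is additive. That is the right way to close that small gap.
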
 
\begin{proof}
By Theorem \ref{thm_hovey}, $(\calC,\calW,\scA)$ giving rise to an abelian model structure on $\scA$ is equivalent to
$\calW$ satisfying the $2$-out-of-$3$ property and $(\calC,\calW\cap\calF)=(\calC,\calW)$,
$(\calC\cap\calW,\calF)=(\calC\cap\calW,\scA)$ being complete cotorsion pairs. The latter means that $\scA$ has enough
projectives and $\calC\cap\calW=\proj(\scA)$. The second part is dual.
\end{proof}

We will see how complete cotorsion pairs can be constructed in the next
section. Concerning the $2$-out-of-$3$ property, the next lemma will be useful.

\begin{lem}\label{lem_twooutofthree}
Let $(\calW,\calF)$ be a cotorsion pair in an abelian category $\scA$ with enough injectives. Consider the following statements:
\begin{enumerate}
\item $(\calW,\calF)$ is coresolving.
\item $\ext^k_\scA(W,F)=0$ for all $W\in\calW$, $F\in\calF$ and $k\geq 1$.
\item $\calW$ satisfies the $2$-out-of-$3$ property.
\end{enumerate}
Then (1)$\Leftrightarrow$(2). If $(\calW,\calF)$ is complete with $\calW\cap\calF=\inj(\scA)$, then also (2)$\Rightarrow$(3).
\end{lem}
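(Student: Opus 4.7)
The plan is to treat the three implications separately, establishing them in the order (1)$\Leftrightarrow$(2), then Case~A of (2)$\Rightarrow$(3) (which needs only the cotorsion pair structure plus (2)), and finally the remaining Case~B via a pullback construction that forces the kernel to lie in $\calW\cap\calF=\inj(\scA)$.

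For (1)$\Rightarrow$(2) I would induct on $k$. The case $k=1$ is the defining property of the cotorsion pair $(\calW,\calF)$. For the inductive step, pick $F\in\calF$ and embed it into an injective: $0\to F\to I\to F'\to 0$. Since every injective is in $\calW^{\perp}=\calF$, the coresolving hypothesis forces $F'\in\calF$. The long exact Ext sequence applied to $W\in\calW$ gives $\ext^{k+1}_{\scA}(W,F)\cong\ext^{k}_{\scA}(W,F')$ because $\ext^{k}_{\scA}(W,I)=\ext^{k+1}_{\scA}(W,I)=0$ for $k\geq 1$, and the right-hand side vanishes by induction. The reverse implication (2)$\Rightarrow$(1) is a one-line diagram chase: given $0\to F_1\to F_2\to F_3\to 0$ with $F_1,F_2\in\calF$, the sequence $\ext^{1}_{\scA}(W,F_2)\to\ext^{1}_{\scA}(W,F_3)\to\ext^{2}_{\scA}(W,F_1)$ sandwiches the middle term between two zeros by (2).

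For (2)$\Rightarrow$(3), note first that the left class of any cotorsion pair is closed under extensions, so the only nontrivial cases of the $2$-out-of-$3$ property are kernels and cokernels. Consider a short exact sequence $0\to X_1\to X_2\to X_3\to 0$. If $X_2,X_3\in\calW$ (kernel case), then for every $F\in\calF$ the fragment $\ext^{1}_{\scA}(X_2,F)\to\ext^{1}_{\scA}(X_1,F)\to\ext^{2}_{\scA}(X_3,F)$ has both ends vanishing by (2), hence $X_1\in{}^{\perp}\calF=\calW$. This case uses only (2) and the fact that $(\calW,\calF)$ is a cotorsion pair, not completeness or the condition on $\calW\cap\calF$.

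The cokernel case $X_1,X_2\in\calW\Rightarrow X_3\in\calW$ is the main obstacle, since the relevant long-exact sequence fragment does not directly pin down $\ext^{1}_{\scA}(X_3,F)$. Here I would invoke completeness: the cotorsion pair has enough projectives, so there is a sequence $0\to F_0\to W_0\to X_3\to 0$ with $W_0\in\calW$ and $F_0\in\calF$. Forming the pullback of $X_2\to X_3\leftarrow W_0$ yields an object $Y$ sitting in two short exact sequences $0\to X_1\to Y\to W_0\to 0$ and $0\to F_0\to Y\to X_2\to 0$. The first sequence gives $Y\in\calW$ by closure under extensions, and the second, combined with the already proved kernel case, gives $F_0\in\calW$. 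Hence $F_0\in\calW\cap\calF=\inj(\scA)$, so the sequence $0\to F_0\to W_0\to X_3\to 0$ splits; then $X_3$ is a direct summand of $W_0\in\calW$, and closure of $\calW$ under summands finishes the argument.
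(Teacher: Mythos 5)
Your proof is correct, and the (1)$\Leftrightarrow$(2) part coincides with the paper's argument (induction via dimension shifting against an injective coresolution of $F$). The interesting divergence is in (2)$\Rightarrow$(3). The paper's approach is more uniform: it first observes that whenever two of the three terms of a short exact sequence lie in $\calW$, the long exact sequence forces $\ext^2_{\scA}(W_i,\calF)=0$ for \emph{all} three $i$, and then reduces the whole problem to one dimension-shifting lemma, namely that $\ext^2_{\scA}(X,\calF)=0$ implies $\ext^1_{\scA}(X,\calF)=0$, proved by choosing $0\to F'\to I\to F\to 0$ with $F'\in\calF$ and $I$ injective (this is where completeness, extension-closure of $\calF$, and $\calW\cap\calF=\inj(\scA)$ all enter). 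Your proof instead splits into the kernel and cokernel cases: the kernel case is a direct three-term sandwich needing only (2), and the cokernel case is handled by pulling back an $\calF$-$\calW$ approximation $0\to F_0\to W_0\to X_3\to 0$ along $X_2\to X_3$, then feeding the resulting sequence $0\to F_0\to Y\to X_2\to 0$ into the already-proved kernel case to conclude $F_0\in\calW\cap\calF=\inj(\scA)$, whence the approximation splits and $X_3$ is a summand of $W_0\in\calW$. Both routes invoke exactly the same hypotheses. The paper's argument is shorter and emphasizes the ``$\ext^2=0$ pins down $\ext^1=0$'' principle; yours is a bit longer but more constructive, and the pullback technique mirrors the style of arguments used elsewhere in the paper (e.g.\ Proposition \ref{prop_cotorsiontorsion}). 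Either way, the statement is fully established.
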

\begin{proof}
(2)$\Rightarrow$(1) follows from the long exact $\ext$-sequence. Now assume (1) holds. For $F\in\calF$, pick an embedding $i:
F\hookrightarrow I$ with $I\in\inj(\scA)\subset\calF$. Then $\Sigma F := \coker(i)\in\calF$ by assumption, and 
$\ext^k_\scA(-,F)\cong\ext^{k-1}_\scA(-,\Sigma F)$ for all $k\geq 2$. Inductively, we deduce (2). This shows
(1)$\Leftrightarrow$(2), so it remains to show (2)$\Rightarrow$(3) in case $(\calW,\calF)$ is complete and
$\calW\cap\calF=\inj(\scA)$. If $0\to W_1\to W_2\to W_3\to 0$ is a short exact sequence with at least two of the $W_i$ belonging to $\calW$, we have
$\ext^2_\scA(W_i,\calF)=0$ for all $i=1,2,3$. It is therefore sufficient to show that any $X\in\scA$
satisfying $\ext_\scA^2(X,\calF)=0$ actually satisfies $\ext^1_\scA(X,\calF)=0$, i.e. $X\in\calW$. For this, pick
$F\in\calF$ arbitrary and choose an exact sequence $0\to F\p\to I\to F\to 0$ with $F\p\in\calF$ and
$I\in\inj(\scA)$. Such a sequence exists since $(\calW,\calF)$ has enough projectives, $\calF$ is closed under
extensions and $\calW\cap\calF=\inj(\scA)$ by assumption. Then $\ext^1_\scA(X,F)\cong\ext^2_\scA(X,F\p)=0$, and hence $X\in\calW$.  
\end{proof}

Combining Lemma \ref{lem_twooutofthree} with its dual (note that $(2)\Rightarrow (1)$ did only use the existence of
$\ext\ua$ and the long exact $\ext\ua$-sequence) shows that in case $\scA$ has enough injectives, then $(\calW,\calF)$ being coresolving implies
$(\calW,\calF)$ being resolving. Dually, if $\scA$ has enough projectives, then $(\calW,\calF)$ being resolving implies
$(\calW,\calF)$ being coresolving. Restricting to complete cotorsion pairs, the existence of enough projectives or
injectives is not necessary: 

\begin{prop}\label{prop_cotorsiontorsion}
Let $\scA$ be an abelian category, $(\calX,\calY)$ be a complete, coresolving cotorsion pair and $\omega :=
\calX\cap\calY$. Then $\calX/\omega={^{\ddagger}}(\calY/\omega)$, $\calY/\omega = (\calX/\omega)^{\ddagger}$ in
$\scA/\omega$. Here $\scA/\omega$, $\calX/\omega$ and $\calY/\omega$ denote the stable categories and $\ddagger$ denotes
the $\Hom$-orthogonal (because $\perp$ is already occupied). Moreover, $(\calX,\calY)$ is resolving. 
\end{prop}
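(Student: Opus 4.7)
The plan is to first establish the ``moreover'' clause that $(\calX,\calY)$ is resolving, since this feeds directly into the harder direction of the orthogonality identification, and then to treat the two orthogonality statements by parallel arguments, the easy inclusions via ``$\omega$-envelopes and $\omega$-covers'' and the reverse inclusions via the universal property of special preenvelopes/precovers.

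\textbf{Resolving.} Given $0\to X_1\to X_2\to X_3\to 0$ with $X_2,X_3\in\calX$, I would show $\ext^1(X_1,Y)=0$ for every $Y\in\calY$ by proving that an arbitrary extension $0\to Y\to E\to X_1\to 0$ splits. The construction: choose a special $\calY$-preenvelope $0\to E\to Y_E\to X_E\to 0$ (which exists by completeness), and consider the chain $Y\subseteq E\subseteq Y_E$. The quotient $W:=Y_E/Y$ fits into $0\to X_1\to W\to X_E\to 0$ and lies in $\calY$ by the coresolving hypothesis applied to $0\to Y\to Y_E\to W\to 0$. I would then form the pushout $P:=X_2\sqcup_{X_1}W$, observe that $0\to X_2\to P\to X_E\to 0$ forces $P\in\calX$ (closure of $\calX$ under extensions), while $0\to W\to P\to X_3\to 0$ splits since $\ext^1(X_3,W)=0$. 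Hence $W$ is a direct summand of $P\in\calX$, so $W\in\calX\cap\calY=\omega$. Since then $\ext^1(W,Y)=0$, the sequence $0\to Y\to Y_E\to W\to 0$ splits, giving $Y_E=Y\oplus W$; and $E$, being the preimage of $X_1\subseteq W$ under the projection $Y\oplus W\to W$, equals $Y\oplus X_1$ inside $Y_E$. So $0\to Y\to E\to X_1\to 0$ splits.

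\textbf{Easy inclusions.} For any $X\in\calX$ and $Y\in\calY$, a special $\calY$-preenvelope $0\to X\to W\to X'\to 0$ of $X$ has $W\in\calX$ (extensions of $\calX$-objects), hence $W\in\omega$; any $f:X\to Y$ extends to $\widetilde f:W\to Y$ thanks to $\ext^1(X',Y)=0$, so $f$ factors through $W\in\omega$. This proves $\calX/\omega\subseteq{}^\ddagger(\calY/\omega)$; the dual argument using a special $\calX$-precover of $Y$ with middle term in $\omega$ proves $\calY/\omega\subseteq(\calX/\omega)^\ddagger$.

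\textbf{Reverse inclusions.} Suppose $A$ satisfies $\Hom_{\scA/\omega}(A,Y)=0$ for all $Y\in\calY$. Take a special $\calY$-preenvelope $0\to A\xrightarrow\alpha Y_1\to X_1\to 0$; by hypothesis $\alpha=ts$ with $s:A\to W$, $t:W\to Y_1$, $W\in\omega$, and $s$ is automatically monic. Writing $B:=W/s(A)$ and applying $\Hom(-,Y'')$ to $0\to A\to W\to B\to 0$, surjectivity of $\Hom(W,Y'')\to\Hom(A,Y'')$ (hence $\ext^1(B,Y'')=0$) is precisely the preenvelope property of $\alpha$: any $\phi:A\to Y''$ factors as $\rho\circ\alpha=\rho t\circ s$, so extends through $s$. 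Thus $B\in\calX$, and since $W\in\omega\subseteq\calX$, the resolving property established above forces $A\in\calX$. The dual argument, using a special $\calX$-precover of an object $B$ orthogonal to $\calX/\omega$ and invoking coresolving at the final step, yields $\calY/\omega\supseteq(\calX/\omega)^\ddagger$.

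\textbf{Main obstacle.} The essential step is the resolving clause, and within it the identification of the cokernel $W=Y_E/Y$ as lying in $\omega$: this is where the existence of the ambient sequence $0\to X_1\to X_2\to X_3\to 0$ with $X_2,X_3\in\calX$ gets used (via the pushout), and where both the coresolving hypothesis and the closure of $\calX$ under summands and extensions must be combined in the right order. Once this is done, higher $\ext$-vanishing for $(\calX,\calY)$ is not needed at any point in the argument.
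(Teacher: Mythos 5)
Your proposal is correct, but it takes a genuinely different route from the paper's. The paper leads with the two orthogonality statements and derives resolving as a short corollary at the end: it shows $\Hom_{\scA/\omega}(\calX,\calY)=0$ using the observation that a special $\calX$-precover of any $Y\in\calY$ has middle term in $\omega$; then, for arbitrary $A\in\scA$, it builds a \emph{single} $3\times 3$ pushout diagram from a special $\calX$-precover $0\to Y\to X\to A\to 0$ composed with a special $\calY$-preenvelope $0\to X\to I\to X'\to 0$ (with $I\in\omega$, $Y'\in\calY$ by coresolving), and reads off \emph{both} hard inclusions from that one diagram: if $A\in{^\ddagger}(\calY/\omega)$ the factorization of $A\to Y'$ through $\omega$ lifts to $I$ and splits $X\to A$ by the pullback property, so $A\in\calX$; if $A\in(\calX/\omega)^\ddagger$ the factorization of $X\to A$ through $\omega$ lifts to $I$ and splits $A\to Y'$ by the pushout property, so $A\in\calY$. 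Resolving then follows in three lines from the characterization of $\calX$ as ${^\ddagger}(\calY/\omega)$. You instead make resolving the linchpin: you prove it directly and self-containedly via the preenvelope-of-$E$ pushout (taking care to show $W\in\omega$ by playing the given $X_2$ against the coresolving hypothesis), and then run two parallel, dual arguments for the reverse inclusions — the $\calX$ direction feeding on your freshly proved resolving, the $\calY$ direction on coresolving. Both routes are sound and use only $\ext^1$. The paper's unified diagram is more economical (one construction serves two conclusions, and resolving comes for free); your version is somewhat longer but makes very explicit where each closure hypothesis enters, and shows that resolving can be established without first passing through the stable-orthogonality description. A small presentational point: in your hard $\calX$-direction argument, the parenthetical ``(hence $\ext^1(B,Y'')=0$)'' tacitly uses $\ext^1(W,Y'')=0$ (from $W\in\omega\subseteq\calX$) together with the long exact sequence; it would be worth stating this explicitly.
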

\begin{proof}
Given $Y\in\calY$, in a sequence $0\to Y\p\to X\to Y\to 0$ with $Y\p\in\calY$ and $X\in\calX$ we have
$X\in\calX\cap\calY=\omega$ since $\calY$ is extension-closed. As $X\to Y$ is an $\calX$-approximation, it follows that
any map $X\p\to Y$ for some other $X\p\in\calX$ factors through $\omega$, hence vanishes in $\scA/\omega$. 

Next, let $A\in\scA$ and pick exact sequences $0\to Y\to X\to A\to 0$ and $0\to X\to I\to X\p\to 0$ with
$X,X\p\in\calX$, $I\in\omega$ and $Y\in\calY$. Taking pushout yields a commutative diagram with exact rows and columns,
and a bicartesian upper right square:  
\begin{equation*}\begin{tikzpicture}[description/.style={fill=white,inner sep=2pt}]
    \matrix (m) [matrix of math nodes, row sep=1.5em,
                 column sep=1.5em, text height=1.5ex, text depth=0.25ex,
                 inner sep=0pt, nodes={inner xsep=0.3333em, inner ysep=0.3333em}]
    {
      && 0 & 0 \\
       0 & Y & X & A & 0 \\
       0 & Y & I & Y\p & 0 \\
       && X\p & X\p \\
       && 0 & 0 \\
    };
    \draw[->] (m-1-3) -- (m-2-3);
    \draw[->] (m-1-4) -- (m-2-4);
    \draw[->] (m-2-1) -- (m-2-2);
    \draw[->] (m-2-2) -- (m-2-3);
    \draw[->>] (m-2-3) -- (m-2-4);
    \draw[->] (m-2-4) -- (m-2-5);
    \draw[->] (m-3-1) -- (m-3-2);
    \draw[->] (m-3-2) -- (m-3-3);
    \draw[->] (m-3-3) -- (m-3-4);
    \draw[->] (m-3-4) -- (m-3-5);
    \draw[double,double distance=0.5mm] (m-2-2) -- (m-3-2);
    \draw[double,double distance=0.5mm] (m-4-3) -- (m-4-4);
    \draw[>->] (m-2-3) -- (m-3-3);
    \draw[->] (m-2-4) -- (m-3-4);
    \draw[->] (m-3-3) -- (m-4-3);
    \draw[->] (m-4-3) -- (m-5-3);
    \draw[->] (m-3-4) -- (m-4-4);
    \draw[->] (m-4-4) -- (m-5-4);
\end{tikzpicture}\end{equation*}
Moreover, since $\calY$ is closed under taking cokernels of monomorphisms by assumption, we also have
$Y\p\in\calY$. Now, in case $A\in {^{\ddagger}}(\calY/\omega)$ the map $A\to Y\p$ factors through an object in $\omega$, 
hence through $I\to Y\p$ as $Y = \ker(I\to Y\p)\in\calY\subset\omega^{\perp}$. Since the upper right square is
cartesian, any such factorization $A\to I$ gives rise to a splitting of $X\to A$, and hence
$A\in\calX$. Similarly, if $A\in(\calX/\omega)^{\ddagger}$, the map $X\to A$ factors through an object in $\omega$, hence
through $X\to I$, and since the upper right square is cocartesian, such a factorization yields a splitting of $A\to Y$,
so $A\in\calY$. 

For the last part, suppose $0\to Z\to X\to X\p\to 0$ is an exact sequence with $X,X\p\in\calX$. We want to show that
$Z\in\calX$, and by the above it is sufficient to show that any morphism $f: Z\to Y$ factors through $\omega$. But $f$
extends to a morphism $g: X\to Y$ (since $X\p\in\calX)$ which then factors through $\omega$ (since $X\in\calX$). 
\end{proof}
\begin{cor}\label{cor_coresres}
A complete cotorsion pair is coresolving if and only if it is resolving. In particular, any injective/projective
abelian model structure is hereditary.
\end{cor}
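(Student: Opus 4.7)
The plan is to derive the first statement from Proposition \ref{prop_cotorsiontorsion} via duality, and then obtain the second statement by using the 2-out-of-3 property built into abelian model structures.

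For the first assertion, Proposition \ref{prop_cotorsiontorsion} already supplies the implication coresolving $\Rightarrow$ resolving. For the reverse direction, I would pass to the opposite category. The identity $\ext^1_{\scA^{op}}(A,B) = \ext^1_\scA(B,A)$ turns a cotorsion pair $(\calX,\calY)$ in $\scA$ into a cotorsion pair $(\calY,\calX)$ in $\scA^{op}$, and the two completeness conditions (enough projectives and enough injectives for the pair) swap roles under $\scA\leftrightarrow\scA^{op}$, so completeness is preserved. Similarly, $(\calX,\calY)$ being resolving in $\scA$ (that is, $\calX$ closed under kernels of epimorphisms) is precisely $(\calY,\calX)$ being coresolving in $\scA^{op}$ (that is, $\calX$ closed under cokernels of monomorphisms there). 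So if $(\calX,\calY)$ is complete and resolving in $\scA$, Proposition \ref{prop_cotorsiontorsion} applied to $(\calY,\calX)$ in $\scA^{op}$ yields that this pair is resolving in $\scA^{op}$; unwinding the opposites, this says that $\calY$ is closed under cokernels of monomorphisms in $\scA$, i.e.\ $(\calX,\calY)$ is coresolving in $\scA$.

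For the second assertion, let $\calM = (\scA, \calW, \calF)$ be an injective abelian model structure. By Corollary \ref{cor_injprojmodel} the associated cotorsion pairs are $(\scA, \inj(\scA))$ and $(\calW,\calF)$; both are complete, and in particular $\scA$ has enough injectives. The pair $(\scA, \inj(\scA))$ is trivially resolving, hence hereditary by the first part of the corollary. For $(\calW,\calF)$, thickness of $\calW$ (part of the characterization in Theorem \ref{thm_hovey}) directly gives resolvingness: in an exact sequence $0\to W_1\to W_2\to W_3\to 0$ with $W_2,W_3\in\calW$, the 2-out-of-3 property forces $W_1\in\calW$. The first part of the corollary then upgrades this to coresolvingness, so $(\calW,\calF)$ is hereditary as well. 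The projective case is formally dual.

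I do not anticipate a genuine obstacle here; the main point is simply to track how the completeness conditions and the resolving/coresolving dichotomy transform under $\scA\leftrightarrow\scA^{op}$, and then to observe that the 2-out-of-3 axiom for $\calW$ gives one half of the hereditary condition for free, the other half being supplied by the duality result already proved.
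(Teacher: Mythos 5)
Your proof is correct and follows essentially the same route as the paper: the first assertion comes from Proposition \ref{prop_cotorsiontorsion} together with its dual (which you make explicit via the passage to $\scA^{op}$), and the second comes from observing that the $2$-out-of-$3$ property of $\calW$ makes $(\calW,\calF)$ resolving, at which point the first part upgrades it to hereditary. The paper's own proof is terser but identical in substance.
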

\begin{proof}
The first statement follows from Proposition \ref{prop_cotorsiontorsion} combined with its dual. For the second, note
that if $(\scA,\calW,\calF)$ is an injective abelian model structure, then $(\calW,\calF)$ is a resolving cotorsion pair
(since $\calW$ satisfies the $2$-out-of-$3$ property), hence hereditary by the first part. The projective case is similar.
\end{proof}

We now describe the homotopy category of an abelian model category.

\begin{prop}\label{prop_descriptionhomotopycategory}
Let $\scA$ be a bicomplete abelian category and $\calM = (\calC,\calW,\calF)$ be an abelian model structure on
$\scA$. Then the composition $\calC\cap\calF\hookrightarrow\scA\to\Ho(\calM)$ induces an equivalence
of categories $\calC\cap\calF/\omega\cong\Ho(\calM)$, where $\omega=\calC\cap\calW\cap\calF$. 
\end{prop}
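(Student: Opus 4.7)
The plan is to combine the standard fact that $\Ho(\calM)(X,Y)\cong\Hom_\scA(X,Y)/\!\sim$ (homotopy) for bifibrant $X,Y$ in any model category, together with an abelian identification of the homotopy relation with the congruence ``$f-g$ factors through $\omega$.''

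For essential surjectivity, given $X\in\scA$ I would apply axiom (4) to factor $0\to X$ as $0\rightarrowtail QX\twoheadrightarrow X$, and then axiom (5) to factor $QX\to 0$ as $QX\rightarrowtail RQX\twoheadrightarrow 0$, so that $RQX\in\calF$. Because $\calC$ is the cotorsion class of the complete cotorsion pair $(\calC,\calW\cap\calF)$, it is closed under extensions; hence from $QX\in\calC$ and the short exact sequence $0\to QX\to RQX\to W\to 0$ (with $W\in\calC\cap\calW$ the cokernel of the trivial cofibration) I get $RQX\in\calC\cap\calF$. The zig-zag $X\leftarrow QX\rightarrow RQX$ consists of weak equivalences, so $X\cong RQX$ in $\Ho(\calM)$. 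Well-definedness of the induced functor $\calC\cap\calF/\omega\to\Ho(\calM)$ is immediate: any $I\in\omega$ lies in $\calC\cap\calW$, so $0\to I$ is a trivial cofibration and $I\cong 0$ in $\Ho(\calM)$, killing any morphism that factors through $I$.

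The main obstacle is full faithfulness, which reduces to identifying the homotopy relation on $\Hom_\scA(X,Y)$ for bifibrant $X,Y$ with the congruence ``$f-g$ factors through $\omega$.'' One direction is formal: if $f-g$ factors through $\omega$, then $f=g$ in $\Ho(\calM)$, and the standard result on bifibrant objects forces $f\sim g$. For the other direction I would construct an explicit cylinder object on $X\in\calC$ by factoring the fold $\nabla:X\oplus X\to X$ as a cofibration $X\oplus X\rightarrowtail CX$ followed by a trivial fibration $CX\twoheadrightarrow X$. The kernel piece gives a short exact sequence $0\to K\to CX\to X\to 0$ with $K\in\calW\cap\calF$, and the key observation is that $\ext^1_\scA(X,K)=0$ by the cotorsion pair $(\calC,\calW\cap\calF)$; thus the sequence splits and $K$ is a direct summand of $CX\in\calC$. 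Since cotorsion classes are closed under retracts, $K\in\calC$, and so $K\in\omega$. Given a left homotopy $H:CX\to Y$ between $f$ and $g$, we have $f-g=H\circ(i_0-i_1)$, and $i_0-i_1:X\to CX$ factors through $K$ because its composite with $CX\twoheadrightarrow X$ is $\id_X-\id_X=0$. Therefore $f-g$ factors through $K\in\omega$, as required.

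Combining essential surjectivity with this identification yields the desired equivalence $\calC\cap\calF/\omega\cong\Ho(\calM)$. The only non-formal input is the abelian analysis of the cylinder, whose decisive feature is the $\ext^1$-vanishing supplied by the cotorsion pair, which forces the kernel of $CX\twoheadrightarrow X$ to lie in $\omega$ and thus makes ``factor through $\omega$'' the right abelian replacement for the homotopy relation.
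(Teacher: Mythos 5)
Your proof is correct, and the plan is sound, but you take the route dual to the paper's. The paper constructs an explicit \emph{path} object on a fibrant $Y$: using completeness of $(\calC\cap\calW,\calF)$ it picks a short exact sequence $0\to\Omega Y\to I\to Y\to 0$ with $I\in\calC\cap\calW$ and $\Omega Y\in\calF$, observes $I\in\omega$ since $\calF$ is extension-closed, and defines $PY$ as the pullback of $Y\oplus Y\xrightarrow{(1,-1)}Y\leftarrow I$. The pullback structure translates right homotopy of $f,g$ directly into ``$f-g$ factors through $I\to Y$,'' and the fact that $I\to Y$ is an $\omega$-precover (because $\Omega Y\in\calF\subset\omega^{\perp}$) upgrades this to ``factors through \emph{some} object of $\omega$'' in both directions at once. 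You instead build a \emph{cylinder} on a cofibrant $X$, use the cotorsion pair $(\calC,\calW\cap\calF)$ to force $\ext^1_\scA(X,K)=0$ and hence $K\in\omega$, and read the congruence off $i_0-i_1$; this is a valid, genuinely dual argument and your analysis of $K$ is exactly right (closure of $\calC$ under extensions gives $CX\in\calC$, then $K$ is a retract of $CX$, hence in $\omega$). Your handling of essential surjectivity and well-definedness is also fine and matches the standard appeal to fibrant/cofibrant replacement.

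One small caveat: the direction you call ``formal'' --- ``$f-g$ factors through $\omega$ $\Rightarrow$ $f\sim g$'' --- is not quite free. You pass through $f=g$ in $\Ho(\calM)$ and then invoke the bijection $\calM(X,Y)/{\sim}\cong\Ho(\calM)(X,Y)$, but to conclude $f-g=0$ in $\Ho(\calM)$ from the factorization you need the localization functor (equivalently, the homotopy relation) to be \emph{additive} --- that is, $\sim$ must be a congruence for addition so that $\calM(X,Y)/{\sim}$ is an abelian group and the bijection is an isomorphism of groups. This is true (it follows, e.g., from the fact that a sum of left homotopies along a common cylinder is again a left homotopy, together with compatibility of $\sim$ with composition and $\id_I\sim 0$ for $I\in\omega$), but it should be stated rather than dismissed. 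The paper's path-object version sidesteps this entirely: the pullback description plus the $\omega$-precover argument give both implications without any appeal to additivity, which is one practical advantage of that choice.
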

\begin{proof}
This is known -- see for example \cite[Proposition 4.3,4.7]{Gillespie_ModelExact} or \cite[Theorem
VIII.4.2]{BeligiannisReiten} -- but for completeness we give a proof here. For a  
general model category $\calM$ and objects $X,Y$, the set $\calM(X,Y)$ admits two natural relations $\sim_{l/r}$ of left
and right homotopy, defined via cylinder and path objects, respectively. If $X$ is cofibrant and $Y$ is fibrant, these
two relations coincide and are equivalence relations, and $\calM(X,Y)\to\Ho(\calM)(X,Y)$ induces a bijection
$\calM(X,Y)/_{\sim}\cong\Ho(\calM)(X,Y)$. In particular, there is a fully-faithful functor
$\calM\lcf/_{\sim}\to\Ho(\calM)$, where $\calM\lcf$ is the class of simultaneously cofibrant and fibrant objects of
$\calM$, and by the existence of fibrant and cofibrant resolutions this is even an equivalence of categories. See
\cite[Theorem 1.2.10]{Hovey_ModelCategories} for details.

To prove the claim, it is therefore sufficient to show that for $X\in\calC$ and $Y\in\calF$, two morphisms $f,g: X\to Y$
are right homotopic in the above sense if and only if $f-g$ factors through $\calC\cap\calW\cap\calF$. For this, we
construct a path object $PY$ for $Y$ as follows: first choose a short exact sequence $0\to\Omega Y\to I\to Y\to 0$ with
$I\in\calC\cap\calW$ and $\Omega Y\in\calF$. Such a sequence exists by the completeness of the cotorsion pair
$(\calC\cap\calW,\calF)$. Since $\calF$ is closed under extensions, we even have
$I\in\calC\cap\calW\cap\calF=\omega$. Taking the pullback of $Y\oplus Y\xrightarrow{(1,-1)}Y\leftarrow
I$, we get the following commutative diagram with exact rows and columns:

\begin{align}\tag{$\ast$}\label{diag:pathobject}\begin{tikzpicture}[baseline,description/.style={fill=white,inner sep=2pt}]
    \matrix (m) [matrix of math nodes, row sep=1.5em,
                 column sep=2.25em, text height=1.5ex, text depth=0.25ex,
                 inner sep=0pt, nodes={inner xsep=0.3333em, inner ysep=0.3333em}]
    {
      \pgfmatrixnextcell\pgfmatrixnextcell 0 \pgfmatrixnextcell 0 \\
       0 \pgfmatrixnextcell Y \pgfmatrixnextcell Y\oplus Y \pgfmatrixnextcell Y \pgfmatrixnextcell 0 \\
       0 \pgfmatrixnextcell Y \pgfmatrixnextcell PY \pgfmatrixnextcell I \pgfmatrixnextcell 0 \\
       \pgfmatrixnextcell\pgfmatrixnextcell \Omega Y \pgfmatrixnextcell \Omega Y \\
       \pgfmatrixnextcell\pgfmatrixnextcell 0 \pgfmatrixnextcell 0 \\
    };
    \draw[->] (m-2-3) -- (m-1-3);
    \draw[->] (m-2-4) -- (m-1-4);
    \draw[->] (m-2-2) -- node[above,scale=0.75]{$\Delta$} (m-2-3);
    \draw[->] (m-2-3) -- node[above,scale=0.75]{$(1\ -1)$} (m-2-4);
    \draw[->] (m-2-1) -- (m-2-2);
    \draw[->] (m-2-4) -- (m-2-5);
    \draw[->] (m-3-1) -- (m-3-2);
    \draw[->] (m-3-2) -- (m-3-3);
    \draw[->] (m-3-3) -- (m-3-4);
    \draw[->] (m-3-4) -- (m-3-5);
    \draw[double,double distance=0.5mm] (m-2-2) -- (m-3-2);
    \draw[double,double distance=0.5mm] (m-4-3) -- (m-4-4);
    \draw[->] (m-3-3) -- (m-2-3);
    \draw[->] (m-3-4) -- (m-2-4);
    \draw[->] (m-4-3) -- (m-3-3);
    \draw[->] (m-5-3) -- (m-4-3);
    \draw[->] (m-4-4) -- (m-3-4);
    \draw[->] (m-5-4) -- (m-4-4);
\end{tikzpicture}\end{align}
The morphism $PY\to Y\oplus Y$ is a fibration because its kernel $\Omega Y$ lies in $\calF$, and $Y\to PY$
is a trivial cofibration because its cokernel $I$ belongs to $\omega\subset\calC\cap\calW$. In other words, the
factorization $Y\to PY\to Y\oplus Y$ of $\Delta: Y\to Y\oplus Y$ is a path object for $Y$ and can be used to compute the
right homotopy relation. By definition of the pullback, the morphism $(f,g)^t: X\to Y\oplus Y$ factors through $PY\to Y$
if and only if $f-g: X\to Y$ factors through $I\to Y$. Finally, since $I\to Y$ is a $\omega$-cover for $Y$ (its kernel
$\Omega Y$ is in $\calF=(\calC\cap\calW)^{\perp}\subset\omega^{\perp}$), this is in turn equivalent to $f-g: X\to Y$
factoring through \textit{some} object in $\omega$.
\end{proof}

The homotopy category of a model category $(\scA,\calM)$ whose underlying category $\scA$ is abelian carries a natural
pretriangulated structure in the sense of \cite[Definition II.1.1]{BeligiannisReiten}. This follows from \cite[Section 6.5]{Hovey_ModelCategories} together with the fact that any cogroup object in an additive category is isomorphic to one
of the form $(X,\Delta: X\to X\oplus X, 0: X\to 0)$ and that giving some object $Y$ a comodule structure over such a
cogroup is equivalent to giving a morphism $Y\to X$. See also \cite[Remark 7.1.3, Theorem
7.1.6]{Hovey_ModelCategories}. Concretely \cite[Paragraph following Definition 6.1.1]{Hovey_ModelCategories}, the
suspension functor $\Sigma:\Ho(\calM)\to\Ho(\calM)$ takes a cofibrant object $X$ to the cokernel of the inclusion
$X\oplus X\to\text{Cyl}(X)$, where $X\oplus X\to \text{Cyl}(X)\to X$ is a cylinder object for $X$, and the loop functor
$\Omega:\Ho(\calM)\to\Ho(\calM)$ takes a fibrant object $Y$ to the kernel of the projection $PY\to Y\oplus Y$, where
$Y\to PY\to Y\oplus Y$ is a path object for $Y$. If $\calM=(\calC,\calW,\calF)$ is an abelian model structure, in view of the
explicit construction \eqref{diag:pathobject} of path objects in Proposition \ref{prop_descriptionhomotopycategory} and the corresponding dual
construction of cylinder objects, we conclude that given objects $X\in\calC$ and $Y\in\calF$ their suspension and loop
objects $\Sigma X\in\calC$, $\Omega Y\in\calF$ can be defined by the property that they belong to exact sequences $0\to X\to I\to
\Sigma X\to 0$ and $0\to \Omega Y\to P\to Y\to 0$ with $I\in\calW\cap\calF$ and $P\in\calC\cap\calW$. However, for
$X,Y\in\calC\cap\calF$ it is not clear in this situation that $\Sigma X$ and $\Sigma Y$ again belong to
$\calC\cap\calW$, at least if $\calM$ is not assumed to be hereditary. Hence, in this case
we don't know how the pretriangulated structure on $\calC\cap\calF/\omega$ obtained by pulling back the pretriangulated
structure on $\Ho(\calM)$ along the equivalence $\calC\cap\calF/\omega\to\Ho(\calM)$ of Proposition
\ref{prop_descriptionhomotopycategory} can be described explicitly. Assuming that $\calM$ is hereditary, however, we
have the following \cite[Proposition 5.2]{Gillespie_ModelExact}: 

\begin{prop}\label{prop_hereditary}
Let $\calM=(\calC,\calW,\calF)$ be a hereditary abelian model structure on an abelian category $\scA$
Then $\calC\cap\calF$, endowed with the exact structure inherited from $\scA$, is Frobenius. Its class of
projective-injective objects equals $\omega := \calC\cap\calW\cap\calF$, and 
$\calC\cap\calF/\omega\to\Ho(\calM)$ is an equivalence of pretriangulated categories.
\end{prop}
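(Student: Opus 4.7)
The plan proceeds in three stages: first establish that $\calC\cap\calF$ carries an inherited exact structure, then check the Frobenius property with projective--injective objects exactly $\omega$, and finally upgrade the equivalence of Proposition \ref{prop_descriptionhomotopycategory} to a pretriangulated equivalence. Since $\calC$ and $\calF$ are each closed under extensions (being sides of cotorsion pairs), so is $\calC\cap\calF$, yielding a well-defined fully exact subcategory of $\scA$. For the projective--injective objects: any $P\in\omega$ is injective in $\calC\cap\calF$ because $\ext^1_{\scA}(\calC,\calW\cap\calF)=0$ forces $\Hom(-,P)$ to take admissible conflations in $\calC\cap\calF$ to exact sequences; dually $P$ is projective because $\ext^1_{\scA}(\calC\cap\calW,\calF)=0$.

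Next I need enough projectives and enough injectives sitting inside $\omega$. For enough injectives: given $X\in\calC\cap\calF$, completeness of the cotorsion pair $(\calC,\calW\cap\calF)$ provides an exact sequence $0\to X\to Y\to X\p\to 0$ with $Y\in\calW\cap\calF$ and $X\p\in\calC$; hereditarity ensures $\calF$ is closed under cokernels of monomorphisms, so $X\p\in\calF$, and extension-closure of $\calC$ forces $Y\in\calC$, whence $Y\in\omega$. The dual argument, using the resolving half of hereditarity together with the cotorsion pair $(\calC\cap\calW,\calF)$, gives enough projectives landing in $\omega$. Finally, if $P\in\calC\cap\calF$ is projective, then the admissible epimorphism $Y\twoheadrightarrow P$ with $Y\in\omega$ just constructed splits, so $P$ is a summand of $Y\in\omega$; since each of $\calC$, $\calW$, $\calF$ is closed under summands (cotorsion and cotorsion-free classes are, and $\calW$ is thick), so is $\omega$, hence $P\in\omega$. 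The injective case is dual, completing the proof that $\calC\cap\calF$ is Frobenius with projective--injectives exactly $\omega$.

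For the pretriangulated statement, I combine with Proposition \ref{prop_descriptionhomotopycategory}. The discussion preceding the proposition identifies $\Sigma X$ for $X\in\calC$ via a short exact sequence $0\to X\to I\to \Sigma X\to 0$ with $I\in\calW\cap\calF$, and by choosing this sequence via completeness of $(\calC,\calW\cap\calF)$ we may assume $\Sigma X\in\calC$; when $X\in\calC\cap\calF$, hereditarity forces $\Sigma X\in\calF$ and extension-closure of $\calC$ promotes $I$ into $\omega$. This is precisely the Frobenius suspension in $\calC\cap\calF/\omega$, and the dual argument with path objects handles loop objects. Compatibility of distinguished triangles then follows from the standard mapping-cone description on both sides, since each triangle is represented by an admissible short exact sequence in $\calC\cap\calF$ with middle term in $\omega$.

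The main obstacle I expect is the bookkeeping in the middle paragraph: completeness of the cotorsion pairs takes place in the ambient category $\scA$, but the resolutions must stay inside $\calC\cap\calF$ and, more delicately, have one term in $\omega$. Hereditarity is exactly what supplies the missing closure properties, and it has to be invoked at every sub-step--once to bring the cokernel term into $\calF$, once to bring it back into $\calC$, and then to promote the middle term into the triple intersection $\omega$.
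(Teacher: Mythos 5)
Your argument is correct and follows essentially the same route as the paper: the Frobenius structure comes from the two complete hereditary cotorsion pairs, which supply enough projectives and injectives inside $\calC\cap\calF$ with one term in $\omega$, and summand-closure of $\omega$ pins down the projective-injectives exactly. You additionally sketch the comparison of triangulated structures, which the paper explicitly omits, and that sketch is sound.
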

\begin{cor}\label{cor_hereditarystable}
A hereditary abelian model category is stable.
\end{cor}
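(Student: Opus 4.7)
The plan is to deduce the corollary directly from Proposition \ref{prop_hereditary} together with Happel's classical fact that the stable category of a Frobenius exact category is triangulated. Recall that a pointed model category is called \emph{stable} precisely when the suspension and loop functors on its homotopy category are inverse equivalences; equivalently, when the natural pretriangulated structure on $\Ho(\calM)$ is actually triangulated.

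First I would invoke Proposition \ref{prop_hereditary}: for a hereditary abelian model structure $\calM=(\calC,\calW,\calF)$ on $\scA$, the full subcategory $\calC\cap\calF$, equipped with the exact structure inherited from $\scA$, is Frobenius with class of projective-injective objects $\omega=\calC\cap\calW\cap\calF$, and the canonical functor $\calC\cap\calF/\omega\to\Ho(\calM)$ is an equivalence of pretriangulated categories.

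Next I would use the fact that in any Frobenius exact category $\scE$ with class of projective-injectives $\omega$, the suspension functor $\Sigma\colon\scE/\omega\to\scE/\omega$ is an equivalence, with quasi-inverse the loop functor $\Omega$. Concretely, for any $X\in\scE$ one can choose short exact sequences $0\to X\to P\to\Sigma X\to 0$ and $0\to\Omega X\to P'\to X\to 0$ with $P,P'\in\omega$, and the resulting triangulation makes $\scE/\omega$ into a triangulated category (Happel). Under the equivalence of Proposition \ref{prop_hereditary} these suspension and loop functors correspond to the model-theoretic suspension and loop on $\Ho(\calM)$, since both are built from the same short exact sequences of the form $0\to X\to I\to \Sigma X\to 0$ with $I\in\calW\cap\calF$ and $0\to\Omega Y\to P\to Y\to 0$ with $P\in\calC\cap\calW$ (as noted in the paragraph preceding Proposition \ref{prop_hereditary}); hereditariness guarantees that starting from $X,Y\in\calC\cap\calF$ these sequences stay in $\calC\cap\calF$, so the identification of structures is unambiguous.

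There is no real obstacle here beyond this identification: the one point worth checking carefully is that the abstractly defined model-categorical $\Sigma$ and $\Omega$ on $\Ho(\calM)$ transport to the exact-sequence definitions of $\Sigma$ and $\Omega$ on the Frobenius stable category $\calC\cap\calF/\omega$. Once that is in hand, the equivalence $\Sigma\dashv\Omega$ of endofunctors on $\calC\cap\calF/\omega$ yields the equivalence $\Sigma\dashv\Omega$ on $\Ho(\calM)$, which is the definition of stability.
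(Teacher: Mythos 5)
Your proof is correct and follows essentially the same route the paper intends: deduce stability by identifying $\Ho(\calM)$ with the stable category of the Frobenius category $\calC\cap\calF$ via Proposition \ref{prop_hereditary}, then apply Happel's theorem that such stable categories are triangulated (so $\Sigma$ and $\Omega$ are inverse equivalences). The paper leaves the corollary as an immediate consequence of Proposition \ref{prop_hereditary} without comment; you have merely filled in the comparison of model-categorical and Frobenius suspension/loop functors, which the paper also acknowledges (and omits) in its proof of the proposition.
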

\begin{proof}[Proof of Proposition \ref{prop_hereditary}]
Denote $\scE$ the class of short exact sequences in $\scA$ with entries in $\calC\cap\calF$. We only check that
 $(\calC\cap\calF,\scE)$ is a Frobenius category; the remaining part involves comparing the definition of
 distinguished triangles in stable categories of Frobenius categories to the definition of fiber and cofiber sequences
 in the homotopy category of a pointed model category \cite[Definition 6.2.6]{Hovey_ModelCategories}, but we omit it. 

First, we have $\calC\cap\calF\subset\calC = {^{\perp}}(\calW\cap\calF)\subset {^{\perp}}\omega$ and similarly 
$\calC\cap\calF\subset\omega^\perp$, showing that any object in $\omega$ is projective-injective in
$(\calC\cap\calF,\scE)$. Next, given $X\in\calC\cap\calF$, the completeness of $(\calC\cap\calW,\calF)$ provides a short exact
sequence $0\to X\p\to I\to X\to 0$ in $\scA$ with $X\p\in\calF$ and $I\in\calC\cap\calW$. As $\calC$ is closed under taking kernels of
epimorphisms by assumption and $\calF$ is closed under taking extensions, we infer that $X\p\in\calC\cap\calF$ and
$I\in\omega$, proving that  $(\calC\cap\calF,\scE)$ has enough projectives, and that $\proj(\calC\cap\calF,\scE) =
\omega$. Similarly, using that $\calF$ is closed under taking cokernels of monomorphisms we get that $(\calC\cap\calF,\scE)$
has enough injectives and $\inj(\calC\cap\calF,\scE)=\omega$, finishing the proof. 
\end{proof}

\label{subsection_basicdefs}
\subsection{Small cotorsion pairs}\label{subsection_cotorsion}

In the previous section we recalled the definition and properties of abelian model structures, and in particular we
discussed Hovey's one-to-one correspondence between abelian model structures and pairs of compatible complete
cotorsion pairs. However, we did not explain so far how one can actually construct such complete cotorsion pairs,
and this is the topic of the present section. We describe how each set $\calS$ of objects in an abelian category $\scA$
yields a cotorsion pair in $\scA$, called the cotorsion pair cogenerated by $\calS$, and discuss when such cotorsion
pairs are complete, our main source being \cite{SaorinStovicek}. We then use these results to give a handy
description of classes occurring as cotorsion classes in complete cotorsion pairs cogenerated by sets in terms of
generators and deconstructibility. This prepares the ground for the construction of the projective, injective, coderived and
contraderived abelian model structures for modules over (curved) differential graded rings in the next section. We
end with a theorem of Hovey connecting complete cotorsion pairs cogenerated by sets to cofibrantly generated abelian
model categories. 

Let $\scA$ be an abelian category with small coproducts. We say that a class of objects $\calG\subseteq\scA$ is
\textit{generating} or that it \textit{generates} $\scA$ if any object in $\scA$ is the quotient of a set-indexed
coproduct of objects in $\calG$. An object $G\in\scA$ is called a \textit{generator} if $\{G\}$ is generating, i.e. if
any object in $\scA$ is a quotient of $G^{\coprod I}$ for some large enough set $I$ (for a comparison to other
definitions of generators and generating sets, see \cite[Proposition 5.2.4]{KashiwaraShapira}). We call $\scA$ an
\textit{(AB5)-category} if small colimits exist in $\scA$ and if filtered colimits are exact, and we say 
that $\scA$ is a \textit{Grothendieck category} if, in addition to being (AB5), it admits a generating set of objects
(or equivalently, a generator). Note that in a Grothendieck category a class of objects is generating if and only if it
contains a generating set. We refer to  \cite{KashiwaraShapira} for generalities on Grothendieck 
categories. For example, any Grothendieck category possesses arbitrary small limits \cite[Proposition
8.3.27(i)]{KashiwaraShapira} and has enough injectives \cite[Theorem 9.6.2]{KashiwaraShapira}. 

From now on let $\scA$ be a Grothendieck category. A cotorsion pair $(\calD,\calE)$ in $\scA$ is said to be
\textit{cogenerated by a set} if there exists a set $\calS\subset\calD$ such that $\calE = \calS^{\perp}$. Any set of
objects $\calS$ serves as the cogenerating set for a unique cotorsion pair, namely
$({^{\perp}}(\calS^{\perp}),\calS^{\perp})$. Although trivial, this is a useful method for constructing cotorsion
pairs. In order to get abelian model structures, however, a criterion is needed to check when cotorsion pairs
cogenerated by certain sets of objects are complete, which is provided by the following proposition:

\begin{prop}[\protect{\cite{SaorinStovicek}}]\label{prop_autocomplete}
Let $\scA$ be a Grothendieck category and $(\calD,\calE)$ be a cotorsion pair cogenerated by a set. Then the following hold:
\begin{enumerate}
\item $(\calD,\calE)$ has enough injectives.
\item $(\calD,\calE)$ has enough projectives if and only if $\calD$ is generating. 
\end{enumerate}
\end{prop}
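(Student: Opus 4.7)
The plan is to use Eklof and Trlifaj's transfinite pushout construction (the ``small object argument'' for cotorsion pairs), which Saor\'in and \v{S}\v{t}ov\'i\v{c}ek generalised from modules to Grothendieck categories. The key technical input I would rely on is \textit{Eklof's lemma}: in a Grothendieck category the class $^{\perp}\calE$ is closed under continuous transfinite extensions, i.e. under smooth unions of ascending chains $X_0\hookrightarrow X_1\hookrightarrow\cdots$ whose successive quotients $X_{\alpha+1}/X_\alpha$ lie in $^{\perp}\calE$. This is proved by induction on the length of the chain using the exactness of filtered colimits (AB5) together with the long exact $\ext$-sequence, noting that $\ext^1(-,E)$ sends smooth unions to inverse limits that vanish by the inductive hypothesis.

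For (1), given $A\in\scA$ I would build a continuous ascending chain $A=X_0\hookrightarrow X_1\hookrightarrow\cdots$ by iteratively ``killing'' Ext-classes of objects of $\calS$: at each successor stage let $X_{\alpha+1}$ be the pushout along $X_\alpha$ of a coproduct of representatives of every class in $\ext^1(S,X_\alpha)$, for every $S\in\calS$; at limit stages take the filtered colimit, which remains monomorphic by AB5. Each successor step then fits into an extension $0\to X_\alpha\to X_{\alpha+1}\to Q_\alpha\to 0$ with $Q_\alpha$ a coproduct of objects of $\calS$. Choosing $\lambda$ strictly larger than the presentability ranks of all $S\in\calS$ - possible because $\scA$ is locally presentable and $\calS$ is a set - one shows that every class in $\ext^1(S,X_\lambda)$ is already represented over some $X_\alpha$ with $\alpha<\lambda$, and hence was trivialised at stage $\alpha+1$. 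Thus $X_\lambda\in\calS^{\perp}=\calE$, while $X_\lambda/A$ carries a continuous filtration with successive quotients $Q_\alpha\in\calD$, so $X_\lambda/A\in\calD$ by Eklof's lemma, yielding the desired enough-injectives sequence $0\to A\to X_\lambda\to X_\lambda/A\to 0$.

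For (2), the ``only if'' direction is immediate since enough projectives implies every object of $\scA$ is a quotient of an object of $\calD$. For ``if'', assume $\calD$ generates, let $A\in\scA$, and pick an epimorphism $\pi:P\twoheadrightarrow A$ with $P$ a coproduct of generators lying in $\calD$; then $P\in\calD$ because $^{\perp}\calE$ is closed under coproducts (as $\ext^1(-,E)$ converts coproducts to products). Applying (1) to $K=\ker\pi$ yields $0\to K\to E\to D_0\to 0$ with $E\in\calE$ and $D_0\in\calD$; pushing this sequence out along $K\hookrightarrow P$ produces a short exact sequence $0\to E\to Q\to A\to 0$ whose middle term fits into $0\to P\to Q\to D_0\to 0$, so $Q\in\calD$ by extension closure, which is the required projective approximation. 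The main obstacle is the cardinality step in (1): selecting $\lambda$ large enough that every Ext-class from a member of $\calS$ into the $\lambda$-th colimit factors through a proper initial segment of the chain. This is exactly where the hypothesis that $(\calD,\calE)$ is cogenerated by a \emph{set} (rather than a proper class) combines with local presentability of $\scA$ to make the construction terminate.
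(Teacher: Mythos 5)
Your proof is correct and takes essentially the same approach as the cited source. The paper itself only sketches this proposition: for part (1) and the ``if'' direction of (2) it simply cites \cite[Theorem 2.13, Proposition 2.7]{SaorinStovicek}, which is precisely the transfinite Eklof--Trlifaj pushout construction you sketch, together with the Salce pushout trick you use for (2). The only argument the paper spells out itself is the ``only if'' direction of (2): it picks a generator $G$, uses completeness to find $0\to E\to D\to G\to 0$ with $D\in\calD$, and concludes $D$ is a generator. You phrase this differently --- enough projectives implies every object is a quotient of an object of $\calD$, which is $\calD$ being generating --- but this is the same observation stated at a slightly higher level of generality; both are correct given the paper's definition of a generating class (which only asks that each object be a quotient of a set-indexed coproduct from the class, and $\calD={}^{\perp}\calE$ is closed under coproducts).

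One remark on the step you yourself flag as the main obstacle: the claim that every class in $\ext^1_{\scA}(S,X_\lambda)$ is ``represented over some $X_\alpha$'' is indeed the technical crux and is genuinely harder in a general Grothendieck category than in module categories, because $S$ need not have a small projective presentation, so the standard factoring argument for $\ext^1(S,-)$ along $\lambda$-filtered colimits of monomorphisms does not carry over verbatim. This is exactly the gap that Saor\'in--\v{S}\v{t}ov\'i\v{c}ek close in their framework of efficient exact categories, and it is reasonable for you to defer to that technology here rather than reprove it; but be aware that ``$\ext^1(S,-)$ commutes with $\lambda$-filtered colimits for $<\lambda$-presentable $S$'' is false in general and must be replaced by a more careful argument.
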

\begin{proof}
Part (1) and the implication ``$\Leftarrow$'' in (2) follow from Quillen's small object argument and are explained very clearly
in \cite[Theorem 2.13]{SaorinStovicek} in the bigger generality of efficient exact categories (of which Grothendieck
categories are examples by \cite[Proposition 2.7]{SaorinStovicek}). It remains to check the implication ``$\Rightarrow$'' in (2):  Assuming
$(\calD,\calE)$ is complete, let $G\in\scA$ be a generator of $\scA$ and pick a short exact sequence $0\to E\to D\to
G\to 0$ with $E\in\calE$ and $D\in\calD$. Then $D$ is a generator for $\scA$, too, so $\calD$ is
generating.
\end{proof}

A cotorsion pair $(\calD,\calE)$ is called \textit{small} if it is cogenerated by a set and if $\calD$ is
generating. The notion of small cotorsion pairs was introduced in \cite[Definition 6.4]{Hovey_Cotorsion} in the study of completeness of
cotorsion pairs cogenerated by sets. The definition given here differs from Hovey's in that we do not assume condition
(iii) of loc.cit. However, in our situation that condition 
(iii) is automatic by \cite[Proposition 2.7]{SaorinStovicek}. In case our underlying category $\scA$ has enough
projectives (as for example in the cases of modules over dg rings we will be studying later) any cotorsion pair
cogenerated by a set is automatically small:  

\begin{cor}\label{cor_complete}
Let $\scA$ be a Grothendieck category with enough projectives. Then any cotorsion pair cogenerated by
a set is small, and in particular complete.
\end{cor}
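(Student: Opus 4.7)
The plan is to reduce the corollary directly to Proposition \ref{prop_autocomplete}. By definition, a cotorsion pair cogenerated by a set is small precisely when its cotorsion class $\calD$ is generating, and by Proposition \ref{prop_autocomplete}(2) smallness is exactly the extra ingredient needed to upgrade ``enough injectives'' to full completeness. So the entire task reduces to showing that in the presence of enough projectives, the cotorsion class $\calD$ is automatically generating.

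For this, I would argue as follows. Since $\scA$ has enough projectives, every object of $\scA$ is a quotient of a (small coproduct of) projective object(s), so the class $\proj(\scA)$ is generating in the sense defined just before Proposition \ref{prop_autocomplete}. Now for any cotorsion pair $(\calD,\calE)$ in $\scA$, every projective object $P$ satisfies $\ext^1_\scA(P,-)=0$, hence $P\in{^\perp\calE}=\calD$. Thus $\proj(\scA)\subseteq\calD$, and since $\proj(\scA)$ is generating, $\calD$ itself is generating. This shows the cotorsion pair is small.

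Completeness then follows immediately: Proposition \ref{prop_autocomplete}(1) gives enough injectives for any cotorsion pair cogenerated by a set, while part (2) of the same proposition together with the fact that $\calD$ is generating gives enough projectives. Hence $(\calD,\calE)$ is complete.

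The argument involves no real obstacle; the only subtlety worth noting is the observation that projectives sit inside every cotorsion class, which is what lets us transfer the generating property from $\proj(\scA)$ to $\calD$. Everything else is a direct appeal to Proposition \ref{prop_autocomplete}.
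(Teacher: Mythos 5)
Your proof is correct and follows exactly the paper's own argument: use enough projectives to see $\proj(\scA)$ is generating, note $\proj(\scA)\subseteq\calD$ for any cotorsion class, and conclude via Proposition \ref{prop_autocomplete}. You merely spell out the step $\proj(\scA)\subseteq\calD$ (via $\ext^1_\scA(P,-)=0$) that the paper leaves implicit.
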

\begin{proof}
Since $\scA$ has enough projectives it admits a projective generator. In particular, the class of
projectives is generating, and hence so is any cotorsion class. The second part follows from Proposition \ref{prop_autocomplete}.
\end{proof}

Proposition \ref{prop_autocomplete} and Corollary \ref{cor_complete} allow for proving that a certain class $\calE$
arises as the cotorsionfree part of a complete cotorsion pair. To give criteria when a class $\calD$ arises as the
cotorsion part in a complete cotorsion pair, we need a more concrete description of ${^{\perp}}(\calS^{\perp})$ for a
cogenerating set $\calS\subseteq\scA$. For this, we recall the notion of an $\calS$-filtration.

\begin{definition}[\protect{\cite[Definition 1.3]{StovicekHillGrothendieck}}]\label{def_dec}
Let $\scA$ be a Grothendieck category, $\calS$ a class of objects in $\scA$ and $X\in\scA$. An
\textit{$\calS$-filtration} on $X$ consists of an ordinal $\tau$ together with a family
$\{X_{\sigma}\}_{\sigma\leq\tau}$ of subobjects of $X$ such that the following hold:
\begin{enumerate}
\item $X_0 = 0$, $X_\tau = X$ and $X_\mu\subseteq X_\sigma$ if $\mu\leq\sigma\leq\tau$.
\item If $\sigma\leq\tau$ is a limit ordinal, $X_{\sigma} = \sum_{\mu<\sigma}
  X_\mu$.
\item $X_{\sigma+1}/X_{\sigma}$ is isomorphic to an object in $\calS$ for all
  $\sigma<\tau$.
\end{enumerate}
The \textit{size} of such an $\calS$-filtration is $|\tau|$. The class of objects admitting an $\calS$-filtration is denoted
$\filt\calS$, and its closure under taking summands is denoted ${^{\oplus}}\filt\calS$. A class $\calF\subset\scA$ of
the form $\calF=\filt\calS$ for some set $\calS\subset\scA$ is called \textit{deconstructible}.
\end{definition}

\begin{prop}\label{prop_doubleorthogonal}
Let $\scA$ be a Grothendieck category and $\calS\subseteq\scA$ be a set of objects. Assume that $\filt\calS$ is a
generating class for $\scA$. Then ${^{\perp}}(\calS^{\perp})={^{\oplus}}\filt\calS$.
\end{prop}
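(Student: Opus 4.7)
The plan is to prove the two inclusions separately.

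For the inclusion ${^{\oplus}}\filt\calS\subseteq {^{\perp}}(\calS^{\perp})$, I would invoke Eklof's lemma. Since ${^{\perp}}(\calS^{\perp})$ is automatically closed under direct summands, it suffices to show that $\ext^1_\scA(X,Y)=0$ for every $X\in\filt\calS$ and every $Y\in\calS^{\perp}$. Fixing such $Y$ together with a filtration $\{X_\sigma\}_{\sigma\leq\tau}$ of $X$, I would argue by transfinite induction on $\sigma$ that $\ext^1_\scA(X_\sigma,Y)=0$. The successor step follows from the long exact $\ext$-sequence associated to $0\to X_\sigma\to X_{\sigma+1}\to X_{\sigma+1}/X_\sigma\to 0$, together with $X_{\sigma+1}/X_\sigma\in\calS$. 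The limit step uses that $X_\sigma=\projlim_{\mu<\sigma} X_\mu$ in $\scA$: by the inductive hypothesis the transition maps $\Hom_\scA(X_{\mu+1},Y)\twoheadrightarrow\Hom_\scA(X_\mu,Y)$ are surjective, so $\invlim^1\Hom_\scA(X_\mu,Y)=0$, and a Milnor-type $\invlim$--$\invlim^1$ six-term sequence then forces $\ext^1_\scA(X_\sigma,Y)=0$.

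For the reverse inclusion ${^{\perp}}(\calS^{\perp})\subseteq{^{\oplus}}\filt\calS$, the central input is a refined form of completeness obtained from the small object argument. By the first inclusion and the hypothesis that $\filt\calS$ generates, the class ${^{\perp}}(\calS^{\perp})$ generates $\scA$, so by Proposition~\ref{prop_autocomplete}(2) the cotorsion pair $({^{\perp}}(\calS^{\perp}),\calS^{\perp})$ has enough projectives. In fact the construction used in \cite[Theorem 2.13]{SaorinStovicek} produces, for every $X\in\scA$, a short exact sequence
\[
0\longrightarrow E\longrightarrow D\longrightarrow X\longrightarrow 0
\]
with $E\in\calS^{\perp}$ and $D\in\filt\calS$ (and not merely $D\in{^{\perp}}(\calS^{\perp})$); this is the content of the transfinite iteration of pushouts along morphisms with domains and codomains from $\calS$.

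Granted this refined form of projective approximation, the conclusion is immediate: if $X\in{^{\perp}}(\calS^{\perp})$, then $\ext^1_\scA(X,E)=0$, so the sequence $0\to E\to D\to X\to 0$ splits and $X$ is a direct summand of $D\in\filt\calS$, giving $X\in{^{\oplus}}\filt\calS$. The main obstacle in executing this program is the limit-ordinal step in Eklof's lemma, where one must control $\invlim^1$ in the Grothendieck category $\scA$; however, the argument is standard and can be cited directly from \cite[Theorem 1.2]{StovicekHillGrothendieck}, so the proof essentially reduces to combining that result with Proposition~\ref{prop_autocomplete}.
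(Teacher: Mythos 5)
Your proposal is correct in its overall structure and effectively unpacks the argument that the paper compresses into a single citation: the paper's own proof of this proposition is the one-line remark that the statement is part of \cite[Theorem 2.13]{SaorinStovicek}. Both halves you identify are exactly the two halves of the cited result: the easy inclusion ${^{\oplus}}\filt\calS\subseteq{^{\perp}}(\calS^{\perp})$ via Eklof's lemma, and the hard one via the refined approximation produced by the small object argument plus a pushout argument, yielding for each $X$ a short exact sequence $0\to E\to D\to X\to 0$ with $E\in\calS^{\perp}$ and, crucially, $D\in\filt\calS$ rather than merely $D\in{^{\perp}}(\calS^{\perp})$. The split-and-retract argument you use to close the second inclusion is correct, and you are right that the generating hypothesis is what licenses the refined approximation through the pushout along an epimorphism from an object of $\filt\calS$.

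One technical caveat concerning the limit-ordinal step in your Eklof argument. The $\invlim^1$ route you sketch works for chains of countable cofinality but does not directly generalize: for an inverse system of abelian groups indexed by an arbitrary limit ordinal with surjective transition maps, $\invlim^1$ need not vanish, and the Milnor six-term sequence you want to invoke is in the first place a statement about $\omega$-indexed towers. The standard proof of Eklof's lemma, which is also the one that works over a Grothendieck category, avoids derived inverse limits entirely: given a representative extension $0\to Y\to Z\to X\to 0$, one builds by transfinite recursion a compatible family of splittings $s_\sigma\colon X_\sigma\to Z$ of the restricted extensions. At a successor one extends $s_\sigma$ to $s_{\sigma+1}$ using $\ext^1_\scA(X_{\sigma+1}/X_\sigma,Y)=0$; at a limit ordinal the continuity condition $X_\sigma=\sum_{\mu<\sigma}X_\mu$ exhibits $X_\sigma$ as the colimit of the $X_\mu$, and the universal property glues the $s_\mu$ into $s_\sigma$ with no derived-functor input. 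Since you ultimately cite \cite{StovicekHillGrothendieck} for the precise statement anyway, your conclusion is unaffected; just be aware that the $\invlim^1$ heuristic as written would not by itself establish the lemma at uncountable cofinality.
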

\begin{proof}
This is also part of \cite[Theorem 2.13]{SaorinStovicek}.
\end{proof}

\begin{prop}\label{prop_cogset}
Let $\scA$ be a Grothendieck category and let $\calD\subseteq\scA$ be some class of objects. Then the following are
equivalent:  
\begin{enumerate}
\item[(i)] $\calD$ arises as the cotorsion part in a small cotorsion pair.
\item[(ii)] $\calD$ is generating and $\calD = {^{\oplus}}\filt\calS$ for a set of objects $\calS$.
\item[(iii)] $\calD$ is generating, closed under direct summands, and deconstructible.
\end{enumerate}
\end{prop}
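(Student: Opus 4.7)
The plan is to establish the cyclic chain of implications (i)$\Rightarrow$(ii)$\Rightarrow$(iii)$\Rightarrow$(i). Two of these are quick consequences of Proposition~\ref{prop_doubleorthogonal}; only (ii)$\Rightarrow$(iii) carries real content, and that is where I expect the difficulty.

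For (i)$\Rightarrow$(ii), I would start from a small cotorsion pair $(\calD,\calE)$, so that $\calE=\calS^{\perp}$ for some set $\calS\subseteq\calD$ and $\calD$ is generating. To bring Proposition~\ref{prop_doubleorthogonal} into force I first enlarge $\calS$ so that $\filt\calS$ itself generates: because $\scA$ is Grothendieck and $\calD$ generates, $\calD$ contains a generating set $\calG$; replacing $\calS$ by $\calS\cup\calG$ leaves $\calS^{\perp}$ unchanged, since $\calG\subseteq\calD={^{\perp}}(\calS^{\perp})$ forces $\ext^1_\scA(\calG,\calS^{\perp})=0$. Now $\filt(\calS\cup\calG)$ contains $\calG$ and is therefore generating, so Proposition~\ref{prop_doubleorthogonal} delivers $\calD={^{\oplus}}\filt(\calS\cup\calG)$, which is condition~(ii). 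For (iii)$\Rightarrow$(i), assume $\calD$ is generating, summand-closed, and $\calD=\filt\calS$ for a set $\calS$. Then $\filt\calS=\calD$ generates, so Proposition~\ref{prop_doubleorthogonal} gives ${^{\perp}}(\calS^{\perp})={^{\oplus}}\filt\calS={^{\oplus}}\calD=\calD$ by summand-closure. The cotorsion pair $({^{\perp}}(\calS^{\perp}),\calS^{\perp})$ is thus cogenerated by the set $\calS$ and has generating cotorsion part $\calD$, so it is small.

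The remaining implication (ii)$\Rightarrow$(iii) is the hard part. Closure under direct summands and the generation property are visible from $\calD={^{\oplus}}\filt\calS$; the nontrivial assertion is that $\calD$ is honestly deconstructible, i.e.\ that ${^{\oplus}}\filt\calS=\filt\calT$ for some (typically larger) set $\calT$. This is not a formal consequence of the definitions because passing to summands destroys the given transfinite filtration structure. My plan is to invoke Šťovíček's theorem from \cite{StovicekHillGrothendieck} to the effect that in a Grothendieck category the summand-closure of a class of the form $\filt\calS$ is again deconstructible. The strategy is to fix a regular cardinal $\kappa$ large enough to bound both the presentability ranks of a generating set and of all objects in $\calS$, take $\calT$ to be a set of representatives of the isomorphism classes of $\kappa$-presentable objects lying in ${^{\oplus}}\filt\calS$, and then, for a given $X\in{^{\oplus}}\filt\calS$ realized as a summand of some $Y\in\filt\calS$, apply Hill's lemma to the $\calS$-filtration of $Y$. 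Hill's lemma refines the filtration into a distributive lattice of subobjects closed under unions and suitable intersections, and from this lattice one extracts a continuous well-ordered chain of $\kappa$-presentable subobjects of $X$ whose successive quotients lie in $\calT$, exhibiting the desired $\calT$-filtration of $X$.
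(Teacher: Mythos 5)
Your proof is correct and follows essentially the same route as the paper: the implications hinging on orthogonality use Proposition~\ref{prop_doubleorthogonal} exactly as the paper does, and the genuinely hard step—that ${^{\oplus}}\filt\calS$ is again deconstructible—is delegated to the same result of Šťovíček (Proposition 2.9(1) of \cite{StovicekHillGrothendieck}) that the paper cites. The paper organizes the equivalences as $(1)\Leftrightarrow(2)$ and $(2)\Leftrightarrow(3)$ rather than your cycle, and it simply cites Šťovíček without the Hill-lemma sketch you append, but these are cosmetic differences; your additional sketch of the Hill-lemma argument is not needed once the citation is in place.
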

\begin{proof}
$(1)\Rightarrow (2)$ Suppose $(\calD,\calE)$ a small cotorsion pair cogenerated by some set
$\calS\subseteq\calD$, i.e. $\calE=\calS^{\perp}$. By definition, $\calD$ is generating and hence we may without loss of
generality assume that $\calS$ is generating, too (otherwise enlarge $\calS$ by a set of generators of $\scA$ inside
$\calD$). We then get $\calD = {^{\perp}}\calE = {^{\perp}}(\calS^{\perp})={^{\oplus}}\filt\calS$ by Proposition
\ref{prop_doubleorthogonal}. $(2)\Rightarrow (1)$: If 
$\calD={^{\oplus}}\filt\calS$ and $\calD$ is generating, then so is $\filt\calS$. Hence Propositions \ref{prop_doubleorthogonal}
and \ref{prop_autocomplete} yield the small cotorsion pair $({^{\perp}}(\calS^{\perp}),\calS^{\perp})=
({^{\oplus}}\filt\calS,\calS^{\perp})=(\calD,\calS^{\perp})$. This shows $(1)\Leftrightarrow (2)$. $(3)\Rightarrow (2)$
is clear and finally $(2)\Rightarrow (3)$ follows from
\cite[Proposition 2.9(1)]{StovicekHillGrothendieck} which says that given any deconstructible class in a Grothendieck
category, the class of direct summands of objects of this class is again deconstructible.
\end{proof}

\begin{ex}\label{ex_injproj} Let $\scA$ be a Grothendieck category.
\begin{enumerate}
\item\label{ex_injproj_inj} Suppose $G$ is generator of $\scA$ and let $\calS$ be a representative set of isomorphism classes of quotients of
$G$. Then $\scA = \filt\calS$, so $\scA$ is deconstructible. As $\scA$ itself is clearly generating, we deduce from
Proposition \ref{prop_cogset} that $(\scA,\inj(\scA))$ is a complete cotorsion pair, i.e. that $\scA$ has enough
injectives.  
\item\label{ex_injproj_proj} Assume that $\scA$ has enough projectives. Then $\proj(\scA)$ is generating, and hence the cotorsion pair
$(\proj(\scA),\scA)$ is small. Applying Proposition \ref{prop_cogset} shows that $\proj(\scA)$ is deconstructible.
\end{enumerate}
\end{ex}

We end the section by by recalling that cotorsion pairs cogenerated by sets are also relevant because of their relation to
the cofibrant generation of abelian model structures, as is shown in the following Theorem of Hovey. 

\begin{prop}\label{prop_hoveydec}
Let $\scA$ be a Grothendieck category and let $\calM = (\calC,\calW,\calF)$ be an abelian model structure on $\scA$. 
\begin{enumerate}
\item If $\calM$ is cofibrantly generated, then the cotorsion pairs $(\calC\cap\calW,\calF)$ and $(\calC,\calW\cap\calF)$ are cogenerated by sets.
\item If $(\calC\cap\calW,\calF)$ and $(\calC,\calW\cap\calF)$ are small, then $\calM$ is cofibrantly generated.
\end{enumerate}
\end{prop}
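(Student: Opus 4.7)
Part (1) establishes the cotorsion-pair/morphism-set bridge in one direction: from generating sets of morphisms in the model structure, we recover cogenerating sets of objects. In an abelian model structure, (trivial) cofibrations are monomorphisms with (trivially) cofibrant cokernel, which suggests taking cokernels of generating (trivial) cofibrations as candidate cogenerators. Concretely, let $I,J$ be generating sets of cofibrations and trivial cofibrations. Define $\calS:=\{\coker(i):i\in I\}\subseteq\calC$ and $\calT:=\{\coker(j):j\in J\}\subseteq\calC\cap\calW$. I claim $\calS^{\perp}=\calW\cap\calF$ (and analogously $\calT^{\perp}=\calF$). The inclusion $\calW\cap\calF\subseteq\calS^{\perp}$ is immediate from the cotorsion pair identities. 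For the converse, if $Y\in\calS^{\perp}$ then for every $i\colon A\hookrightarrow B$ in $I$ the long exact $\ext$-sequence associated to $0\to A\to B\to\coker(i)\to 0$ yields exactness of $\Hom(B,Y)\to\Hom(A,Y)\to\ext^1_\scA(\coker(i),Y)=0$, so any $A\to Y$ extends to $B\to Y$. This shows $Y\to 0$ has the right lifting property (RLP) against every generating cofibration, hence is a trivial fibration, so $Y\in\calW\cap\calF$. The argument for $\calT$ is verbatim, applied to $J$ and the fibrations.

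For part (2), the plan is to construct generating sets $I$ and $J$ of (trivial) cofibrations from cogenerating sets $\calS,\calT$ of the two cotorsion pairs. Since $\calC$ and $\calC\cap\calW$ are generating, we may without loss of generality enlarge $\calS,\calT$ to contain a generator of $\scA$ while still lying in the respective cotorsion classes. The set $I$ will consist of inclusions $K\hookrightarrow L$ arising from a bounded family of short exact sequences $0\to K\to L\to S\to 0$ with $S\in\calS$, chosen so that $\textrm{RLP}(I)$ is exactly the class of epimorphisms with kernel in $\calS^{\perp}=\calW\cap\calF$, i.e.\ the class of trivial fibrations. The construction of $J$ from $\calT$ is analogous. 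Granted this $\textrm{RLP}$ characterization, cofibrant generation follows from Quillen's small object argument, which applies because every object of a Grothendieck category is small with respect to transfinite compositions (the same input as in Proposition \ref{prop_autocomplete}; compare \cite[Proposition 2.7]{SaorinStovicek}).

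The main obstacle is this $\textrm{RLP}$ characterization in part (2). The RLP against the subset $\{0\hookrightarrow S:S\in\calS\}$ forces $f\colon X\to Y$ to be an epimorphism (via the generator contained in $\calS$) and forces $\Hom(S,X)\to\Hom(S,Y)$ to be surjective for every $S\in\calS$; but the latter only says, via the long exact $\ext$-sequence applied to $0\to N\to X\to Y\to 0$ with $N=\ker(f)$, that the connecting map $\Hom(S,Y)\to\ext^1_\scA(S,N)$ vanishes, not that $\ext^1_\scA(S,N)$ itself vanishes. Enlarging $I$ by inclusions of the form $K_S\hookrightarrow L_S$ with cokernel $S$ and $L_S$ a sufficiently large copower of the chosen generator, and running a pushout-pullback diagram chase between the two $\ext$-long exact sequences, upgrades the vanishing of the connecting map to $N\in\calS^{\perp}$. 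Beyond this diagram chase and the cardinality bookkeeping needed to keep $I$ a set, the argument is formal.
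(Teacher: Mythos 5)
For part (1) your argument coincides with the paper's: take $\calS$ to be the cokernels of a generating set of cofibrations, observe $\calS\subseteq\calC$, and note that the condition $\ext^1_\scA(\calS,Y)=0$ is literally the right lifting property of $Y\to 0$ against those generating cofibrations, which characterizes $Y\in\calW\cap\calF$; similarly for $\calT$ and $\calF$. This is exactly the paper's proof, just with the lifting reformulated through the long exact $\ext$-sequence.

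For part (2) the paper defers entirely to \cite[Lemma 6.7]{Hovey_Cotorsion}, while you sketch a direct argument, so the comparison is of a citation against a sketch. Your plan is in the right direction (it mirrors Hovey's own proof), but the step you yourself flag as ``the main obstacle'' is not actually resolved. Concretely, what is needed is: for each $S\in\calS$ a monomorphism $i_S\colon K_S\hookrightarrow L_S$ with cokernel $S$ such that a morphism has the RLP against $\{0\hookrightarrow G\}\cup\{i_S\}_S$ exactly when it is an epimorphism with kernel in $\calS^\perp$. You propose to take $L_S$ a large copower of a generator $G$ and claim a ``pushout-pullback diagram chase'' upgrades vanishing of the connecting map to genuine $\ext^1$-vanishing, but you neither carry this out nor explain why an arbitrary generator $G$ (which need not be projective in a Grothendieck category) is adequate; the usual argument over rings uses that $L_S$ can be taken to be a free, hence projective, module. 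This is exactly Hovey's condition (iii) in his definition of small cotorsion pair, and the paper explicitly notes that in the present generality this condition is only available thanks to \cite[Proposition 2.7]{SaorinStovicek}, not for free. So as written, part (2) has a genuine gap: the RLP characterization that powers the small object argument is asserted but not established, and closing it requires either invoking Hovey's Lemma 6.7 directly (as the paper does) or importing the efficiency result from \cite{SaorinStovicek} to build the morphisms $i_S$ with the claimed RLP-detection property.
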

\begin{proof}
(2) is proved in \cite[Lemma 6.7]{Hovey_Cotorsion}. Part (1) is \cite[Lemma 3.1]{HoveyCotorsionArxiv}; however,
it is stated there without proof, so we give an argument for convenience of the reader. Suppose $\calM$ is cofibrantly
generated with a generating set of cofibrations $I\subseteq\cof$ and a generating set of trivial cofibrations
$J\subset\cof\cap\weak$, and put $\calS := \{\coker(f)\ |\ f\in I\}$. As cofibrations are monomorphisms with cofibrant
cokernel, we have $\calS\subseteq\calC$, and we claim that $\calS^{\perp}=\calF\cap\calW$. Indeed, if
$X\in\calS^{\perp}$, then $X\to 0$ has the right lifting property with respect to all maps $f\in I$, and hence is a
trivial fibration by assumption. In other words, $X\in\calW\cap\calF$ as claimed. Similarly one shows that $\calF =
\calT^{\perp}$ for $\calT := \{\coker(g)\ |\ g\in J\}\subseteq\calC\cap\calW$. 
\end{proof}

In particular, Proposition \ref{prop_hoveydec} shows that in case $\scA$ has enough projectives $\calM\leftrightarrow
(\calC,\calW,\calF)$ gives a one-to-one correspondence between cofibrantly generated abelian model structures on $\scA$
and triples $(\calC,\calW,\calF)$ such that both $(\calC\cap\calW,\calF)$ and $(\calC,\calW\cap\calF)$ are cotorsion
pairs cogenerated by sets. 

\subsection{Four model structures on modules over a dg ring}\label{subsection_fourmodels}

In this section we use the results of the previous section to construct four prominent abelian model structures on the
category of modules over a (curved) differential graded ring (dg rings resp. cdg rings for short): Firstly, the standard
injective and projective abelian model structures for modules over a dg ring, and secondly, Positselski's coderived and
contraderived abelian model structures for modules over a cdg ring. 

\begin{notation} A \textit{grading group} \cite[Remark preceeding Section 1.2]{Positselski_TwoKindsOfDerivedCategories} is an abelian group
$\Gamma$ together with a parity homomorphism $|\cdot|:\Gamma\to\ZZ/2\ZZ$ and a 
distinguished element $1\in\Gamma$ satisfying $|1|=\ol{1}$. A $\Gamma$-graded abelian group is a $\Gamma$-indexed family
$X\ua = \{X^k\}_{k\in\Gamma}$ of abelian groups, but we will often drop the index from the notation. We will also
sometimes drop $\Gamma$ from the notation, in which case it is implicitly assumed that a 
 grading group has been fixed. Given such a $\Gamma$-graded abelian group $X$ and some $n\in\Gamma$, we denote
 $\Sigma^n X = X$ the $\Gamma$-graded abelian group given by $(\Sigma^n X)^k := X^{k+n}$ and call it the
 \textit{$n$-fold suspension} of $X$. We also put $\Sigma := \Sigma^1$ and $\Omega := \Sigma^{-1}$. The category of
 $\Gamma$-graded abelian groups has a monoidal structure given by the tensor product $(X\otimes Y)^n :=
 \bigoplus_{p+q=n} X^p\otimes_{\mathbb Z} Y^q$; a \textit{$\Gamma$-graded ring} is an algebra object in that monoidal
 category, and a module over such an algebra object is called a \textit{$\Gamma$-graded module}. A
 \textit{$\Gamma$-graded curved differential graded ring} (cdg ring for short) is a $\Gamma$-graded ring $A$ 
 together with a map $\diff: A\to \Sigma A$ of $\Gamma$-graded abelian groups called differential and an element $w\in A^2$
 such that $\diff(w)=0$, $\diff$  satisfies the Leibniz rule and for any $x\in A$ we have $\diff^2(x) = [w,x]$. The $\Gamma$-graded ring underlying a
 $\Gamma$-graded cdg ring $A$ is denoted $A\us$. For a cdg ring $A$, a \textit{(cdg) module} over $A$ is a $\Gamma$-graded
 module $X$ over $A\us$ together with a map $\diff: X\to\Sigma X$ of $\Gamma$-graded abelian groups satisfying the Leibniz rule and
 $\diff^2(x)=wx$ for all $x\in X$. Given such an $A$-module $X$ and $n\in\Gamma$, the $n$-fold suspension $\Sigma^n X$ carries a natural
 $A$-module structure as follows: its differential $\diff_{\Sigma^n X}$ is given by $\diff_{\Sigma^n X} := (-1)^{|n|}\diff_X$, and the
action of some homogeneous $a\in A$ on some $x\in X$ given by $(-1)^{|a|\cdot |n|}ax$. The $A\us$-module underlying $X$
is denoted $X\us$. Given two $A$-modules $X,Y$, the ($\Gamma$-indexed) complex of $A\us$-linear homomorphisms $X\us\to
\Sigma\ua Y\us$ is denoted $\text{dg-Hom}_A\ua(X,Y)$: for $k\in\Gamma$, its $k$-th component is $\Hom_{A\us}(X\us,\Sigma^k Y\us)$, with
differential sending $f: X\us\to \Sigma^k Y\us$ to $\partial_Y f - (-1)^{|k|} f \partial_X$. The $k$-th cohomology
$\coH^k(\text{dg-Hom}\ua_A(X,Y))$ equals the set $[X,\Sigma^k Y]$ of homotopy classes of morphisms $X\to \Sigma^k
Y$. Finally, we denote $A\Mod\lproj$ (resp. $A\Mod\linj$) the class of $A$-modules whose underlying graded
$A\us$-modules are projective (resp. injective).    \end{notation}

Recall from \cite{Positselski_TwoKindsOfDerivedCategories} the following explicit description of the adjoints of $(-)\us$:

\begin{prop}[\protect{see \cite[Proof of Theorem 3.6]{Positselski_TwoKindsOfDerivedCategories}}]\label{prop_adjointstoforget}
Let $A$ be a cdg ring and define the functors $G^+, G^-: A\us\Mod\to A\Mod$ as follows:
\begin{enumerate}
\item $G^+(X) := X\oplus \Omega X$ as graded abelian groups. An element $(x,y)\in G^+(X)$ is 
  denoted $x+\diff(y)$. The action of some $a\in A$ on $x+\diff(y)$ is given by $ax - (-1)^{|a|} \diff(a) y + (-1)^{|a|} \diff(ay)$,
  while the differential on $G^+(X)$ sends $x+\diff(y)$ to $wy + \diff(x)$. 
\item $G^- := \Sigma\circ G^+$.
\end{enumerate}
Then there are canonical adjunctions $G^+\dashv (-)\us\dashv G^-$. 
\end{prop}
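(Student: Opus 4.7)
The plan is to first verify that $G^+(X)$ is a well-defined cdg module over $A$, then establish $G^+\dashv(-)\us$ by constructing an explicit natural bijection of $\Hom$-sets, and finally deduce $(-)\us\dashv G^-$ by a parallel construction.

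First, for $X\in A\us\Mod$, I would check the three axioms defining a cdg module: associativity of the $A$-action on $G^+(X)$ (which follows from associativity on $X$ together with the Leibniz rule for $\diff$ on $A$), the Leibniz rule for the differential on $G^+(X)$, and the curvature equation $\diff^2=w\cdot(-)$. For the latter, on an element $x+\diff(y)$ I compute $\diff^2(x+\diff(y))=\diff(wy+\diff(x))=wx+\diff(wy)$, while the action formula gives $w\cdot(x+\diff(y))=wx-(-1)^{|w|}\diff(w)y+(-1)^{|w|}\diff(wy)=wx+\diff(wy)$, using $|w|=\ol{0}$ and $\diff(w)=0$; so the two agree. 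The Leibniz check for $\diff$ on $G^+(X)$ is more tedious but a direct expansion; it is essentially the condition that forces the particular form of the $A$-action.

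Second, for the adjunction $G^+\dashv(-)\us$, I would define
\[ \alpha_{X,Y}\colon \Hom_A(G^+(X),Y)\longrightarrow\Hom_{A\us}(X,Y\us),\qquad f\mapsto f|_X, \]
using the inclusion $X\hookrightarrow G^+(X)$, $x\mapsto x+\diff(0)$. The inverse sends an $A\us$-linear $g\colon X\to Y\us$ to $\tilde g\colon G^+(X)\to Y$, $x+\diff(y)\mapsto g(x)+\diff_Y(g(y))$. The main checks are that $\tilde g$ is $A$-linear (a direct expansion of $\tilde g(a\cdot(x+\diff(y)))$ reduces to the Leibniz rule $\diff_Y(a\,g(y))=\diff(a)\,g(y)+(-1)^{|a|}a\,\diff_Y(g(y))$ in $Y$, which is essentially what motivates the action formula on $G^+(X)$) and that $\tilde g$ commutes with differentials (immediate from $\diff_Y^2=w\cdot(-)$). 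The two maps are mutually inverse: one direction is by definition, and the other uses the observation that any $A$-linear $f$ automatically satisfies $f(\diff(y))=\diff_Y(f(y))$, since the formal symbol $\diff(y)\in G^+(X)$ is precisely $\diff_{G^+(X)}(y)$ when $y$ is viewed as sitting in the $X$-summand.

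Third, for $(-)\us\dashv G^-$, I would proceed symmetrically. Given $\phi\colon X\us\to Y$ in $A\us\Mod$ with $X\in A\Mod$, define $\hat\phi\colon X\to G^-(Y)$ by sending $x\in X^k$ to the element $\diff(\phi(x))-\phi(\diff_X(x))$ of $G^-(Y)^k=Y^{k+1}\oplus Y^k$, in the $G^+$-notation. The sign is pinned down by forcing $\hat\phi$ to commute with differentials, the minus sign entering because the differential on $G^-(Y)=\Sigma G^+(Y)$ is the negative of that on $G^+(Y)$. The $A$-linearity of $\hat\phi$ and the mutual inverse property follow from calculations entirely analogous to those of the second step, the inverse sending $\psi\colon X\to G^-(Y)$ to the composition $X\us\xrightarrow{\psi\us}G^-(Y)\us\twoheadrightarrow Y$ onto the $Y$-summand (which is $A\us$-linear by a short direct check).

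The main obstacle throughout is the careful bookkeeping of signs and the interplay between the $A$-action, the differential on the module, and the curvature element $w$. The baroque-looking formula for the action on $G^+(X)$ is essentially forced by the requirement that $\tilde g$ be $A$-linear, and the analogous signed formula for $\hat\phi$ is forced by compatibility with the differential on the suspended copy $G^-=\Sigma G^+$; once these compatibility constraints are written out, both adjunctions essentially compute themselves.
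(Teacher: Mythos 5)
Your proof is correct and is the natural one: you verify that $G^+(X)$ is a well-defined cdg $A$-module (the Leibniz rule for $\diff_{G^+(X)}$ uses the curvature equation $\diff^2_A(a)=[w,a]$ on $A$, and associativity uses the Leibniz rule on $A$, both of which you could have named explicitly but which are indeed straightforward), and then you produce the two natural bijections of $\Hom$-sets directly. The key observations you make are exactly the right ones: for $G^+\dashv(-)\us$, that the formal symbol $\diff(y)\in G^+(X)$ is literally $\diff_{G^+(X)}$ applied to $y\in X\subset G^+(X)$, which forces any $A$-morphism $f:G^+(X)\to Y$ to be determined by $f|_X$; and for $(-)\us\dashv G^-$, that the sign in $\hat\phi(x)=-\phi(\diff_X(x))+\diff(\phi(x))$ is forced by compatibility with the differential on $\Sigma G^+(Y)$, with the inverse given by projection onto the $\Omega Y$-coordinate (the second factor of $G^-(Y)^k\cong Y^{k+1}\oplus Y^k$), which is $A\us$-linear after the sign twist coming from $\Sigma$. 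The paper itself states the formulas for $G^{\pm}$ but defers the verification to Positselski's proof of Theorem 3.6 rather than giving an argument, so there is no distinct line of reasoning in the paper to compare against; your explicit verification is the intended one.
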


Note that if $A$ is a dg ring (so that we can talk about homology of $A$-modules) the images of $G^+$ and $G^-$ consist
of acyclic modules. This follows immediately from the explicit description of $G^{\pm}$, or alternatively by using the
adjunction property: $\coH^n(G^-(X))\cong [A,\Sigma^n
G^-(X)]\cong\ext^1_{A}(\Omega^{n-1}A,G^-(X))\cong\ext^1_{A\us}(\Omega^{n-1}A\us,X)=0$, where the latter equality 
holds because $A\us$ is projective in $A\us\Mod$; as $G^+=\Omega\circ G^-$, this also shows the acyclicity of objects in
the image of $G^+$. Here we have used that, given a cdg ring $A$ and $X\in A\Mod\lproj$, there is a canonical
isomorphism $\ext^1_{A}(X,-)\cong [\Omega X,-]$. Similarly, if $X\in A\Mod\linj$, we have $\ext^1_{A}(-,X)\cong
[-,\Sigma X]$. These isomorphisms will be used very often in what follows. We will also need the following
characterization of projective and injective objects in $A\Mod$:  

\begin{lem}\label{lem_projective}
Let $A$ be a cdg ring and $X$ an $A$-module. Then $X$ is projective in $A\Mod$ if and only if $X\us$ is projective in
$A\us\Mod$ and $X$ is contractible as an $A$-module. Similarly, $X$ is injective in $A\Mod$ if and only if $X\us$ is
injective in $A\us\Mod$ and $X$ is contractible as an $A$-module. 
\end{lem}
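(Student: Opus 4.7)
The statement splits into dual projective and injective parts; I sketch the projective half, with the injective one being strictly analogous under $(-)\us\dashv G^-$ in place of $G^+\dashv(-)\us$.

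\textbf{Direction $(\Leftarrow)$.} Assume $X\us$ is projective in $A\us\Mod$ and $X$ is contractible. The paragraph preceding the lemma records the natural isomorphism $\ext^1_A(X,Y)\cong[\Omega X,Y]$, so it suffices to see $[\Omega X,Y]=0$ for every $Y$. A contracting homotopy for $X$ gives, by precomposition, a contracting homotopy of the complex $\dgHom_A(X,Y)$; hence all of its cohomology vanishes, including $[\Omega X,Y]\cong\coH^1(\dgHom_A(X,Y))$.

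\textbf{Direction $(\Rightarrow)$.} The plan rests on three ingredients. Firstly, since $(-)\us$ is exact, its left adjoint $G^+$ preserves projectives and epimorphisms, and the counit $\epsilon_X\colon G^+(X\us)\to X$ is an epimorphism in $A\Mod$ (a triangle identity shows its image under $(-)\us$ is a split epi). Secondly, every $G^+(M)$ is contractible as an $A$-module: with the description $G^+(M)=M\oplus\Omega M$ and $\diff(x,y)=(wy,x)$ read off from Proposition~\ref{prop_adjointstoforget}, the degree $-1$ map $s(x,y):=(y,0)$ satisfies $\diff s+s\diff=\id$, and the Koszul sign inherent to degree $-1$ maps exactly cancels the twist in the $A\us$-action on $G^+(M)$. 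Thirdly, for every $A\us$-module $M$ the first-coordinate inclusion and second-coordinate projection form a short exact sequence of $A\us$-modules
\[
0\longrightarrow M\longrightarrow G^+(M)\us\longrightarrow\Omega M\longrightarrow 0,
\]
whose $A\us$-linearity one checks directly against the twisted action formula; when $M$ is projective, so is $\Omega M$, the sequence splits, and $G^+(M)\us\cong M\oplus\Omega M$ is projective.

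Granting these, suppose $X$ is projective in $A\Mod$. Pick a surjection $P\twoheadrightarrow X\us$ with $P$ projective in $A\us\Mod$, and compose $G^+(P)\twoheadrightarrow G^+(X\us)$ with $\epsilon_X$ to obtain an epimorphism $\pi\colon G^+(P)\twoheadrightarrow X$ in $A\Mod$ out of a projective object. Projectivity of $X$ splits $\pi$, so $X$ is an $A\Mod$-summand of $G^+(P)$: applying $(-)\us$ displays $X\us$ as a summand of the projective module $G^+(P)\us$, while on the other side $X$ is a summand of the contractible module $G^+(P)$. The injective statement dualises step for step, using the split-mono unit $X\to G^-(X\us)$, an analogous explicit contraction of $G^-(M)$, and the dual exact sequence $0\to\Omega I\to G^-(I)\us\to I\to 0$. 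The main obstacle is the third ingredient --- producing the short exact sequence for $G^+(M)\us$ and verifying that it splits over projective $M$; once in hand, everything else is a clean summand-of-a-good-object argument.
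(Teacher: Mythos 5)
Your proposal is correct, and the $(\Leftarrow)$ direction coincides with the paper's: both invoke $\ext^1_A(X,Y)\cong[X,\Sigma Y]$ (valid once $X\us$ is projective) and then kill this group using the contraction of $X$.

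For $(\Rightarrow)$ you take a genuinely different and somewhat heavier route. You exhibit $X$ as a direct summand of $G^+(P)$ with $P$ a projective $A\us$-module, and then transfer contractibility and projectivity of $G^+(P)$ to $X$. This requires your two auxiliary verifications: that $G^+(M)$ is contractible (including the sign bookkeeping making $s(x,y)=(y,0)$ genuinely $A\us$-linear as a map $G^+(M)\to\Omega G^+(M)$), and the short exact sequence $0\to M\to G^+(M)\us\to\Omega M\to 0$ splitting when $M$ is projective. I checked both against Proposition~\ref{prop_adjointstoforget}; they hold. The paper instead splits the two claims and dispatches each more cheaply: contractibility of a projective $X$ comes from $X$ being a summand of the always-contractible $\cone(\id_{\Omega X})$ (the canonical epimorphism onto $X$ splits), and projectivity of $X\us$ follows at once from the abstract fact that $(-)\us$, being left adjoint to the exact functor $G^-$, preserves projectives. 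That last observation makes your entire ``third ingredient'' superfluous: you already noted $G^+$ preserves projectives because $(-)\us$ is exact, and the dual statement --- $(-)\us$ preserves projectives because $G^-$ is exact --- is exactly what is wanted and costs nothing. You could then even drop the passage through $P\twoheadrightarrow X\us$ and use the counit $\epsilon_X\colon G^+(X\us)\to X$ alone to get contractibility, since $G^+(X\us)$ is already contractible without any projectivity assumption on $X\us$. So the proof is right, just longer than necessary; the abstract adjoint-to-exact argument saves the explicit splitting.
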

\begin{proof}
For any $A$-module there is a canonical epimorphism $\cone(\id_{\Omega X})\to X$ in $A\Mod$. Hence, if $X$ is projective in
$A\Mod$, it is a summand of $\cone(\id_{\Omega X})$ and hence contractible as an $A$-module. Further, as the forgetful
functor $A\Mod\to A\us\Mod$ is left adjoint to the exact functor $G^-$ (see Proposition \ref{prop_adjointstoforget}), it preserves
projective objects, and hence one direction is proved. Conversely, assume that $X\us$ is projective in $A\us\Mod$ and
$X$ is contractible as an $A$-module. Given another $A$-module $Z$, the projectiveness of $X\us$ implies that there is a
canonical isomorphism $\ext^1_{A}(X,Z)\cong [X,\Sigma Z]$, and the latter group is trivial since $X$ is
contractible. It follows that $X$ is projective in $A\Mod$, as claimed.  

The part on injective objects in $A\Mod$ is similar.
\end{proof}

\begin{lem}\label{lem_doubleorthogonal}
Let $A$ be a cdg ring and $(\calD,\calE)$ be a cotorsion pair with $\Sigma\calD\subseteq\calD$.
\begin{enumerate}
\item If $\calD\subseteq A\Mod\lproj$, then $\calD\cap\calE = \proj(A\Mod)$.
\item If $\calE\subseteq A\Mod\linj$, then $\calD\cap\calE = \inj(A\Mod)$.
\end{enumerate}
\end{lem}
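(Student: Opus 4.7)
The plan is to combine Lemma \ref{lem_projective}, which characterises a projective (resp. injective) object in $A\Mod$ as one whose underlying $A\us$-module is projective (resp. injective) \emph{and} which is contractible as an $A$-module, with the two isomorphisms recalled just before Lemma \ref{lem_projective}: for $X\us$ projective one has $\ext^1_A(X,-)\cong[\Omega X,-]$, and for $X\us$ injective one has $\ext^1_A(-,X)\cong[-,\Sigma X]$. The hypothesis $\Sigma\calD\subseteq\calD$ is there precisely to let us feed $\Sigma X$ back into an $\ext^1$-computation involving $X$ itself.

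For part (1), the inclusion $\proj(A\Mod)\subseteq\calD\cap\calE$ is easy. That $P\in\proj(A\Mod)$ lies in $\calD$ is automatic since $\ext^1_A(P,-)=0$. To see $P\in\calE$, pick any $D\in\calD$; because $\calD\subseteq A\Mod\lproj$, the module $D\us$ is projective in $A\us\Mod$, so $\ext^1_A(D,P)\cong[\Omega D,P]$, and this vanishes because $P$ is contractible by Lemma \ref{lem_projective}.

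For the reverse inclusion, let $X\in\calD\cap\calE$. From $\calD\subseteq A\Mod\lproj$ we have $X\us$ projective in $A\us\Mod$, so by Lemma \ref{lem_projective} it remains to prove $X$ contractible, i.e. $[X,X]=0$. Here the hypothesis $\Sigma\calD\subseteq\calD$ enters: it gives $\Sigma X\in\calD$, whose underlying graded module is again projective, so
\[
\ext^1_A(\Sigma X,X)\;\cong\;[\Omega\Sigma X,X]\;\cong\;[X,X].
\]
But the left-hand side vanishes because $\Sigma X\in\calD$ and $X\in\calE$. Hence $[X,X]=0$, $\id_X$ is null-homotopic, and $X$ is contractible; combined with $X\us$ being projective, Lemma \ref{lem_projective} gives $X\in\proj(A\Mod)$.

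Part (2) is the formal dual: with $X\us$ now injective, use $\ext^1_A(-,X)\cong[-,\Sigma X]$ to rewrite $0=\ext^1_A(\Sigma X,X)\cong[\Sigma X,\Sigma X]\cong[X,X]$, again forcing contractibility and hence injectivity via Lemma \ref{lem_projective}; the $\supseteq$ inclusion is analogous to the first direction above, with the roles of $\calD$ and $\calE$, and of $\Omega$ and $\Sigma$, swapped. There is no real obstacle; the only thing to notice is that $\Sigma\calD\subseteq\calD$ is exactly what pairs $X$ against itself in $\ext^1$, which under the projective/injective hypothesis on $X\us$ becomes a group of homotopy classes containing $\id_X$.
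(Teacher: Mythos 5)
Your proof is correct and follows essentially the same line of reasoning as the paper's own argument: both directions of the inclusion are established via Lemma \ref{lem_projective} together with the isomorphisms $\ext^1_A(Z,-)\cong[\Omega Z,-]$ (for $Z\us$ projective) and $\ext^1_A(-,Z)\cong[-,\Sigma Z]$ (for $Z\us$ injective), and the key step for $\subseteq$ uses $\Sigma\calD\subseteq\calD$ to conclude $0=\ext^1_A(\Sigma X,X)\cong[X,X]$ exactly as in the paper. The only cosmetic difference is that you write $[\Omega\Sigma X,X]\cong[X,X]$ whereas the paper writes $[\Sigma X,\Sigma X]$, but these are naturally identified.
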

\begin{proof}
We only prove (1), as the proof of (2) is similar. Assuming $\calD\subseteq A\Mod\lproj$, we claim that
$\calD\cap\calE=\proj(A\Mod)$. ``$\supseteq$'': Clearly $\proj(A\Mod)={^{\perp}}A\Mod\subseteq{^{\perp}}\calE =
\calD$. Moreover, if $X\in\proj(A\Mod)$ and $Z\in\calD\subseteq A\Mod\lproj$, we have $\ext^1_{A}(Z,X)\cong
[Z,\Sigma X]=0$ since $X$ is contractible (Lemma \ref{lem_projective}). This shows
$\proj(A\Mod)\subseteq\calD^{\perp}=\calE$, and hence $\proj(A\Mod)\subseteq\calD\cap\calE$. ``$\subseteq$'': By Lemma
\ref{lem_projective} and the assumption that $\calD\subseteq A\Mod\lproj$ it suffices to show that any
$X\in\calD\cap\calD^{\perp}$ is contractible as an $A$-module. Using that $\Sigma\calD\subseteq\calD$ by assumption,
this follows from $0=\ext^1_{A}(\Sigma X,X)\cong [\Sigma X,\Sigma X]$.
\end{proof}

\begin{prop}\label{prop_projectiveinjectivemodel}
For a dg ring $A$, the following hold:
\begin{enumerate}
\item There exists a unique projective abelian model structure on $A\Mod$, denoted
$\calM\uproj(A)$, with $\calW = \Acyc(A)$. $\calM\uproj(A)$ is called the \textit{standard projective model structure}
on $A\Mod$. The class $\calC\uproj(A)$ of cofibrant objects in $\calM\uproj(A)$ is contained in $A\Mod\lproj$.
\item There exists a unique injective abelian model structure on $A\Mod$, denoted $\calM\uinj(A)$, with
  $\calW=\Acyc(A)$. $\calM\uinj(A)$ is called the \textit{standard injective model structure} on $A\Mod$. The class
  $\calF\uinj(A)$ of fibrant objects in $\calM\uinj(A)$ is contained in $A\Mod\linj$. 
\end{enumerate}
Moreover, $\calM\uproj(A)$ and $\calM\uinj(A)$ are cofibrantly generated.
\end{prop}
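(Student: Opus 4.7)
The plan is to apply Corollary \ref{cor_injprojmodel} in both cases: for the projective statement we need a complete cotorsion pair $(\calC\uproj(A),\Acyc(A))$ with $\calC\uproj(A)\cap\Acyc(A)=\proj(A\Mod)$, and dually $(\Acyc(A),\calF\uinj(A))$ with $\Acyc(A)\cap\calF\uinj(A)=\inj(A\Mod)$ for the injective one. In both cases the 2-out-of-3 property of $\Acyc(A)$ is immediate from the long exact sequence in homology; cofibrant generation will then follow from Proposition \ref{prop_hoveydec}(2) provided both cotorsion pairs are small; and uniqueness is automatic since Theorem \ref{thm_hovey} forces $\calC\uproj(A)={^\perp}\Acyc(A)$ and $\calF\uinj(A)=\Acyc(A)^\perp$.

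\textbf{Projective case.} Since $A\us$ is projective over itself, $\Sigma^n A\in A\Mod\lproj$ for every $n\in\ZZ$, so $\ext^1_A(\Sigma^n A,Z)\cong[\Sigma^{n-1}A,Z]=\coH^{1-n}(Z)$, whence $\{\Sigma^n A : n\in\ZZ\}^\perp=\Acyc(A)$. The category $A\Mod$ is Grothendieck with enough projectives, since the left adjoint $G^+$ of Proposition \ref{prop_adjointstoforget} sends projectives to projectives and the counit $G^+(M\us)\to M$ is epi. Hence Corollary \ref{cor_complete} shows that the cotorsion pair cogenerated by $\{\Sigma^n A\}$ is small, and Proposition \ref{prop_doubleorthogonal} identifies its cotorsion class as $\calC\uproj(A)={^\oplus}\filt\{\Sigma^n A\}$. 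Every continuous filtration by shifts of $A$ is split at each successor step (because the quotients $(\Sigma^n A)\us$ are projective over $A\us$), and transfinite induction yields a compatible direct-sum decomposition; consequently $\calC\uproj(A)\subseteq A\Mod\lproj$. Since $\Sigma\calC\uproj(A)\subseteq\calC\uproj(A)$, Lemma \ref{lem_doubleorthogonal}(1) gives $\calC\uproj(A)\cap\Acyc(A)=\proj(A\Mod)$.

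\textbf{Injective case.} Here the task is to realize $\Acyc(A)$ as the cotorsion part of a small cotorsion pair. First, $\Acyc(A)$ is generating: the counit $G^+(M\us)\to M$ is an epimorphism and $G^+(M\us)$ is acyclic, as remarked after Proposition \ref{prop_adjointstoforget}. The main technical obstacle is to show that $\Acyc(A)$ is deconstructible. For this we fix a regular cardinal $\lambda$ such that $A\Mod$ is locally $\lambda$-presentable; every $\lambda$-presentable submodule of an acyclic module can be enlarged, within size $\leq\lambda$, to a $\lambda$-presentable submodule that is itself acyclic (by iteratively witnessing boundaries of the finitely many relevant cycles in the larger module). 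A standard transfinite argument then writes every acyclic module as a continuous union of $\lambda$-presentable acyclic submodules, giving $\Acyc(A)=\filt\calS$ for the set $\calS$ of isomorphism classes of $\lambda$-presentable acyclic modules. Proposition \ref{prop_cogset} then yields a small cotorsion pair $(\Acyc(A),\calF\uinj(A))$. To apply Lemma \ref{lem_doubleorthogonal}(2), note that $\Omega\Acyc(A)\subseteq\Acyc(A)$ is clear, and that $\calF\uinj(A)\subseteq A\Mod\linj$ because $G^+$ is exact with acyclic image, so for any $M\in\Acyc(A)^\perp$ and $N\in A\us\Mod$ the adjunction $G^+\dashv(-)\us$ gives $\ext^1_{A\us}(N,M\us)\cong\ext^1_A(G^+(N),M)=0$.

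Both cotorsion pairs being small and complete, Corollary \ref{cor_injprojmodel} yields the two abelian model structures with $\calW=\Acyc(A)$, and Proposition \ref{prop_hoveydec}(2) gives cofibrant generation. The main difficulty is concentrated in the deconstructibility of $\Acyc(A)$ needed for the injective model; the projective model only requires the concrete generating set $\{\Sigma^n A\}$ and the splitting argument described above.
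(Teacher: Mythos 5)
Your proof is correct in outline and reaches the same conclusions by way of Corollary \ref{cor_injprojmodel}, Corollary \ref{cor_complete}, Proposition \ref{prop_cogset}, and Lemma \ref{lem_doubleorthogonal}, but it diverges from the paper at two points worth comparing.

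For the inclusion $\calC\uproj(A)\subseteq A\Mod\lproj$, you go through Proposition \ref{prop_doubleorthogonal} to identify $\calC\uproj(A)={^\oplus}\filt\{\Sigma^n A\}$ and then run a transfinite splitting argument to show that a module admitting a filtration with graded-projective quotients has projective underlying graded module. This is sound (it is the standard Eklof-type observation that a transfinite extension of projectives is projective), but it is more work than necessary: the paper instead observes that for $X\in{^\perp}\Acyc(A)$ and any $Z\in A\us\Mod$ one has $0=\ext^1_A(X,G^-(Z))\cong\ext^1_{A\us}(X\us,Z)$, using that the image of the right adjoint $G^-$ of $(-)\us$ lies in $\Acyc(A)$; this settles $X\us\in\proj(A\us\Mod)$ in one line without ever invoking Proposition \ref{prop_doubleorthogonal} or the explicit filtration.

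For the deconstructibility of $\Acyc(A)$, the paper reduces to the known deconstructibility of $\Acyc$ in $\Ch_\Gamma(\ZZ)$ (\v St\!ov\'{\i}\v cek) by applying Proposition \ref{prop_corpullbackmonadic} to the monadic forgetful functor $A\Mod\to\Ch_\Gamma(\ZZ)$. You instead attempt a direct Hill-lemma-style argument. The idea is right, but the justification is too loose: the phrase ``finitely many relevant cycles'' is incorrect — a $\lambda$-presentable submodule can have a $<\lambda$-sized (not finite) family of cycles that need witnessed boundaries, and one must iterate the enlargement $\omega$ many times, closing under the $A$-action and the differential at each stage, before the union is both acyclic and still $\lambda$-presentable (this requires $\lambda$ uncountable regular and a bound comparable to $|A|$). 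One must then verify that the resulting continuous chain has acyclic quotients (immediate from 2-out-of-3) and $\lambda$-presentable quotients. This can be carried out, but as written it is a gap in precision; either flesh it out along the lines of the Hill lemma (Proposition \ref{prop_hill}) or, more economically, cite Proposition \ref{prop_corpullbackmonadic} as the paper does.

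Your treatments of $\calS^\perp=\Acyc(A)$, the generating property of $\Acyc(A)$ via the $G^+$ counit, $\calF\uinj(A)\subseteq A\Mod\linj$ via the $G^+\dashv(-)\us$ adjunction, the invocation of Lemma \ref{lem_doubleorthogonal}, uniqueness, and cofibrant generation all match the paper's argument.
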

\begin{proof}
(1) Let $\calS := \{\Sigma^n A\ |\ n\in\Gamma\}$. For any $n\in\Gamma$ and any $X\in A\Mod$ we have a canonical
  isomorphism $\ext^1_{A}(\Omega^n A,X)\cong [A,\Sigma^{n+1} X]\cong \coH^{n+1}(X)$, so it follows that
  $\calS^{\perp}=\Acyc(A)$. Hence, by Corollary \ref{cor_complete}, the cotorsion pair $({^{\perp}}\Acyc,\Acyc)$
  is complete. By Corollary \ref{cor_injprojmodel} and the thickness of $\Acyc(A)$ it remains to show that
  ${^{\perp}}\Acyc\cap\Acyc=\proj(A\Mod)$, so that by Lemma \ref{lem_doubleorthogonal} it suffices to show that
  ${^{\perp}}\Acyc\subseteq A\Mod\lproj$. For this, note that for any 
  $X\in{^{\perp}}\Acyc$ and any $Z\in A\us\Mod$, we have $0=\ext^1_A(X,G^-(Z))\cong\ext^1_{A\us}(X\us,Z)$, so that
  $X\us$ is projective in $A\us\Mod$ as claimed. Here we used that the image of $G^-$ consists of acyclic $A$-modules.

(2) By Corollary \ref{cor_injprojmodel} and Proposition \ref{prop_cogset} it suffices to show that $\Acyc(A)$ is
  generating and deconstructible, and that $\Acyc(A)\cap\Acyc(A)^{\perp}=\inj(A\Mod)$. By Lemma \ref{lem_projective}
  $\proj(A\Mod)\subseteq\Acyc(A)$, so $\Acyc(A)$ is generating. The deconstructibility of $\Acyc(A)$ follows
  from Proposition \ref{prop_corpullbackmonadic} applied to the monadic forgetful functor $: A\Mod\to\Ch_\Gamma(\ZZ)$ and the fact
\cite[Theorem 4.2.(2)]{StovicekHillGrothendieck} that $\Acyc(\ZZ)\subset\Ch_\Gamma(\ZZ)$ is deconstructible (in
loc.cit. the result is proved for $\Gamma=\ZZ$, but the arguments carry over to the case of a general grading
group). Finally, the equality $\Acyc(A)\cap\Acyc(A)^{\perp}=\inj(A\Mod)$ again follows from Lemma
\ref{lem_doubleorthogonal} once we've showed that for any $X\in\Acyc(A)^{\perp}$ its underlying $A\us$-module $X\us$ is
injective. Indeed, if $Z\in A\us\Mod$, we have $0=\ext^1_{A}(G^+(Z),X)\cong\ext^1_{A\us}(Z,X\us)$, where the first
equality holds because the image of $G^+$ consists of acyclic $A$-modules.  

The statement about cofibrant generation follows from Proposition \ref{prop_hoveydec}.
\end{proof}

\begin{prop}\label{prop_cocontraderivedmodel}
For a cdg ring $A$, the following hold:
\begin{enumerate}
\item There exists a unique projective abelian model structure on $A\Mod$, denoted $\calM\uctr(A)$, such that
  $\calC=A\Mod\lproj$. $\calM\uctr(A)$ is called the \textit{contraderived model structure} on $A\Mod$.
\item There exists a unique injective abelian model structure on $A\Mod$, denoted $\calM\uco(A)$, such that
  $\calF=A\Mod\linj$. $\calM\uctr(A)$ is called the \textit{coderived model structure} on $A\Mod$.
\end{enumerate}
Moreover, $\calM\uctr(A)$ and $\calM\uco(A)$ are cofibrantly generated. 
\end{prop}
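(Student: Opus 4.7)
The strategy is to mimic the proof of Proposition \ref{prop_projectiveinjectivemodel}: for each model structure, produce a complete cotorsion pair in $A\Mod$ whose left or right class is the prescribed one, identify the intersection with its cotorsion partner as the projectives or injectives via Lemma \ref{lem_doubleorthogonal}, and verify the $2$-out-of-$3$ property with Lemma \ref{lem_twooutofthree}, so that Corollary \ref{cor_injprojmodel} applies. Uniqueness is immediate from Hovey's bijection: in a projective abelian model structure the cofibrant class $\calC$ determines $\calW=\calC^{\perp}$, and dually for the injective case. Cofibrant generation follows from Proposition \ref{prop_hoveydec}, since each of the two cotorsion pairs so produced will be cogenerated by a set.

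For (1), I would argue that $A\Mod\lproj$ is generating, summand-closed, and deconstructible; by Proposition \ref{prop_cogset} it is then the left-hand class of a small cotorsion pair $(A\Mod\lproj,\calW)$, automatically complete by Corollary \ref{cor_complete}. Generating is clear from $\proj(A\Mod)\subseteq A\Mod\lproj$ (Lemma \ref{lem_projective}), and summand-closedness is obvious. The decisive step is deconstructibility: I would invoke Proposition \ref{prop_corpullbackmonadic} applied to the monadic forgetful functor $(-)\us\colon A\Mod\to A\us\Mod$ (monadic because it admits the adjoints $G^\pm$), reducing to the deconstructibility of $\proj(A\us\Mod)$ in the Grothendieck category $A\us\Mod$, which is Example \ref{ex_injproj}(\ref{ex_injproj_proj}). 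Since $\Sigma A\Mod\lproj\subseteq A\Mod\lproj$, Lemma \ref{lem_doubleorthogonal}(1) identifies $A\Mod\lproj\cap\calW=\proj(A\Mod)$. Finally, $A\Mod\lproj$ is closed under kernels of epimorphisms (any short exact sequence of $A$-modules with projective graded middle and right terms splits over $A\us$), so $(A\Mod\lproj,\calW)$ is resolving and the dual of Lemma \ref{lem_twooutofthree} yields the $2$-out-of-$3$ property for $\calW$.

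For (2), one cannot mirror the above directly, as $A\Mod\linj$ is the cotorsionfree rather than cotorsion class; the main obstacle is producing a cogenerating set. I would pick a set $\calT\subseteq A\us\Mod$ cogenerating the complete cotorsion pair $(A\us\Mod,\inj(A\us\Mod))$ of Example \ref{ex_injproj}(\ref{ex_injproj_inj}) and set $\calS:=\{G^+(T)\mid T\in\calT\}$. Since $G^+$ is exact (visible from the formula $G^+(X)=X\oplus\Omega X$) and sends projectives to projectives in $A\Mod$ (by Lemma \ref{lem_projective}, because $G^+(P)\us=P\oplus\Omega P$ is projective in $A\us\Mod$ and any $G^+$-module is canonically contractible), the adjunction $G^+\dashv(-)\us$ yields a natural isomorphism $\ext^1_A(G^+(T),X)\cong\ext^1_{A\us}(T,X\us)$ for all $X\in A\Mod$ (compare a projective resolution of $T$ after applying $G^+$). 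Hence $\calS^{\perp}=A\Mod\linj$, and by Corollary \ref{cor_complete} the cotorsion pair $(\calW,A\Mod\linj)=({^{\perp}}(\calS^{\perp}),\calS^{\perp})$ is small and complete. As $A\Mod\linj$ is $\Sigma$-stable, so is $\calW$, and Lemma \ref{lem_doubleorthogonal}(2) gives $\calW\cap A\Mod\linj=\inj(A\Mod)$. Lastly, $A\Mod\linj$ is closed under cokernels of monomorphisms (any short exact sequence in $A\us\Mod$ whose first two terms are injective splits, so the third is a summand of an injective), whence Lemma \ref{lem_twooutofthree} supplies the $2$-out-of-$3$ property for $\calW$ and the proof concludes via Corollary \ref{cor_injprojmodel}.
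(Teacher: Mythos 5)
Your proof is correct and follows essentially the same route as the paper: part (1) is verbatim the paper's argument (generating via $\proj(A\Mod)\subseteq A\Mod\lproj$, deconstructibility via Proposition \ref{prop_corpullbackmonadic} pulled back from $\proj(A\us\Mod)$, then Lemma \ref{lem_doubleorthogonal} and the dual of Lemma \ref{lem_twooutofthree}), and part (2) hinges on the same key observation that $A\Mod\linj$ is the right orthogonal of the image of $G^+$. The only genuine variation is in part (2): the paper shows $G^+(A\us\Mod)$ is deconstructible (using exactness and cocontinuity of $G^+$) and concludes $A\Mod\linj$ is the perp of a set by Eklof's lemma, whereas you explicitly take $\calS=G^+(\calT)$ for $\calT$ a cogenerating set of $(A\us\Mod,\inj(A\us\Mod))$ and justify $\calS^\perp=A\Mod\linj$ by proving the natural isomorphism $\ext^1_A(G^+(T),X)\cong\ext^1_{A\us}(T,X\us)$ via a projective resolution of $T$ and preservation of projectives by $G^+$. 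That is actually a welcome amount of care: the paper uses this $\ext^1$ adjunction identity without comment both here and in Proposition \ref{prop_projectiveinjectivemodel}(2), and your argument supplies the justification. (A marginally slicker route to preservation of projectives is simply that $G^+$ is left adjoint to the exact functor $(-)\us$; your route through Lemma \ref{lem_projective} and the canonical contraction on $G^+(X)$ is correct but longer.) Your uniqueness remark via Hovey's bijection is exactly the intended reading of ``unique'' in the statement.
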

\begin{proof}
(1) By Corollary \ref{cor_injprojmodel} and Proposition \ref{prop_cogset} we have to show that $A\Mod\lproj$ is
  generating and deconstructible, that $A\Mod\lproj\cap A\Mod\lproj^{\perp}=\proj(A\Mod)$ and that $A\Mod\lproj^{\perp}$
  is satisfies the $2$-out-of-$3$ property. By Lemma \ref{lem_projective}, $\proj(A\Mod)\subseteq A\Mod\lproj$, so 
  $A\Mod\lproj$ is generating. For the deconstructibility of $A\Mod\lproj$, we again apply Proposition
  \ref{prop_corpullbackmonadic}: The forgetful functor $(-)\us: A\Mod\to A\us\Mod$ is monadic, for example by the
  explicit description of its left adjoint $G^+$ in Proposition \ref{prop_adjointstoforget}, and $A\Mod\lproj$ is the
  preimage under $(-)\us$ of  $\proj(A\us\Mod)$, which is deconstructible by Example
  \ref{ex_injproj}.\ref{ex_injproj_proj}. Finally, $A\Mod\lproj\cap A\Mod\lproj^{\perp}=\proj(A\Mod)$ follows
  from Lemma \ref{lem_doubleorthogonal}, and the $2$-out-of-$3$ property of $A\Mod\lproj^{\perp}$ is ensured by the dual
  of Lemma \ref{lem_twooutofthree}, using that $A\Mod\lproj$ is closed under kernels of epimorphisms.

(2) By definition, an $A$-module $X$ belongs to $A\Mod\linj$ if and only if $X\us\in\inj(A\Mod)$, i.e.
  $0=\ext^1_{A\us}(Z,X\us)=\ext^1_A(G^+(Z),X)$ for all $Z\in A\us\Mod$. Hence $A\Mod\linj =
  G^+(A\us\Mod)^{\perp}$, so that by the deconstructibility of $A\us\Mod$ and the exactness and
  cocontinuity of $G^+$, $G^+(A\us\Mod)$ is deconstructible, too. This shows that $A\Mod\linj=\calS^{\perp}$ for
  some set $\calS\subset A\Mod$, and hence $({^{\perp}}A\linj,A\linj)$ is a complete cotorsion pair by Corollary
  \ref{cor_complete}. As above, ${^{\perp}}A\linj\cap A\linj=\inj(A\Mod)$ follows from Lemma \ref{lem_doubleorthogonal},
  and the $2$-out-of-$3$ property of ${^{\perp}}A\Mod\linj$ follows from Lemma \ref{lem_twooutofthree} together with the
  fact that $A\Mod\linj$ is closed under cokernels of monomorphisms.

The cofibrant generation follows from Proposition \ref{prop_hoveydec}.
\end{proof}
\begin{cor}\label{cor_comparison}
For a dg ring $A$, the identity on $A\Mod$ is a left Quillen functor $\calM\uproj(A)\to\calM\uctr(A)$ and a right
Quillen functor $\calM\uinj(A)\to\calM\uco(A)$.  
\end{cor}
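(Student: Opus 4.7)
The plan is to invoke the standard criterion that, for two model structures on the same bicomplete category, the identity functor is left Quillen from the first to the second exactly when every cofibration of the first is a cofibration of the second and every trivial cofibration of the first is a trivial cofibration of the second. Under Definition \ref{def_defabelianmodelstructure}, both conditions translate into class inclusions: writing $\calM_i=(\calC_i,\calW_i,\calF_i)$, one needs $\calC_1\subseteq\calC_2$ and $\calC_1\cap\calW_1\subseteq\calC_2\cap\calW_2$. The right Quillen case for the identity is then settled by the dual criterion on fibrant and trivially fibrant objects.

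For the first claim, the cofibrant-object inclusion $\calC\uproj(A)\subseteq A\Mod\lproj=\calC\uctr(A)$ is precisely the last sentence of Proposition \ref{prop_projectiveinjectivemodel}(1). For the trivial-cofibrant inclusion, I would recall that in the projective model structure $(\calC\uproj\cap\calW\uproj,A\Mod)$ is a complete cotorsion pair, so $\calC\uproj\cap\calW\uproj=\proj(A\Mod)$; this was computed in the proof of Proposition \ref{prop_projectiveinjectivemodel}(1) via Lemma \ref{lem_doubleorthogonal}(1), using that $\calC\uproj\subseteq A\Mod\lproj$ and that $\Sigma\calC\uproj\subseteq\calC\uproj$. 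The same Lemma applied to the cotorsion pair $(A\Mod\lproj,\calW\uctr)$, as in the proof of Proposition \ref{prop_cocontraderivedmodel}(1), yields $\calC\uctr\cap\calW\uctr=\proj(A\Mod)$. So the two classes of trivially cofibrant objects coincide and the required inclusion is trivial.

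For the second claim, the dual criterion says that the identity is right Quillen $\calM\uinj(A)\to\calM\uco(A)$ iff $\calF\uinj(A)\subseteq\calF\uco(A)$ and $\calW\uinj\cap\calF\uinj\subseteq\calW\uco\cap\calF\uco$. The fibrant-object inclusion $\calF\uinj(A)\subseteq A\Mod\linj=\calF\uco(A)$ is the last sentence of Proposition \ref{prop_projectiveinjectivemodel}(2). For the trivially fibrant inclusion, the symmetric argument using Lemma \ref{lem_doubleorthogonal}(2) identifies both $\calW\uinj\cap\calF\uinj$ and $\calW\uco\cap\calF\uco$ with $\inj(A\Mod)$, again after checking the (trivial) hypotheses that the relevant cotorsionfree classes lie in $A\Mod\linj$ and are stable under $\Sigma$.

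There is no real obstacle here: all the substantive work has already been done in Propositions \ref{prop_projectiveinjectivemodel} and \ref{prop_cocontraderivedmodel} together with Lemma \ref{lem_doubleorthogonal}. The only point to be careful about is making explicit that the verification of the Quillen conditions reduces, in the abelian setting, to the two class inclusions above — a reduction that follows immediately from Hovey's theorem (Theorem \ref{thm_hovey}) and Definition \ref{def_defabelianmodelstructure}.
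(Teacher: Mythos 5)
Your argument is correct and follows the same route as the paper's one-line proof: reduce the Quillen condition to the class inclusions and invoke Proposition \ref{prop_projectiveinjectivemodel}. The paper suppresses the trivially-(co)fibrant comparison that you spell out, but that part is automatic since both $\calM\uproj(A)$ and $\calM\uctr(A)$ (resp.\ $\calM\uinj(A)$ and $\calM\uco(A)$) are projective (resp.\ injective) abelian model structures, so by Corollary \ref{cor_injprojmodel} their trivially cofibrant (resp.\ trivially fibrant) classes are already forced to be $\proj(A\Mod)$ (resp.\ $\inj(A\Mod)$), without any further appeal to Lemma \ref{lem_doubleorthogonal}.
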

\begin{proof}
Unraveling the definitions, this means that we have $\calC\uproj(A)\subseteq A\Mod\lproj$ and $\calF\uinj(A)\subseteq
A\Mod\linj$, which was shown in Proposition \ref{prop_projectiveinjectivemodel}.
\end{proof}

Following \cite{Positselski_TwoKindsOfDerivedCategories}, weakly trivial objects in $\calM\uco(A)$ are
called \textit{coacyclic}, while weakly trivial objects in $\calM\uctr(A)$ are called
\textit{contraacyclic}. We denote them $\calW\uco(A)$ and $\calW\uctr(A)$, respectively. Corollary
\ref{cor_comparison} implies that $\calW\uco(A)\subseteq\Acyc(A)\supseteq\calW\uctr(A)$, so coacyclic and contraacyclic
modules are in particular acyclic in the classical sense. In general, we can only give the following description:

\begin{prop}\label{prop_coctracyclic}
Let $A$ be a dg ring and $X\in A\Mod$. 
\begin{enumerate}
\item $X$ is contraacyclic if and only if for each $Z\in A\Mod\lproj$ the homomorphism complex $\dgHom\ua_A(Z,X)$
  is acyclic, if and only if $[Z,X]=0$ for all $Z\in A\Mod\lproj$ . 
\item $X$ is coacyclic if and only if for each $Z\in A\Mod\linj$ the homomorphism complex $\dgHom\ua_A(X,Z)$ is
  acyclic if and only if $[X,Z]=0$ for all $Z\in A\Mod\linj$. 
\end{enumerate}
In particular, any contractible $A$-module is both contraacyclic and coacyclic.
\end{prop}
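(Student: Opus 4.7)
The plan is to unwind the definitions of $\calW\uctr(A)$ and $\calW\uco(A)$ furnished by Proposition \ref{prop_cocontraderivedmodel} and then translate the resulting $\ext^1$-vanishing conditions into statements about homotopy classes via the two isomorphisms $\ext^1_A(Z,-) \cong [\Omega Z,-]$ (for $Z\in A\Mod\lproj$) and $\ext^1_A(-,Z) \cong [-,\Sigma Z]$ (for $Z\in A\Mod\linj$) recalled after Proposition \ref{prop_adjointstoforget}. The key structural observation is that since $\calM\uctr(A)$ is projective, its weak equivalences form the cotorsion pair $(\calC,\calW)=(A\Mod\lproj, A\Mod\lproj^{\perp})$; dually, $\calM\uco(A)$ being injective gives $\calW\uco(A)={^{\perp}}(A\Mod\linj)$.

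For (1): by the above, $X$ is contraacyclic iff $\ext^1_A(Z,X)=0$ for every $Z\in A\Mod\lproj$. Because each such $Z$ has projective underlying graded module, this Ext group is canonically isomorphic to $[\Omega Z,X]$, and since $A\Mod\lproj$ is closed under the autoequivalences $\Sigma$ and $\Omega$, the vanishing over all $Z\in A\Mod\lproj$ is equivalent to the condition $[Z,X]=0$ for all $Z\in A\Mod\lproj$. Applying this to every shift $\Omega^k Z$ and using $\coH^k(\dgHom\ua_A(Z,X))=[Z,\Sigma^k X]\cong[\Omega^k Z,X]$, this is in turn equivalent to $\dgHom\ua_A(Z,X)$ being acyclic for every $Z\in A\Mod\lproj$.

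Part (2) proceeds dually. From $\calW\uco(A)={^{\perp}}(A\Mod\linj)$ one gets that $X$ is coacyclic iff $\ext^1_A(X,Z)=0$ for every $Z\in A\Mod\linj$, which, using $\ext^1_A(X,Z)\cong[X,\Sigma Z]$ and the closure of $A\Mod\linj$ under $\Sigma$, is equivalent to $[X,Z]=0$ for all such $Z$; applying this to all shifts $\Sigma^k Z$ produces acyclicity of $\dgHom\ua_A(X,Z)$. The final assertion is then immediate: if $X$ is contractible then $\id_X$ is null-homotopic, so $[Z,X]=0$ and $[X,Z]=0$ for \emph{every} $A$-module $Z$, and the characterisations just proved give that $X$ is both contraacyclic and coacyclic.

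There is no real obstacle; the only bookkeeping one has to be careful with is using the stability of $A\Mod\lproj$ (resp.\ $A\Mod\linj$) under all shifts $\Sigma^k$ to pass from vanishing of a single $\ext^1$ over all projective-graded (resp.\ injective-graded) modules to acyclicity of the full dg-Hom complex, rather than just vanishing of $[Z,X]$ in degree zero.
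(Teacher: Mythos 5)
Your argument is correct and is essentially the proof the paper gives: it identifies $\calW\uctr(A)=(A\Mod\lproj)^{\perp}$ and $\calW\uco(A)={^{\perp}}(A\Mod\linj)$ from the two cotorsion pairs, then applies the isomorphisms $\ext^1_A(Z,-)\cong[\Omega Z,-]$ and $\ext^1_A(-,Z)\cong[-,\Sigma Z]$ together with $\coH^k(\dgHom\ua_A(X,Y))\cong[X,\Sigma^k Y]$. You have merely spelled out the shift-stability bookkeeping that the paper leaves implicit.
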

\begin{proof}
(i) follows from $\ext^1_A(Z,-)\cong [\Omega Z,-]$ for $Z\in A\Mod\lproj$ and the isomorphism $\coH^k\left[\text{dg-Hom}\ua_A(X,Y)
\right]\cong [X,\Sigma^k Y]$, and (ii) follows using $\ext^1_A(-,Z)\cong [-,\Sigma Z]$ for $Z\in
A\Mod\linj$.
\end{proof}
\begin{lem}\label{lem_totalizations}
Let $A$ be a cdg ring and $...\stackrel{p_2}{\to} X_1\stackrel{p_1}{\to} X_0$ be an inverse system of contraacyclic
$A$-modules with all $p_n$ being epimorphisms. Then $\invlim X_n$ is $A$-contraacyclic, too. In particular, the
totalization formed by taking products of any bounded above exact sequence of $A$-modules is contraacyclic. 
\end{lem}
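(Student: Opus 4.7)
The plan is to combine the Milnor short exact sequence for surjective inverse limits with thickness of the contraacyclic class. Since $A\Mod$ has exact products and each $p_n$ is an epimorphism, one has the short exact sequence
\begin{equation*}
0 \to \invlim X_n \to \prod_n X_n \xrightarrow{1-\sigma} \prod_n X_n \to 0,
\end{equation*}
with $\sigma$ the shift on the product. By Proposition \ref{prop_coctracyclic}(1), contraacyclicity of $X$ amounts to $[Z,X]=0$ for all $Z\in A\Mod\lproj$; since $[Z,\prod_n X_n]=\prod_n [Z,X_n]$, the class $\calW\uctr(A)$ is closed under products, so both copies of $\prod_n X_n$ above are contraacyclic. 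As $\calW\uctr(A)$ is thick by Theorem \ref{thm_hovey} applied to $\calM\uctr(A)$ (Proposition \ref{prop_cocontraderivedmodel}), the $2$-out-of-$3$ property then forces $\invlim X_n\in\calW\uctr(A)$.

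For the ``in particular'' clause, given a bounded above exact sequence $\cdots\to X^{-1}\to X^0\to 0$ in $A\Mod$, for each $N\geq 0$ I would form the bounded exact sequence
\begin{equation*}
C_N\colon\ 0 \to B^{-N} \to X^{-N} \to X^{-N+1} \to \cdots \to X^0 \to 0,
\end{equation*}
where $B^{-N}:=\image(d\colon X^{-N-1}\to X^{-N})=\ker(d\colon X^{-N}\to X^{-N+1})$. The natural comparison map $C_{N+1}\to C_N$, taken to be the identity on $X^{-N},\ldots,X^0$, the surjection $d\colon X^{-N-1}\twoheadrightarrow B^{-N}$ at the boundary position, and zero on the extra tail $B^{-N-1}$, is a degreewise-epimorphic chain map, and a short degreewise inspection identifies $\tot\uprod(X^\bullet)$ with $\invlim_N \tot(C_N)$ under surjective transition maps.

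To conclude I would show that each $\tot(C_N)$ is contraacyclic and then apply the main part of the lemma. For this, pick any $Z\in A\Mod\lproj$: since $\Hom_{A\us}(Z\us,-)$ is exact on underlying graded modules, $\dgHom_A(Z,C_N)$ is a bounded exact sequence of complexes of abelian groups, whose totalization $\dgHom_A(Z,\tot(C_N))$ is acyclic by the standard double complex argument. Hence $[Z,\tot(C_N)]=0$, so $\tot(C_N)\in\calW\uctr(A)$ by Proposition \ref{prop_coctracyclic}(1). The main nuisance I anticipate is the bookkeeping behind the identification $\tot\uprod(X^\bullet)=\invlim_N \tot(C_N)$, but this is routine once one observes that a compatible family in the inverse system is uniquely determined by its components at each position $X^i$, with the $B^{-N}$-slot of each $\tot(C_N)$ forced by the differential.
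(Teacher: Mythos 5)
Your proof is correct and follows the paper's two-step plan closely. For the main statement both you and the paper use the Milnor sequence $0\to\invlim X_n\to\prod X_n\to\prod X_n\to 0$, closure of $\calW\uctr(A)$ under products, and thickness; you spell out the product-closure step more explicitly via the $\Hom$-criterion of Proposition \ref{prop_coctracyclic}(1). For the \emph{in particular} clause both arguments write $\tot\uprod(X^\bullet)$ as a surjective inverse limit of totalizations of bounded soft truncations (your $C_N$ coincides, up to reindexing, with the paper's $0\to X_n/\image(f_{n+1})\to\cdots\to X_0\to 0$) and then invoke the first part. The one local difference is how you see that each $\tot(C_N)$ is contraacyclic: the paper notes that $\tot(C_N)$ is an iterated extension of contractible modules (there is a short exact sequence $0\to\cone(\id_{B^{-N}})\to C_N\to C_{N-1}\to 0$ in $\Ch(A\Mod)$, and the functor $\tot$ preserves it), so one appeals to thickness and the last sentence of Proposition \ref{prop_coctracyclic}; you instead verify the $\Hom$-criterion of Proposition \ref{prop_coctracyclic}(1) directly, using exactness of $\Hom_{A\us}(Z\us,-)$ and the bounded double complex lemma. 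Both routes are sound and of comparable length; yours avoids having to exhibit the contractible filtration.
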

\begin{proof}
The first statement follows from the existence of a short exact sequence $0\to\invlim X_n\to\prod X_n\to\prod X_n\to 0$
and the fact that $\calW\uctr(A)$ satisfies the $2$-out-of-$3$ property.
It remains to show that the totalization $\tot\uprod(X\la)$ formed by taking products of an exact, bounded
above sequence of $A$-modules $...\stackrel{f_3}{\to} X_2\stackrel{f_2}{\to} X_1\stackrel{f_1}{\to} X_0\to 0\to ...$ is
contraacyclic, which is essentially a special case of the first statement: $\tot\uprod(X\la)$ is the inverse limit of the totalizations
of the soft truncations $0\to X_n/\image(f_{n+1})\to X_{n-1}\to ...\to X_1\to X_0\to 0$, which in turn are iterated
extensions of contractible $A$-modules, hence contraacyclic by Proposition \ref{prop_coctracyclic}.
\end{proof}

In case some mild conditions on $A\us$ is satisfied, Positselski gives the following description of coacyclic
and contraacyclic modules:  

\begin{prop}[\protect{\cite[Theorem 3.7, 3.8]{Positselski_TwoKindsOfDerivedCategories}}]
Let $A$ be a cdg ring.  
\begin{enumerate}
\item Suppose any countable product of projective $A\us$-modules has finite projective dimension. Then $\calW\uctr(A)$
  equals the smallest thick triangulated subcategory of $\coH^0(A\Mod)$ closed under products and containing totalizations
  of exact sequences of $A$-modules.
\item Suppose any countable sum of injective $A\us$-modules has finite injective dimension. Then $\calW\uco(A)$ equals
  the smallest thick triangulated subcategory of $\coH^0(A\Mod)$ closed under coproducts and containing totalizations of exact
  sequences of $A$-modules. 
\end{enumerate}
\end{prop}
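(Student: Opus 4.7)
My plan is to prove part (1); part (2) follows by the evident duality, swapping projectives with injectives and products with coproducts throughout. Let $\calT$ denote the smallest thick triangulated subcategory of $\coH^0(A\Mod)$ that is closed under products and contains all totalizations-with-products of bounded above exact sequences in $A\Mod$; I must show $\calT = \calW\uctr(A)$.

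The inclusion $\calT \subseteq \calW\uctr(A)$ is the easier direction. Since $\calM\uctr(A)$ is a projective abelian model structure, its associated cotorsion pair is $(A\Mod\lproj,\calW\uctr(A))$, so $\calW\uctr(A) = A\Mod\lproj^{\perp}$ is closed under arbitrary products because $\ext^1_A(-,-)$ commutes with products in its second argument. Its abelian thickness upgrades to thickness as a triangulated subcategory of $\coH^0(A\Mod)$, since any distinguished triangle there is represented, up to isomorphism, by an $A\us$-split short exact sequence $0\to Y\to \cone(f)\to \Sigma X\to 0$ arising from a mapping cone. Lemma \ref{lem_totalizations} supplies the remaining closure under totalizations.

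For the converse $\calW\uctr(A) \subseteq \calT$, fix $X \in \calW\uctr(A)$. I first construct a resolution $\ldots \to F_1 \to F_0 \to X \to 0$ in $A\Mod$ with every $F_i \in A\Mod\lproj$: inductively, if $Z_n$ denotes the $n$-th syzygy (with $Z_0 := X$), pick an $A\us$-projective cover $P_n \twoheadrightarrow Z_n\us$ and set $F_n := G^+(P_n)$, which lies in $A\Mod\lproj$ since $G^+(P_n)\us = P_n \oplus \Omega P_n$ is $A\us$-projective, and the composition $G^+(P_n) \to G^+(Z_n\us) \to Z_n$ with the counit is epic. Each finite truncation $T_n := \tot\uprod(0 \to Z_{n+1} \to F_n \to \ldots \to F_0 \to X \to 0)$ is a totalization of a finite exact sequence, hence lies in $\calT$ as an iterated extension of totalizations of three-term exact sequences. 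The plan is to recover $X$ as the homotopy inverse limit of the tower $\{T_n\}$ in $\coH^0(A\Mod)$ via a Milnor-type short exact sequence $0 \to \invlim T_n \to \prod T_n \to \prod T_n \to 0$, which places $X$ inside $\calT$.

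The main technical obstacle is the tail product $\prod_{i\geq n}\Sigma^{-i} F_i$, which controls the discrepancy between $X$ and $\invlim T_n$ in $\coH^0(A\Mod)$; although each $F_i$ lies in $A\Mod\lproj$, this infinite product need not. This is precisely where the hypothesis on finite projective dimension of countable products of $A\us$-projectives is used: it ensures the tail product admits a finite $A\Mod\lproj$-resolution, whose own totalization-with-products lies in $\calT$ by the bounded case, so the obstruction to inverting it is itself in $\calT$. Combined with the contraacyclicity of $X$, which by Proposition \ref{prop_coctracyclic} forces $[P, X] = 0$ for every $P \in A\Mod\lproj$ and thereby kills the comparison obstructions governing the map $X \to \invlim T_n$ in $\coH^0(A\Mod)$, this identifies $X$ with an object of $\calT$, as required.
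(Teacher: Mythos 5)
Note first that the paper does not prove this proposition at all; it is quoted as a citation to Positselski's Theorems 3.7 and 3.8, so there is no internal proof to compare against, and I assess your attempt on its own merits.

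Your easy inclusion $\calT\subseteq\calW\uctr(A)$ is fine, and the opening of the converse — resolve $X$ by $\cdots\to F_1\to F_0\to X\to 0$ with each $F_i\in A\Mod\lproj$ and exploit the finite projective dimension of countable products of $A\us$-projectives — is the right collection of ingredients. But the middle step fails as written. The natural tower maps $T_{n+1}\to T_n$ (kill $Z_{n+2}$, project $F_{n+1}\twoheadrightarrow Z_{n+1}$, identity on $F_n,\dots,F_0,X$) give $\invlim T_n = T := \tot\uprod(\cdots\to F_1\to F_0\to X\to 0)$, the totalization of the full infinite exact sequence — \emph{not} $X$. Since $T$ is already visibly a totalization of an exact sequence, the Milnor sequence $0\to\invlim T_n\to\prod T_n\to\prod T_n\to 0$ is a vacuous detour: it shows $T\in\calT$, which we knew, and never puts $X$ in $\calT$. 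Nothing in your last paragraph actually bridges the gap between $T$ and $X$; the phrase that the contraacyclicity of $X$ "kills the comparison obstructions" is not a precise argument.

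What closes the gap is the following, and it must be made explicit. Set $F := \tot\uprod(\cdots\to F_1\to F_0\to 0)$, so that $F$, $X$, $T$ sit in a distinguished triangle in $\coH^0(A\Mod)$; as $T\in\calT$, it suffices to show $F\in\calT$. Now $F\us\cong\prod_{i\geq 0}\Omega^i F_i\us$ is a countable product of $A\us$-projectives, hence of finite projective dimension $m$ by hypothesis, and $F$ is contraacyclic by $2$-out-of-$3$ in the triangle (since $X$ and $T$ are). Choose a finite resolution $0\to R_m\to\cdots\to R_0\to F\to 0$ with every $R_j\in A\Mod\lproj$, possible because after lifting $A\us$-syzygies through $G^+$ the $m$-th kernel is already $A\us$-projective. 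Its totalization lies in $\calT$ and fits in a triangle with $F$ and $L := \tot\uprod(0\to R_m\to\cdots\to R_0\to 0)$, which lies in $A\Mod\lproj$ as a \emph{finite} product of objects of $A\Mod\lproj$. Again by $2$-out-of-$3$, $L$ is contraacyclic, so $L\in A\Mod\lproj\cap\calW\uctr(A)=\proj(A\Mod)$ by Lemma~\ref{lem_doubleorthogonal}, hence contractible, hence zero in $\coH^0(A\Mod)$. This gives $F\in\calT$ and therefore $X\in\calT$. So the overall strategy you sketched is sound, but the inverse-limit mechanism you propose does not produce $X$, and the reduction to the $2$-out-of-$3$ chain terminating in $\proj(A\Mod)$ is the missing content.
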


The next proposition is contained in greater generality in \cite[Section 3.6]{Positselski_TwoKindsOfDerivedCategories}. Restricting to ordinary rings here, we give a direct proof in the setting of abelian categories.

\begin{prop}\label{prop_finglobdim}
If $R$ is an ordinary ring of finite left-global dimension (i.e. $\gldim(R\Mod)<\infty$), then
$\calM\uctr(R)=\calM\uproj(R)$ and $\calM\uco(R)=\calM\uinj(R)$. 
\end{prop}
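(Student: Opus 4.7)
The plan is to prove the two equalities by the same method, so I work out $\calM\uctr(R)=\calM\uproj(R)$ and indicate the dual changes for $\calM\uco(R)=\calM\uinj(R)$ at the end. Since both structures are projective abelian model structures on $\Ch(R\Mod)$, Corollary~\ref{cor_injprojmodel} says that each is determined by its pair $(\calC,\calW)$ forming a complete cotorsion pair, so it is enough to prove $\calW\uctr(R)=\Acyc(R)$. The inclusion $\calW\uctr(R)\subseteq\Acyc(R)$ is contained in Corollary~\ref{cor_comparison}, so the substantive task is the reverse inclusion.

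The key reduction uses the completeness of the cotorsion pair $(\Ch(R\Mod)\lproj,\calW\uctr(R))$ from Proposition~\ref{prop_cocontraderivedmodel}. Given $X\in\Acyc(R)$, pick a short exact sequence
\[
0\to K\to \tilde X\to X\to 0
\]
with $\tilde X\in\Ch(R\Mod)\lproj$ and $K\in\calW\uctr(R)$. Combining $K\in\Acyc(R)$ (from Corollary~\ref{cor_comparison}) with $X\in\Acyc(R)$ and two-out-of-three for acyclic complexes gives $\tilde X\in\Acyc(R)$, so $\tilde X$ is an acyclic complex with projective components. If $\tilde X$ can be shown to be contractible, Proposition~\ref{prop_coctracyclic} places $\tilde X$ in $\calW\uctr(R)$, and the thickness of $\calW\uctr(R)$ guaranteed by Theorem~\ref{thm_hovey}, applied to the sequence above, yields $X\in\calW\uctr(R)$.

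The remaining step, and the only real obstacle in the argument, is the subclaim that every acyclic complex of projective $R$-modules is contractible under the hypothesis $\gldim(R\Mod)\leq d<\infty$. This should be a standard dimension-shifting computation: for cycle modules $Z^n:=\ker(d_{\tilde X}^n)$, the short exact sequences $0\to Z^n\to \tilde X^n\to Z^{n+1}\to 0$ with $\tilde X^n$ projective give the bound $\mathrm{pd}(Z^n)\leq\max(\mathrm{pd}(Z^{n+1})-1,0)$. Iterating $d$ times and using that every $R$-module has projective dimension at most $d$ forces $\mathrm{pd}(Z^{n-d})=0$ for each $n$; since $n$ is arbitrary every cycle module is projective, each short exact sequence splits, and compatible choices of splittings write $\tilde X$ as a direct sum of contractible disk complexes.

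For $\calM\uco(R)=\calM\uinj(R)$ I would argue entirely dually: use the cotorsion pair $(\calW\uco(R),\Ch(R\Mod)\linj)$ of Proposition~\ref{prop_cocontraderivedmodel} to form a fibrant replacement $0\to X\to\tilde X\to C\to 0$ with $\tilde X$ a complex of injectives and $C\in\calW\uco(R)$, invoke the injective analogue of the subclaim (finite left global dimension bounds left injective dimensions by the same $d$, so the same dimension-shifting goes through), and conclude via two-out-of-three for $\calW\uco(R)$. All the nontrivial content is absorbed in the contractibility subclaim; everything else is formal manipulation of complete cotorsion pairs and thickness.
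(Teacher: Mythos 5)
Your proposal is correct, and it takes a genuinely different route from the paper's. The paper proves the reverse inclusion on the \emph{cofibrant} side: it shows $\Ch_\Gamma(\Proj(R))\subseteq{^{\perp}}\Acyc(R)$, first for acyclic complexes of projectives (via the same contractibility lemma you use) and then bootstrapping to arbitrary complexes of projectives by taking a cofibrant resolution $P\to X$ in $\calM\uproj(R)$, noting it is degree-wise split so its kernel $K$ is again an acyclic complex of projectives, and then applying $\text{dg-Hom}\ua_R(-,Z)$ to $0\to K\to P\to X\to 0$ and a cohomology argument to conclude $[X,Z]=0$ for all acyclic $Z$. You instead prove the reverse inclusion on the \emph{weakly trivial} side ($\Acyc(R)\subseteq\calW\uctr(R)$), taking a $\Ch(\Proj(R))$-cover $0\to K\to\tilde X\to X\to 0$ of an acyclic $X$, observing that $\tilde X$ is then an acyclic complex of projectives hence contractible hence contraacyclic, and finishing by thickness of $\calW\uctr(R)$. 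Both are legitimate ways to compare two cotorsion pairs, and both hinge on the same key lemma (finite global dimension forces acyclic complexes of projectives to be contractible). Your route is slightly more economical in that it avoids the degree-wise-split $\text{dg-Hom}$ bootstrap and uses only completeness and thickness, which is a standard and clean pattern for comparing abelian model structures. One small point worth stating explicitly: your reduction to $\calW\uctr(R)=\Acyc(R)$ is justified because both model structures are projective and abelian, so each is determined by its weak equivalence class via ${^\perp}\calW=\calC$ in the associated complete cotorsion pair, and likewise the dual reduction for the coderived case. Your dimension-shifting subclaim and its injective analogue (using that finite left global dimension bounds injective dimensions as well) are both sound.
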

\begin{proof}
By Corollary \ref{cor_comparison} we have $\calC\uproj(R)\subseteq\calC\uctr(R)$, so it suffices to show the reverse
inclusion, i.e. that for any $X\in\Ch_\Gamma(\Proj(R))$ we have $X\in{^{\perp}}\Acyc(R)$. Suppose first that
$X\in\Ch_\Gamma(\Proj(R))\cap\Acyc(R)$. Since $\gldim(R\Mod)<\infty$ by assumption, the syzygies $Z^n(X)$ of $X$ are projective in this
case, and hence $X$ is contractible. By Lemma \ref{lem_projective}, it follows that $X\in\proj(\Ch_\Gamma(R))
\subseteq{^{\perp}}\Acyc(R)$ as claimed. In the general case, pick a cofibrant resolution $p: P\to X$ in
$\calM\uproj(R)$, i.e. $p$ is an epimorphism with $K := \ker(p)\in\Acyc(R)$ and $P\in\calC\uproj(R)$. As the components of
$X$ are projective, $p$ is degree-wise split, so $K\in\Acyc(R)\cap\Ch_\Gamma(\Proj(R))\subseteq{^{\perp}}\Acyc(R)$ 
by the first case. Moreover, applying $\text{dg-Hom}\ua_R(-,Z)$ to $0\to K\to P\to X\to 0$ for $Z\in\Acyc(R)$ and taking
cohomology shows $[X,Z]=0$ as claimed. The proof of $\calM\uco(R)=\calM\uinj(R)$ is similar.    
\end{proof}

Morphisms of dg rings induce Quillen adjunctions between the four models:

\begin{prop}\label{prop_fouradjunctions}
Let $\varphi: R\to A$ be a morphism of dg rings and let $U_\varphi: A\Mod\to R\Mod$ be the forgetful functor.
\begin{enumerate}
\item $A\otimes_R -\dashv U_\varphi$ is a Quillen adjunction
  $\calM\uproj(R)\rightleftarrows\calM\uproj(A)$.
\item$A\otimes_R -\dashv U_\varphi$ is a Quillen adjunction
   $\calM\uctr(R)\rightleftarrows\calM\uctr(A)$.
\item $U_\varphi\dashv\Hom_R(A,-)$ is a Quillen adjunction
  $\calM\uinj(A)\rightleftarrows\calM\uinj(R)$.
\item\label{item_fouradj4} $U_\varphi\dashv\Hom_R(A,-)$ is a Quillen adjunction
  $\calM\uco(A)\rightleftarrows\calM\uco(R)$. 
\item\label{item_fouradj5} If $A\us$ is projective as an $R\us$-module, then $U_\varphi\dashv\Hom_R(A,-)$ is a Quillen
  adjunction $\calM\uctr(A)\rightleftarrows\calM\uctr(R)$. 
\end{enumerate}
\end{prop}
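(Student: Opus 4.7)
My plan is to check each of the five Quillen adjunctions using the explicit characterizations of (trivial) cofibrations and fibrations in abelian model structures: cofibrations are monomorphisms with cofibrant cokernel, trivial cofibrations are monomorphisms with cokernel in $\calC\cap\calW$, and dually for (trivial) fibrations. Since $U_\varphi$ and $\Hom_R(A,-)$ are both exact, preservation of mono- and epimorphisms is automatic, so the real content is to verify that each functor sends the relevant classes of (co)fibrant and weakly trivial objects into one another.

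For (1) and (3) the verification is immediate: in $\calM\uproj(R)$ everything is fibrant and in $\calM\uinj(R)$ everything is cofibrant, with $\calW=\Acyc$ in each case, and $U_\varphi$ trivially preserves acyclicity since it does not alter the underlying graded complex. Thus the right Quillen condition for (1) and the left Quillen condition for (3) both reduce to acyclicity preservation.

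For (2) I would show that the right Quillen functor $U_\varphi$ preserves contraacyclic modules (everything in $\calM\uctr$ being fibrant, only $\calW\uctr$-preservation is at issue). By Proposition \ref{prop_coctracyclic} an $A$-module $X$ is contraacyclic iff $[Z,X]_A=0$ for all $Z\in A\Mod\lproj$, and for any $W\in R\Mod\lproj$ the adjunction $A\otimes_R-\dashv U_\varphi$ gives an isomorphism $[W,U_\varphi X]_R\cong [A\otimes_R W, X]_A$. Since base change preserves projectivity at the underlying graded level, $A\otimes_R W\in A\Mod\lproj$, so this group vanishes when $X$ is contraacyclic. For (4) the symmetric argument applies: the left Quillen functor $U_\varphi$ preserves coacyclic modules via the other half of Proposition \ref{prop_coctracyclic}, the adjunction $\dgHom\ua_R(U_\varphi X,W)\cong \dgHom\ua_A(X,\Hom_R(A,W))$, and the fact that $\Hom_R(A,W)\us=\Hom_{R\us}(A\us,W\us)$ is $A\us$-injective whenever $W\us$ is $R\us$-injective (because $\Hom_{R\us}(A\us,-)$ is right adjoint to the exact restriction functor on underlying graded modules).

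For (5) I would show that the left Quillen functor $U_\varphi:\calM\uctr(A)\to\calM\uctr(R)$ preserves both $A\Mod\lproj$ (the cofibrant class) and $\proj(A\Mod)$ (the trivially cofibrant class). The hypothesis that $A\us$ is projective over $R\us$ immediately yields $U_\varphi(A\Mod\lproj)\subseteq R\Mod\lproj$, since a summand of $\bigoplus A\us$ is a summand of $\bigoplus R\us$. For the trivially cofibrant class I would invoke Lemma \ref{lem_projective}, which identifies $\proj(A\Mod)$ with the $A$-contractible modules having $A\us$-projective underlying graded module, and observe that restricting a contracting homotopy along $\varphi$ transports both properties to $R\Mod$. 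The main technical obstacle is in (2) and (4), where one has to combine Proposition \ref{prop_coctracyclic} with the $\dgHom\ua$-adjunction in the correct direction; the other three items are essentially formal consequences of exactness and standard base-change facts.
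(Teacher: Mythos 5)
Your proposal is correct and its overall strategy (verify preservation of the appropriate classes on each side of the adjunction) matches the paper's. There is, however, a genuine difference in which side you choose to check in items (2) and (4), and it is worth pointing out. The paper's proof explicitly states the strategy ``to check the alternative which involves the parts of the model structures that we know explicitly,'' and then works out only item (4) in detail, by showing that $\Hom_R(A,-)$ preserves fibrations (epimorphisms with kernel in $R\Mod\linj$) and trivial fibrations (split epimorphisms with injective kernel). This argument is purely formal: it only uses that restriction functors on underlying graded modules are exact and that right adjoints to exact functors preserve injectives. You instead verify the left-Quillen side of (4) by showing $U_\varphi(\calW\uco(A))\subseteq\calW\uco(R)$, and dually for (2) you verify the right-Quillen side $U_\varphi(\calW\uctr(A))\subseteq\calW\uctr(R)$. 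These are the implicitly described classes, so you have to reach for Proposition \ref{prop_coctracyclic} and the $\dgHom\ua$ form of the two adjunctions. This is perfectly valid (the ring is dg, not merely cdg, so the characterization applies), and it is in fact pedagogically useful because it shows directly that restriction preserves coacyclicity and contraacyclicity; but it is strictly less elementary than the path the paper takes. For (2), in particular, the more direct route would have been to check that $A\otimes_R-$ preserves (trivial) cofibrations, using that a monomorphism with cokernel in $R\Mod\lproj$ is degreewise split, so $A\otimes_R-$ preserves such short exact sequences, and sends $R\Mod\lproj$ to $A\Mod\lproj$ and $\proj(R\Mod)$ to $\proj(A\Mod)$.

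Two small points of precision in (5). First, the phrase ``a summand of $\bigoplus A\us$ is a summand of $\bigoplus R\us$'' should read: since $A\us$ is a summand of a free $R\us$-module, any free (hence any projective) $A\us$-module is, after restriction, a summand of a free $R\us$-module. Second, you say that ``restricting a contracting homotopy along $\varphi$ transports both properties to $R\Mod$''; restriction of the homotopy gives contractibility over $R$, but projectivity of the underlying graded module transports via the hypothesis on $A\us$, not via the homotopy. A cleaner way to handle the trivially cofibrant class here is to observe that under the hypothesis $\Hom_{R\us}(A\us,-)$ is exact, hence $\Hom_R(A,-)$ is exact, and $U_\varphi$, as a left adjoint to an exact functor, preserves projective objects.
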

\begin{proof}
Given an adjunction between model categories, checking that it is a Quillen adjunction means either to check that the
left adjoint preserves (trivial) cofibrations, or, equivalently, that the right adjoint preserves (trivial)
fibrations. The point here is to check the alternative which involves the parts of the model 
structures that we know explicitly. As an example, we check that $U_\varphi\dashv\Hom_R(A,-)$ is a Quillen adjunction
$\calM\uco(A)\rightleftarrows\calM\uco(R)$ by proving that $\Hom_R(A,-)$ preserves (trivial) fibrations. A fibration in
$\calM\uco(R)$ is an epimorphism $f: Z\to X$ with 
$\ker(f)\in\calF\uco(R)=R\Mod\linj$. Since $\Hom_R(A,-)\us=\Hom_{R\us}(A\us,-)$ and $\ker(f)\us\in\inj(R\us\Mod)$, we
see that $\Hom_R(A,f): \Hom_R(A,Z)\to\Hom_R(A,X)$ is an epimorphism with kernel $\Hom_R(A,\ker(f))$. As
$\Hom_R(A,\ker(f))\us\cong\Hom_{R\us}(A\us,\ker(f)\us)$ and $\Hom_{R\us}(A\us,-)$ is right adjoint to the exact functor
$A\us\Mod\to R\us\Mod$, we get $\ker(\Hom_R(A,f))\us\in\inj(A\us\Mod)$, hence $\ker(\Hom_{R}(A,f))\in A\Mod\linj$. In other
words, $\Hom_R(A,f)$ is a fibration. Similarly, let $f$ is a trivial fibration in $\calM\uco(R)$. Then
$\ker(f)\in\inj(R\Mod)$, so $f$ is a split epimorphism with injective kernel. Since $\Hom_R(A,-)$ preserves
injectives as the right adjoint to the exact functor $A\Mod\to R\Mod$, $\Hom_R(A,f)$ is a split epimorphism with
injective kernel, too, and hence a trivial fibration in $\calM\uco(A)$.
\end{proof}

\begin{rem}\label{rem_gen}
The results of this section generalize to the case where we replaced our base category of
abelian groups by any Grothendieck category $\scA$ equipped with a closed symmetric monoidal tensor product
$-\otimes-:\scA\times\scA\to\scA$. Given a grading group $\Gamma$, the category $\scA^{\Gamma}$ of $\Gamma$-indexed
objects in $\scA$ and the category $\Ch_\Gamma(\scA)$ of $\Gamma$-indexed complexes in $\scA$ are again
Grothendieck and inherit a closed symmetric monoidal tensor product; one can then speak about algebra objects in these
categories ($\Gamma$-graded rings and $\Gamma$-graded dg rings in case $\scA={\mathbb Z}\Mod$), and form their
categories of modules, which are again Grothendieck by Lemma \ref{lem_alggroth}. The arguments of this section carry
over to this situation and show that for any $\Gamma$-graded dg ring $A$ over $(\scA,\otimes)$ its category of
modules carries the standard injective model structure, determined by injectivity and $\calW = \Acyc(A)$, and the
coderived model structure, determined by injectivity and $\calF = A\Mod\linj$. The only difference is that one has to
argue why $\Acyc(A)$ and ${^{\perp}}A\Mod\linj$ are generating; for example, this follows from the fact that both
$\Acyc(A)$ and ${^{\perp}}A\Mod\linj$ contain the class of contractible $A$-modules, and any $A$-module $X$ is the
quotient of the contractible $A$-module $\cone(\id_{\Omega X})$. If $\scA$ has enough projectives, then so do $\scA^\Gamma$,
$\Ch_\Gamma(\scA)$, $A\us\Mod$ and $A\Mod$, and we also get the standard projective and the 
contraderived model structure on $A\Mod$, determined by projectivity and $\calW=\Acyc(A)$
resp. $\calC=A\Mod\lproj$. Also see Remarks \ref{rem_gen2} and \ref{rem_gen3}.

This generalization applies for example to the case where $\scA=\QCoh(X)$ for a quasi-compact and quasi-separated scheme
$X$ (see \cite[Proposition 66]{Murfet_ModulesOverAScheme}, or to $\scA=\calO_X\Mod$ for some ringed space $(X,\calO_X)$
(see \cite[Theorem 18.1.6]{KashiwaraShapira}).
\end{rem}

\subsection{Localization of abelian model structures}

Let $\scA$ be a bicomplete abelian category and $\calM_1$, $\calM_2$ two injective abelian model structures
on $\scA$ such that $\id: \calM_2\to\calM_1$ is right Quillen. In this section we will construct from this datum another
hereditary (usually non-injective) abelian model structure, called the right localization of $\calM_1$ with respect to
$\calM_2$ and denoted $\calM_1/\calM_2$, whose homotopy category fits into a colocalization sequence with the homotopy
categories of $\calM_1$ and $\calM_2$. The arguments in the proof are elementary homological algebra only, and in
particular do not use Quillen's small object argument. Hence, we neither need to assume that the model structures we
work with are cofibrantly generated, nor that the underlying bicomplete abelian category is Grothendieck. Instead, the
assumptions are completely self-dual, and we get a dual left localization result for comparable pairs of projective
abelian model structures. We will see in the next section that what we call localizations here are indeed Bousfield
localizations in the sense of \cite{Hirschhorn_LocalizationOfModelCategories}. 

\begin{fact}\label{fact_weakequivalences}
Let $\scA$ be an abelian category equipped with an abelian model structure $\calM=(\calC,\calW,\calF)$. Then,
given a morphism $f: A\to B$ the following are equivalent:  
\begin{enumerate}
\item[(i)]\label{item:fwe} $f$ is a weak equivalence.
\item[(ii)]\label{item:ffact} $f$ factors as $A\stackrel{\iota}{\rightarrowtail} X\stackrel{p}{\twoheadrightarrow} B$ with
  $\coker(\iota)\in\calC\cap\calW$ and $\ker(p)\in\calF\cap\calW$.  
\end{enumerate}
\end{fact}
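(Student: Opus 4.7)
The plan is to prove both directions by combining the characterization of (trivial) (co)fibrations in an abelian model structure with the factorization and 2-out-of-3 axioms. Recall from Definition~\ref{def_defabelianmodelstructure} (combined with Hovey's correspondence, Theorem~\ref{thm_hovey}) that in $\calM$ the cofibrations are exactly the monomorphisms with cokernel in $\calC$, the trivial cofibrations are the monomorphisms with cokernel in $\calC \cap \calW$, the fibrations are the epimorphisms with kernel in $\calF$, and the trivial fibrations are the epimorphisms with kernel in $\calF \cap \calW$.

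For the direction (ii)$\Rightarrow$(i), I would simply observe that a factorization $f = p \circ \iota$ of the form described realizes $\iota$ as a trivial cofibration and $p$ as a trivial fibration, using the characterizations above. Both are weak equivalences, so by the 2-out-of-3 axiom their composite $f$ is a weak equivalence.

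For the direction (i)$\Rightarrow$(ii), I would apply factorization axiom~(5) to write $f = \beta \circ \alpha$ with $\alpha \in \cof \cap \weak$ and $\beta \in \fib$. Since $f, \alpha \in \weak$ by hypothesis, the 2-out-of-3 axiom forces $\beta \in \weak \cap \fib$. Setting $\iota := \alpha$ and $p := \beta$ and invoking the characterizations recalled in the first paragraph, one gets that $\iota$ is a monomorphism with $\coker(\iota) \in \calC \cap \calW$ and $p$ is an epimorphism with $\ker(p) \in \calF \cap \calW$, which is exactly condition~(ii).

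There is no real obstacle here; the entire content of the statement is an immediate unpacking of the abelian model structure axioms together with the 2-out-of-3 property of $\weak$. The only point to be careful about is that one could equally well have started from factorization axiom~(4) and applied 2-out-of-3 to conclude that the trivial fibration is actually a trivial fibration whose preceding cofibration is a weak equivalence; either axiom works, and both yield the same factorization up to the usual non-uniqueness.
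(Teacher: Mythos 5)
Your proof is correct and takes exactly the same route as the paper, which merely states that (ii)$\Rightarrow$(i) is clear and that (i)$\Rightarrow$(ii) follows from the factorization axiom; your write-up just spells out what the paper leaves implicit, namely the 2-out-of-3 step and the translation via the characterization of (trivial) (co)fibrations in an abelian model structure.
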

\begin{proof}
(ii)$\Rightarrow$(i) is clear, and (i)$\Rightarrow$(ii) follows from the factorization axiom.
\end{proof}

Fact \ref{fact_weakequivalences} is meant to motivate the description of $\calW$ in the following proposition.

\begin{prop}\label{prop_localization}
Let $\scA$ be a bicomplete abelian category and $\calM_1= (\calW_1,\calF_1)$ and $\calM_2= (\calW_2,\calF_2)$ be
 injective abelian model structures on $\scA$ with $\calF_2\subset\calF_1$. Then there exists a
hereditary abelian model structure on $\scA$, called the \textit{right localization} of $\calM_1$ with respect to
$\calM_2$ and denoted $\calM_1/\calM_2$, with $\calC = \calW_2$, $\calF = \calF_1$ and  
\begin{align*}
\calW & \makebox[8mm]{$:=$} \{X\in\scA\ |\ \exists \text{ ex. seq. } 0\to X\to A\to B\to 0\text{ with }A\in\calF_2, B\in\calW_1\}\\
& \makebox[8mm]{$=$} \{X\in\scA\ |\ \exists \text{ ex. seq. } 0\to A\to B\to X\to 0\text{ with }A\in\calF_2, B\in\calW_1\}.
\end{align*}
Moreover, $X\in\calW$ if and only if it belongs to the essential image of $\calF_2\to\Ho(\calM_1)$.
\end{prop}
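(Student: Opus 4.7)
The plan is to verify Hovey's correspondence (Theorem~\ref{thm_hovey}) for the triple $(\calW_2,\calW,\calF_1)$. Since each $\calM_i$ is an injective abelian model structure, it yields a complete hereditary cotorsion pair $(\calW_i,\calF_i)$ with $\calW_i\cap\calF_i=\inj(\scA)$ (heredity by Corollary~\ref{cor_coresres}), and the hypothesis $\calF_2\subseteq\calF_1$ forces $\calW_1\subseteq\calW_2$ on passing to left orthogonals; in particular $\inj(\scA)\subseteq\calF_2\cap\calW_1$. The two cotorsion pairs called for by Hovey's theorem will be exactly $(\calW_2,\calF_2)=\calM_2$ and $(\calW_1,\calF_1)=\calM_1$, so the real content is to: (a) show the two set-theoretic descriptions of $\calW$ coincide; (b) verify $\calW\cap\calF_1=\calF_2$ and $\calW_2\cap\calW=\calW_1$; (c) show $\calW$ is thick; and (d) identify $\calW$ with the preimage of the essential image of $\calF_2\to\Ho(\calM_1)$.

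For (a) I would start with $X$ satisfying a sequence $0\to X\to A\to B\to 0$, $A\in\calF_2$, $B\in\calW_1$, and use enough-projectives of the cotorsion pair $(\calW_2,\calF_2)$ to produce $0\to F\to W\to X\to 0$ with $F\in\calF_2$, $W\in\calW_2$. Pushing out along $X\hookrightarrow A$ yields $W':=W\cup_X A$ fitting simultaneously in $0\to F\to W'\to A\to 0$ (so $W'\in\calF_2$ by extension-closure) and $0\to W\to W'\to B\to 0$ (so $W'\in\calW_2$ by thickness). Hence $W'\in\calF_2\cap\calW_2=\inj(\scA)\subseteq\calW_1$, and thickness of $\calW_1$ applied to the second sequence forces $W\in\calW_1$, placing $X$ in the second description. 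The reverse inclusion is dual, built from enough-injectives of $(\calW_2,\calF_2)$ and the analogous pullback.

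For (b), the inclusion $\calF_2\subseteq\calW\cap\calF_1$ is witnessed by $0\to X\to X\to 0\to 0$; conversely, if $X\in\calW\cap\calF_1$ with $0\to X\to A\to B\to 0$, heredity of $\calF_1$ forces $B\in\calF_1\cap\calW_1=\inj(\scA)$, the sequence splits, and $X$ becomes a summand of $A\in\calF_2$, hence lies in $\calF_2$. For $\calW_2\cap\calW=\calW_1$, embed any $X\in\calW_1$ into an injective $I\in\inj(\scA)\subseteq\calF_2$ (cokernel in $\calW_1$ by thickness) to obtain $\calW_1\subseteq\calW$; conversely, if $X\in\calW_2\cap\calW$, thickness of $\calW_2$ in $0\to X\to A\to B\to 0$ forces $A\in\calW_2\cap\calF_2=\inj(\scA)\subseteq\calW_1$, after which thickness of $\calW_1$ yields $X\in\calW_1$.

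For (d), which then drives (c): the inclusion $X\hookrightarrow A$ in any $\calW$-sequence is a trivial cofibration of $\calM_1$, so $X\cong A$ in $\Ho(\calM_1)$; conversely, given a weak equivalence $X\to A$ in $\calM_1$ with $A\in\calF_2$, factor it as a trivial cofibration $X\rightarrowtail Z$ followed by a fibration $Z\twoheadrightarrow A$, use 2-out-of-3 to make the latter a trivial fibration (hence a split epi with injective kernel), and recognise $Z\cong A\oplus I$ with $A\oplus I\in\calF_2$ and $Z/X\in\calW_1$. Thickness of $\calW$ then amounts to thickness of the essential image of $\calF_2$ inside the triangulated $\Ho(\calM_1)$: shift-closure follows from the cotorsion-pair structure of $\calM_2$ together with $\inj(\scA)\subseteq\calF_2$; triangle/extension closure follows because every triangle between $\calF_2$-objects can be represented by a short exact sequence in $\calF_1$ whose third term lies again in $\calF_2$ by coresolving closure. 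The main subtle point will be retract-closure, which I would handle by lifting the relevant idempotent in $\Ho(\calM_1)$ through a fibrant replacement in $\calM_2$ of each summand and matching the resulting $\calW_2$-cokernels to $\calW_1$ against the ambient $\calW$-sequence. Once $\calW$ is thick, Theorem~\ref{thm_hovey} assembles the abelian model structure $\calM_1/\calM_2$, and heredity is inherited from both constituent cotorsion pairs.
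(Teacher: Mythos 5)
Your global strategy matches the paper's — verify Hovey's criterion (Theorem~\ref{thm_hovey}) for the triple $(\calW_2,\calW,\calF_1)$ by identifying the two cotorsion pairs with $(\calW_2,\calF_2)$ and $(\calW_1,\calF_1)$ and then establishing thickness of $\calW$ — and steps (b) and (d) are essentially correct. However, step (a) contains a concrete error: from $0\to F\to W\xrightarrow{p} X\to 0$ and $X\xrightarrow{i} A$, the maps $p$ and $i$ form a composable pair, not a span over $X$, so the pushout ``$W\cup_X A$'' is simply undefined. An amalgam $W'$ with $W'/F\cong A$ and $W'/W\cong B$ does exist, but not for a formal reason: one must lift $[A]\in\ext^1_\scA(B,X)$ along $\ext^1_\scA(B,W)\to\ext^1_\scA(B,X)$, and the obstruction in $\ext^2_\scA(B,F)$ vanishes because $(\calW_1,\calF_1)$ is coresolving with $B\in\calW_1$ and $F\in\calF_2\subseteq\calF_1$ (Lemma~\ref{lem_twooutofthree}). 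You give neither this argument nor the paper's alternative, which mediates through an injective: choose $0\to A'\to I\to A\to 0$ with $I\in\inj(\scA)$ and $A'\in\calF_2$, pull $I\twoheadrightarrow A$ back along $X\hookrightarrow A$ to obtain $B'$ sitting in $0\to A'\to B'\to X\to 0$ and $0\to B'\to I\to B\to 0$, and conclude $B'\in\calW_1$ by thickness.

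The thickness step (c) is also underdeveloped. To transport the closure properties of $\calF_2/\inj(\scA)$ inside $\Ho(\calM_1)\cong\calF_1/\inj(\scA)$ back to a thickness statement about $\calW\subseteq\scA$, you need that the fibrant-replacement assignment $X\mapsto R_1X$ is an additive functor $\scA\to\calF_1/\inj(\scA)$ carrying short exact sequences of $\scA$ to distinguished triangles; this is exactly what Lemma~\ref{lem_exact} combined with Lemma~\ref{lem_stablyisomorphic}(2) delivers, and your sketch does not engage with it. The ``idempotent lifting'' plan for retract-closure is moreover both unnecessary and delicate, since an idempotent in the stable category $\calF_1/\inj(\scA)$ need not lift to $\calF_1$. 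The clean argument, which the paper uses, is: $R_1X\oplus R_1Y\cong R_1(X\oplus Y)$ is stably isomorphic to some $A\in\calF_2$, so by Lemma~\ref{lem_stablyisomorphic}(1) there are injectives $I,J$ with $R_1X\oplus R_1Y\oplus J\cong A\oplus I$ in $\calF_1$, whence $R_1X\in\calF_2$ because $\calF_2$ contains $\inj(\scA)$, is extension-closed, and is closed under direct summands.
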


In the course of the proof of Proposition \ref{prop_localization} we will need the following lemmata:
\begin{lem}\label{lem_stablyisomorphic}
Let $\calF$ be a Frobenius category and let $\inj$ be its class of projective-injective objects. Then the following
hold:
\begin{enumerate}
\item Assume $\calF$ weakly idempotent complete, i.e. every split monomorphism has a cokernel. Then, given
  $X,Y\in\calF$, we have $X\cong Y$ in the stable category 
  $\calF/\inj$ if and only if there exist $I,J\in\inj$ such that $X\oplus J\cong Y\oplus I$ in $\calF$.
\item Given an admissible short exact sequence $X\rightarrowtail Y\twoheadrightarrow Z$, there exists a
  canonical morphism $Z\to \Sigma X$ in the stable category $\calF/\inj$ such that $X\to Y\to Z\to \Sigma X$ is a
  distinguished triangle in $\calF/\inj$.
\end{enumerate}
\end{lem}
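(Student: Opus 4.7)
Part (1), direction $(\Leftarrow)$, is immediate: any isomorphism $X \oplus J \cong Y \oplus I$ in $\calF$ with $I, J \in \inj$ descends to an isomorphism $X \cong Y$ in $\calF/\inj$, since objects of $\inj$ are zero there.

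For the forward direction of (1), the plan is to pick lifts $f: X \to Y$ and $g: Y \to X$ of mutually inverse morphisms in $\calF/\inj$, yielding factorizations $\id_X - gf = vu$ through some $I \in \inj$ (with $u: X \to I$, $v: I \to X$) and $\id_Y - fg = v' u'$ through some $J \in \inj$ (with $u': Y \to J$, $v': J \to Y$). The computation $(g, v) \circ \binom{f}{u} = gf + vu = \id_X$ shows $\binom{f}{u}: X \to Y \oplus I$ is a split monomorphism; by weak idempotent completeness it admits a cokernel $K$ and a splitting $\sigma = \binom{s}{t}: K \to Y \oplus I$, producing $Y \oplus I \cong X \oplus K$. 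The crux is to show $K \in \inj$. From $(g, v)\sigma = 0$ one reads off $gs = -vt$, and combining with $v'u' = \id_Y - fg$ gives $v'(u's) = s - fgs = s + fvt$, so $s = v'(u's) - fvt$. Therefore $\sigma$ factors as
\[ K \xrightarrow{\binom{u's}{t}} J \oplus I \xrightarrow{\left(\begin{smallmatrix} v' & -fv \\ 0 & \id_I \end{smallmatrix}\right)} Y \oplus I. \]
Since $\sigma$ is a split monomorphism, so is the first arrow, exhibiting $K$ as a direct summand of $J \oplus I \in \inj$; closure of $\inj$ under summands (which holds in any Frobenius category) then gives $K \in \inj$. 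Setting $J := K$ yields the desired isomorphism $X \oplus J \cong Y \oplus I$.

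For part (2), the standard recipe applies. Fix an admissible short exact sequence $X \rightarrowtail I_X \twoheadrightarrow \Sigma X$ with $I_X \in \inj$; injectivity of $I_X$ extends $X \rightarrowtail I_X$ along the inflation $X \rightarrowtail Y$ to a morphism $\alpha: Y \to I_X$, and passing to cokernels produces $\bar\alpha: Z \to \Sigma X$. Two choices of $\alpha$ differ by a morphism $Y \to I_X$ vanishing on $X$, hence factoring through $Z$, so the induced difference $Z \to \Sigma X$ itself factors through $I_X \in \inj$ and vanishes in $\calF/\inj$; independence from the chosen sequence $X \rightarrowtail I_X \twoheadrightarrow \Sigma X$ is analogous, so $\bar\alpha$ is canonical in $\calF/\inj$. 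By the definition of the triangulated structure on the stable category of a Frobenius category (due to Happel), the resulting triangle $X \to Y \to Z \to \Sigma X$ is distinguished. The only genuine obstacle lies in (1), where one must combine both factorizations $\id_X - gf = vu$ and $\id_Y - fg = v'u'$ simultaneously to recognize the cokernel $K$ as a summand of an injective; part (2) is routine once the functoriality of the connecting map modulo $\inj$ is observed.
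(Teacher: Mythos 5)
Your proof is correct and follows essentially the same strategy as the paper's. For part (1) the paper first reduces to the case $gf=\id_X$ by replacing $Y$ with $Y\oplus I$ and then exhibits the cokernel $K$ of $f$ as a summand of $J$ via the identity $\id_K=(kq)(vs)$; you skip the reduction and instead exhibit the cokernel of $\binom{f}{u}:X\to Y\oplus I$ as a summand of $J\oplus I$ — the same argument carried out in place. For part (2) the paper simply cites Happel's Lemma 2.7, which is exactly the statement you sketch.
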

\begin{proof}
(1) ``$\Leftarrow$'' is clear since all object in $\calI$ are isomorphic to $0$ in $\calF/\inj$. ``$\Rightarrow$'':
  Suppose $X\cong Y$ in $\calF/\inj$. By definition, this means that we can find $f: X\to Y$, $g: Y\to X$ such that $\id_Y-fg$
  and $\id_X - gf$ respectively factor through some object in $\inj$. Pick $p: I\to X$ and $u: X\to I$ with $I\in\inj$
  such that $\id_X = gf+pu$. Then $(f,u)^t: X\to Y\oplus I$ is a split monomorphism with left inverse $(g,p): Y\oplus
  I$, so replacing $Y$ by $Y\oplus I$ we may assume $gf=\id_X$. In this case, $f$ is a split monomorphism, so by assumption
  we can choose a cokernel $k: Y\to K$ of $f$, and we have $s: K\to Y$ be such that $sk=\id-fg$. Then, picking
  morphisms $q: J\to Y$ and $v: Y\to J$ with $J\in\inj$ such that $\id_Y = fg + qv$ we get $\id_K = ks =
  k(fg+qv)s=(kq)(vs)$. Again using the assumption that $\calF$ is weakly idempotent complete, we conclude that $K$ is a
  summand of $J$, and in particular $K\in\inj$. Since $Y\cong X\oplus K$, this proves the claim. 

(2) See \cite[Lemma 2.7]{Happel}.
\end{proof}
\begin{lem}\label{lem_exact}
Let $\scA$ be an abelian category and $(\calW,\calF)$ be a coresolving cotorsion pair with enough injectives. Then for any short
exact sequence $0\to X_1\to X_2\to X_3\to 0$ in $\scA$ there exists a commutative diagram 
\begin{equation*}\begin{tikzpicture}[description/.style={fill=white,inner sep=2pt}]
    \matrix (m) [matrix of math nodes, row sep=1.5em,
                 column sep=1.5em, text height=1.5ex, text depth=0.25ex,
                 inner sep=0pt, nodes={inner xsep=0.3333em, inner ysep=0.3333em}]
    {
      & 0 & 0 & 0 \\
       0 & X_1 & X_2 & X_3 & 0 \\
       0 & A_1 & A_2 & A_3 & 0 \\
       0 & B_1 & B_2 & B_3 & 0 \\
       & 0 & 0 & 0 \\
    };
    \draw[->] (m-1-2) -- (m-2-2);
    \draw[->] (m-1-3) -- (m-2-3);
    \draw[->] (m-1-4) -- (m-2-4);
    \draw[->] (m-2-2) -- (m-3-2);
    \draw[->] (m-2-3) -- (m-3-3);
    \draw[->] (m-2-4) -- (m-3-4);
    \draw[->] (m-3-2) -- (m-4-2);
    \draw[->] (m-3-3) -- (m-4-3);
    \draw[->] (m-3-4) -- (m-4-4);
    \draw[->] (m-4-2) -- (m-5-2);
    \draw[->] (m-4-3) -- (m-5-3);
    \draw[->] (m-4-4) -- (m-5-4);
    \draw[->] (m-2-1) -- (m-2-2);
    \draw[->] (m-2-2) -- (m-2-3);
    \draw[->] (m-2-3) -- (m-2-4);
    \draw[->] (m-2-4) -- (m-2-5);
    \draw[->] (m-3-1) -- (m-3-2);
    \draw[->] (m-3-2) -- (m-3-3);
    \draw[->] (m-3-3) -- (m-3-4);
    \draw[->] (m-3-4) -- (m-3-5);
    \draw[->] (m-4-1) -- (m-4-2);
    \draw[->] (m-4-2) -- (m-4-3);
    \draw[->] (m-4-3) -- (m-4-4);
    \draw[->] (m-4-4) -- (m-4-5);
\end{tikzpicture}\end{equation*}
such that $A_i\in\calF$, $B_i\in\calW$ and all rows and columns are exact.
\end{lem}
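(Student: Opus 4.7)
The plan is to build the diagram in two stages, each one a single application of the enough-injectives property of $(\calW,\calF)$, and then to fill in the remaining data by forming canonical quotients and invoking the closure of $\calF$ under cokernels of monomorphisms together with the closure of $\calW$ under extensions.

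First, I would pick, using enough injectives, an exact sequence $0\to X_1\to A_1\to B_1\to 0$ with $A_1\in\calF$ and $B_1\in\calW$, and then form the pushout $P$ of $X_1\hookrightarrow A_1$ along $X_1\hookrightarrow X_2$. Since pushouts of monomorphisms along arbitrary maps in an abelian category remain monomorphisms, $P$ sits in two short exact sequences $0\to A_1\to P\to X_3\to 0$ and $0\to X_2\to P\to B_1\to 0$. Next, I would apply enough injectives a second time, now to $P$, to obtain $0\to P\to A_2\to B_3\to 0$ with $A_2\in\calF$ and $B_3\in\calW$. Setting $A_3:=A_2/A_1$ (via $A_1\hookrightarrow P\hookrightarrow A_2$) and $B_2:=A_2/X_2$ (via $X_2\hookrightarrow P\hookrightarrow A_2$) produces all nine entries of the desired diagram.

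The remaining checks are routine diagram chases. The middle row $0\to A_1\to A_2\to A_3\to 0$ holds by construction, and $A_3\in\calF$ by coresolvingness of $(\calW,\calF)$. The third column $0\to X_3\to A_3\to B_3\to 0$ follows from the snake lemma applied to the inclusion of $0\to A_1\to P\to X_3\to 0$ into $0\to A_1\to A_2\to A_3\to 0$ with identity on the left, which identifies $B_3$ with $A_2/P$ and shows $X_3\hookrightarrow A_3$. The bottom row $0\to B_1\to B_2\to B_3\to 0$ arises from the filtration $X_2\subset P\subset A_2$ through the identifications $B_1=P/X_2$, $B_2=A_2/X_2$, $B_3=A_2/P$, and it forces $B_2\in\calW$ because cotorsion classes are closed under extensions. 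Commutativity of every square is automatic from the universal properties involved.

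The one conceptual subtlety, and arguably the only obstacle, is why one cannot simply run a classical horseshoe lemma on two independently chosen coresolutions of $X_1$ and $X_3$: extending the coresolution map $X_1\to A_1$ across $X_1\hookrightarrow X_2$ would require $\ext^1_\scA(X_3,A_1)=0$, which is not available here since $X_3$ is not assumed to lie in $\calW$. Building the pushout $P$ first and only afterwards coresolving $P$ bypasses the lifting problem entirely, which is precisely why enough injectives is sufficient for the whole assertion.
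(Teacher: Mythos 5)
Your construction is correct and follows essentially the same route as the paper: coresolve $X_1$, push out along $X_1\hookrightarrow X_2$, then coresolve the pushout. The paper phrases the pushout as a reduction (``may assume $X_1\in\calF$'') and recovers $A_3$ via a second pushout, while you keep the original sequence in play and take direct quotients $A_2/A_1$ and $A_2/X_2$, but the underlying objects and mechanism are identical.
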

\begin{proof}
Let $0\to X_1\to A_1\to B_1\to 0$ be short exact with $A_1\in\calF$, $B_1\in\calW$. Taking the pushout of $A_1\leftarrow
X_1\to X_2$ we get a monomorphism of exact sequences
\begin{equation*}\begin{tikzpicture}[description/.style={fill=white,inner sep=2pt}]
    \matrix (m) [matrix of math nodes, row sep=1.5em,
                 column sep=1.5em, text height=1.5ex, text depth=0.25ex,
                 inner sep=0pt, nodes={inner xsep=0.3333em, inner ysep=0.3333em}]
    {
       0 & X_1 & X_2 & X_3 & 0 \\ 
       0 & A_1 & Z & X_3 & 0 \\ 
    };
    \draw[->] (m-1-1) -- (m-1-2);
    \draw[->] (m-1-2) -- (m-1-3);
    \draw[->] (m-1-3) -- (m-1-4);
    \draw[->] (m-1-4) -- (m-1-5);
    \draw[->] (m-2-1) -- (m-2-2);
    \draw[->] (m-2-2) -- (m-2-3);
    \draw[->] (m-2-3) -- (m-2-4);
    \draw[->] (m-2-4) -- (m-2-5);
    \draw[->] (m-1-2) -- (m-2-2);
    \draw[->] (m-1-2) -- (m-2-2);
    \draw[->] (m-1-3) -- (m-2-3);
    \draw[->] (m-1-4) -- (m-2-4);
\end{tikzpicture}\end{equation*}
whose cokernel $0\to B_1\to B_1\to 0\to 0$ is an exact sequence in $\calW$. Replacing $0\to X_1\to X_2\to X_3\to 0$ by
$0\to A_1\to Z\to X_3\to 0$ we may therefore assume $A_1 = X_1\in\calF$ right from the beginning. In this case, choose an exact sequence $0\to X_2\to
A_2\to B_2\to 0$ with $A_2\in\calF$, $B_2\in\calW$. Forming the pushout of $A_2\leftarrow X_2\to X_3$ we get the
following commutative diagram:
\begin{equation*}\begin{tikzpicture}[description/.style={fill=white,inner sep=2pt}]
    \matrix (m) [matrix of math nodes, row sep=1.5em,
                 column sep=1.5em, text height=1.5ex, text depth=0.25ex,
                 inner sep=0pt, nodes={inner xsep=0.3333em, inner ysep=0.3333em}]
    {
       0 & A_1 & X_2 & X_3 & 0 \\ 
       0 & A_1 & A_2 & Z & 0 \\ 
    };
    \draw[->] (m-1-1) -- (m-1-2);
    \draw[->] (m-1-2) -- (m-1-3);
    \draw[->] (m-1-3) -- (m-1-4);
    \draw[->] (m-1-4) -- (m-1-5);
    \draw[->] (m-2-1) -- (m-2-2);
    \draw[->] (m-2-2) -- (m-2-3);
    \draw[->] (m-2-3) -- (m-2-4);
    \draw[->] (m-2-4) -- (m-2-5);
    \draw[double, double distance=0.5mm] (m-1-2) -- (m-2-2);
    \draw[->] (m-1-3) -- (m-2-3);
    \draw[->] (m-1-4) -- (m-2-4);
\end{tikzpicture}\end{equation*}
By definition, the right square is pushout, but as $X_2\to A_2$ is a monomorphism, it is also pullback, and hence the
second row is exact. Since $\calF$ is closed under cokernels of monomorphisms by assumption, we conclude
$Z\in\calF$. Hence we have constructed a monomorphism from $0\to A_1\to X_2\to X_3\to 0$ into a short exact sequence in
$\calF$ with cokernel $0\to 0\to B_2\to B_2\to 0$ lying in $\calW$, as required.
\end{proof}
\begin{proof}[Proof of Proposition \ref{prop_localization}]
Recall from Corollary \ref{cor_coresres} that $\calM_1$ and $\calM_2$ are automatically hereditary, and in particular
$\calF_1$ and $\calF_2$ are closed under taking cokernels of monomorphisms; this will be used several times in the
proof. We begin by showing that both definitions of $\calW$ agree. 

Suppose $X\in\scA$ admits a short exact sequence $0\to A\to B\to X\to 0$ with $A\in\calF_2$ and $B\in\calW_1$. Since
$(\calW_1,\calF_1)$ is a cotorsion pair with 
$\calW_1\cap\calF_1=\inj$, we can choose a short exact sequence $0\to B\to I\to B\p\to 0$ with $I\in\inj$
and $B\p\in\calW_1$. Taking pushout, we get the following commutative diagram with exact rows and columns and
bicartesian upper right square: 
\begin{equation*}\begin{tikzpicture}[description/.style={fill=white,inner sep=2pt}]
    \matrix (m) [matrix of math nodes, row sep=1.5em,
                 column sep=1.5em, text height=1.5ex, text depth=0.25ex,
                 inner sep=0pt, nodes={inner xsep=0.3333em, inner ysep=0.3333em}]
    {
      && 0 & 0 \\
       0 & A & B & X & 0 \\
       0 & A & I & A\p & 0 \\
       && B\p & B\p \\
       && 0 & 0 \\
    };
    \draw[->] (m-1-3) -- (m-2-3);
    \draw[->] (m-1-4) -- (m-2-4);
    \draw[->] (m-2-1) -- (m-2-2);
    \draw[->] (m-2-2) -- (m-2-3);
    \draw[->] (m-2-3) -- (m-2-4);
    \draw[->] (m-2-4) -- (m-2-5);
    \draw[->] (m-3-1) -- (m-3-2);
    \draw[->] (m-3-2) -- (m-3-3);
    \draw[->] (m-3-3) -- (m-3-4);
    \draw[->] (m-3-4) -- (m-3-5);
    \draw[double,double distance=0.5mm] (m-2-2) -- (m-3-2);
    \draw[double,double distance=0.5mm] (m-4-3) -- (m-4-4);
    \draw[>->] (m-2-3) -- (m-3-3);
    \draw[->] (m-2-4) -- (m-3-4);
    \draw[->] (m-3-3) -- (m-4-3);
    \draw[->] (m-4-3) -- (m-5-3);
    \draw[->] (m-3-4) -- (m-4-4);
    \draw[->] (m-4-4) -- (m-5-4);
\end{tikzpicture}\end{equation*}
As $\calF_2$ is closed under cokernels of monomorphisms, we have $A\p\in\calF_2$, and hence $0\to X\to A\p\to B\p$ is our desired sequence.

Reversing the argument (using that any $A\in\calF_2$ admits a short exact sequence $0\to A\p\to
I\to A\to 0$ with $I\in\calW_2\cap\calF_2=\inj$ and $A\p\in\calF_2$), we see that the existence of a short
exact sequence $0\to X\to A\to B\to 0$ with $A\in\calF_2$ and $B\in\calW_1$ also implies the existence of a short exact
sequence $0\to A\p\to B\p\to X\to 0$ with $A\p\in\calF_2$ and $B\p\in\calW_1$. Hence the two definitions of $\calW$ agree.

For the thickness and the last claim, the argument goes as follows: As $(\calW_1,\calF_1)$ is a complete cotorsion pair, for any
$X\in\scA$ there exists an exact sequence $0\to X\to A\to B\to 0$ with $A\in\calF_1$ and $B\in\calW_1$. The assignment
$X\mapsto A$ defines an additive functor $\scA\to \calF_1/\calF_1\cap\calW_1 = \calF_1/\inj$ (it is a short check that
any morphism between objects of $\calF_1$ factoring through an object in $\calW_1$ actually factors through some object in
$\calF_1\cap\calW_1$; see also Proposition \ref{prop_cotorsiontorsion}) and in particular the object $A$ from above is unique
up to canonical isomorphism in $\calF_1/\inj$. Next, form the full subcategory $\calF_2/\inj$ of $\calF_1/\inj$
consisting of objects $\calF_2$ (recall that passing to the stable category does not change objects). It is isomorphism
closed by Lemma \ref{lem_stablyisomorphic}, and using this we see that $\calW$ equals the preimage of $\calF_2/\inj$ under 
$\scA\to\calF_1/\inj$. With this description at hand, we can now prove the thickness of $\calW$. As the functor
$\scA\to\calF_1/\inj$ from above is additive and $\calF_2/\inj$ is closed under direct summands in $\calF_1/\inj$,
$\calW$ is closed under direct summands, too. For the 2-out-of-3 property, note that $\calF_2/\inj$ is a triangulated
subcategory of $\calF_1/\inj$, so it suffices to show that $\scA\to\calF_1/\inj$ turns short exact sequences into
distinguished triangles, which follows from Lemma \ref{lem_stablyisomorphic}(2) and Lemma \ref{lem_exact}.

It remains to show that $\calM_1/\calM_2$ is hereditary and that $(\calC\cap\calW,\calF)$ and $(\calC,\calW\cap\calF)$
are complete cotorsion pairs. The former is true since $\calF = \calF_1$ is closed under cokernels of monomorphisms by
assumption and $\calC=\calW_2$ even satisfies the $2$-out-of-$3$ property; the latter will 
follow once we showed that $(\calC\cap\calW,\calF)=(\calW_1,\calF_1)$ and $(\calC,\calW\cap\calF)=(\calW_2,\calF_2)$,
as these are complete cotorsion pairs by assumption.

$\calW\cap\calF = \calF_2$: Suppose $X\in\calW\cap\calF=\calW\cap\calF_1$ and let $0\to X\to A\to B\to 0$ be a short
exact sequence with $A\in\calF_2$ and $B\in\calW_1$. By definition, $\ext^1(\calW_1,X)=0$, so the sequence splits and
$X\in\calF_2$ as $\calF_2$ is thick. This shows that $\calF_1\cap\calW\subset\calF_2$, and the reverse inclusion
$\calF_2\subset\calF_1\cap\calW$ is clear.

$\calC\cap\calW = \calW_1$: Suppose $X\in\calC\cap\calW=\calW_2\cap\calW$ and let $0\to A\to B\to X\to 0$ be a short exact
sequence with $A\in\calF_2$ and $B\in\calW_1$. Again, this sequence is split since $X\in{^{\perp}}\calF_2$, so
$X\in\calW_1$. Hence $\calW_2\cap\calW\subset\calW_1$, and the reverse inclusion is clear. 
\end{proof}
\begin{cor}\label{cor_loc} In the situation of Proposition \ref{prop_localization} the sequence
$$\Ho(\calM_2)\xrightarrow{\ \ \bfR\id\ \ }\Ho(\calM_1)\xrightarrow{\ \ \bfR\id\ \ }\Ho(\calM_1/\calM_2)$$
is a colocalization sequence \cite[Definition 3.1]{Krause_StableDerived} of triangulated categories.
\end{cor}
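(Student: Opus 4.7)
The plan is to realize each of the two $\bfR\id$-functors in the sequence as the right half of a Quillen adjunction, identify the three homotopy categories via Proposition~\ref{prop_descriptionhomotopycategory}, and then verify the three axioms of Krause's colocalization sequence directly from these identifications. No single step should be really hard; the only real input is the description $\calW\cap\calF=\calF_2$ already established in Proposition~\ref{prop_localization}.

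First I would verify that the identity on $\scA$ is left Quillen both as a functor $\calM_1\to\calM_2$ and as a functor $\calM_1/\calM_2\to\calM_1$. For $\calM_1\to\calM_2$: in both injective structures every object is cofibrant, so monomorphisms are the common class of cofibrations, and trivial cofibrations are preserved because $\calW_1\subset\calW_2$ (which is asserted in Proposition~\ref{prop_localization}). For $\calM_1/\calM_2\to\calM_1$: cofibrations in $\calM_1/\calM_2$ are monomorphisms (with cokernel in $\calC=\calW_2$), hence cofibrations in $\calM_1$; and trivial cofibrations in $\calM_1/\calM_2$ are monomorphisms with cokernel in $\calC\cap\calW=\calW_1$ (computed at the end of the proof of Proposition~\ref{prop_localization}), which matches trivial cofibrations in $\calM_1$. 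Passing to homotopy categories produces derived adjunctions $\bfL\id\dashv\bfR\id$ in each case, and in particular equips each of the two $\bfR\id$-functors in the corollary with a left adjoint.

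Next I would use Proposition~\ref{prop_descriptionhomotopycategory} to describe the three homotopy categories explicitly. For an injective abelian model structure $\omega=\calC\cap\calW\cap\calF=\calW\cap\calF=\inj(\scA)$ by Corollary~\ref{cor_injprojmodel}, so $\Ho(\calM_i)\cong\calF_i/\inj$. For $\calM_1/\calM_2$ one has $\calC\cap\calF=\calW_2\cap\calF_1$, and using $\calW\cap\calF=\calF_2$ from Proposition~\ref{prop_localization} we compute $\omega=\calW_2\cap(\calW\cap\calF)=\calW_2\cap\calF_2=\inj$, so $\Ho(\calM_1/\calM_2)\cong(\calW_2\cap\calF_1)/\inj$. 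Under these identifications the derived functor $\bfR\id:\Ho(\calM_2)\to\Ho(\calM_1)$ is the inclusion $\calF_2/\inj\hookrightarrow\calF_1/\inj$ (an object in $\calF_2$ needs no fibrant replacement in $\calM_2$, and morphism sets modulo factorings through $\inj$ agree), and $\bfL\id:\Ho(\calM_1/\calM_2)\to\Ho(\calM_1)$ is the inclusion $(\calW_2\cap\calF_1)/\inj\hookrightarrow\calF_1/\inj$. Both are therefore fully faithful essentially tautologically.

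Finally I would verify the kernel-image axiom. The essential image of $\bfR\id:\Ho(\calM_2)\to\Ho(\calM_1)$ is precisely the full subcategory $\calF_2/\inj$ of $\calF_1/\inj$. On the other hand, for $X\in\calF_1$ one has $\bfR\id(X)=0$ in $\Ho(\calM_1/\calM_2)$ iff $X\to 0$ is a weak equivalence in $\calM_1/\calM_2$, iff $X\in\calW\cap\calF=\calF_2$, again invoking Proposition~\ref{prop_localization}. Hence the kernel of the second $\bfR\id$ coincides with the essential image of the first, completing the verification that the sequence is a colocalization sequence of triangulated categories. The entire argument is bookkeeping on top of the explicit description of $\calW\cap\calF$ furnished by the preceding proposition.
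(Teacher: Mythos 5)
Your proof is correct and follows the same approach as the paper: identify each homotopy category with the appropriate stable category via Proposition~\ref{prop_descriptionhomotopycategory}, observe that the derived functors become inclusions of full subcategories (hence fully faithful), and use the description of $\calW\cap\calF$ from Proposition~\ref{prop_localization} for the kernel-image axiom. You spell out the Quillen adjunctions and the computation of $\omega$ for $\calM_1/\calM_2$ a bit more explicitly than the paper, but the structure of the argument is the same.
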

\begin{proof}
Consider the following commutative diagram
\begin{equation*}\begin{tikzpicture}[description/.style={fill=white,inner sep=2pt}]
    \matrix (m) [matrix of math nodes, row sep=2em,
                 column sep=5.5em, text height=1.5ex, text depth=0.25ex,
                 inner sep=0pt, nodes={inner xsep=0.3333em, inner ysep=0.3333em}]
    {
       \Ho(\calM_1/\calM_2) & \Ho(\calM_1) & \Ho(\calM_2)\\
       \calF_1\cap\calW_2/\calI & \calF_1/\inj & \calF_2/\inj.\\
    };
    \draw[->] ($(m-1-1.east) + (0,0.6mm)$) -- node[above,scale=0.75]{$\bfL\id$} ($(m-1-2.west) + (0,0.6mm)$);
    \draw[->] ($(m-1-2.west) - (0,0.6mm)$) -- node[below,scale=0.75]{$\bfR\id$} ($(m-1-1.east) - (0,0.6mm)$);
    \draw[->] ($(m-1-2.east) + (0,0.6mm)$) -- node[above,scale=0.75]{$\bfL\id$} ($(m-1-3.west) + (0,0.6mm)$);
    \draw[->] ($(m-1-3.west) - (0,0.6mm)$) -- node[below,scale=0.75]{$\bfR\id$} ($(m-1-2.east) - (0,0.6mm)$);
    \draw[->] ($(m-2-1.east) + (0,0.6mm)$) -- node[above,scale=0.75]{$\inc$} ($(m-2-2.west) + (0,0.6mm)$);
    \draw[->] ($(m-2-2.west) - (0,0.6mm)$) -- ($(m-2-1.east) - (0,0.6mm)$);
    \draw[->] ($(m-2-2.east) + (0,0.6mm)$) -- ($(m-2-3.west) + (0,0.6mm)$);
    \draw[->] ($(m-2-3.west) - (0,0.6mm)$) -- node[below,scale=0.75]{$\inc$} ($(m-2-2.east) - (0,0.6mm)$);
    \draw[->] (m-2-1) -- node[left,scale=0.75]{$\cong$} (m-1-1);
    \draw[->] (m-2-2) -- node[left,scale=0.75]{$\cong$} (m-1-2);
    \draw[->] (m-2-3) -- node[left,scale=0.75]{$\cong$} (m-1-3);
\end{tikzpicture}\end{equation*}
By Proposition \ref{prop_localization} the kernel of $\Ho(\calM_1)\to\Ho(\calM_1/\calM_2)$ equals the essential image of
$\calF_2/\inj\to\Ho(\calM_1)$, i.e. the essential image of $\bfR\id:\Ho(\calM_2)\to\Ho(\calM_1)$. It
remains to be shown that the derived functors $\bfR\id:\Ho(\calM_2)\to\Ho(\calM_1)$ and $\bfL\id:
\Ho(\calM_1/\calM_2)\to\Ho(\calM_1)$ are fully faithful, which follows from the commutativity of the diagram and the
fully faithfulness of $\calF_2/\inj\to\calF_1/\inj$ and $\calF_1\cap\calW_2/\inj\to\calF_1/\inj$.
\end{proof}

Dually, we have the following localization result for projective model structures:

\begin{prop}\label{prop_localizationdual}
Let $\scA$ be a bicomplete abelian category and $\calM_1 = (\calC_1,\calW_1)$ and $\calM_2 = (\calC_2,\calW_2)$ be
projective, abelian model structures on $\scA$ with $\calC_2\subset\calC_1$. Then there exists
a hereditary abelian model structure on $\scA$, called the \textit{left localization} of $\calM_1$ with respect to $\calM_2$
and denoted $\calM_2\backslash\calM_1$, with $\calC = \calC_1$, $\calF = \calW_2$ and
\begin{align*}
\calW & \makebox[8mm]{$:=$} \{X\in\scA\ |\ \exists \text{ ex. seq. } 0\to X\to A\to B\to 0\text{ with }A\in\calW_1,
B\in\calC_2\}\\
& \makebox[8mm]{$=$} \{X\in\scA\ |\ \exists \text{ ex. seq. } 0\to A\to B\to X\to 0\text{ with }A\in\calW_1, B\in\calC_2\}.
\end{align*}
Moreover, $X\in\calW$ if and only if it belongs to the essential image of $\calC_2\to\Ho(\calM_1)$, and there is a
localization sequence of triangulated categories  
$$\Ho(\calM_2)\xrightarrow{\ \ \bfL\id\ \ }\Ho(\calM_1)\xrightarrow{\ \ \bfL\id\ \ }\Ho(\calM_2\backslash\calM_1).$$
\end{prop}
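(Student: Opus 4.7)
The plan is to deduce Proposition \ref{prop_localizationdual} directly from Proposition \ref{prop_localization} by passing to the opposite category. Since $\scA$ is a bicomplete abelian category, so is $\scA^{\mathrm{op}}$, and a model structure $(\cof,\weak,\fib)$ on $\scA$ transports to $(\fib^{\mathrm{op}},\weak^{\mathrm{op}},\cof^{\mathrm{op}})$ on $\scA^{\mathrm{op}}$. This transport satisfies Definition \ref{def_defabelianmodelstructure} if and only if the original does, with the roles of cofibrant and fibrant objects swapped while weakly trivial objects remain the same. In particular, a projective abelian model structure $\calM_i=(\calC_i,\calW_i,\scA)$ on $\scA$ corresponds to an injective abelian model structure $\calM_i^{\mathrm{op}}=(\scA^{\mathrm{op}},\calW_i,\calC_i)$ on $\scA^{\mathrm{op}}$, and the hypothesis $\calC_2\subset\calC_1$ becomes $\calF_2^{\mathrm{op}}\subset\calF_1^{\mathrm{op}}$.

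Proposition \ref{prop_localization} then applies and produces a hereditary abelian model structure $\calM_1^{\mathrm{op}}/\calM_2^{\mathrm{op}}$ on $\scA^{\mathrm{op}}$ with $\calC^{\mathrm{op}}=\calW_2$, $\calF^{\mathrm{op}}=\calC_1$, and $\calW^{\mathrm{op}}$ described by the two symmetric short-exact-sequence conditions involving $\calC_2$ and $\calW_1$. To transport this back, one observes that an exact sequence $0\to X\to A\to B\to 0$ in $\scA^{\mathrm{op}}$ is the sequence $0\to B\to A\to X\to 0$ in $\scA$; this reversal interchanges the two descriptions of $\calW$, recovering exactly the two conditions in the statement. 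The result is the hereditary abelian model structure $\calM_2\backslash\calM_1$ on $\scA$ with $\calC=\calC_1$ and $\calF=\calW_2$ claimed in the proposition.

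The final sentence of the proposition follows by the same dualization: the characterization of $\calW$ as the essential image of $\calC_2\to\Ho(\calM_1)$ is inherited from the analogous statement in Proposition \ref{prop_localization} via $\Ho(\calM_i^{\mathrm{op}})\simeq\Ho(\calM_i)^{\mathrm{op}}$, and the colocalization sequence provided by Corollary \ref{cor_loc} on $\scA^{\mathrm{op}}$ is by definition a localization sequence on $\scA$, with the right-derived identity on $\scA^{\mathrm{op}}$ corresponding to the left-derived identity on $\scA$. No essential obstacle arises in this argument; the only point requiring genuine care is the bookkeeping of the direction of short exact sequences under opposition so that the two symmetric descriptions of $\calW$ are matched up correctly, and that the Quillen direction of $\id$ reverses when passing between $\scA$ and $\scA^{\mathrm{op}}$.
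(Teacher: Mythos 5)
Your proof is correct and is exactly what the paper has in mind: the paper states Proposition \ref{prop_localizationdual} with only the word ``Dually'' and gives no further argument, relying on precisely the opposite-category transport you spell out. Your bookkeeping of the reversal of short exact sequences and the swap of the Quillen direction is accurate.
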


\begin{ex}
We consider a simple example, anticipating the more general results that will be discussed later in Sgection
\ref{section_relsing}. Let $R$ be a ring considered as a dg ring concentrated in cohomological degree zero. From Propositions
\ref{prop_projectiveinjectivemodel} and \ref{prop_cocontraderivedmodel} we get the standard projective 
model structure $({^{\perp}}\Acyc(R),\Acyc(R),\Ch(R))$ and the contraderived model structure
$(\Ch(\Proj(R)),\calW\uctr(R),\Ch(R))$ on $\Ch(R)$. Since $\calC\uproj(R)\subseteq\calC\uctr(R)$ by
Corollary \ref{cor_comparison}, we can apply Proposition \ref{prop_localizationdual} and get as the left
localization $\calM\uproj(R)\backslash\calM\uctr(R)$ the model structure $(\Ch(\Proj(R)),?,\Acyc(R))$ on
$\Ch(R)$, with homotopy category $\bfK\lac(\Proj(R))$. Similarly, applying Proposition \ref{prop_localization}
we can form the right localization $\calM\uco(R)/\calM\uinj(R)$, i.e. the abelian model structure corresponding
to the triple $(\Acyc(R),?,\Ch(\Inj(R)))$, with homotopy category $\bfK\lac(\Inj(R))$. In particular, we
deduce that there is a colocalization sequence $\bfK\lac(\Inj(R))\to\bfK(\Inj(R))\to\bfD(R)$ and a localization sequence
$\bfK\lac(\Proj(R))\to\bfK(\Proj(R))\to\bfD(R)$. 
\end{ex}

\label{subsection_localization}
\subsection{Right Bousfield localization}\label{subsection_bousfield}

In this section, we again go back to the classical language of model categories and rewrite Proposition
\ref{prop_localization} as a statement about existence of certain right Bousfield localizations. The results of this
section are not needed anywhere else and are included solely for the purpose of connecting and making explicit
well-established notions and results on model categories in the case of abelian model categories.

\begin{definition}[\protect{\cite[Definition 3.3.1(2)]{Hirschhorn_LocalizationOfModelCategories}}]
Let $\calM$ be a model category and $\locM$ be a class of maps in $\calM$. The \textit{right Bousfield localization} of
$\calM$ with respect to $\locM$ is, if it exists, the model structure $\bfR_{\locM}\calM$ on the category underlying
$\calM$ such that
\begin{enumerate}
\item the class of weak equivalences of $\bfR_{\locM}\calM$ is the class of $\locM$-colocal equivalences,
\item the class of fibrations of $\bfR_{\locM}\calM$ is the class of fibrations of $\calM$, and
\item the class of cofibrations of $\bfR_{\locM}\calM$ is determined by the left lifting property with respect to trivial
  fibrations. 
\end{enumerate}
\end{definition}

\begin{definition}
Let $\calM$ be a model category, $K$ a class of objects and $\locM$ a class of morphisms in $\calM$. 
\begin{enumerate}
\item A morphism $f:A\to B$ is called a \textit{$K$-colocal equivalence} if for all $X\in K$ and $k\geq 0$ the induced
  map $\Ho(\calM)(X,\Omega^k A)\to\Ho(\calM)(X,\Omega^k B)$ is a bijection.
\item An object $X\in\calM$ is called \textit{$\locM$-colocal} if for all $f: A\to B$ in $\locM$ and $k\geq 0$ the
  induced map $\Ho(\calM)(X,\Omega^k A)\to\Ho(\calM)(X,\Omega^k B)$ is a bijection.
\item A morphism is called a \textit{$\locM$-colocal equivalence} if it is a colocal equivalence with respect to the
  class of $\locM$-colocal objects.
\end{enumerate}
\end{definition}

\begin{prop}\label{prop_bousfield}
Let $\scA$ be a bicomplete abelian category and $\calM_1= (\calW_1,\calF_1)$ and $\calM_2= (\calW_2,\calF_2)$ be
 injective model structures on $\scA$ satisfying $\calF_2\subset\calF_1$. Then the model structure
$\calM_1/\calM_2$ described in Theorem \ref{prop_localization} is the right Bousfield localization of $\calM_1$ with
respect to $\locM := \{0\to X\ |\ X\in\calF_2\}\subset\Mor(\scA)$. 
\end{prop}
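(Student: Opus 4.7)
The plan is to verify that $\calM_1/\calM_2$ satisfies the three defining properties of the right Bousfield localization $\bfR_{\locM}\calM_1$ from \cite[Definition 3.3.1(2)]{Hirschhorn_LocalizationOfModelCategories}; since such a localization is unique when it exists, this identifies the two model structures. The fibrations match immediately, as both consist of epimorphisms with kernel in $\calF_1$. Once the weak equivalences are matched with the $\locM$-colocal equivalences, the cofibrations agree automatically, being characterized by the left lifting property against the common class of trivial fibrations. The whole content is thus in matching the weak equivalences, and I would carry this out in two stages: first identify the $\locM$-colocal objects with $\calW_2$, then check that the resulting $\locM$-colocal equivalences coincide with the weak equivalences of $\calM_1/\calM_2$.

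For the identification of $\locM$-colocal objects, Corollary \ref{cor_hereditarystable}, Proposition \ref{prop_descriptionhomotopycategory}, and Proposition \ref{prop_hereditary} give a canonical triangle equivalence $\Ho(\calM_1)\simeq \calF_1/\inj$ under which $\Omega^k$ is the $k$-fold shift. Since $\calM_2$ is hereditary by Corollary \ref{cor_coresres}, $\calF_2/\inj$ is shift-stable in $\calF_1/\inj$, so the colocality condition reduces to $\Hom_{\calF_1/\inj}(X^f,\calF_2/\inj)=0$, where $X^f$ denotes a fibrant replacement of $X$ in $\calM_1$. One inclusion $\calW_2\cap\calF_1/\inj\subseteq (\calF_2/\inj)^{\perp}$ is a formal adjunction argument via the colocalization sequence of Corollary \ref{cor_loc}, namely that $\calW_2\cap\calF_1/\inj$ is the essential image of the fully faithful $\bfL\id:\Ho(\calM_1/\calM_2)\to\Ho(\calM_1)$ whose right adjoint kills $\calF_2/\inj$. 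For the reverse inclusion I would use the cofibrant-replacement short exact sequence $0\to F\to C\to X^f\to 0$ in $\calM_1/\calM_2$, with $F\in\calF_2$ and $C\in\calW_2\cap\calF_1$: the hypothesis forces the connecting morphism $X^f\to F[1]\in\calF_2$ to vanish in $\calF_1/\inj$, so the triangle splits and $X^f$ becomes a summand of $C$, hence lies in $\calW_2\cap\calF_1/\inj$. Finally, a $2$-out-of-$3$ argument applied to $0\to X\to X^f\to X^f/X\to 0$ with $X^f/X\in\calW_1\subseteq\calW_2$ shows $X\in\calW_2\Leftrightarrow X^f\in\calW_2$, completing the identification.

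For the matching of equivalences, a standard long-exact-sequence argument together with the shift-stability of $\calW_2\cap\calF_1$ shows that $f:A\to B$ is an $\locM$-colocal equivalence iff $\Hom_{\calF_1/\inj}(\calW_2\cap\calF_1,\cone(f^f))=0$. Applying the cofibrant-replacement triangle of $\cone(f^f)$ in $\calM_1/\calM_2$ and again invoking the splitting criterion for triangles with vanishing first map, one sees that the right $\Hom$-orthogonal of $\calW_2\cap\calF_1$ in $\calF_1/\inj$ is precisely $\calF_2/\inj$; thus $f$ is an $\locM$-colocal equivalence iff $\cone(f^f)\in\calF_2/\inj$. On the other hand, by Fact \ref{fact_weakequivalences} together with the identifications $\calC\cap\calW=\calW_1$ and $\calW\cap\calF=\calF_2$ that appear in the proof of Proposition \ref{prop_localization}, $f$ is a weak equivalence of $\calM_1/\calM_2$ iff it factors as a monomorphism with cokernel in $\calW_1$ followed by an epimorphism with kernel in $\calF_2$. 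After fibrant replacement in $\calM_1$, this factorization realizes $\cone(f^f)$ as an object of $\calF_2/\inj$; conversely, starting from $\cone(f^f)\in\calF_2/\inj$, the required abelian factorization is reconstructed via a pullback-pushout manipulation in the spirit of Lemma \ref{lem_exact}. This last converse—passing from the triangulated cone condition back to the explicit abelian factorization—is the main technical obstacle; the rest of the argument is a formal consequence of the adjunction and the colocalization sequence of Corollary \ref{cor_loc}.
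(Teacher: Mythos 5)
Your proposal is correct in substance, but it takes a genuinely different route from the paper. The paper identifies the $\locM$-colocal objects in one line by invoking Proposition \ref{prop_cotorsiontorsion} directly (for the cotorsion pair $(\calW_2,\calF_2)$, the stable left $\Hom$-orthogonal of $\calF_2/\inj$ is $\calW_2/\inj$), and then characterizes \emph{both} the $\calW_2$-colocal equivalences and the weak equivalences of $\calM_1/\calM_2$ by the same concrete abelian description -- ``morphisms isomorphic in $\Ho(\calM_1)$ to an epimorphism of objects in $\calF_1$ with kernel in $\calF_2$'' -- proving they agree without ever invoking cones or long exact sequences. You instead route through the triangulated machinery: the colocalization sequence of Corollary \ref{cor_loc} plus a splitting argument for the colocal objects, and a cone criterion $\cone(f^f)\in\calF_2/\inj$ for the colocal equivalences. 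That is conceptually cleaner in places (the adjunction argument for the inclusion $(\calW_2\cap\calF_1)/\inj\subseteq{}^{\perp}(\calF_2/\inj)$, and your splitting argument for the reverse inclusion are both fine and more self-contained than the paper's appeal to Proposition \ref{prop_cotorsiontorsion}), but it introduces the step you flag as the main obstacle -- passing from $\cone(f^f)\in\calF_2/\inj$ back to the explicit abelian factorization -- and that step is exactly where you would end up re-deriving the paper's pushout manipulations anyway, plus a small extra argument (via Lemma \ref{lem_stablyisomorphic}) to promote ``kernel stably isomorphic to something in $\calF_2$'' to ``kernel in $\calF_2$'' by adding an injective summand to the epimorphism. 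Two minor points: your notation $(\calF_2/\inj)^{\perp}$ should be ${}^{\perp}(\calF_2/\inj)$ (it is the \emph{left} orthogonal, since colocality means maps \emph{into} $\calF_2$ vanish), and when you reduce the colocality condition to $k=0$ you implicitly need that $\calF_2/\inj$ is closed under both $\Sigma$ and $\Omega$ in $\calF_1/\inj$, which holds because $(\calW_2,\calF_2)$ is hereditary by Corollary \ref{cor_coresres} -- worth stating explicitly, since the raw definition of colocality only quantifies over nonnegative loop powers.
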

\begin{proof}
Since domain and codomain of each morphism in $\locM$ are fibrant in $\calM_1$, Proposition
\ref{prop_descriptionhomotopycategory} reveals that the class of $\locM$-colocal objects equals
${^{\perp}}(\calF_2/\inj)$ in $\scA/\inj$, which is $\calW_2/\inj$ by Proposition \ref{prop_cotorsiontorsion} applied to the cotorsion
pair $(\calW_2,\calF_2)$.

It remains to show that the weak equivalences in $\calM_1/\calM_2$ are precisely the $\calW_2$-colocal equivalences. For
this, note the following: 
\begin{enumerate}
\item In $\Ho(\calM_1)$ any morphism is isomorphic to a morphism between objects in
$\calF_1$: This follows from the fact that in $\Ho(\calM_1)$ any object is isomorphic to an object in $\calF_1$ (see
Proposition \ref{prop_descriptionhomotopycategory}).
\item In $\Ho(\calM_1)$, any morphism between objects in $\calF_1$ is isomorphic to an epimorphism between objects in
  $\calF_1$ with kernel again in $\calF_1$: If $f: A\to B$ is (a representative of) the given morphism with
  $A,B\in\calF_1$, and $0\to B\p\to I\stackrel{p}{\to} B\to 0$ is exact with $I\in\inj$ and $B\p\in\calF_1$, then $f$ is isomorphic in
  $\Ho(\calM_1)$ to $(f,-p): A\oplus I\to B$. Moreover, $K := \ker(f,-p)\in\calF_1$ since it fits into the commutative diagram with exact rows 
\begin{equation*}\begin{tikzpicture}[description/.style={fill=white,inner sep=2pt}]
    \matrix (m) [matrix of math nodes, row sep=1.5em,
                 column sep=1.5em, text height=1.5ex, text depth=0.25ex,
                 inner sep=0pt, nodes={inner xsep=0.3333em, inner ysep=0.3333em}]
    {
       0 & B\p & K & A & 0\\
       0 & B\p & I & B & 0\\
    };
    \draw[->] (m-1-1) -- (m-1-2);
    \draw[->] (m-1-2) -- (m-1-3);
    \draw[->] (m-1-3) -- (m-1-4);
    \draw[->] (m-1-4) -- (m-1-5);
    \draw[->] (m-2-1) -- (m-2-2);
    \draw[->] (m-2-2) -- (m-2-3);
    \draw[->] (m-2-3) -- (m-2-4);
    \draw[->] (m-2-4) -- (m-2-5);
    \draw[double distance=0.5mm] (m-1-2) -- (m-2-2);
    \draw[->] (m-1-3) -- (m-2-3);
    \draw[->] (m-1-4) -- (m-2-4);
\end{tikzpicture}\end{equation*}
and $\calF_1$ is closed under extensions.
\item If $f: A\to B$ is an epimorphism of objects in $\calF_1$ and kernel $K\in\calF_1$ as in (2), then $X\in\scA$
  is $f$-colocal if and only if $(\scA/\inj)(X,\Omega^k K)=0$ for all $k\geq 0$: To begin, the short exact sequence $0\to K\to
  A\to B\to 0$ gives rise to a triangle in $\Ho(\calM)$. Now the functor $\Ho(\calM)(X,-)$ is cohomological, i.e. turns
  exact triangles into long exact sequences, and hence $\Ho(\calM)(X,\Omega^k(f))$ is bijective for all $k\geq 
  0$ if and only if $\Ho(\calM)(X,\Omega^k K)=0$ for all $k\geq 0$. By Proposition
  \ref{prop_descriptionhomotopycategory} the latter is equivalent to $(\scA/\inj)(X,\Omega^k K)=0$ for all $k\geq 0$.
\end{enumerate}
As $(\calW_2/\inj)^{\perp}=\calF_2/\inj$ in $\scA/\inj$, steps $(1)-(3)$ show that the $\calW_2$-colocal equivalences are
precisely those morphisms which are isomorphic in $\Ho(\calM_1)$ to epimorphism of objects in $\calF_1$ with kernel in
$\calF_2$.  

We will show that the same description applies to the weak equivalences in $\calM_1/\calM_2$. By Fact
\ref{fact_weakequivalences}, any weak equivalence in $\calM_1/\calM_2$ is the composition of a monomorphism with
cokernel in $\calC\cap\calW={^{\perp}}\calF_1 = \calW_1$ and an epimorphism with kernel in $\calW\cap\calF =
\calW_2^{\perp} = \calF_2$. The former is already a weak equivalence in $\calM_1$, hence any weak equivalence in
$\calM_1/\calM_2$ is isomorphic to an epimorphism with kernel in $\calF_2$ in $\Ho(\calM_1)$. Let $f: B\to A$ be such an
epimorphism and pick a short exact sequence $0\to B\stackrel{\alpha}{\to} F\to W\to 0$ with $F\in\calF_1$. Taking the pushout of
$F\stackrel{\alpha}{\leftarrow} B\stackrel{f}{\to} A$, we get the following commutative diagram (note that the right square is also pullback):
\begin{equation*}\begin{tikzpicture}[description/.style={fill=white,inner sep=2pt}]
    \matrix (m) [matrix of math nodes, row sep=1.5em,
                 column sep=1.5em, text height=1.5ex, text depth=0.25ex,
                 inner sep=0pt, nodes={inner xsep=0.3333em, inner ysep=0.3333em}]
    {
&& 0 & 0\\
       0 & K & B & A & 0\\
       0 & K & F & F\p & 0\\
 && W & W &\\
&& 0 & 0\\
    };
    \draw[double distance=0.8mm] (m-2-2) -- (m-3-2);
    \draw[double distance=0.8mm] (m-4-3) -- (m-4-4);
    \draw[->] (m-2-3) -- node[description,scale=0.75]{$\alpha$} (m-3-3);
    \draw[->] (m-2-4) -- node[description,scale=0.75]{$\beta$} (m-3-4);
    \draw[->] (m-4-3) -- (m-5-3);
    \draw[->] (m-4-4) -- (m-5-4);
    \draw[->] (m-2-1) -- (m-2-2);
    \draw[->] (m-2-2) -- (m-2-3);
    \draw[->] (m-2-3) -- node[scale=0.75,above]{$f$} (m-2-4);
    \draw[->] (m-2-4) -- (m-2-5);
    \draw[->] (m-3-1) -- (m-3-2);
    \draw[->] (m-3-2) -- (m-3-3);
    \draw[->] (m-3-3) -- node[scale=0.75,below]{$g$} (m-3-4);
    \draw[->] (m-3-4) -- (m-3-5);
    \draw[->] (m-1-3) -- (m-2-3);
    \draw[->] (m-3-3) -- (m-4-3);
    \draw[->] (m-1-4) -- (m-2-4);
    \draw[->] (m-3-4) -- (m-4-4);
\end{tikzpicture}\end{equation*}
As $\alpha,\beta$ are weak equivalences in $\calM_1$, $f$ is isomorphic to $g$ in $\Ho(\calM_1)$. Moreover, as $\calF_1$
is closed under cokernels of monomorphisms, $F\p\in\calF_1$. This shows that $f$ is isomorphic in $\Ho(\calM_1)$ to an
epimorphism of objects in $\calF_1$ with kernel in $\calF_2$. Conversely, since any weak equivalence in $\calM_1$ is
also a weak equivalence in $\calM_1/\calM_2$, it is clear that any such morphism is a weak equivalence in
$\calM_1/\calM_2$. 
\end{proof}

\section{The singular model structures}\label{section_relsing}

In this section we attach to each morphism of dg rings $\varphi: R\to A$ two ``relative singular'' model structures on
$A\Mod$, a contraderived and a coderived one. Roughly, the contraderived (resp. coderived) singular model structure
is obtained by pulling back the contraderived (resp. coderived) model $\calM\uctr(R)$  (resp. $\calM\uco(R)$) on $R\Mod$
to $A\Mod$ along the right (resp. left) adjoint $U_\varphi: A\Mod\to R\Mod$, and afterwards taking the left
(resp. right) localization of $\calM\uctr(A)$ (resp. $\calM\uco(A)$) with respect to this pullback model structure. If
$R$ is an ordinary ring of finite left-global dimension, we will see that the relative singular contraderived and coderived model
structures only depend on $A$, and we will call them the ``absolute singular'' model structures attached to $A$.

In general, pulling back model structures along adjoints is a nontrivial problem, so we need to justify that the above
pullbacks are again abelian model structures. In our situation, the connection between abelian
model structures and deconstructible classes makes this problem tractable and we give ad-hoc arguments to establish the
desired pullbacks.   

Recall that right (resp. left) localization of two projective (resp. injective) model structures produces abelian model
structures which are neither projective nor injective in general. In particular, the (relative or absolute) singular
model structures are neither projective nor injective. We will be able, however, to establish a concrete projective
(resp. injective) abelian model structure on $A\Mod$ Quillen equivalent to the singular contraderived (resp. coderived)
one. This alternative description is useful for example in proving that the absolute contraderived 
(resp. coderived) singular model structure on $\Ch(R)$, for $R$ Gorenstein, is Quillen equivalent to Hovey's
Gorenstein projective (resp. Gorenstein injective) model structure on $R\Mod$, as well as in the construction of recollements later.

\subsection{General definitions}

Let $U:\calD\longrightarrow\calC$ be a functor between two categories $\calC,\calD$, and suppose that $\calC$
carries a model structure $\calM$. The \textit{right pullback} of $\calM$ along $U$ is, if it exists, the
model structure on $\calD$ in which a morphism is a weak equivalence (resp. fibration) if and only if its image under
$U$ is a weak equivalence (resp. fibration) in $\calM$, and where the cofibrations are determined by the left lifting
property with respect to all trivial fibrations. Similarly, the \textit{left pullback} of $\calM$ along $U$ is, if it
exists, the model structure on $\calD$ where the cofibrations (resp. weak equivalences) are the morphisms which become
cofibrations (resp. weak equivalences) in $\calM$ after application of $U$, and where the fibrations are determined by
the right lifting property with respect to all trivial cofibrations. 

\begin{prop}\label{prop_pullback}
Let $\varphi: R\to A$ be a morphism of dg rings.
\begin{enumerate}
\item The right-pullback $\varphi\ua\calM\uctr(R)$ of $\calM\uctr(R)$ along $U_\varphi$ exists. 
\item The left-pullback $\varphi\ua\calM\uco(R)$ of $\calM\uco(R)$ along $U_\varphi$ exists.
\end{enumerate}
Moreover, both $\varphi\ua\calM\uctr(R)$ and $\varphi\ua\calM\uco(R)$ are cofibrantly generated.
\end{prop}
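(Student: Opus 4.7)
The plan is to use Corollary \ref{cor_injprojmodel} to construct abelian model structures on $A\Mod$ with the appropriate weakly trivial classes, and then identify them with the pullbacks. For (1) I aim for a projective abelian model $(\calC', \calW', A\Mod)$ with $\calW' := U_\varphi^{-1}(\calW\uctr(R))$; for (2) an injective abelian model $(A\Mod, \calW'', \calF'')$ with $\calW'' := U_\varphi^{-1}(\calW\uco(R))$. Cofibrant generation will then follow from Proposition \ref{prop_hoveydec} once the associated cotorsion pairs are seen to be small.

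For (1), I exploit the adjunction $A \otimes_R - \dashv U_\varphi$. By Proposition \ref{prop_hoveydec}, $(\calC\uctr(R), \calW\uctr(R))$ is cogenerated by a set $\calS \subseteq R\Mod\lproj$. The key identity is $\calW' = (A \otimes_R \calS)^\perp$: for $T \in R\Mod\lproj$ one has $(A \otimes_R T)\us = A\us \otimes_{R\us} T\us \in \proj(A\us\Mod)$, so $\ext^1_A(A \otimes_R T, -) \cong [\Omega(A \otimes_R T), -]_A$, and the complex-level adjunction $\dgHom_A(A \otimes_R T, -) \cong \dgHom_R(T, U_\varphi -)$ reduces this to $\ext^1_R(T, U_\varphi -)$. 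Corollary \ref{cor_complete} then yields a small cotorsion pair $(\calC', \calW') = ({^\perp}\calW', \calW')$, and Proposition \ref{prop_doubleorthogonal} identifies $\calC' = {^\oplus}\filt(A \otimes_R \calS)$. For the intersection $\calC' \cap \calW' = \proj(A\Mod)$, note that $A\Mod\lproj$ is deconstructible by Proposition \ref{prop_corpullbackmonadic} applied to the monadic $(-)\us: A\Mod \to A\us\Mod$ and to $\proj(A\us\Mod)$ (Example \ref{ex_injproj}), hence closed under $\filt$-construction and summands; consequently $\calC' \subseteq A\Mod\lproj$, and Lemma \ref{lem_doubleorthogonal}(1) applies in view of $\Sigma\calW' = \calW'$. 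Thickness of $\calW'$ is inherited from $\calW\uctr(R)$ via the exact $U_\varphi$, and Corollary \ref{cor_injprojmodel}(i) delivers the model structure.

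For (2) I proceed directly by deconstructibility. Being the cotorsion part of the small cotorsion pair $(\calW\uco(R), R\Mod\linj)$, the class $\calW\uco(R)$ is generating, deconstructible, and closed under summands (Proposition \ref{prop_cogset}); these three properties transfer to $\calW''$ (the first because $\calW''$ contains every contractible $A$-module and any $X$ is a quotient of $\cone(\id_{\Omega X})$, the second via Proposition \ref{prop_corpullbackmonadic} applied to the monadic $U_\varphi$, the third because $U_\varphi$ reflects summands). Hence Proposition \ref{prop_cogset} yields the small cotorsion pair $(\calW'', (\calW'')^\perp)$. To obtain $\calW'' \cap (\calW'')^\perp = \inj(A\Mod)$ via Lemma \ref{lem_doubleorthogonal}(2), a direct inspection of the formulas in Proposition \ref{prop_adjointstoforget} shows that $U_\varphi \circ G^+_A = G^+_R \circ U_\varphi$ on underlying modules; since the image of $G^+_R$ consists of contractible, hence coacyclic, $R$-modules, $G^+_A(A\us\Mod) \subseteq \calW''$, giving $(\calW'')^\perp \subseteq G^+_A(A\us\Mod)^\perp = A\Mod\linj$, and Corollary \ref{cor_injprojmodel}(ii) then delivers the injective abelian model.

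It remains to identify the constructed model structures with the pullbacks. Fibrations in $\varphi\ua\calM\uctr(R)$ are $U_\varphi$-preimages of all epimorphisms, hence all epis in $A\Mod$, matching the constructed model; dually for cofibrations of $\varphi\ua\calM\uco(R)$. For the weak equivalences, using Fact \ref{fact_weakequivalences} together with the hereditariness of $\calM\uctr(R)$ and $\calM\uco(R)$ (Corollary \ref{cor_coresres}, which in particular makes monos with weakly trivial cokernel and epis with weakly trivial kernel weak equivalences), one direction follows because $U_\varphi$ sends $\proj(A\Mod)$ (resp.\ $\inj(A\Mod)$) to contractible $R$-modules, which are contraacyclic (resp.\ coacyclic); the converse uses $2$-out-of-$3$ in $\calM\uctr(R)$ (resp.\ $\calM\uco(R)$) applied to a suitable factorization in the constructed model. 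The main obstacle throughout is the orthogonality identity of Lemma \ref{lem_doubleorthogonal}, which rests on the deconstructibility of $A\Mod\lproj$ for (1) and on the concrete $G^+_A$-description of $A\Mod\linj$ for (2).
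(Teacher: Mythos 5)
Your proof is correct and follows the same overall strategy as the paper's: reduce existence of the pullbacks to existence of an abelian model structure on $A\Mod$ with the prescribed weakly-trivial class, and establish that model structure via the small-cotorsion-pair criteria of Corollary \ref{cor_injprojmodel}, Proposition \ref{prop_cogset}, and Lemma \ref{lem_doubleorthogonal}. Where you diverge is in the supporting arguments. For (1), you compute $\ext^1_A(A\otimes_R T,-)\cong\ext^1_R(T,U_\varphi(-))$ via the complex-level adjunction $\dgHom\ua_A(A\otimes_R T,-)\cong\dgHom\ua_R(T,U_\varphi(-))$ and the identification of $\ext^1$ with shifted homotopy classes; the paper instead computes both $\ext^1$'s directly from a single extension $0\to Y\to W\to C\to 0$ with $W\in\calW\uctr(A)$, $C\in\calC\uctr(A)$. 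Both are fine. More substantially, to obtain $\calC'\subseteq A\Mod\lproj$ and $(\calW'')^{\perp}\subseteq A\Mod\linj$ you argue via deconstructibility of $A\Mod\lproj$ (respectively via the intertwining $U_\varphi\circ G^+_A = G^+_R\circ U_{\varphi\us}$ on underlying graded modules, which indeed holds since $\varphi$ commutes with differentials and preserves degree), whereas the paper derives both inclusions in one line from Proposition \ref{prop_fouradjunctions}: since $U_\varphi$ is right Quillen $\calM\uctr(A)\to\calM\uctr(R)$ and left Quillen $\calM\uco(A)\to\calM\uco(R)$, one has $\calW\uctr(A)\subseteq\calW'$ and $\calW\uco(A)\subseteq\calW''$, giving $\calC'\subseteq{^\perp}\calW\uctr(A)=A\Mod\lproj$ and $(\calW'')^\perp\subseteq\calW\uco(A)^\perp=A\Mod\linj$. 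The paper's route is shorter; yours is more self-contained but incurs one obligation you glossed over: invoking Proposition \ref{prop_doubleorthogonal} to get $\calC'={^\oplus}\filt(A\otimes_R\calS)$ requires $\filt(A\otimes_R\calS)$ to be generating, which is not automatic for a cogenerating set $\calS$ produced by Proposition \ref{prop_hoveydec}; one should first enlarge $\calS$ by the cofibrant objects $\{\Sigma^n R\}_{n\in\Gamma}$ (this leaves $\calS^\perp$ unchanged) so that $A=A\otimes_R R$ and its shifts lie in $A\otimes_R\calS$. Finally, you explicitly check that the constructed model structures are the stated pullbacks — matching fibrations (resp. cofibrations) and identifying the weak equivalences via Fact \ref{fact_weakequivalences} and $2$-out-of-$3$ — which the paper's proof leaves implicit after the phrase ``it suffices to show''; this is a worthwhile clarification.
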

\begin{proof}
(1) It suffices to show that firstly $U\ua_\varphi(\calW\uctr(R))$ is of the form $\calS^{\perp}$ for a set $\calS\subset
  A\Mod$, and secondly that $U\ua_\varphi(\calW\uctr(R))\cap{^{\perp}}U\ua_\varphi(\calW\uctr(R))=\proj(A\Mod)$. By Proposition
  \ref{prop_cocontraderivedmodel} $\calC\uctr(R)$ is deconstructible, so we may choose a set $\calT$ such that
  $\calC\uctr(R)=\filt\calT$. Denoting the left adjoint $A\otimes_R -$ to  
  $U_\varphi$ by $F$ for a moment, we claim that $U\ua_\varphi(\calW\uctr(R))=F(\calT)^{\perp}$.
  In fact, we will even show that $\ext^1_{A}(F(T),-)\cong\ext^1_R(T,U_\varphi(-))$ for all
  $T\in\calT$. Having done this, the claim follows via
  $F(\calT)^{\perp}=U_\varphi\ua(\calT^{\perp})=U_\varphi\ua(\calW\uctr(R))$. 
  Let $Y\in A\Mod$ be arbitrary and $0\to Y\to W\stackrel{f}{\to}C\to 0$ be an exact sequence with
  $W\in\calW\uctr(A)$ and $C\in\calC\uctr(A)$. Since $F(\calT)\subseteq\calC\uctr(A)$ (Proposition
  \ref{prop_fouradjunctions}), we get $\ext^1_{A}(F(T),Y)\cong\coker\left[\Hom_A(F(T),f)\right]$. Moreover, since
  $U_\varphi$ is exact and $U_\varphi(\calW\uctr(A))\subseteq\calW\uctr(R)$ (Proposition \ref{prop_fouradjunctions}), computing
  $\ext^1_{A}(T,U_\varphi(Y))$ using the exact sequence $0\to U_\varphi(Y)\to
  U_\varphi(W)\stackrel{U_\varphi(f)}{\longrightarrow} U_\varphi(C)\to 0$ gives
  $\ext^1_R(T,U_\varphi(Y))\cong\coker\left[\Hom_R(T,U_\varphi(f))\right]$. Now, the adjunction $F\dashv U_\varphi$ gives
  $\coker\left[\Hom_R(T,U_\varphi(f))\right]\cong\coker\left[\Hom_A(F(T),f)\right]$, and hence
  $\ext^1_{A}(F(T),Y)\cong\ext^1_R(T,U_\varphi(Y))$ for all $T\in\calT$ and $Y\in A\Mod$.  The remaining part
  $U\ua_\varphi(\calW\uctr(R))\cap{^{\perp}}U\ua_\varphi(\calW\uctr(R))=\proj(A\Mod)$ follows from 
  Lemma \ref{lem_doubleorthogonal} since $\calW\uctr(A)\subseteq U\ua_\varphi(\calW\uctr(R))$ and hence
  ${^{\perp}}U\ua_\varphi(\calW\uctr(R))\subseteq\calC\uctr(A)=A\Mod\lproj$. 

(2) We have to show that $\calK := U_\varphi\ua(\calW\uco(R))$ is deconstructible and
  $\calK\cap\calK^{\perp}=\inj(A\Mod)$. The deconstructibility of $\calK$ follows from Proposition
  \ref{prop_corpullbackmonadic} together with the deconstructibility of $\calW\uco(R)$ established in Proposition
  \ref{prop_cocontraderivedmodel}. Hence $(\calK,\calK^{\perp})$ is a complete cotorsion pair cogenerated by a set. For
  $\calK\cap\calK^{\perp}=\inj(A\Mod)$, first note that since $U_\varphi:\calM\uco(A)\to\calM\uco(R)$ is left Quillen
  (Proposition \ref{prop_fouradjunctions}), we have $\calK\supseteq\calW\uco(A)$, and hence
  $\calK^{\perp}\subseteq\calF\uco(A)=A\Mod\linj$. Applying Lemma \ref{lem_doubleorthogonal} now gives
  $\calK\cap\calK^{\perp}=\inj(A\Mod)$ as required. 
\end{proof}

Note that if $R$ is an ordinary ring of finite left-global dimension, then $\calM\uctr(R)=\calM\uproj(R)$ and
$\calM\uco(R)=\calM\uinj(R)$ (Proposition \ref{prop_finglobdim}), and hence for any morphism $\varphi: R\to A$ of
dg rings $\varphi\ua\calM\uctr(R) = \calM\uproj(A)$ and  $\varphi\ua\calM\uco(R)=\calM\uinj(A)$.

\begin{definition}\label{def_relativesingmodel}
Let $\varphi: R\to A$ be a morphism of dg rings. 
\begin{enumerate}
\item The \textit{relative singular coderived model structure} on $A\Mod$ is defined as the right localization
  $\calM\uco(A)/\varphi\ua\calM\uco(R)$ in the sense of Proposition \ref{prop_localization} and denoted
  $\calM\uco\lsing(A/R)$. 
\item The \textit{relative singular contraderived  model structure} on $A\Mod$ is defined as the left localization
  $\varphi\ua\calM\uctr(R)\backslash\calM\uctr(A)$ in the sense of Proposition \ref{prop_localizationdual} and denoted
  $\calM\uctr\lsing(A/R)$. 
\end{enumerate}
If $R$ is a ring of finite left-global dimension (e.g. $R=\ZZ$ or $R=k$ is a field), then $\calM^{\ctr/\co}\lsing(A) := 
\calM^{\ctr/\co}\lsing(A/R)$ does not depend on $R$ and is called the \textit{absolute singular contraderived
  resp. coderived model structure}.  
\end{definition}

\begin{prop}\label{prop_abssingmodelcontra}
Let $\varphi: R\to A$ be a morphism of dg rings. The relative singular contraderived model structure
$\calM\uctr\lsing(A/R)$ can be described as follows: 
\begin{enumerate}
\item[--] The class $\calC$ of cofibrant objects equals $A\Mod\lproj$. 
\item[--] The class $\calF$ of fibrant objects is the class of $A$-modules whose underlying $R$-modules are contraacyclic.
\item[--] The class $\calW$ of weakly trivial objects is determined by Fact
  \ref{fact_weakequivalences}. 
\end{enumerate}
In particular, the fibrant objects in $\calM\uctr\lsing(A)$ are the acyclic $A$-modules.
\end{prop}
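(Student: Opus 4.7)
The plan is to unwind the definition of $\calM\uctr\lsing(A/R)$ and then apply the structural results that have already been established: Proposition \ref{prop_localizationdual} describes the cofibrant and fibrant classes of a left localization, while Proposition \ref{prop_pullback} identifies weak equivalences in the right pullback via $U_\varphi$. The three bullet points are then essentially a translation exercise, and the last sentence follows by specializing to $R=\ZZ$ (or any ring of finite left-global dimension) via Proposition \ref{prop_finglobdim}.

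First I would verify that Proposition \ref{prop_localizationdual} is applicable to the pair $\calM_1 = \calM\uctr(A)$ and $\calM_2 = \varphi\ua\calM\uctr(R)$; this requires the inclusion $\calC(\varphi\ua\calM\uctr(R))\subseteq\calC\uctr(A)$. But this inclusion was already obtained (implicitly) in the proof of Proposition \ref{prop_pullback}(1), where it was shown that $\calW\uctr(A)\subseteq U_\varphi\ua(\calW\uctr(R))=\calW(\varphi\ua\calM\uctr(R))$ and hence $^{\perp}U_\varphi\ua(\calW\uctr(R))\subseteq\calC\uctr(A)=A\Mod\lproj$.

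Next, Proposition \ref{prop_localizationdual} directly gives $\calC = \calC(\calM\uctr(A)) = A\Mod\lproj$, which is the first bullet. For the second bullet, Proposition \ref{prop_localizationdual} identifies $\calF$ with the class $\calW_2 = \calW(\varphi\ua\calM\uctr(R))$; by the definition of the right pullback (Proposition \ref{prop_pullback}(1)), these are exactly the $A$-modules $X$ such that $U_\varphi(X)\in\calW\uctr(R)$, i.e.\ the $A$-modules whose underlying $R$-module is contraacyclic. The third bullet is the general description of weak equivalences in any abelian model structure provided by Fact \ref{fact_weakequivalences}, so no separate argument is needed.

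Finally, for the in-particular clause about the absolute case, I would specialize to $R$ of finite left-global dimension (so implicitly $R=\ZZ$ or a field). Then Proposition \ref{prop_finglobdim} gives $\calM\uctr(R)=\calM\uproj(R)$, whence $\calW\uctr(R)=\Acyc(R)$. The fibrant class then consists of the $A$-modules whose underlying $R$-module is acyclic, which coincides with $\Acyc(A)$ since acyclicity is detected on the underlying complex of abelian groups. There is no real obstacle in this argument — the only non-formal ingredient is the inclusion between cofibrant classes needed to legitimize the left localization, and that is already in hand from Proposition \ref{prop_pullback}.
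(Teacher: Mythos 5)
Your proof is correct, and since the paper supplies no explicit proof for this proposition (it is presented as an immediate consequence of Definition \ref{def_relativesingmodel} together with Propositions \ref{prop_localizationdual} and \ref{prop_pullback}), your argument is precisely the implicit reasoning the paper relies on. You correctly identify the one non-formal point, namely the inclusion of cofibrant classes needed to invoke the left-localization result, and correctly locate it inside the proof of Proposition \ref{prop_pullback}(1).
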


A similar description holds for the relative singular coderived model:

\begin{prop}\label{prop_abssingmodelco}
Let $\varphi: R\to A$ be a morphism of dg rings. The relative singular coderived model structure
$\calM\uco\lsing(A/R)$ can be described as follows: 
\begin{enumerate}
\item[--] The class $\calC$ of cofibrant objects is the class of $A$-modules whose underlying $R$-modules are coacyclic.
\item[--] The class $\calF$ of fibrant objects equals $A\Mod\linj$. 
\item[--] The class $\calW$ of weakly trivial objects is determined by Fact
  \ref{fact_weakequivalences}. 
\end{enumerate}
In particular, the cofibrant objects in $\calM\uctr\lsing(A)$ are the acyclic $A$-modules.
\end{prop}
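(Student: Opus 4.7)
The plan is to unwind the definition and apply Proposition \ref{prop_localization}. By Definition \ref{def_relativesingmodel}, $\calM\uco\lsing(A/R) = \calM\uco(A)/\varphi\ua\calM\uco(R)$ is the right localization of two injective abelian model structures on $A\Mod$, namely $\calM_1 := \calM\uco(A)$ and $\calM_2 := \varphi\ua\calM\uco(R)$. Proposition \ref{prop_localization} then immediately hands over $\calC = \calW_2$ and $\calF = \calF_1$, with the description of $\calW$ supplied by Fact \ref{fact_weakequivalences}.

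First I would identify the two relevant classes. The fibrant class $\calF_1$ of $\calM\uco(A)$ equals $A\Mod\linj$ by Proposition \ref{prop_cocontraderivedmodel}, which gives the second bullet. The weakly trivial class $\calW_2$ of $\varphi\ua\calM\uco(R)$ is, by construction in the proof of Proposition \ref{prop_pullback}(2), the preimage $U_\varphi\ui(\calW\uco(R))$, i.e.\ the $A$-modules whose underlying $R$-module is coacyclic; this gives the first bullet. The third bullet is simply the content of Fact \ref{fact_weakequivalences} transplanted to the model structure $\calM\uco\lsing(A/R)$.

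Before invoking Proposition \ref{prop_localization} I would check its hypothesis $\calF_2 \subseteq \calF_1$. This is already implicit in the proof of Proposition \ref{prop_pullback}(2): since $U_\varphi : \calM\uco(A) \to \calM\uco(R)$ is left Quillen (Proposition \ref{prop_fouradjunctions}(\ref{item_fouradj4})), one has $\calW\uco(A) \subseteq \calW_2$, whence $\calF_2 = \calW_2^{\perp} \subseteq \calW\uco(A)^{\perp} = A\Mod\linj = \calF_1$. No other hypotheses need to be verified.

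For the \emph{in particular} clause about the absolute case, assume $R$ has finite left-global dimension, so that $\calM\uco\lsing(A) = \calM\uco\lsing(A/R)$ by Definition \ref{def_relativesingmodel}. Proposition \ref{prop_finglobdim} gives $\calM\uco(R) = \calM\uinj(R)$, hence $\calW\uco(R) = \Acyc(R)$. Since acyclicity of a (c)dg $A$-module depends only on the underlying complex of abelian groups, one has $U_\varphi\ui(\Acyc(R)) = \Acyc(A)$, so the cofibrant class reduces to the acyclic $A$-modules, as claimed (the proposition's ``$\calM\uctr\lsing(A)$'' on the last line appears to be a typo for $\calM\uco\lsing(A)$). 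All of this is largely a bookkeeping exercise; the heavy lifting has been done in Propositions \ref{prop_localization}, \ref{prop_pullback} and \ref{prop_cocontraderivedmodel}, so I anticipate no serious obstacle.
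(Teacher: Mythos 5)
Your proposal is correct and takes the only natural route — the paper states this proposition without proof precisely because it is a direct unwinding of Definition \ref{def_relativesingmodel}, Proposition \ref{prop_localization}, Proposition \ref{prop_pullback}(2), Proposition \ref{prop_cocontraderivedmodel} and Proposition \ref{prop_finglobdim}, exactly as you do. Two small remarks: you write $U_\varphi\ui$ where the paper's convention is $U_\varphi\ua$ for the preimage class, but the meaning is unambiguous; and you are right that the hypothesis $\calF_2\subseteq\calF_1$ is already verified inside the proof of Proposition \ref{prop_pullback}(2), where $\calK\supseteq\calW\uco(A)$ is shown to imply $\calK^\perp\subseteq A\Mod\linj$. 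You also correctly spot the typo in the final clause.
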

\label{subsection_gendefs}

\begin{rem}\label{rem_gen2}
The construction of the relative and absolute singular coderived model structures carries over to the setting discussed
in Remark \ref{rem_gen}.
\end{rem}

\subsection{Constructing recollements}

From Proposition \ref{prop_abssingmodelcontra} (resp. \ref{prop_abssingmodelco}) it is clear that $\calM\lsing\uctr(A)$
(resp. $\calM\lsing\uco(A)$) is almost never projective (resp. injective). However, there is a canonical projective
(resp. injective) abelian model structure which is Quillen equivalent to the absolute singular contraderived
(resp. coderived) model, which we describe in this section. 

\begin{prop}\label{prop_singmodelcoctr}
For a dg ring $A$, the following hold:
\begin{enumerate}
\item There exists a projective abelian model structure ${^{p}}\calM\uctr\lsing(A)$ on $A\Mod$ satisfying $\calC =
  A\Mod\lproj\cap\Acyc(A)$. 
\item There exists an injective abelian model structure ${^{i}}\calM\uco\lsing(A)$ on $A\Mod$ satisfying $\calF =
A\Mod\linj\cap\Acyc(A)$. 
\end{enumerate}
${^{p}}\calM\uctr\lsing(A)$ and ${^{i}}\calM\uco\lsing(A)$ are cofibrantly generated and the identity is
a left resp. right Quillen equivalence ${^{p}}\calM\uctr\lsing(A)\to\calM\uctr\lsing(A)$
resp. ${^{i}}\calM\uco\lsing(A)\to\calM\uco\lsing(A)$.    
\end{prop}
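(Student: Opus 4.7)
The proofs of (1) and (2) are dual; I focus on (1) and indicate afterwards why (2) is actually easier. The plan is to apply Corollary \ref{cor_injprojmodel} with $\calC := A\Mod\lproj \cap \Acyc(A)$, $\calF := A\Mod$ and $\calW := \calC^\perp$. This reduces matters to verifying that $(\calC, \calW)$ is a complete cotorsion pair with $\calC \cap \calW = \proj(A\Mod)$ and that $\calW$ satisfies $2$-out-of-$3$; cofibrant generation will then follow from Proposition \ref{prop_hoveydec}.

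The main technical ingredient is the deconstructibility of $\calC$. Each factor is deconstructible by Propositions \ref{prop_cocontraderivedmodel} and \ref{prop_projectiveinjectivemodel}, and $\calC$ is closed under direct summands and extensions. It is also generating: for any $P \in \proj(A\us\Mod)$, the module $G^+(P)$ lies in $\calC$, since its underlying graded module $P \oplus \Omega P$ is projective and $G^+$ lands in $\Acyc(A)$, and every $A$-module $X$ receives an epimorphism from some such $G^+(P)$ by composing the counit $G^+(X\us)\twoheadrightarrow X$ with $G^+$ of a projective cover $P\twoheadrightarrow X\us$. The delicate part---and the one I expect to be the main obstacle---is to promote these ingredients to a deconstructibility statement for the intersection. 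I would invoke a Stovicek-type result on intersections of deconstructible classes in a Grothendieck category, exploiting that both factors are closed under transfinite extensions and that $\calC$ contains a generating set; alternatively one could try to combine the monadic pullback result of the appendix (Proposition \ref{prop_pullbackmonadic}) with a refinement argument, refining $A\Mod\lproj$-filtrations of acyclic members so that the successive quotients become acyclic as well.

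Granted deconstructibility, Proposition \ref{prop_cogset} yields a small complete cotorsion pair $(\calC, \calW)$. Lemma \ref{lem_doubleorthogonal}, applied using $\calC \subseteq A\Mod\lproj$ and $\Sigma\calC \subseteq \calC$, gives $\calC \cap \calW = \proj(A\Mod)$. Since $A\Mod\lproj$ is closed under kernels of epimorphisms (Proposition \ref{prop_cocontraderivedmodel}) and so is $\Acyc(A)$ (long exact homology), $(\calC, \calW)$ is resolving, and the dual of Lemma \ref{lem_twooutofthree} furnishes $2$-out-of-$3$. For part (2) the intersection issue is avoided: set $\calF := A\Mod\linj \cap \Acyc(A)$ and observe directly that $\calF = \calS^\perp$ for the set $\calS := G^+(\calS_0) \cup \{\Omega^n A\}_{n \in \Gamma}$, where $\calS_0$ is any set with $A\us\Mod = \filt(\calS_0)$; this uses $A\Mod\linj = G^+(A\us\Mod)^\perp = G^+(\calS_0)^\perp$ (by cocontinuity of $G^+$ together with $\filt(\calT)^\perp = \calT^\perp$) combined with $\Acyc(A) = \{\Omega^n A\}^\perp$. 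Corollary \ref{cor_complete} then yields a small complete cotorsion pair $({^\perp}\calF, \calF)$, and Lemmata \ref{lem_doubleorthogonal} and \ref{lem_twooutofthree} complete the verification.

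Finally, the Quillen equivalences: the identity is evidently left (resp.\ right) Quillen by comparing (trivial) cofibrations (resp.\ fibrations). To see these are Quillen equivalences, I would apply Proposition \ref{prop_descriptionhomotopycategory}: both ${^{p}}\calM\uctr\lsing(A)$ and $\calM\uctr\lsing(A)$ share the bifibrant class $\calC \cap \calF = A\Mod\lproj \cap \Acyc(A)$, and both have $\omega = \proj(A\Mod)$ (for $\calM\uctr\lsing(A)$ this follows from $\calF \cap \calW = A\Mod\lproj^\perp = \calW\uctr(A)$ combined with $A\Mod\lproj \cap \calW\uctr(A) = \proj(A\Mod)$ via Proposition \ref{prop_cocontraderivedmodel}), so the identity descends to an equivalence of homotopy categories. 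The dual computation handles case (2).
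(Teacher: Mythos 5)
Your proposal is correct and follows essentially the same route as the paper's proof. The step you flagged as the main potential obstacle---deconstructibility of $A\Mod\lproj\cap\Acyc(A)$---is handled precisely as in your first suggestion, by invoking stability of deconstructible classes under intersections \cite[Proposition 2.9]{StovicekHillGrothendieck}; the rest of your argument (Corollary~\ref{cor_injprojmodel}, Lemmata~\ref{lem_doubleorthogonal} and~\ref{lem_twooutofthree}, the $\calS^{\perp}$ argument for part (2), and Proposition~\ref{prop_descriptionhomotopycategory} for the Quillen equivalences) matches the paper's proof almost verbatim.
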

\begin{proof}
(1) As usual it suffices to that ${^{p}}\calC\uctr\lsing(A)=A\Mod\lproj\cap\Acyc(A)$ is deconstructible,
 ${^{p}}\calC\uctr\lsing(A)\cap{^{p}}\calC\uctr\lsing(A)^{\perp}=\proj(A\Mod)$ and that
${^{p}}\calC\uctr\lsing(A)^{\perp}$ has the $2$-out-of-$3$ property. Since both $A\Mod\lproj$ and $\Acyc(A)$
are deconstructible by Propositions \ref{prop_cocontraderivedmodel} and \ref{prop_projectiveinjectivemodel}, the deconstructibility
of $A\Mod\lproj\cap\Acyc(A)$ follows from the stability of deconstructible classes under intersections \cite[Proposition
2.9]{StovicekHillGrothendieck}. The equality ${^{p}}\calC\uctr\lsing(A)\cap{^{p}}\calC\uctr\lsing(A)^{\perp}=\proj(A\Mod)$ follows
from Lemma \ref{lem_doubleorthogonal}, and Lemma \ref{lem_twooutofthree} ensures the $2$-out-of-$3$ property since
${^{p}}\calC\uctr\lsing(A)$ is closed under kernels of epimorphisms. Finally, it is clear that the identity is a left Quillen functor
${^{p}}\calM\uctr\lsing(A)\to\calM\uctr\lsing(A)$; moreover, Proposition \ref{prop_descriptionhomotopycategory} implies that
it induces an equivalence on homotopy categories, hence is a Quillen equivalence.

(2) Note that ${^{i}}\calM\uco\lsing(A)=A\Mod\linj\cap\Acyc(A)$ is of the form $\calS^{\perp}$ for some set
$\calS$ as this is true both for $A\Mod\linj$ (Proposition \ref{prop_cocontraderivedmodel}) and $\Acyc(A)$
(Proposition \ref{prop_projectiveinjectivemodel}). Hence $\left({^{\perp}}({^{i}}\calM\uco\lsing(A)),
  {^{i}}\calM\uco\lsing(A)\right)$ is a complete cotorsion pair. By Lemma \ref{lem_doubleorthogonal}, we have
${^{i}}\calM\uco\lsing(A)\cap{^{\perp}}({^{i}}\calM\uco\lsing(A))=A\Mod\linj$, and Lemma \ref{lem_twooutofthree} again
provides the $2$-out-of-$3$ property since ${^{i}}\calM\uco\lsing(A)$ is closed under cokernels of monomorphisms. That
the identity is a right Quillen equivalence ${^{i}}\calM\uco\lsing(A)\to\calM\uco\lsing(A)$ again follows from Proposition
\ref{prop_descriptionhomotopycategory}. 
\end{proof}
We do not expect a variant of Proposition \ref{prop_singmodelcoctr} to hold for the relative singular models attached to
a morphism $\varphi: R\to A$ since we see no reason for $\calW\uctr(R)$ and $U_\varphi\ua\calW\uctr(R)$ to be
deconstructible (resp. for $\calW\uco(R)$ and $U_\varphi\ua\calW\uco(R)$ to be of the form $\calS^{\perp}$ for a set of
objects $\calS$). For the absolute singular models, this is different, because luckily $\Acyc(A)$ arises both  as the
cotorsionfree class in $(\calC\uproj(A),\Acyc(A))$ and as the cotorsion class in $(\Acyc(A),\calF\uinj(A))$. 

Let us pause for a moment to see what model structures are currently around, restricting to the injective case. We
started with the identity right Quillen functor $\calM\uinj(A)\to\calM\uco(A)$ and applied Proposition
\ref{prop_localization} to get the right localization $\calM\uco\lsing(A):=\calM\uinj(A)/\calM\uco(A)$, fitting into a
colocalization sequence $\Ho(\calM\uinj(A))\to\Ho(\calM\uco(A))\to\Ho(\calM\uco\lsing(A))$. Now, however, we have also
constructed a model ${^{i}}\calM\uco\lsing(A)$ for which the identity is \textit{right} Quillen
${^{i}}\calM\uco\lsing(A)\to\calM\uco(A)$, and on the level of homotopy categories we have the following commutative diagram:
\begin{equation*}\begin{tikzpicture}[description/.style={fill=white,inner sep=2pt}]
    \matrix (m) [matrix of math nodes, row sep=3em,
                 column sep=2.5em, text height=1.5ex, text depth=0.25ex,
                 inner sep=0pt, nodes={inner xsep=0.3333em, inner ysep=0.3333em}]
    {
      \Ho(\calM\uco\lsing(A)) &&& \Ho(\calM\uco(A))\\
      & \bfK\lac(A\Mod\linj) &\bfK(A\Mod\linj) & \\
      \Ho({^{i}}\calM\uco\lsing(A)) &&& \Ho(\calM\uco(A))\\
    };
    \draw[->] (m-1-1) -- node[above,scale=0.75]{$\bfL\id$} (m-1-4);
    \draw[->] (m-3-1) -- node[below,scale=0.75]{$\bfR\id$} (m-3-4);
    \draw[->] ($(m-1-1.south) + (1mm,0)$) -- node[right,scale=0.75]{$\bfL\id$} ($(m-3-1.north) + (1mm,0)$);
    \draw[->] ($(m-3-1.north) - (1mm,0)$) -- node[left,scale=0.75]{$\bfR\id$} ($(m-1-1.south) - (1mm,0)$);
    \draw[double, double distance=0.8mm] (m-1-4) -- (m-3-4);
    \draw[->] (m-2-2) -- node[above,scale=0.75]{$\inc$} (m-2-3);
    \draw[->] ($(m-2-2.north west) + (2mm,0)$) -- node[above right,scale=0.75]{$\cong$} ($(m-1-1.south east) - (2mm,0)$);
    \draw[->] ($(m-2-2.south west) + (2mm,0)$) -- node[below right,scale=0.75]{$\cong$} ($(m-3-1.north east) - (2mm,0)$);
    \draw[->] ($(m-2-3.north east) - (2mm,0)$) -- node[above left,scale=0.75]{$\cong$} ($(m-1-4.south west) + (2mm,0)$);
    \draw[->] ($(m-2-3.south east) - (2mm,0)$) -- node[below left,scale=0.75]{$\cong$} ($(m-3-4.north west) + (2mm,0)$);
\end{tikzpicture}\end{equation*}
Note that the diagonal functors are equivalences since they are the canonical functors from the
homotopy category of cofibrant and fibrant objects into the homotopy category. From this diagram we see that
$\bfL\id:\calM\uco\lsing(A)\to\calM\uco(A)$ and $\bfR\id:{^{i}}\calM\uco\lsing(A)\to\calM\uco(A)$ are equivalent, and
hence $\bfL\id:\calM\uco\lsing(A)\to\calM\uco(A)$ has a \textit{left} adjoint while
$\bfR\id:{^{i}}\calM\uco\lsing(A)\to\calM\uco(A)$ has a \textit{right} adjoint. Thus:
\begin{cor}\label{cor_injrec}
For any dg ring $A$, there is a recollement
\begin{align*}
\bfK\lac(A\Mod\linj)\bigrec\bfK(A\Mod\linj)\bigrec\bfD(A).
\end{align*}
\end{cor}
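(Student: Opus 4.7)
The plan is to observe that the paragraph preceding the corollary has already done the essential model-categorical work, so the corollary reduces to a standard fact in the theory of Bousfield localizations of triangulated categories.

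Five of the six functors in the recollement are already in hand. The identity Quillen adjunction $\calM\uinj(A)\rightleftarrows\calM\uco(A)$ of Corollary \ref{cor_comparison} yields $j^*=\bfL\id:\bfK(A\Mod\linj)\to\bfD(A)$ together with its fully faithful right adjoint $j_*=\bfR\id$; Corollary \ref{cor_loc} applied to the same Quillen adjunction provides a colocalization sequence identifying $\bfD(A)$ with the kernel of $\bfK(A\Mod\linj)\to\bfK\lac(A\Mod\linj)$, and hence the Verdier quotient $\bfK(A\Mod\linj)/\bfK\lac(A\Mod\linj)$ with $\bfD(A)$. The paragraph preceding the corollary supplies the inclusion $i_*: \bfK\lac(A\Mod\linj)\hookrightarrow\bfK(A\Mod\linj)$ together with \emph{both} a right adjoint $i^!$ (from the right Quillen identity $\calM\uco(A)\to\calM\uco\lsing(A)$) and a left adjoint $i^*$ (from the left Quillen identity $\calM\uco(A)\to {^{i}}\calM\uco\lsing(A)$).

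It remains to produce the left adjoint $j_!$ of $j^*$. For any $X\in\bfK(A\Mod\linj)$ I would complete the unit $X\to i_*i^*X$ to a distinguished triangle whose fiber $F(X)$ satisfies $j^*F(X)\xrightarrow{\sim} j^*X$ (because $j^*i_*=0$) and $i^*F(X)=0$ (because $i^*i_*\cong\id$ by full faithfulness of $i_*$). Thus $F(X)$ lies in the left orthogonal of $\bfK\lac(A\Mod\linj)$ and depends, up to canonical isomorphism, only on $j^*X$; the assignment $j_! Y := F(X)$ for any $X$ with $j^*X\cong Y$ defines a fully faithful functor $j_!: \bfD(A)\to\bfK(A\Mod\linj)$ left adjoint to $j^*$, completing the recollement.

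No substantial obstacle arises: the only non-formal step, producing both adjoints for the inclusion $i_*$, was already carried out in Proposition \ref{prop_singmodelcoctr} and the preceding discussion, and the rest is the standard Bousfield--Verdier machinery for passing from a bireflective thick subcategory to a recollement.
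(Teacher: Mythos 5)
Your argument is correct and essentially the same as the paper's proof: both obtain the (co)localization sequence from Corollary \ref{cor_loc} together with the extra left adjoint of the inclusion $\bfK\lac(A\Mod\linj)\hookrightarrow\bfK(A\Mod\linj)$ coming from the auxiliary injective model ${^{i}}\calM\uco\lsing(A)$, and then conclude the recollement formally. The only difference is that the paper simply remarks ``this is all we need for a recollement,'' while you explicitly carry out the standard Bousfield--Verdier construction of the missing left adjoint $j_!$ via the $i^*$-unit triangle, which is precisely the formal principle the paper is invoking.
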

\begin{proof}
$\bfK\lac(A\Mod\linj)\to\bfK(A\Mod\linj)\to\bfD(A)$ is a colocalization sequence by Corollary \ref{cor_loc}, and by the above
$\bfK\lac(A\Mod\linj)\to\bfK(A\Mod\linj)$ also has a left adjoint. This is all we need for a recollement.
\end{proof}
In case $A$ is a Noetherian ring (considered as a dg ring concentrated in degree $0$) the recollement from Corollary
\ref{cor_injrec} was constructed by Krause \cite[Corollary 4.3]{Krause_StableDerived} in the more general framework of
complexes over a locally Noetherian Grothendieck category with compactly generated derived category. 

Dually, in the
projective/contraderived situation we have the following recollement, which again is already known for ordinary rings by
\cite[Theorem 5.15]{Murfet_MockHomotopyCategoryOfProjectives}: 
\begin{cor}\label{cor_projrec}
For any dg ring $A$, there is a recollement
\begin{align*}
\bfK\lac(A\Mod\lproj)\bigrec\bfK(A\Mod\lproj)\bigrec\bfD(A).
\end{align*}
\end{cor}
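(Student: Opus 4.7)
The plan is to dualize the proof of Corollary~\ref{cor_injrec} verbatim, interchanging the roles of left and right Quillen functors throughout. The starting point is Proposition~\ref{prop_localizationdual}: since the identity is left Quillen $\calM\uproj(A)\to\calM\uctr(A)$ by Corollary~\ref{cor_comparison}, I will form the left localization $\calM\uctr\lsing(A) = \calM\uproj(A)\backslash\calM\uctr(A)$ and obtain a localization sequence
\[
\bfD(A)\xrightarrow{\ \bfL\id\ }\bfK(A\Mod\lproj)\xrightarrow{\ \bfL\id\ }\bfK\lac(A\Mod\lproj)
\]
of triangulated categories, whose right adjoints encode the reverse exact sequence $\bfK\lac(A\Mod\lproj)\to\bfK(A\Mod\lproj)\to\bfD(A)$; in particular the inclusion $\bfK\lac(A\Mod\lproj)\to\bfK(A\Mod\lproj)$, realized as $\bfR\id$ from $\calM\uctr\lsing(A)$, is fully faithful and inherits a left adjoint from the underlying Quillen adjunction.

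To upgrade this exact sequence to a recollement I need just one extra adjoint, namely a \emph{right} adjoint to the inclusion, in perfect analogy with how an additional left adjoint was used in Corollary~\ref{cor_injrec}. This adjoint will be provided by the projective variant ${^p}\calM\uctr\lsing(A)$ of Proposition~\ref{prop_singmodelcoctr}(1). The key check is that the identity is left Quillen ${^p}\calM\uctr\lsing(A)\to\calM\uctr(A)$: its cofibrant class $A\Mod\lproj\cap\Acyc(A)$ is contained in $\calC\uctr(A) = A\Mod\lproj$, and (by Lemma~\ref{lem_doubleorthogonal}) the two model structures share the same class $\proj(A\Mod)$ of simultaneously cofibrant and weakly trivial objects, so cofibrations and trivial cofibrations are preserved. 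The resulting $\bfL\id:\Ho({^p}\calM\uctr\lsing(A))\to\bfK(A\Mod\lproj)$ is naturally isomorphic to the inclusion $\bfK\lac(A\Mod\lproj)\to\bfK(A\Mod\lproj)$ (both are fully faithful embeddings of the same essential subcategory), and as a left-derived functor it comes equipped with the desired right adjoint $\bfR\id$.

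The inclusion $\bfK\lac(A\Mod\lproj)\to\bfK(A\Mod\lproj)$ will then admit both a left and a right adjoint, and by the standard criterion for Verdier localizations of triangulated categories the quotient $\bfK(A\Mod\lproj)\to\bfD(A)$ automatically inherits both adjoints as well, which completes the recollement. I do not anticipate any serious obstacle: the entire argument is a routine dualization of Corollary~\ref{cor_injrec}, using Proposition~\ref{prop_localizationdual} in place of Proposition~\ref{prop_localization} and Proposition~\ref{prop_singmodelcoctr}(1) in place of Proposition~\ref{prop_singmodelcoctr}(2). The only non-formal verification is the left-Quillen check above, which reduces to a direct comparison of explicit classes.
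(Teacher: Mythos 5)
Your dualization is exactly what the paper intends: the paper gives no separate argument for Corollary~\ref{cor_projrec} beyond the word ``dually,'' and your argument fills that in correctly, passing through Proposition~\ref{prop_localizationdual} and using the projective variant ${^p}\calM\uctr\lsing(A)$ from Proposition~\ref{prop_singmodelcoctr}(1) to supply the missing right adjoint to the inclusion. Your explicit check that $\id:{^p}\calM\uctr\lsing(A)\to\calM\uctr(A)$ is left Quillen is the precise dual of the butterfly arrow ${^i}\calM\uco\lsing(A)\to\calM\uco(A)$ labeled $R$ in diagram~\eqref{eq:butterfly}, so everything matches.
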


Back in the injective situation we also want to give a model categorical construction of the left adjoint of
$\bfK(A\Mod\linj)\to\bfD(A)$. For this, note that the injective version ${^{i}}\calM\uco\lsing(A)$ of the singular
coderived model structure has ${^{i}}\calF\uco\lsing(A)\subseteq\calF\uco(A)$; we can therefore apply Proposition
\ref{prop_localization} to form the right localization
${^{m}}\calM\uinj(A):=\calM\uco(A)/{^{i}}\calM\uco\lsing(A)$. This is the abelian model structure 
determined by ${^{m}}\calC\uinj(A)={^{\perp}}\left(\Acyc(A)\cap A\Mod\linj\right)$ and
${^{m}}\calF\uinj(A)=A\Mod\linj$, and the identity is a left Quillen functor ${^{m}}\calM\uinj(A)\to\calM\uinj(A)$. All
in all, we get the following butterfly of abelian model structures and Quillen functors on $A\Mod$, where $L$ denotes 
left Quillen functors and $R$ denotes right Quillen functors:
\begin{align}\label{eq:butterfly}\tag{$\infty$}
\begin{tikzpicture}[baseline,description/.style={fill=white,inner sep=2pt}]
    \matrix (m) [matrix of math nodes, row sep=1.8em,
                 column sep=4em, text height=1.5ex, text depth=0.25ex,
                 inner sep=0pt, nodes={inner xsep=0.3333em, inner ysep=0.3333em}]
    {
       \calM\uco\lsing(A) \pgfmatrixnextcell\pgfmatrixnextcell \calM\uinj(A)\\
       \pgfmatrixnextcell \calM\uco(A) \pgfmatrixnextcell \\
       {^{i}}\calM\uco\lsing(A) \pgfmatrixnextcell\pgfmatrixnextcell {^{m}}\calM\uinj(A)\\
    };
    \draw[->] ($(m-1-1.south) + (1mm,0)$) -- node[scale=0.75,right]{L} ($(m-3-1.north) + (1mm,0)$);
    \draw[->] ($(m-3-1.north) - (+1mm,0)$) -- node[scale=0.75,left]{R} ($(m-1-1.south) - (1mm,0)$);
    \draw[->] ($(m-1-3.south) + (1mm,0)$) -- node[scale=0.75,right]{R} ($(m-3-3.north) + (1mm,0)$);
    \draw[->] ($(m-3-3.north) + (-1mm,0)$) -- node[scale=0.75,left]{L} ($(m-1-3.south) - (1mm,0)$);
    \draw[->] ($(m-1-1.south east) - (4mm,0)$) -- node[scale=0.75,below]{L} ($(m-2-2.north west) + (0mm,0)$);
    \draw[->] ($(m-2-2.north west) + (4mm,0)$) -- node[scale=0.75,above]{R} ($(m-1-1.south east) - (0mm,0)$);
    \draw[->] ($(m-3-1.north east) + (0mm,0)$) -- node[scale=0.75,below]{R} ($(m-2-2.south west) + (4mm,0)$);
    \draw[->] ($(m-2-2.south west) + (0mm,0)$) -- node[scale=0.75,above]{L} ($(m-3-1.north east) - (4mm,0)$);
    \draw[->] ($(m-2-2.north east) + (0mm,0)$) -- node[scale=0.75,below]{L} ($(m-1-3.south west) + (4mm,0)$);
    \draw[->] ($(m-1-3.south west) + (0mm,0)$) -- node[scale=0.75,above]{R} ($(m-2-2.north east) - (4mm,0)$);
    \draw[->] ($(m-2-2.south east) - (4mm,0)$) -- node[scale=0.75,below]{R} ($(m-3-3.north west) + (0mm,0)$);
    \draw[->] ($(m-3-3.north west) + (4mm,0)$) -- node[scale=0.75,above]{L} ($(m-2-2.south east) + (0mm,0)$);
\end{tikzpicture}
\end{align}
The properties of this diagram are summarized in the following proposition:
\begin{prop}\label{prop_butterfly}
Let $A$ be a dg ring and consider the butterfly \eqref{eq:butterfly}.
\begin{enumerate}
\item\label{eq:butterflyrows}  $\calM\uinj(A)\to\calM\uco(A)\to\calM\uco\lsing(A)$ and
  ${^{i}}\calM\uco\lsing(A)\to\calM\uco(A)\to{^{m}}\calM\uinj(A)$ are right localizations in the sense of Proposition \ref{prop_localization}.
\item\label{eq:butterflyquill} $\calM\uco\lsing(A)\rightleftarrows{^{i}}\calM\uco\lsing(A)$ and
  ${^{m}}\calM\uinj(A)\rightleftarrows\calM\uinj(A)$ are Quillen equivalences. More precisely, the classes of
  simultaneously cofibrant and fibrant objects in $\calM\uco\lsing(A)$ and ${^{i}}\calM\uco\lsing(A)$ coincide, and
  the classes of weak equivalences in $\calM\uinj(A)$ and ${^{m}}\calM\uinj(A)$ coincide. 
\item\label{eq:butterflywings} The two wings in the following following diagram commute:
\begin{equation*}
\begin{tikzpicture}[description/.style={fill=white,inner sep=2pt}]
    \matrix (m) [matrix of math nodes, row sep=1.8em,
                 column sep=4em, text height=1.5ex, text depth=0.25ex,
                 inner sep=0pt, nodes={inner xsep=0.3333em, inner ysep=0.3333em}]
    {
       \Ho(\calM\uco\lsing(A)) \pgfmatrixnextcell\pgfmatrixnextcell \Ho(\calM\uinj(A))\\
       \pgfmatrixnextcell \calM\uco(A) \pgfmatrixnextcell \\
       \Ho\left({^{i}}\calM\uco\lsing(A)\right) \pgfmatrixnextcell\pgfmatrixnextcell \Ho\left({^{m}}\calM\uinj(A)\right)\\
    };
    \draw[->] (m-1-1) -- node[scale=0.75,description]{$\bfL\id$} (m-3-1);
    \draw[->] (m-1-3) -- node[scale=0.75,description]{$\bfR\id$} (m-3-3);
    \draw[->] (m-1-1) -- node[scale=0.75,description]{$\bfL\id$} (m-2-2);
    \draw[->] (m-2-2) -- node[scale=0.75,description]{$\bfR\id$} (m-3-3);
    \draw[->] (m-3-1) -- node[scale=0.75,description]{$\bfR\id$} (m-2-2);
    \draw[->] (m-2-2) -- node[scale=0.75,description]{$\bfL\id$} (m-1-3);
\end{tikzpicture}\end{equation*}
\end{enumerate}
\end{prop}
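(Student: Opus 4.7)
I handle the three parts in turn, relying on Propositions \ref{prop_localization}, \ref{prop_singmodelcoctr} and \ref{prop_descriptionhomotopycategory} as the main inputs.

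Part (\ref{eq:butterflyrows}) requires only unwinding the definitions. The first row is $\calM\uco\lsing(A)=\calM\uco(A)/\calM\uinj(A)$ from Definition \ref{def_relativesingmodel}, a legitimate application of Proposition \ref{prop_localization} since both model structures are injective with $\calF\uinj(A)\subseteq A\Mod\linj=\calF\uco(A)$. The second row is the definition ${^{m}}\calM\uinj(A)=\calM\uco(A)/{^{i}}\calM\uco\lsing(A)$; this again meets the hypothesis of Proposition \ref{prop_localization} because ${^{i}}\calM\uco\lsing(A)$ is injective by Proposition \ref{prop_singmodelcoctr} with fibrant class $A\Mod\linj\cap\Acyc(A)\subseteq A\Mod\linj$.

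For Part (\ref{eq:butterflyquill}), the Quillen equivalence $\calM\uco\lsing(A)\rightleftarrows{^{i}}\calM\uco\lsing(A)$ is part of Proposition \ref{prop_singmodelcoctr}, and the coincidence of simultaneously cofibrant-fibrant classes is immediate, both equaling $A\Mod\linj\cap\Acyc(A)$. For the second equivalence ${^{m}}\calM\uinj(A)\rightleftarrows\calM\uinj(A)$, identity is right Quillen $\calM\uinj\to{^{m}}\calM\uinj$ using $\calF\uinj(A)\subseteq A\Mod\linj$ and $\inj(A\Mod)\subseteq A\Mod\linj\cap\Acyc(A)$, so by Fact \ref{fact_weakequivalences} it remains to show ${^{m}}\calW\uinj(A)=\Acyc(A)=\calW\uinj(A)$. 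The inclusion ${^{m}}\calW\uinj(A)\subseteq\Acyc(A)$ follows from $2$-out-of-$3$ for $\Acyc(A)$ together with $\calW\uco(A)\subseteq\Acyc(A)$ (Corollary \ref{cor_comparison}); conversely, given $X\in\Acyc(A)$, completeness of the cotorsion pair $(\calW\uco(A),A\Mod\linj)$ underlying $\calM\uco(A)$ produces $0\to X\to F\to C\to 0$ with $F\in A\Mod\linj$ and $C\in\calW\uco(A)$, and $2$-out-of-$3$ forces $F\in A\Mod\linj\cap\Acyc(A)$, placing $X$ in ${^{m}}\calW\uinj(A)$ via the description from Proposition \ref{prop_localization}.

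For Part (\ref{eq:butterflywings}), apply Proposition \ref{prop_descriptionhomotopycategory} to each model structure; Lemma \ref{lem_doubleorthogonal} identifies the relevant $\omega$-classes uniformly as $\inj(A\Mod)$, giving
\begin{align*}
\Ho(\calM\uco\lsing(A))\cong\Ho({^{i}}\calM\uco\lsing(A)) & \cong (A\Mod\linj\cap\Acyc(A))/\inj(A\Mod), \\
\Ho(\calM\uco(A)) & \cong A\Mod\linj/\inj(A\Mod), \\
\Ho(\calM\uinj(A))\cong\Ho({^{m}}\calM\uinj(A)) & \cong \calF\uinj(A)/\inj(A\Mod),
\end{align*}
where the pairs of equivalences come from Part (\ref{eq:butterflyquill}). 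Tracing each derived identity through Proposition \ref{prop_descriptionhomotopycategory} realises it as an inclusion or the identity between subquotients of $A\Mod\linj/\inj(A\Mod)$, and commutativity of both wings is immediate (for the right wing one also uses that a fibrant replacement $X\to X'$ in $\calM\uinj$ is a quasi-isomorphism, hence a weak equivalence in ${^{m}}\calM\uinj$ by Part (\ref{eq:butterflyquill})). The main obstacle is the identification ${^{m}}\calW\uinj(A)=\Acyc(A)$ in Part (\ref{eq:butterflyquill}); once it is established, both the second Quillen equivalence and the descriptions needed for commutativity follow with routine bookkeeping.
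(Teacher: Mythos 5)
Your approach to part (2) is the main point of divergence. The paper shows directly that both classes of weak equivalences in $\calM\uinj(A)$ and ${^{m}}\calM\uinj(A)$ coincide with the quasi-isomorphisms: it unravels Fact \ref{fact_weakequivalences} to see that a weak equivalence in ${^{m}}\calM\uinj(A)$ is a quasi-isomorphism, and conversely factors a quasi-isomorphism as trivial cofibration followed by fibration in ${^{m}}\calM\uinj(A)$ and checks the fibration is trivial. You instead compute the weakly trivial classes, showing ${^{m}}\calW\uinj(A) = \Acyc(A) = \calW\uinj(A)$ (which is a correct and clean computation the paper does not spell out), and then assert that by Fact \ref{fact_weakequivalences} this finishes the argument.

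That last step has a gap. Fact \ref{fact_weakequivalences} describes weak equivalences in terms of $\calC\cap\calW$ and $\calF\cap\calW$, and these \emph{differ} between the two model structures: for $\calM\uinj(A)$ they are $\Acyc(A)$ and $\inj(A\Mod)$, while for ${^{m}}\calM\uinj(A)$ they are $\calW\uco(A)$ and $\Acyc(A)\cap A\Mod\linj$ (by the analysis in the proof of Proposition \ref{prop_localization}). So equality of the $\calW$'s does not immediately give equality of weak equivalences by that Fact alone. What is needed is the stronger (true, but not stated in the paper) assertion that in an abelian model structure any monomorphism with cokernel merely in $\calW$ is already a weak equivalence, and dually for epimorphisms with kernel in $\calW$; this follows by factoring the monomorphism as trivial cofibration then fibration, applying the snake lemma to the resulting map of short exact sequences, and invoking thickness of $\calW$ to conclude the fibration is trivial. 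Once this is recorded, weak equivalences are characterized by $\calW$ alone and your argument closes. The paper's direct comparison with quasi-isomorphisms circumvents the need for that auxiliary lemma.

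For part (3) your subquotient identifications are a reasonable expansion of the paper's one-line "follows from (2)". One caveat you partly acknowledge: the identification $\Ho({^{m}}\calM\uinj(A))\cong\calF\uinj(A)/\inj(A\Mod)$ is \emph{not} the intrinsic description from Proposition \ref{prop_descriptionhomotopycategory} applied to ${^{m}}\calM\uinj(A)$ (whose cofibrant-fibrant class is ${^{m}}\calC\uinj(A)\cap A\Mod\linj$, a priori a different subclass of $A\Mod\linj$), but rather comes through the Quillen equivalence of part (2). Consequently the bottom-right derived identity is not realized as a literal inclusion of subquotients, and the commutativity of the right wing genuinely uses that a fibrant replacement in $\calM\uco(A)$ is a quasi-isomorphism hence a weak equivalence in both $\calM\uinj(A)$ and ${^{m}}\calM\uinj(A)$. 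You do note this parenthetically, so I would not call it a gap, but the phrase "realises it as an inclusion or the identity" overstates what happens on the right wing.
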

\begin{proof}
\eqref{eq:butterflyrows} and the part of \eqref{eq:butterflyquill} concerning
$\calM\uco\lsing(A)\rightleftarrows{^{i}}\calM\uco\lsing(A)$ hold by definition. Consider now
${^{m}}\calM\uinj(A)\rightleftarrows\calM\uinj(A)$: By Fact \ref{fact_weakequivalences} the weak  
equivalences in ${^{m}}\calM\uinj(A)$ are compositions of monomorphisms with cokernel in
${^{\perp}}\left({^{m}}\calF\uinj(A)\right)=\calW\uco(A)$ and epimorphisms with kernel in $\Acyc(A)\cap A\Mod\linj$. In
particular, any weak equivalence in ${^{m}}\calM\uinj(A)$ is a quasi-isomorphism. Conversely, suppose $f: A\to B$ is a
quasi-isomorphism and $f = g\circ h$ is a factorization of $f$ into a trivial cofibration $h: A\to C$ followed by a 
fibration $g: C\to B$, both with respect to ${^{m}}\calM\uinj(A)$. Then $h$ is a monomorphism with cokernel in
$\calW\uco(A)$, so in particular it is a quasi-isomorphism. Consequently, $g: C\to B$ is both an epimorphism with kernel
in $A\Mod\linj$ and a quasi-isomorphism, hence a trivial fibration in ${^{m}}\calM\uinj(A)$. As the composition of $g$ and $h$, we
conclude that $f$ is a weak equivalence in ${^{m}}\calM\uinj(A)$, too, as claimed. Finally, \eqref{eq:butterflywings}
follows from \eqref{eq:butterflyquill}.
\end{proof}

Proposition \ref{prop_butterfly} shows that when trying to lift a recollement $\calT\p\rec\calT\rec\calT\pp$ of
triangulated categories to the world of model categories, it is likely to happen that it unfolds to a butterfly of model
categories and Quillen functors between them. The two adjoints both for $\calT\p\to\calT$ and
$\calT\to\calT\pp$ are then explained by the presence of two different model structures for $\calT\p$ and $\calT\pp$,
compensating the fact that a functor between model categories is usually either left or right Quillen, but rarely both.

\begin{rem}\label{rem_gen3}
When trying to generalize the previous results to the setting of Remark \ref{rem_gen}, we run into a
problem: we need to know that $A\Mod\linj\cap\Acyc(A)$ is of the form $\calS^{\perp}$ for some set
of objects $\calS$. If $\scA$ has enough projectives, then $\Acyc(A)=\{\Sigma^k A\otimes P\ |\ k\in\Gamma\}^{\perp}$ for
a projective generator $P$ of $\scA$ and hence $A\Mod\linj\cap\Acyc(A)=\calS^{\perp}$ for $\calS$ being the union of a representative set
of isomorphism classes in $\{\Sigma^k A\otimes P\ |\ k\in\Gamma\}$, and $G^+(\calT)$, for a set $\calT\subset A\us\Mod$
such that $A\us\Mod=\filt\calT$. However, without existence of enough projectives, we don't know whether
$A\Mod\linj\cap\Acyc(A)$ is of the form $\calS^{\perp}$ 
for some set $\calS\subset A\Mod$. Note that since $\ext^1_A(X,Y)\cong [X,\Sigma Y]$ for $Y\in A\Mod\linj$, the problem
can also be formulated in the triangulated setting as the question whether there exists a set
$\calS\subset\bfK(A\Mod)$ such that $\bfK\lac(A\Mod\linj) = \{X\in A\Mod\ |\ [S,X]=0\text{ for all
}S\in\calS\}$. Hence the following statements are equivalent:
\begin{enumerate}
\item[(i)] There exists a set $\calS\subset A\Mod$ such that $\Acyc(A)\cap A\Mod\linj=\calS^{\perp}$.
\item[(ii)] There exists a set $\calS\subset\bfK(A\Mod)$ such that $\bfK\lac(A\Mod\linj)=\calS^{\perp}$.
\item[(iii)] The sequence $\bfK\lac(A\Mod\linj)\to\bfK(A\Mod\linj)\to\bfD(A)$ is a recollement.
\item[(iv)] The butterfly from Proposition \ref{prop_butterfly} exists.
\end{enumerate}
It would be nice to have methods at hand for checking these conditions, as well as to see examples where they fail. Note
that by \cite{Krause_StableDerived} the conditions are indeed satisfied for the sequence
$\bfK\lac(\inj(\scA))\to\bfK(\inj(\scA))\to\bfD(\scA)$ if $\scA$ is a locally Noetherian Grothendieck category such that
$\bfD(\scA)$ is compactly generated. 
\end{rem}

\label{subsection_butterfly}
\section{Examples}\label{sec_examples}
\subsection{Gorenstein rings}\label{subsection_gorenstein}
Let $R$ be a Gorenstein ring, i.e. $R$ is Noetherian and of finite injective dimension both as a left and as a right
module over itself. Considering $R$ as a dg ring concentrated in degree $0$, we 
can form the absolute singular contraderived and coderived models $\calM\uctr\lsing(R)$ and $\calM\uco\lsing(R)$ on
$\Ch(R)$, see Definition \ref{def_relativesingmodel}. The goal of this section is to see that they can be
connected through a zig-zag of Quillen equivalences to Hovey's Gorenstein projective and injective models on $R\Mod$
(see Proposition \ref{prop_gorensteinmodels}). The ``intermediate'' model structures we meet along that zig-zag are
the projective and injective versions ${^{p}}\calM\uctr\lsing(R)$ and ${^{i}}\calM\uco\lsing(R)$ of the relative
singular models introduced in Proposition \ref{prop_singmodelcoctr}.

We begin with two examples of weakly trivial objects in ${^{p}}\calM\uctr\lsing(R)$.
\begin{prop}\label{prop_weaklytrivialgproj}
Let $R$ be a Gorenstein ring and $X\in\Ch(R)$. Then we have
$X\in{^{p}}\calW\uctr\lsing(R)=(\Acyc(R)\cap\Ch(\Proj(R)))^\perp$ if either of the following holds:
\begin{enumerate}
\item $X\in\Ch^+(\Proj(R))$.
\item $X\in\Ch^-(R)\cap\Acyc(R)$.
\end{enumerate}
\end{prop}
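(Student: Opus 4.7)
Since each $Y^i$ is projective, $\ext^1_{\Ch(R)}(Y,X)\cong [Y,\Sigma X]$, and both classes $\Ch^+(\Proj(R))$ and $\Ch^-(R)\cap\Acyc(R)$ are closed under suspension. It therefore suffices to show that every chain map $f\colon Y\to X$ is null-homotopic, for $Y\in\Acyc(R)\cap\Ch(\Proj(R))$ and $X$ satisfying (1) or (2). In both cases the plan is to build the null-homotopy $s^i\colon Y^i\to X^{i-1}$ inductively on $i$, with the direction of induction depending on the boundedness of $X$.

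\emph{Case (1).} With $X^i=0$ for $i<N$, I would construct $s^i$ by upward induction, initialising $s^i=0$ for $i\leq N$. At the inductive step, with $s^j$ already built for $j\leq i$, the standard chain-map computation combined with the inductive hypothesis shows that $(f^i-d^{i-1}s^i)\circ d^{i-1}_Y=0$, so $f^i-d^{i-1}s^i$ descends to a map $\image(d^i_Y)\to X^i$. Extending this to $s^{i+1}\colon Y^{i+1}\to X^i$ along the inclusion $\image(d^i_Y)\hookrightarrow Y^{i+1}$ is controlled by $\ext^1_R(Y^{i+1}/\image(d^i_Y),X^i)$, which by acyclicity of $Y$ equals $\ext^1_R(\image(d^{i+1}_Y),X^i)$. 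Here the Gorenstein hypothesis enters: the cosyzygy $\image(d^{i+1}_Y)$ of the acyclic complex of projectives $Y$ is Gorenstein projective, and since $R$ is Gorenstein every projective has finite injective dimension, so $\ext^k_R(\image(d^{i+1}_Y),X^i)=0$ for all $k\geq 1$. The obstruction vanishes and the induction proceeds.

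\emph{Case (2).} With $X^i=0$ for $i>M$, I would construct $s^i$ by downward induction, initialising $s^i=0$ for $i\geq M+2$. At step $i$ I want $s^i\colon Y^i\to X^{i-1}$ satisfying $d^{i-1}_X s^i=f^i-s^{i+1}d^i_Y$. A short computation with the chain-map condition and the inductive hypothesis shows that $f^i-s^{i+1}d^i_Y$ lands in $\ker(d^i_X)=\image(d^{i-1}_X)$, the equality by acyclicity of $X$. Since $Y^i$ is projective and $d^{i-1}_X$ surjects onto $\image(d^{i-1}_X)$, the required lift exists.

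The substantive step is case (1), which rests on the fact that over a Gorenstein ring every acyclic complex of projectives is totally acyclic, in the sense that its cosyzygies are Gorenstein projective, and that Gorenstein projective modules have no positive $\ext$ into projectives (since projectives have finite injective dimension over a Gorenstein ring). Case (2) is entirely formal and requires no hypothesis on $R$ beyond what is already available.
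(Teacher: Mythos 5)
Your proof is correct and follows the same line of reasoning as the paper, just with the details made explicit. In both cases you reduce the vanishing of $\ext^1_{\Ch(R)}(P,X)\cong[P,\Sigma X]$ to an inductive construction of a null-homotopy: for $X\in\Ch^+(\Proj(R))$ the obstruction to extending along $\image(d_Y^i)\hookrightarrow Y^{i+1}$ lies in $\ext^1_R(\image(d_Y^{i+1}),X^i)$, which vanishes because the cosyzygy is Gorenstein projective and $X^i$ is projective of finite injective dimension over the Gorenstein ring $R$ — this is precisely the paper's remark that the projective $X^i$ are ``injective relative to injections with Gorenstein projective cokernels.'' For $X\in\Ch^-(R)\cap\Acyc(R)$ your downward induction is the standard comparison argument that the paper compresses into ``fundamental lemma of homological algebra.''
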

\begin{proof}
For any $P\in{^{p}}\calC\uctr\lsing(R)=\Acyc(R)\cap\Ch(\Proj(R))$ we have $\ext^1_{\Ch(R)}(P,X)\cong
[P,\Sigma X]$. If $X\in\Ch^+(\Proj(R))$, $[P,\Sigma X]=0$ because $P$ is acyclic, has Gorenstein projective
syzygies and $X$ consists of projective modules, which are injective relative to injections with Gorenstein projective
cokernels. If $X\in\Ch^-(R)\cap\Acyc(R)$, $[P,\Sigma X]=0$ by the fundamental lemma of homological algebra.
\end{proof}

We can now describe the promised Quillen adjunction ${^{p}}\calM\uctr\lsing(R)\rightleftarrows\calM\ugproj(R)$. In the
following, we denote $\sigma\la$ resp. $\tau\la$ the brutal and soft truncation functors on categories of
complexes of $R$-modules. Given such a complex $(X,\partial)$, its $k$-th syzygy $\ker(\delta^k)$ is denoted $Z^k(X)$, and its
$k$-th cosyzygy $\coker(\delta^{k-1})$ is denoted $Q^k(X)$. Given an $R$-module $M$, we denote $\iota^k(M)$ the stalk
complex which has $M$ sitting in degree $k$ and vanishes otherwise.

\begin{lem}\label{lem_syzygyadjunction}
For any ring $R$, there is an adjunction $Q^0: \Ch(R)\rightleftarrows R\Mod: \iota^0$.
\end{lem}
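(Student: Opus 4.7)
The plan is to exhibit an explicit natural bijection
\[
\Hom_{R\Mod}(Q^0(X),M)\ \cong\ \Hom_{\Ch(R)}(X,\iota^0(M))
\]
for any $X\in\Ch(R)$ and $M\in R\Mod$, and then check naturality in both variables.

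First I would unravel what a chain map $f: X\to\iota^0(M)$ looks like. Since $\iota^0(M)$ is concentrated in degree $0$, the components $f^k: X^k\to\iota^0(M)^k$ are forced to vanish for all $k\neq 0$, so $f$ is determined by $f^0: X^0\to M$. The commutativity of the square at degree $-1$ reads $f^0\circ\delta^{-1}_X = \delta^{-1}_{\iota^0(M)}\circ f^{-1}=0$, i.e.\ $f^0$ must vanish on $\image(\delta^{-1}_X)$; the square at degree $0$ reads $f^1\circ\delta^0_X = \delta^0_{\iota^0(M)}\circ f^0$, and is automatic because both sides are zero; the squares in all other degrees are trivially commutative. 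Consequently $f^0$ factors uniquely through $X^0\twoheadrightarrow X^0/\image(\delta^{-1}_X)=Q^0(X)$, giving a map $\bar f: Q^0(X)\to M$.

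Conversely, given any map $g: Q^0(X)\to M$, composing with the canonical surjection $X^0\twoheadrightarrow Q^0(X)$ yields a map $\tilde g: X^0\to M$ vanishing on $\image(\delta^{-1}_X)$, and declaring all other components to be zero produces a chain map $\tilde g_\bullet: X\to\iota^0(M)$ (the commutativity checks reduce, as above, to $\tilde g\circ\delta^{-1}_X=0$ and to automatic vanishing in all other degrees). The assignments $f\mapsto\bar f$ and $g\mapsto\tilde g_\bullet$ are mutually inverse by construction.

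Finally I would check naturality: for $\varphi: X\to X'$ in $\Ch(R)$ the diagram of sets behaves well because $Q^0$ is functorial and the map $X^0\to Q^0(X)$ is the universal quotient killing $\image(\delta^{-1}_X)$; for $\psi: M\to M'$ in $R\Mod$ it is immediate since both sides are given by post-composition with $\psi$ on the degree-$0$ component. There is no real obstacle here — the statement is a direct consequence of the universal property of the cokernel combined with the description of chain maps into a stalk complex — so the work is entirely bookkeeping of the commutativity squares.
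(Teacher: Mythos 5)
Your argument is correct and is exactly the standard verification: a chain map $X\to\iota^0(M)$ is determined by its degree-$0$ component, which must annihilate $\image(\delta^{-1}_X)$, and the universal property of the cokernel $Q^0(X)=\coker(\delta^{-1}_X)$ then yields the natural bijection with $\Hom_{R\Mod}(Q^0(X),M)$. The paper treats this lemma as evident and supplies no proof at all, so there is no alternative argument to compare against; your bookkeeping of the commutativity squares and the naturality check are sound.
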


\begin{prop}
Let $R$ be Gorenstein. Then the adjunction $Q^0\dashv\iota^0$ from Lemma \ref{lem_syzygyadjunction} is a Quillen
equivalence ${^{p}}\calM\uctr\lsing(R)\rightleftarrows\calM\ugproj(R)$.
\end{prop}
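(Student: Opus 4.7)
The plan is to verify first that $Q^0 \dashv \iota^0$ is a Quillen adjunction by checking that $Q^0$ preserves cofibrations and trivial cofibrations, then to establish the Quillen equivalence by showing the derived unit is a weak equivalence on cofibrants together with essential surjectivity of $\bfL Q^0$.

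For the Quillen adjunction, cofibrations in ${^{p}}\calM\uctr\lsing(R)$ are monomorphisms $X \hookrightarrow Y$ with cokernel $Z \in \Ch(\Proj(R)) \cap \Acyc(R)$, and trivial cofibrations have cokernel a contractible complex of projectives (combining Corollary \ref{cor_injprojmodel} and Lemma \ref{lem_projective}). I would apply the snake lemma to the two rows at degrees $-1$ and $0$ of the defining short exact sequence and observe that the connecting morphism $\ker(\partial^{-1}_Z) \to Q^0(X)$ vanishes: acyclicity of $Z$ writes any $z \in \ker(\partial^{-1}_Z)$ as $\partial^{-2}_Z(z')$, which lifts through $Y^{-2}$, after which $\partial^2 = 0$ in $Y$ forces the resulting representative in $Q^0(X)$ to vanish. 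This yields a short exact sequence $0 \to Q^0(X) \to Q^0(Y) \to Q^0(Z) \to 0$ with $Q^0(Z) \in \gproj(R)$ by definition of Gorenstein projectives; when $Z$ is contractible, $Q^0(Z)$ is itself projective, being a syzygy of a contractible complex of projectives.

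For the Quillen equivalence, Corollary \ref{cor_coresres} makes both model structures hereditary, so Proposition \ref{prop_hereditary} identifies the homotopy categories as $\Ho({^{p}}\calM\uctr\lsing(R)) \cong \bfK\lac(\Proj(R))$ and $\Ho(\calM\ugproj(R)) \cong \uMCM(R) := \gproj(R)/\proj(R)$, with $\bfL Q^0$ on cofibrants simply equal to $Q^0$. For $X \in \Ch(\Proj(R)) \cap \Acyc(R)$, the unit $\eta_X: X \to \iota^0(Q^0(X))$ is surjective (the degree $0$ component is the quotient $X^0 \twoheadrightarrow Q^0(X)$ and all other components are zero), and its kernel decomposes as the direct sum of the soft truncation $\tau^{\leq 0}(X) = (\ldots \to X^{-2} \to X^{-1} \to Z^0(X))$ and the brutal truncation $\sigma^{\geq 1}(X) = (X^1 \to X^2 \to \ldots)$, because the induced differential $Z^0(X) \to X^1$ vanishes by definition of $Z^0(X)$. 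The first summand is bounded-above and acyclic, while the second is bounded-below with projective components, so Proposition \ref{prop_weaklytrivialgproj} makes both weakly trivial, and hence so is their sum. By Fact \ref{fact_weakequivalences}, the unit $\eta_X$ is therefore a weak equivalence, which means the derived unit is an isomorphism on all of $\Ho({^{p}}\calM\uctr\lsing(R))$, so $\bfL Q^0$ is fully faithful. Essential surjectivity is immediate from the definition of Gorenstein projective, as any $M \in \gproj(R)$ equals $Z^0(P^\bullet) \cong Q^0(\Omega P^\bullet)$ for some $P^\bullet \in \Ch(\Proj(R)) \cap \Acyc(R)$, so $\bfL Q^0$ is an equivalence and the adjunction is a Quillen equivalence.

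The substantive step is the analysis of the kernel of the unit: its decomposition into a bounded-above acyclic piece and a bounded-below projective piece is what makes it possible to apply Proposition \ref{prop_weaklytrivialgproj} and thereby avoid the classical Buchweitz construction of complete projective resolutions. The Gorenstein hypothesis on $R$ enters the proof exclusively through part (1) of that proposition, whose derivation requires Gorenstein projectivity of the syzygies of bounded-below complexes of projectives.
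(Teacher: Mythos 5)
Your proof is correct, and it takes a modestly but genuinely different route from the paper's in the Quillen-equivalence part. For the Quillen-adjunction part, your snake-lemma argument at degrees $-1$ and $0$ is a diagram-chase version of what the paper does by taking the long exact cohomology sequence of the brutal truncations $\sigma_{\leq 0}$; the paper also handles trivial cofibrations more abstractly, noting that $Q^0$ preserves projectives as the left adjoint of the exact functor $\iota^0$, whereas you observe directly that the cosyzygy of a contractible complex of projectives is projective -- both are fine. The real divergence is in the equivalence step: the paper verifies both conditions in the two-sided criterion for a Quillen equivalence, checking (1) that $X\to\iota^0(Q^0(X))$ is a weak equivalence for cofibrant $X$ (identical kernel decomposition $\tau_{\leq 0}(X)\oplus\sigma_{>0}(X)$ as yours) and then (2) that $Q^0(P)\to M$ is a weak equivalence for a cofibrant replacement $P\to\iota^0(M)$, the latter requiring a small reduction via $Z^0\circ\iota^0\cong\id$ to the case $M\in\gproj(R)$. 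You instead combine (1) with essential surjectivity of $\bfL Q^0$, which follows almost tautologically from the definition of Gorenstein projective; since (1) makes the derived unit a natural isomorphism, $\bfL Q^0$ is automatically fully faithful and hence an equivalence. This is logically equivalent but arguably avoids the paper's small detour in step (2). Your reading of the Gorenstein hypothesis as entering exclusively through Proposition \ref{prop_weaklytrivialgproj}(1) also matches the structure of the paper's argument.
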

\begin{proof}
We show first that $Q^0\dashv\iota^0$ is a Quillen adjunction
${^{p}}\calM\uctr\lsing(R)\rightleftarrows\calM\ugproj(R)$, i.e. that $Q^0$ preserves cofibrations and trivial
cofibrations. By Proposition \ref{prop_singmodelcoctr}, a cofibration in ${^{p}}\calM\uctr\lsing(R)$ is a monomorphism
of complexes $f: X\to Y$ such that $P := \coker(f)$ is an 
acyclic complex of projective $R$-modules. Given such an $f$, the long exact sequence in cohomology associated to the exact sequence
of brutal truncations $0\to \sigma_{\leq 0} X\to\sigma_{\leq 0} Y\to \sigma_{\leq 0} P\to 0$ together with the acyclicity
of $P$ show that the sequence $0\to Q^0(X)\to Q^0(Y)\to Q^0(P)\to 0$ is exact. Moreover, $Q^0(P)\in\gproj(R)$ by
definition of Gorenstein projective modules, so $Q^0(f)$ is a monomorphism with Gorenstein projective cokernel,
i.e. a cofibration in $\calM\ugproj(R)$. Next, $Q^0$ preserves trivial cofibrations since these are monomorphisms
with projective cokernel, and $Q^0$ preserves projective objects as the left adjoint to the exact functor $\iota^0$.

To prove that $Q^0\dashv\iota^0$ is a Quillen equivalence, we have to show the following:
\begin{enumerate}
\item For each $X\in\Acyc(R)\cap\Ch(\Proj(R))$ the composition $X\to\iota^0(Q^0(X))$ is a weak equivalence in
  ${^{p}}\calM\uctr\lsing(R)$. 
\item For each $M\in R\Mod$ and some (hence any) cofibrant replacement $P\to\iota^0(M)$ in
  ${^{p}}\calM\uctr\lsing(R)$, the resulting composition $Q^0(P)\to Q^0(\iota^0(M))=M$ is a weak equivalence in
  $\calM\ugproj(R)$. 
\end{enumerate}
(1): We have $\ker(X\to(\iota^0\circ Q^0)(X))=\tau_{\leq 0}(X)\oplus\sigma_{>0}(X)$, and both summands are weakly
trivial by Proposition \ref{prop_weaklytrivialgproj}. (2): Pick a cofibrant replacement $p: K\to M$ in $\calM\ugproj(R)$,
i.e. $p$ is a trivial fibration with $K$ Gorenstein projective. As $\iota^0$ is right Quillen, $\iota^0(p):
\iota^0(K)\to\iota^0(M)$ is a trivial fibration, too, and hence for a cofibrant replacement of $\iota^0(M)$ we may take
any cofibrant replacement of $\iota^0(K)$. As $Z^0\circ\iota^0\cong\id$, we may therefore assume $M$ being Gorenstein
projective right from the beginning. If in that case $P$ is a complete projective resolution of $M$, we know from
(1) that $P\to \iota^0(M)$ is a cofibrant replacement, and applying $Q^0$ gives the identity on $M$, which is a weak
equivalence. 
\end{proof}

\begin{prop}\label{prop_zigzagproj}
Let $R$ be a Gorenstein ring. Then there is a zig-zag of left Quillen equivalences
$\calM\uctr\lsing(R)\stackrel{\id}{\longleftarrow}{^{p}}\calM\uctr\lsing(R)\stackrel{Q^0}{\longrightarrow}\calM\ugproj(R)$. 
\end{prop}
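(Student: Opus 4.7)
The plan is to recognize that the proposition is an immediate concatenation of two Quillen equivalences already assembled in the preceding two results. First, I would cite Proposition \ref{prop_singmodelcoctr}(1), whose last sentence already asserts that the identity functor is a left Quillen equivalence ${^{p}}\calM\uctr\lsing(R) \to \calM\uctr\lsing(R)$. Intuitively, this holds because both model structures share the same class of fibrant objects (the acyclic $R$-modules, by Proposition \ref{prop_abssingmodelcontra}), while the cofibrant class shrinks from $\Ch(\Proj(R))$ to $\Ch(\Proj(R)) \cap \Acyc(R)$, and the relevant homotopy categories coincide by Proposition \ref{prop_descriptionhomotopycategory}.

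Next, I would cite the proposition immediately preceding the statement, which provides the Quillen equivalence $Q^0 \dashv \iota^0 \colon {^{p}}\calM\uctr\lsing(R) \rightleftarrows \calM\ugproj(R)$. Since $Q^0$ is the left adjoint here, it is left Quillen in the sense required, and concatenating with the identity equivalence above yields the desired zig-zag.

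Hence there is essentially no new obstacle to address in the proof of this proposition. All of the genuinely homological work has been performed in the two constituent results --- particularly the analysis of weakly trivial objects in ${^{p}}\calM\uctr\lsing(R)$ via Proposition \ref{prop_weaklytrivialgproj} and the verification that $X \to \iota^0 Q^0(X)$ is a weak equivalence for $X \in \Acyc(R) \cap \Ch(\Proj(R))$, together with the cofibrant-replacement argument on the other side, both of which crucially use the Gorenstein hypothesis. The content of Proposition \ref{prop_zigzagproj} is purely organizational: it packages these results into a single zig-zag identifying the absolute singular contraderived homotopy category on $\Ch(R)$ with Hovey's Gorenstein projective homotopy category on $R\Mod$, which is the goal announced at the beginning of Section \ref{subsection_gorenstein}.
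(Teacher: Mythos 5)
Your approach matches the paper exactly: Proposition \ref{prop_zigzagproj} has no separate proof in the text and is simply the concatenation of the identity left Quillen equivalence from Proposition \ref{prop_singmodelcoctr}(1) with the $Q^0\dashv\iota^0$ Quillen equivalence proved immediately before it. However, the parenthetical ``intuition'' you offer for the first leg is factually wrong: $\calM\uctr\lsing(R)$ and ${^{p}}\calM\uctr\lsing(R)$ do \emph{not} share the same class of fibrant objects. In $\calM\uctr\lsing(R)$ the fibrant objects are precisely $\Acyc(R)$ (Proposition \ref{prop_abssingmodelcontra}), whereas ${^{p}}\calM\uctr\lsing(R)$ is a \emph{projective} abelian model structure, so \emph{every} object is fibrant; meanwhile the cofibrants shrink from $\Ch(\Proj(R))$ to $\Ch(\Proj(R))\cap\Acyc(R)$. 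What the two structures actually share is the class $\calC\cap\calF=\Ch(\Proj(R))\cap\Acyc(R)$ of bifibrant objects and the class $\omega$ of projective-injectives, and it is Proposition \ref{prop_descriptionhomotopycategory} applied to both that makes the identity a Quillen equivalence. This is exactly the argument given in the proof of Proposition \ref{prop_singmodelcoctr}. So your citation chain is right, but swap your heuristic for ``same bifibrant objects, same $\omega$'' rather than ``same fibrant objects.''
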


The corresponding statement about injective model structures also holds. The arguments are completely analogous, so we
omit the proof.

\begin{prop}\label{prop_zigzaginj}
Let $R$ be a Gorenstein ring. Then there is a zig-zag of right Quillen equivalences
$\calM\uco\lsing(R)\stackrel{\id}{\longleftarrow}{^{i}}\calM\uco\lsing(R)\stackrel{Z^0}{\longrightarrow}\calM\uginj(R)$. 
\end{prop}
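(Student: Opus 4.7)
The plan is to dualize the proof of Proposition \ref{prop_zigzagproj} step by step, replacing the cosyzygy functor $Q^0$ by the syzygy functor $Z^0 = \ker d^0$. The corresponding adjunction $\iota^0 \dashv Z^0\colon \Ch(R) \rightleftarrows R\Mod$ is established exactly as in Lemma \ref{lem_syzygyadjunction}: a chain map $\iota^0(M) \to X$ is the same as a map $M \to X^0$ annihilated by $d^0_X$, i.e.\ a map $M \to Z^0(X)$.

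First I would establish the dual of Proposition \ref{prop_weaklytrivialgproj}: if $R$ is Gorenstein and $X$ satisfies either $X \in \Ch^-(\Inj(R))$ or $X \in \Ch^+(R) \cap \Acyc(R)$, then $X \in {^{i}}\calW\uco\lsing(R) = {^{\perp}}(\Acyc(R) \cap \Ch(\Inj(R)))$. The proof proceeds via $\ext^1_{\Ch(R)}(X, I) \cong [X, \Sigma I]$ for $I \in \Ch(\Inj(R)) = \Ch(R)\linj$, treating the two cases dually: bounded above complexes of injectives are projective with respect to epimorphisms having Gorenstein injective kernels (handling $X \in \Ch^-(\Inj(R))$), while the dual fundamental lemma of homological algebra handles $X \in \Ch^+(R) \cap \Acyc(R)$.

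Next I would check that $Z^0$ is right Quillen ${^{i}}\calM\uco\lsing(R) \to \calM\uginj(R)$. A fibration in the domain is an epimorphism $f\colon Y \twoheadrightarrow X$ with kernel $K \in \Acyc(R) \cap \Ch(\Inj(R))$; the long exact cohomology sequence of $0 \to K \to Y \to X \to 0$ together with $H^0(K)=0$ yields a short exact sequence $0 \to Z^0(K) \to Z^0(Y) \to Z^0(X) \to 0$ in which $Z^0(K)$ is Gorenstein injective by definition, so $Z^0(f)$ is a fibration in $\calM\uginj(R)$. Trivial fibrations are preserved because $Z^0$ is right adjoint to the exact functor $\iota^0$ and hence carries injective objects of $\Ch(R)$ to injective $R$-modules.

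Finally, the Quillen equivalence reduces to two verifications mirroring those for $Q^0 \dashv \iota^0$. For fibrant $X \in \Acyc(R) \cap \Ch(\Inj(R))$, the counit $\iota^0(Z^0(X)) \hookrightarrow X$ has cokernel isomorphic to $\sigma_{<0}(X) \oplus \tau_{\geq 0}(X)$; the splitting hinges on acyclicity of $X$, which forces $d^{-1}$ to vanish modulo $Z^0(X)$, and both summands are weakly trivial by the dual proposition. For $M \in R\Mod$, a fibrant replacement $M \to K$ in $\calM\uginj(R)$, together with $\iota^0$ being left Quillen and the identity $Z^0 \circ \iota^0 = \id$, reduces to the case that $M$ is already Gorenstein injective; a complete injective resolution $I$ of $M$ then yields a fibrant replacement $\iota^0(M) \hookrightarrow I$ by the first verification, and $Z^0$ sends it to $\id_M$. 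The main technical point is this cokernel decomposition; all other checks are routine dualizations.
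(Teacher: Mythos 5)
Your plan is correct and coincides exactly with the dualization that the paper itself indicates and omits ("The arguments are completely analogous"). One minor slip in the rationalization of the cokernel: $d^{-1}$ always lands in $Z^0(X)$ since $d^0 d^{-1}=0$, so $\coker(\iota^0 Z^0(X)\hookrightarrow X)$ splits into $\sigma_{<0}(X)\oplus(\text{degree}\geq 0\text{ part})$ unconditionally; acyclicity of $X$ is instead what you need to see that the degree-$\geq 0$ summand lies in $\Ch^+(R)\cap\Acyc(R)$ (and that $\sigma_{<0}(X)\in\Ch^-(\Inj(R))$ uses $X\in\Ch(\Inj(R))$), so that the dual of Proposition \ref{prop_weaklytrivialgproj} applies to both pieces.
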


\subsection{Curved mixed complexes}\label{subsection_mf}

In this section we study the relative singular contraderived model structure on the category of curved mixed complexes
over a ring and show that it is Quillen equivalent to the contraderived model structure on the corresponding category of
duplexes. 

\begin{definition}
Let $S$ be a ring and $w\in Z(S)$.
\begin{enumerate}
\item We denote $K_{S,w}$ the \textit{Koszul-algebra} of $(S,w)$, i.e. the $\ZZ$-graded algebra $S[s]/(s^2)$ with
  $\deg(s)=-1$ and differential $\diff$ given by $\diff(s) = w$.   
\item We denote $S_w$ the curved $\ZZ/2\ZZ$-graded dg ring with $(S_w)^{\ol{0}} = S$, $(S_w)^{\ol{1}} = 0$, trivial
  differential and curvature $w\in S = (S_w)^{\ol{2}}$. 
\end{enumerate}
\end{definition}

\begin{fact}\label{fact_kswmods} Let $S$ be a ring and $w\in Z(S)$.
\begin{enumerate}
\item A dg module over $K_{S,w}$ is a complex of $S$-modules together with a square-zero nullhomotopy for
  the multiplication by $w$, i.e. a curved mixed complex with curvature $w$.
\item A curved dg module over $S_w$ is an \textit{$(S,w)$-duplex}, i.e. a sequence $f: M^0\to M^1$, $g:
  M^1\to M^0$ of $S$-modules such that $fg=w\cdot\id_{M^1}$ and $gf=w\cdot\id_{M^0}$. Sometimes we abbreviate such a
  sequence by $f: M^0\rightleftarrows M^1: g$.
\end{enumerate}
\end{fact}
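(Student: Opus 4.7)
The plan is to unpack the definitions: both statements assert that, once one identifies the underlying graded ring of the (curved) dg ring in question, a (curved) dg module structure amounts precisely to the stated combinatorial data. No input beyond writing out the Leibniz rule and the curvature condition is required, and both equivalences go through by rewriting the data on each side.

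For (1), I would start by observing that $K_{S,w}\us = S[s]/(s^2)$ is freely generated over $S$ by an element $s$ of degree $-1$ with $s^2 = 0$, so a graded $K_{S,w}\us$-module is exactly a $\ZZ$-graded $S$-module $M$ equipped with a degree $-1$ endomorphism $s: M \to \Sigma^{-1} M$ satisfying $s^2 = 0$. Adding a differential $\diff: M \to \Sigma M$, which squares to zero because $K_{S,w}$ is uncurved, gives a complex structure on $M$. The Leibniz rule applied to the new generator and using $\diff(s)=w$ yields $\diff(sm) = wm - s\cdot \diff m$, i.e.\ $\diff s + s\diff = w\cdot\id_M$. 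Thus $s$ is precisely a square-zero degree $-1$ nullhomotopy for $w\cdot\id_M$, and conversely any such datum reassembles into a dg $K_{S,w}$-module in the obvious way.

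For (2), I would note that $S_w\us$ equals $S$ concentrated in degree $\ol{0}$, so a graded $S_w\us$-module is just a pair $(M^{\ol{0}}, M^{\ol{1}})$ of $S$-modules. A differential $\diff: M \to \Sigma M$ of graded abelian groups is then exactly a pair of $S$-linear maps $f: M^{\ol{0}} \to M^{\ol{1}}$ and $g: M^{\ol{1}} \to M^{\ol{0}}$, and the Leibniz condition on $\diff$ is vacuous since $S_w$ carries the zero differential. Finally, the curvature relation $\diff^2(m) = wm$ read off parity-by-parity becomes $gf = w\cdot\id_{M^{\ol{0}}}$ and $fg = w\cdot\id_{M^{\ol{1}}}$, which is precisely the defining data of an $(S,w)$-duplex. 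The reverse direction is immediate.

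Both statements are pure unpackings, so no real obstacle arises. The only point requiring attention is the sign $(-1)^{|s|} = -1$ in the Leibniz rule for part (1), which is what forces the additive relation $\diff s + s\diff = w$ and hence the interpretation of $s$ as a nullhomotopy; for (2) the curvature condition and the absence of a differential on $S_w$ make the duplex relations $fg = w$, $gf = w$ automatic, leaving no subtlety to resolve.
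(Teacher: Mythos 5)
Your unpacking is correct and is exactly the intended verification; the paper states this as a Fact without proof precisely because it is the routine translation of the Leibniz rule and the curvature identity $\diff^2 = w\cdot\id$ that you carried out. Your attention to the sign $(-1)^{|s|}=-1$ in part (1) is the only point where one could slip, and you handled it correctly.
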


Viewing $K_{S,w}$-modules as curved mixed complexes, the cofibrant and fibrant objects in
$\calM\uctr\lsing(K_{S,w}/S)$ are easy to describe in terms of the two differentials of the mixed complex:  

\begin{prop}\label{prop_coffibincmix}
Let $X = (X,\diff,s)$ be a $K_{S,w}$-module. Then the following hold:
\begin{enumerate}
\item\label{prop_coffibincmix:cofrel} $X$ is cofibrant in $\calM\uctr\lsing(K_{S,w}/S)$ (or, equivalently, $\calM\uctr(K_{S,w})$) if and only if $(X,s)$
  is contractible and $S$-projective. 
\item\label{prop_coffibincmix:fibrel} $X$ is fibrant in $\calM\uctr\lsing(K_{S,w}/S)$ if and only if $(X,\diff)$ is $S$-contraacyclic.
\item\label{prop_coffibincmix:fibabs} $X$ is fibrant in $\calM\uctr\lsing(K_{S,w})$ if and only if $(X,\diff)$ is acyclic.
\end{enumerate}
In particular, if $S$ is semisimple, then $X$ is cofibrant (resp. fibrant) in $\calM\uctr\lsing(K_{S,w}/S)$ if and only
if $(X,\diff)$ (resp. $(X,s)$) is acyclic.
\end{prop}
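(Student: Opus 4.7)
The plan is to reduce parts~(2) and~(3) directly to Proposition~\ref{prop_abssingmodelcontra} and to prove~(1) by identifying the abelian category of graded $K_{S,w}\us$-modules with $\Ch_{\ZZ}(S)$.

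Parts~(2) and~(3) are essentially free. The forgetful functor $U_\varphi : K_{S,w}\Mod \to S\Mod$ associated with the inclusion $S \hookrightarrow K_{S,w}$ retains the differential $\partial$ and drops only the $s$-action, so $U_\varphi(X) = (X,\partial)$ as a chain complex of $S$-modules. Plugging this into the fibrancy description of Proposition~\ref{prop_abssingmodelcontra} immediately yields~(2), and~(3) is the absolute version contained in the last sentence of the same proposition.

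For~(1), Propositions~\ref{prop_cocontraderivedmodel} and~\ref{prop_abssingmodelcontra} show that both $\calM\uctr(K_{S,w})$ and $\calM\uctr\lsing(K_{S,w}/S)$ have cofibrant class $K_{S,w}\Mod\lproj$, i.e., those $X$ with $X\us$ projective over $K_{S,w}\us = S[s]/(s^2)$, so the two are literally identified. The key observation is that since $|s|=-1$ and $s^2=0$, the abelian category $K_{S,w}\us\Mod$ is canonically isomorphic to $\Ch_{\ZZ}(S)$ with the $s$-action becoming the differential of degree $-1$; under this isomorphism $X\us$ corresponds to $(X,s)$ and $K_{S,w}\us$ corresponds to $\cone(\id_S) = S\oplus \Sigma S$. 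Hence~(1) reduces to the familiar characterization of projective objects of $\Ch_{\ZZ}(S)$ as contractible complexes of projective $S$-modules. For ``$\Rightarrow$'', a free $K_{S,w}\us$-module $K_{S,w}\us\otimes_S P$ corresponds to $P\oplus\Sigma P$ with $s = \begin{pmatrix} 0 & 0 \\ \id & 0 \end{pmatrix}$, which is contractible and componentwise $S$-projective whenever $P$ is; both properties pass to summands. For ``$\Leftarrow$'', starting from a contractible $(X,s)$ with $S$-projective components and contracting homotopy $h$ satisfying $sh+hs=\id$, the complementary idempotents $hs$ and $sh$ produce a graded $S$-module splitting $X = I\oplus J$ with $I = \image(s) = \ker(s)$ and $s|_J : J \xrightarrow{\sim} I$ of degree $-1$; identifying $J$ with $\Omega I$ exhibits $X$ as $K_{S,w}\us\otimes_S \Omega I$, and since $I$ is a graded summand of $X$ it is componentwise $S$-projective, so $X$ is projective over $K_{S,w}\us$.

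The ``in particular'' follows by specialization of~(1) and~(2): if $S$ is semisimple, every $S$-module is projective, so the $S$-projectivity requirement in~(1) is automatic and contractibility of a complex coincides with acyclicity; for fibrants, $\gldim(S)=0<\infty$, so Proposition~\ref{prop_finglobdim} gives $\calM\uctr(S)=\calM\uproj(S)$ and hence $S$-contraacyclic equals $S$-acyclic. The only real work above is the explicit identification of a contractible complex of $S$-projectives as a free $K_{S,w}\us$-module in the ``$\Leftarrow$'' direction; this is routine from the contracting homotopy, but requires some care with degree shifts to track the isomorphism $X \cong K_{S,w}\us\otimes_S\Omega I$ as graded $K_{S,w}\us$-modules.
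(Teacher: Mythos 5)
Your argument is correct and tracks the paper's proof closely: (2) and (3) are indeed read off from Proposition~\ref{prop_abssingmodelcontra}, and for (1) you use the same identification $K_{S,w}\us\Mod\cong\Ch_\ZZ(S)$; the only difference is that you reprove by hand (via the idempotents $sh$, $hs$ coming from a contracting homotopy) the characterization of projectives in $\Ch_\ZZ(S)$ as contractible complexes of projectives, where the paper simply invokes Lemma~\ref{lem_projective} applied to the ring $S$ viewed as a cdg ring in degree zero. Incidentally, the conclusion you actually derive for the ``in particular'' clause — cofibrant iff $(X,s)$ acyclic, fibrant iff $(X,\diff)$ acyclic — is the one that follows from (1) and (2), so it appears that $\diff$ and $s$ are inadvertently transposed in the printed statement of the proposition.
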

\begin{proof}
\eqref{prop_coffibincmix:fibrel} and \eqref{prop_coffibincmix:fibabs} hold by
definition. \eqref{prop_coffibincmix:cofrel} is true by Lemma \ref{lem_projective}, since, by definition, $X$ is 
cofibrant in $\calM\uctr\lsing(K_{S,w}/S)$ or $\calM\uctr\lsing(K_{S,w})$ if and only if $(X,s)$ is projective in
$K_{S,w}\us\Mod\cong\Ch(S)$. 
\end{proof}

Curved mixed complexes with curvature $w$ are connected to $(S,w)$-duplexes via the operations of folding and stabilization:

\begin{definition}
Let $S$ be a ring and $w\in Z(S)$. Further, let $(X,\diff,s)$ be a $K_{S,w}$-module and $f: M^0\rightleftarrows M^1: g$ be an $(S,w)$-duplex.
\begin{enumerate}
\item The \textit{folding via products} $\fold\uprod(X)$ of $X$ is the $(S,w)$-duplex given by 
$$\fold\uprod(X)\ :=\ \prod\limits_{n\in\ZZ} X^{2n}\xrightarrow{\ \diff+s\ }\prod\limits_{n\in\ZZ} X^{2n+1}\xrightarrow{\ \diff+s\
}\prod\limits_{n\in\ZZ} X^{2n}.$$
\item The \textit{folding via sums} $\fold^\oplus(X)$ of $X$ is the $(S,w)$-duplex given by 
$$\fold^\oplus(X)\ :=\ \bigoplus\limits_{n\in\ZZ} X^{2n}\xrightarrow{\ \diff+s\ }\bigoplus\limits_{n\in\ZZ} X^{2n+1}\xrightarrow{\ \diff+s\
}\bigoplus\limits_{n\in\ZZ} X^{2n}.$$
\item The \textit{stable bar resolution} $\sbar(M)$ is the $K_{S,w}$-module given by
$$\begin{tikzpicture}[description/.style={fill=white,inner sep=2pt}]
    \matrix (m) [matrix of math nodes, row sep=3em,
                 column sep=4em, text height=1.5ex, text depth=0.25ex,
                 inner sep=0pt, nodes={inner xsep=0.3333em, inner ysep=0.3333em}]
    {
       ... & M^1\oplus M^0 & M^0\oplus M^1 & M^1\oplus M^0 & ...,\\
    };
    \draw[->] ($(m-1-1.east) + (0,0.6mm)$) -- node[scale=0.6,above]{$\begin{pmatrix}f & w \\ -\id &
        -g\end{pmatrix}$} ($(m-1-2.west) + (0,0.6mm)$);     
    \draw[->] ($(m-1-2.west) - (0,0.6mm)$) -- node[scale=0.6,below]{$\begin{pmatrix}0 & 0 \\ \id &
        0\end{pmatrix}$}($(m-1-1.east) - (0,0.6mm)$);  
    \draw[->] ($(m-1-2.east) + (0,0.6mm)$) --  node[scale=0.6,above]{$\begin{pmatrix}g & w \\ -\id &
        -f\end{pmatrix}$} ($(m-1-3.west) + (0,0.6mm)$);     
    \draw[->] ($(m-1-3.west) - (0,0.6mm)$) --  node[scale=0.6,below]{$\begin{pmatrix} 0 & 0 \\ \id &
        0\end{pmatrix}$} ($(m-1-2.east) - (0,0.6mm)$);  
    \draw[->] ($(m-1-3.east) + (0,0.6mm)$) -- node[scale=0.6,above]{$\begin{pmatrix}f & w \\ -\id &
        -g\end{pmatrix}$} ($(m-1-4.west) + (0,0.6mm)$);     
    \draw[->] ($(m-1-4.west) - (0,0.6mm)$) -- node[scale=0.6,below]{$\begin{pmatrix}0 & 0 \\ \id &
        0\end{pmatrix}$}($(m-1-3.east) - (0,0.6mm)$);   
    \draw[->] ($(m-1-4.east) + (0,0.6mm)$) -- node[scale=0.6,above]{$\begin{pmatrix}g & w \\ -\id &
        -f\end{pmatrix}$}($(m-1-5.west) + (0,0.6mm)$);      
    \draw[->] ($(m-1-5.west) - (0,0.6mm)$) -- node[scale=0.6,below]{$\begin{pmatrix} 0 & 0 \\ \id &
        0\end{pmatrix}$} ($(m-1-4.east) - (0,0.6mm)$); 
\end{tikzpicture}$$
where the terms $M^0\oplus M^1$ live in cohomologically even degrees. 
\end{enumerate}
\end{definition}

\begin{prop}\label{prop_sbaradjunctions}
There are canonical adjunctions $\sbar\dashv\fold\uprod$, $\fold^\oplus\dashv\sbar\circ\Sigma$.
\end{prop}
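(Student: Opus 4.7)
The plan is to prove both adjunctions by writing down explicit natural bijections of $\Hom$-sets; the two cases are parallel, so I will work out $\sbar\dashv\fold\uprod$ in some detail and then indicate the modifications needed for the direct-sum case. Fix an $(S,w)$-duplex $M = (f: M^0 \rightleftarrows M^1: g)$ and a $K_{S,w}$-module $X$. A morphism $\varphi: \sbar(M)\to X$ of $K_{S,w}$-modules consists of $S$-linear maps $\varphi^n: \sbar(M)^n\to X^n$ commuting with both $d$ and $s$. Each component $\sbar(M)^n$ is by construction a direct sum of two summands, so $\varphi^n$ decomposes as a pair $(\alpha^n,\beta^n)$ corresponding to these summands. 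The first key observation is that the form of $s_{\sbar(M)}$ --- namely the matrix $\bigl(\begin{smallmatrix}0&0\\ \id&0\end{smallmatrix}\bigr)$ in every degree --- together with the relation $\varphi s = s\varphi$ forces $\beta^{n-1} = s_X\circ\alpha^n$ and $s_X\circ\beta^n = 0$, so $\varphi$ is completely determined by the sequence $(\alpha^n)_{n\in\ZZ}$. Via the universal property of the product, these assemble into maps $\psi^0 := (\alpha^{2k})_k: M^0\to\prod_k X^{2k}$ and $\psi^1 := (\alpha^{2k+1})_k: M^1\to\prod_k X^{2k+1}$.

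The second step is to verify that the remaining $d$-compatibility of $\varphi$ is equivalent to $(\psi^0,\psi^1)$ being a morphism of $(S,w)$-duplexes $M\to\fold\uprod(X)$. Reading off the matrix differentials $\bigl(\begin{smallmatrix}f&w\\-\id&-g\end{smallmatrix}\bigr)$ and $\bigl(\begin{smallmatrix}g&w\\-\id&-f\end{smallmatrix}\bigr)$ of $\sbar(M)$, the equation $\varphi d = d\varphi$ yields in each degree a relation of the form $\alpha^{n+1}\circ f \ (\text{or }g) = d_X\circ\alpha^n + s_X\circ\alpha^{n+2}$; the $w$-entries in the matrices are absorbed using $\beta = s_X\alpha$ and the curvature identity $d_Xs_X+s_Xd_X = w$ on $X$. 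Assembled over all degrees, these identities collapse to
\[
\psi^1\circ f = (d_X+s_X)\circ\psi^0,\qquad \psi^0\circ g = (d_X+s_X)\circ\psi^1,
\]
which is exactly the duplex-morphism condition for $\fold\uprod(X)$. The inverse construction --- given a duplex morphism $(\psi^0,\psi^1)$, build $\varphi$ by setting $\alpha^n$ to the appropriate component of $\psi^0$ or $\psi^1$ and $\beta^n := s_X\circ\alpha^{n+1}$ --- is immediate from the same formulae and is manifestly inverse to the first; naturality in $M$ and $X$ is clear by construction.

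For the second adjunction $\fold\uoplus\dashv\sbar\circ\Sigma$ the same strategy applies with ``maps into a product'' replaced by ``maps out of a sum'': a $K_{S,w}$-morphism $X\to\sbar(\Sigma M)$ is again pinned down by its components on the first summand of $\sbar(\Sigma M)$ in each degree, and the $s$-compatibility on $X$ combined with the universal property of the coproduct translates this data into a pair of maps out of $\bigoplus_k X^{2k}$ and $\bigoplus_k X^{2k+1}$ into $M^0$ and $M^1$. The suspension $\Sigma$ on the right adjoint serves exactly to realign the parity of the first summands of $\sbar$ with the parity convention of $\fold\uoplus$, and once this alignment is in place the resulting matrix relations collapse --- again using the curvature identity --- to the duplex conditions for $\fold\uoplus(X)\to M$. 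The only real obstacle in either case is combinatorial bookkeeping: one has to track carefully which summand of $\sbar(M)^n$ carries $M^0$ versus $M^1$ in each parity and verify that the $w$-entries in the differentials of $\sbar$ are exactly cancelled by the $s\alpha$-contributions via $(d+s)^2 = w$; once these conventions are fixed the verification is a finite check of explicit matrix identities.
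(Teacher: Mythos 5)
Your proposal is correct and follows essentially the same route as the paper: decompose a $K_{S,w}$-module map into its two componentwise summands, observe that $s$-compatibility pins down the second family of components in terms of the first (e.g.\ $\beta^{n} = s_X\alpha^{n+1}$), then show the $d$-compatibility reduces to a single relation $\alpha^{n+1}\partial = (d_X+s_X)\alpha^n$ (the companion relation involving $w$ being automatic from the curvature identity), which is exactly the duplex-morphism condition after assembling products or sums. Two tiny points: the extra condition $s_X\beta^n=0$ you list is not an independent constraint (it follows from $\beta^n = s_X\alpha^{n+1}$ and $s_X^2=0$), and in the $\fold\uoplus$ case it is the \emph{second} summand of $\sbar(\Sigma M)^n$ (in the paper's conventions) that determines the morphism, not the first — but neither affects the validity of the argument.
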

\begin{proof}
Let $g: M^1\rightleftarrows M^0: f$ be an $(S,w)$-duplex and $(X,\diff,s)\in K_{S,w}\Mod$. A morphism $\sbar(M)\to X$ is given by a diagram
\begin{equation*}\begin{tikzpicture}[description/.style={fill=white,inner sep=2pt}]
    \matrix (m) [matrix of math nodes, row sep=3em,
                 column sep=3em, text height=1.5ex, text depth=0.25ex,
                 inner sep=0pt, nodes={inner xsep=0.3333em, inner ysep=0.3333em}]
    {
       \cdots & M^1\oplus M^0 & M^0\oplus M^1 & M^1\oplus  M^0 & \cdots\\
       \cdots & X^{-1} & X^0 & X^1 & \cdots\\
    };
    \draw[->] ($(m-1-1.east) + (0,0.6mm)$) -- node[above,scale=0.6]{$$} ($(m-1-2.west) + (0,0.6mm)$);
    \draw[->] ($(m-1-2.west) - (0,0.6mm)$) -- node[below,scale=0.6]{$$} ($(m-1-1.east) - (0,0.6mm)$);
    \draw[->] ($(m-1-2.east) + (0,0.6mm)$) -- node[above,scale=0.6]{$\begin{pmatrix} g & w \\ -\id & -f\end{pmatrix}$} ($(m-1-3.west) + (0,0.6mm)$);
    \draw[->] ($(m-1-3.west) - (0,0.6mm)$) -- node[below,scale=0.6]{$\begin{pmatrix} 0 & 0 \\ \id & 0\end{pmatrix}$} ($(m-1-2.east) - (0,0.6mm)$);
    \draw[->] ($(m-1-3.east) + (0,0.6mm)$) -- node[above,scale=0.6]{$\begin{pmatrix} f & w \\ -\id & -g\end{pmatrix}$} ($(m-1-4.west) + (0,0.6mm)$);
    \draw[->] ($(m-1-4.west) - (0,0.6mm)$) -- node[below,scale=0.6]{$\begin{pmatrix} 0 & 0 \\ \id & 0\end{pmatrix}$} ($(m-1-3.east) - (0,0.6mm)$);
    \draw[->] ($(m-1-4.east) + (0,0.6mm)$) -- node[above,scale=0.6]{$$} ($(m-1-5.west) + (0,0.6mm)$);
    \draw[->] ($(m-1-5.west) - (0,0.6mm)$) -- node[below,scale=0.6]{$$} ($(m-1-4.east) - (0,0.6mm)$);
    \draw[->] ($(m-2-1.east) + (0,0.6mm)$) -- node[above,scale=0.6]{$\diff$} ($(m-2-2.west) + (0,0.6mm)$);
    \draw[->] ($(m-2-2.west) - (0,0.6mm)$) -- node[below,scale=0.6]{$s$} ($(m-2-1.east) - (0,0.6mm)$);
    \draw[->] ($(m-2-2.east) + (0,0.6mm)$) -- node[above,scale=0.6]{$\diff$} ($(m-2-3.west) + (0,0.6mm)$);
    \draw[->] ($(m-2-3.west) - (0,0.6mm)$) -- node[below,scale=0.6]{$s$} ($(m-2-2.east) - (0,0.6mm)$);
    \draw[->] ($(m-2-3.east) + (0,0.6mm)$) -- node[above,scale=0.6]{$\diff$} ($(m-2-4.west) + (0,0.6mm)$);
    \draw[->] ($(m-2-4.west) - (0,0.6mm)$) -- node[below,scale=0.6]{$s$} ($(m-2-3.east) - (0,0.6mm)$);
    \draw[->] ($(m-2-4.east) + (0,0.6mm)$) -- node[above,scale=0.6]{$\diff$} ($(m-2-5.west) + (0,0.6mm)$);
    \draw[->] ($(m-2-5.west) - (0,0.6mm)$) -- node[below,scale=0.6]{$s$} ($(m-2-4.east) - (0,0.6mm)$);
    \draw[->] (m-1-2) -- node[description,scale=0.6]{$\begin{pmatrix}\alpha_{-1} & \alpha_{-1}\p\end{pmatrix}$} (m-2-2);
    \draw[->] (m-1-3) -- node[description,scale=0.6]{$\begin{pmatrix}\alpha_{0} & \alpha_{0}\p\end{pmatrix}$} (m-2-3);
    \draw[->] (m-1-4) -- node[description,scale=0.6]{$\begin{pmatrix}\alpha_{1} & \alpha_{1}\p\end{pmatrix}$} (m-2-4);
\end{tikzpicture}\end{equation*}
such that each square commutes both with respect to the maps pointing to the right and the ones pointing to the
left. The latter is equivalent to $\alpha_n\p = s\alpha_{n+1}$ for all $n\in\ZZ$, so assume this from now on. Writing
$\partial$ in place of $f$ and $g$ (to avoid distinction of cases), the other commutativity constraint then writes as
follows: 
\begin{enumerate}
\item $\alpha_{n} \partial - s\alpha_{n+1} = \diff\alpha_{n-1}$.
\item $\diff s\alpha_n = w\alpha_n - s\alpha_{n+1}\partial$.
\end{enumerate}
The second condition follows from the first by applying $s\circ-$. Thus, the constraint on the family
$\{\alpha_n\}_{n\in\ZZ}$ to yield a morphism of $K_{S,w}$-modules $\sbar(M)\to X$ is $\alpha\partial = (\diff+s)\alpha$,
in this in turn is equivalent to saying that $\prod\alpha_{2n}$ and $\prod\alpha_{2n+1}$ yield a morphism of duplexes
$M\to\fold\uprod(X)$.  

Similarly, a morphism $X\to\sbar(M)\circ\Sigma$ is given by a diagram
\begin{equation*}\begin{tikzpicture}[description/.style={fill=white,inner sep=2pt}]
    \matrix (m) [matrix of math nodes, row sep=3em,
                 column sep=2.5em, text height=1.5ex, text depth=0.25ex,
                 inner sep=0pt, nodes={inner xsep=0.3333em, inner ysep=0.3333em}]
    {
       \cdots & X^{-1} & X^0 & X^1 & \cdots\\
       \cdots & M^0\oplus M^1 & M^1\oplus M^0 & M^0\oplus  M^1 & \cdots\\
    };
    \draw[->] ($(m-2-1.east) + (0,0.6mm)$) -- node[above,scale=0.6]{$$} ($(m-2-2.west) + (0,0.6mm)$);
    \draw[->] ($(m-2-2.west) - (0,0.6mm)$) -- node[below,scale=0.6]{$$} ($(m-2-1.east) - (0,0.6mm)$);
    \draw[->] ($(m-2-2.east) + (0,0.6mm)$) -- node[above,scale=0.6]{$\begin{pmatrix} -f & w \\ -\id & g\end{pmatrix}$} ($(m-2-3.west) + (0,0.6mm)$);
    \draw[->] ($(m-2-3.west) - (0,0.6mm)$) -- node[below,scale=0.6]{$\begin{pmatrix} 0 & 0 \\ \id & 0\end{pmatrix}$} ($(m-2-2.east) - (0,0.6mm)$);
    \draw[->] ($(m-2-3.east) + (0,0.6mm)$) -- node[above,scale=0.6]{$\begin{pmatrix} -g & w \\ -\id & f\end{pmatrix}$} ($(m-2-4.west) + (0,0.6mm)$);
    \draw[->] ($(m-2-4.west) - (0,0.6mm)$) -- node[below,scale=0.6]{$\begin{pmatrix} 0 & 0 \\ \id & 0\end{pmatrix}$} ($(m-2-3.east) - (0,0.6mm)$);
    \draw[->] ($(m-2-4.east) + (0,0.6mm)$) -- node[above,scale=0.6]{$$} ($(m-2-5.west) + (0,0.6mm)$);
    \draw[->] ($(m-2-5.west) - (0,0.6mm)$) -- node[below,scale=0.6]{$$} ($(m-2-4.east) - (0,0.6mm)$);
    \draw[->] ($(m-1-1.east) + (0,0.6mm)$) -- node[above,scale=0.6]{$\diff$} ($(m-1-2.west) + (0,0.6mm)$);
    \draw[->] ($(m-1-2.west) - (0,0.6mm)$) -- node[below,scale=0.6]{$s$} ($(m-1-1.east) - (0,0.6mm)$);
    \draw[->] ($(m-1-2.east) + (0,0.6mm)$) -- node[above,scale=0.6]{$\diff$} ($(m-1-3.west) + (0,0.6mm)$);
    \draw[->] ($(m-1-3.west) - (0,0.6mm)$) -- node[below,scale=0.6]{$s$} ($(m-1-2.east) - (0,0.6mm)$);
    \draw[->] ($(m-1-3.east) + (0,0.6mm)$) -- node[above,scale=0.6]{$\diff$} ($(m-1-4.west) + (0,0.6mm)$);
    \draw[->] ($(m-1-4.west) - (0,0.6mm)$) -- node[below,scale=0.6]{$s$} ($(m-1-3.east) - (0,0.6mm)$);
    \draw[->] ($(m-1-4.east) + (0,0.6mm)$) -- node[above,scale=0.6]{$\diff$} ($(m-1-5.west) + (0,0.6mm)$);
    \draw[->] ($(m-1-5.west) - (0,0.6mm)$) -- node[below,scale=0.6]{$s$} ($(m-1-4.east) - (0,0.6mm)$);
    \draw[->] (m-1-2) -- node[description,scale=0.6]{$\begin{pmatrix}\alpha_{-1}\p \\ \alpha_{-1}\end{pmatrix}$} (m-2-2);
    \draw[->] (m-1-3) -- node[description,scale=0.6]{$\begin{pmatrix}\alpha_{0}\p\\ \alpha_{0}\end{pmatrix}$} (m-2-3);
    \draw[->] (m-1-4) -- node[description,scale=0.6]{$\begin{pmatrix}\alpha_{1}\p \\ \alpha_{1}\end{pmatrix}$} (m-2-4);
\end{tikzpicture}\end{equation*}
such that each square commutes both with respect to the maps pointing to the right and the ones pointing to the
left. The latter is equivalent to $\alpha\p_n = \alpha_{n-1}s$, and we assume this from now. Then, again writing $\partial$
for $f$ and $g$, the other commutativity constraint writes as
\begin{enumerate}
\item $w\alpha_n-\partial\alpha_{n-1} s = \alpha_n s \diff$
\item $\partial\alpha_n - \alpha_{n-1} s = \alpha_{n+1}\diff$.
\end{enumerate}
The first condition follows from the second by applying $-\circ s$, and the second is equivalent to saying that
$\bigoplus\alpha_{2n}$ and $\bigoplus\alpha_{2n+1}$ yield a morphism of $S_w$-modules $\fold^\oplus(X)\to M$.
\end{proof}

\begin{prop}\label{prop_quillenadjunctions}
Let $S$ be a ring and $w\in Z(S)$. Then the following are Quillen adjunctions:
\begin{enumerate}
\item\label{item:quill1} $\sbar: \calM\uctr(S_w)\rightleftarrows\calM\uctr\lsing(K_{S,w}):\fold\uprod$
\item\label{item:quill2} $\sbar: \calM\uctr(S_w)\rightleftarrows\calM\uctr\lsing(K_{S,w}/S):\fold\uprod$.
\item\label{item:quill3} $\fold^\oplus: \calM\uctr\lsing(K_{S,w})\rightleftarrows\calM\uctr(S_w):\sbar\circ\Sigma$.
\item\label{item:quill4} $\fold^\oplus: \calM\uctr\lsing(K_{S,w}/S)\rightleftarrows\calM\uctr(S_w):\sbar\circ\Sigma$.
\end{enumerate}
\end{prop}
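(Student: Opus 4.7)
The plan is to verify each of the four items by showing the indicated left adjoint preserves cofibrations and trivial cofibrations. A preliminary observation is that both $\sbar$ and $\fold^{\oplus}$ are exact functors: $\sbar$ by inspection of its explicit periodic formula (the components in each degree being direct sums of duplex components), and $\fold^{\oplus}$ by exactness of countable direct sums in module categories. In particular both functors preserve monomorphisms, so only the cokernel conditions need to be checked.

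For the cofibrations, the cofibrant objects in all four model structures at hand are the underlying-graded-projective modules $A\Mod\lproj$ for the relevant $A$, by Propositions \ref{prop_cocontraderivedmodel} and \ref{prop_abssingmodelcontra}. Preservation therefore reduces to the two inclusions $\sbar(S_w\Mod\lproj)\subseteq K_{S,w}\Mod\lproj$ and $\fold^{\oplus}(K_{S,w}\Mod\lproj)\subseteq S_w\Mod\lproj$, which follow by direct computation on underlying graded modules, using that $K_{S,w}\us = S[s]/(s^2)$ is Frobenius, so that $K_{S,w}\us$-projectivity amounts to having $S$-projective components together with an acyclic $s$-action of the appropriate shape.

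For the trivial cofibrations, I identify the class $\calC\cap\calW$ in each model structure: it is $\proj(S_w\Mod)$ in $\calM\uctr(S_w)$ (Lemma \ref{lem_doubleorthogonal}), it is $\calC\uproj(K_{S,w})$ in the absolute singular model (dual of Proposition \ref{prop_localization}), and it is the cofibrant class ${^{\perp}}(U_\varphi\ua\calW\uctr(S))$ of the pullback $\varphi\ua\calM\uctr(S)$ in the relative singular model (see the proof of Proposition \ref{prop_pullback}). For items \eqref{item:quill1} and \eqref{item:quill2} it suffices to verify $\sbar(\proj(S_w\Mod))\subseteq\proj(K_{S,w}\Mod)$, since $\proj(K_{S,w}\Mod)$ is contained in every such $\calC\cap\calW$ class; this follows from Lemma \ref{lem_projective} together with the fact that $\sbar$ is additive and compatible with the construction $\cone(\id_Z)$, so that it sends a summand of some $\cone(\id_Z)$ with $\us$-projective components to a summand of $\cone(\id_{\sbar(Z)})$.

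The main obstacle is items \eqref{item:quill3} and \eqref{item:quill4}, which require $\fold^{\oplus}(\calC\uproj(K_{S,w}))\subseteq\proj(S_w\Mod)$ together with the analogous relative statement. I plan to handle these by reducing to generators via Proposition \ref{prop_cogset}: the classes in question are of the form ${^{\oplus}}\filt\calT$ for an explicit set $\calT$, namely the shifts of $K_{S,w}$ in the absolute case, and $K_{S,w}\otimes_S-$ applied to a cogenerating set of $\calC\uctr(S)$ in the relative case. On these generators, direct computation gives $\fold^{\oplus}(\calT)\subseteq\proj(S_w\Mod)$; for instance, $\fold^{\oplus}(K_{S,w})$ is the duplex $S\xrightarrow{\id}S\xrightarrow{w}S$, which is manifestly contractible with projective components. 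Since $\proj(S_w\Mod)$ is deconstructible (Example \ref{ex_injproj}) and hence closed under $\filt$-operations, and $\fold^{\oplus}$ preserves direct sums and filtered colimits as a left adjoint, the desired inclusion propagates from $\calT$ to ${^{\oplus}}\filt\calT$.
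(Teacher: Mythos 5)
Your treatment of \eqref{item:quill1} and \eqref{item:quill2} is essentially the paper's argument for \eqref{item:quill1} (preservation of $\calC$ by the explicit formula for $\sbar$, preservation of $\calC\cap\calW$ because $\sbar$ preserves projectives as left adjoint to the exact $\fold\uprod$), though the detour through $\cone(\id_Z)$ is unnecessary, and the Frobenius property of $K_{S,w}\us$ is not really what's at stake --- one only needs that restriction along $S\hookrightarrow K_{S,w}\us$ preserves graded projectives, which holds because $K_{S,w}\us$ is $S$-free. The paper also notes that \eqref{item:quill2} and \eqref{item:quill3} follow from \eqref{item:quill1} and \eqref{item:quill4} by composing with the trivial Quillen adjunction $\id\dashv\id$ between the absolute and relative singular models; you omit this and instead aim at all four uniformly.

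For \eqref{item:quill3} and \eqref{item:quill4} you take a genuinely different and substantially harder route: you try to show that the \emph{left} adjoint $\fold^\oplus$ preserves cofibrations and trivial cofibrations, while the paper checks that the \emph{right} adjoint $\sbar\circ\Sigma$ preserves fibrations and trivial fibrations. The paper's choice is decisive: once one knows $\fold^\oplus(K_{S,w}\Mod\lproj)\subseteq S_w\Mod\lproj$ (the easy cofibration inclusion you also prove), the adjunction isomorphism $\ext^1_{K_{S,w}}(X,(\sbar\circ\Sigma)(M))\cong\ext^1_{S_w}(\fold^\oplus(X),M)$ --- valid because both functors are exact --- immediately yields $(\sbar\circ\Sigma)(\calW\uctr(S_w))\subseteq\calW\uctr(K_{S,w})$, and the first fibration condition is dispatched by observing that $\sbar$-images are $S$-contractible. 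Your plan instead requires the inclusion $\fold^\oplus(\calC\cap\calW)\subseteq\proj(S_w\Mod)$ where, in the relevant case \eqref{item:quill4}, $\calC\cap\calW={^{\perp}}U_\varphi\ua\calW\uctr(S)$ is described only as ${^{\oplus}}\filt(K_{S,w}\otimes_S\calT)$ for a deconstruction basis $\calT$ of $\Ch(\Proj(S))$. Here the argument has a genuine gap: you verify the needed projectivity of $\fold^\oplus$ only on the absolute-case generators $\Sigma^n K_{S,w}$, but never compute $\fold^\oplus(K_{S,w}\otimes_S T)$ for $T\in\calT$ (it is in fact contractible, the $s$-action providing the required contracting homotopy, but this must be checked). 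You also silently apply Proposition \ref{prop_doubleorthogonal}, which requires $\filt(K_{S,w}\otimes_S\calT)$ to be generating --- true after adjoining a generator, but it should be said. Finally, since \eqref{item:quill4} is strictly stronger than \eqref{item:quill3} (the trivially cofibrant class is larger in the relative model), working out only the absolute-case generators proves \eqref{item:quill3} but not \eqref{item:quill4}. The paper's right-adjoint argument sidesteps all of this.
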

\begin{proof}
Because of the trivial Quillen adjunction $\id:\calM\uctr\lsing(A)\rightleftarrows\calM\uctr\lsing(A/R):\id$ between
absolute and relative contraderived singularity models, \eqref{item:quill2} follows from \eqref{item:quill1} and
\eqref{item:quill3} follows from \eqref{item:quill4}.

For \eqref{item:quill1}, we have to show that $\sbar$ preserves cofibrations and trivial
cofibrations. By the exactness of $\sbar$ and the definition of an abelian model structure, it suffices to show
$\sbar(\calC)\subset\calC$ and $\sbar(\calC\cap\calW)\subset\calC\cap\calW$. The cofibrants in $\calM\uctr(S_w)$
are those $f: M^0\rightleftarrows M^1: g$ with $M^0, M^1$ projective $S$-modules, and the cofibrants in
$\calM\uctr\lsing(K_{S,w})$ are the $K_{S,w}$-modules with underlying projective $K_{S,w}\us$-modules. By definition of
$\sbar$, the $K_{S,w}\us$-module underlying $\sbar(M)$ is isomorphic to $\bigoplus_{n\in\ZZ} K_{S,w}\us\otimes_S
\Sigma^{2n}M^0\oplus K_{S,w}\us\otimes_S \Sigma^{2n+1}M^1$, and hence is $K_{S,w}\us$-projective if $M^0, M^1$ are $S$-projective. This
proves $\sbar(\calC)\subset\calC$. The assertion $\sbar(\calC\cap\calW)\subset\calC\cap\calW={^{\perp}}\calF$ is clear
because $\calC\cap\calW=\proj$ in $\calM\uctr(S_w)$ and $\sbar$ preserves projectives as the left adjoint to the exact
functor $\fold\uprod$. 

For \eqref{item:quill4}, we have to show that $(\sbar\circ\Sigma)(\calF)\subset\calF$ and
$(\sbar\circ\Sigma)(\calW\cap\calF)\subset\calW\cap\calF$. In $\calM\uctr(S_w)$ everything is fibrant, while in 
$\calM\uctr\lsing(K_{S,w}/S)$ the fibrants are the $S$-contraacyclic $K_{S,w}$-modules, so for
$(\sbar\circ\Sigma)(\calF)\subset\calF$ we have to show that the image of $\sbar$ consists of $S$-contraacyclic
complexes. The stable bar resolutions are even contractible as complexes of 
$S$-modules, so this follows from Proposition \ref{prop_coctracyclic}. The other condition
$(\sbar\circ\Sigma)(\calW\cap\calF)\subset\calW\cap\calF$ means that $\sbar$ maps $S_w$-contraacyclics to
$K_{S,w}$-contraacyclics, i.e. that it maps $S_w\Mod\lproj^{\perp}$ to $K_{S,w}\Mod\lproj^{\perp}$. For this, suppose $X\in
K_{S,w}\Mod$ and $M$ is $S_w$-contraacyclic. Then $\ext^1_{K_{S,w}}(X,(\sbar\circ\Sigma)(M))\cong
\ext^1_{S_w}(\fold^{\oplus}(X),M)$, which is trivial since $\fold^{\oplus}(X)\in S_w\Mod\lproj$.
\end{proof}

Our goal is to show that the adjunctions \ref{prop_quillenadjunctions}\eqref{item:quill2} and
\ref{prop_quillenadjunctions}\eqref{item:quill4} are Quillen equivalences, but before we come to the proof, we
define the completed Bar resolution.

\begin{fact}[\protect{\cite[Proposition 8.6.10]{Weibel_HomologicalAlgebra}}]\label{fact_comonad}
Let $F:\scA\rightleftarrows\scB:U$ be an adjunction between abelian categories and $\perp:=FU:\scB\to\scB$ the
associated comonad. For $X\in\scB$ there is a canonical structure of a simplicial object on ${\perp\ua}X :=
\left\{{\perp^{n+1}}X\right\}_{n\geq 0}$, and $U({\perp\ua}X)$ admits a canonical left contraction. In particular,
if $U$ is exact and faithful, then the normalized augmented chain complex $N({\perp\ua}X)\to X$ is acyclic.
\end{fact}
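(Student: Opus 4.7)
The plan is to recognize this as the standard bar construction associated to a comonad, together with the observation that an adjunction automatically produces a contracting homotopy on one side. I would work through the construction step by step, without verifying the (purely formal) simplicial identities in detail.

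First, I would set up the comonad structure. Writing $\eta:\id_{\scA}\to UF$ for the unit and $\epsilon:FU\to\id_\scB$ for the counit of the adjunction, the comonad $\monad=FU$ comes equipped with counit $\epsilon:\monad\to\id_\scB$ and comultiplication $\delta := F\eta U:\monad\to\monad^2$. Coassociativity $\delta\monad\circ\delta=\monad\delta\circ\delta$ is the naturality of $\eta$ applied to $\eta U$, while the counit axioms $\epsilon\monad\circ\delta=\id=\monad\epsilon\circ\delta$ are exactly the two triangle identities of the adjunction smeared with $F$ and $U$.

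Second, I would define the simplicial object $\monad\ua X$ in the standard way by setting $(\monad\ua X)_n := \monad^{n+1}X$, with face maps $d_i := \monad^{i}\epsilon\monad^{n-i}X$ and degeneracies $s_i := \monad^{i}\delta\monad^{n-i}X$ for $0\leq i\leq n$. The simplicial identities reduce purely formally to the comonad axioms established in the first step, and the augmentation $\epsilon_X:\monad X\to X$ makes $\monad\ua X\to X$ an augmented simplicial object in $\scB$.

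Third, I would exhibit the left contraction of $U(\monad\ua X)\to UX$. Observe that $U\monad^{n+1}X=UF(UF)^{n}UX$, so the unit supplies a canonical map $s_{-1}:=\eta_{U\monad^{n}X}:U\monad^{n}X\to U\monad^{n+1}X$. The relations that make $s_{-1}$ an extra degeneracy (namely $d_0 s_{-1}=\id$, $d_{i+1}s_{-1}=s_{-1}d_i$, $s_{i+1}s_{-1}=s_{-1}s_i$) all reduce to naturality of $\eta$ and to the triangle identity $U\epsilon\circ\eta U=\id_U$. An augmented simplicial object equipped with such an extra degeneracy is simplicially contractible, hence its normalized chain complex is chain-homotopy equivalent (in particular quasi-isomorphic) to the augmentation; so $N(U\monad\ua X)\to UX$ is acyclic.

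Finally, assuming $U$ is exact, we have $U\,N(\monad\ua X)\cong N(U\monad\ua X)$, so the complex $U(N(\monad\ua X)\to X)$ is acyclic. Since $U$ is exact, $U$ commutes with cohomology; since $U$ is faithful on an abelian category, $U(Z)=0$ forces $Z=0$. Thus each cohomology object of $N(\monad\ua X)\to X$ vanishes, proving acyclicity. The only step that requires genuine care is verifying that the unit does give an extra degeneracy in the simplicial sense, but this is bookkeeping with the triangle identities; everything else is formal.
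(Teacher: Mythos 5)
Your argument is correct and is the standard proof of this fact: the cobar construction with face and degeneracy maps built from the comonad counit and comultiplication, the extra degeneracy on $U(\monad\ua X)$ supplied by the adjunction unit $\eta$ via the triangle identities, and finally exactness of $U$ to commute with normalization and cohomology together with faithfulness to reflect vanishing of the cohomology objects. The paper states this as a cited Fact (Weibel, Proposition 8.6.10) without giving its own proof, and your write-up is the standard argument found there.
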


\begin{cor}\label{cor_barres}
Let $S$ be a ring, $A$ be a dg $S$-algebra and $M$ an $A$-module. Let $\eta: S\to A$ be the structure map and $\ol{A} :=
\coker(\eta)$. Then the following augmented complex of $A$-modules is acyclic:
\begin{align}
\label{eq:barres}\left(...\to A\otimes_S \ol{A}\otimes_S \ol{A}\otimes_S M\to A\otimes_S \ol{A}\otimes_S M\to A\otimes_S
  M\right)\to M.
\end{align}
\end{cor}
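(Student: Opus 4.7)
The plan is to deduce the corollary from Fact \ref{fact_comonad} applied to the extension-restriction of scalars adjunction $F:\Ch(S)\rightleftarrows A\Mod :U$, where $F(X):=A\otimes_S X$ (with left $A$-action on the first tensor factor) and $U$ is the forgetful functor along $\eta$. Both categories are abelian, $F$ is left adjoint to $U$, and $U$ is clearly exact and faithful. Writing $\monad:=FU$ for the associated comonad on $A\Mod$, so that $\monad(N)=A\otimes_S N$, Fact \ref{fact_comonad} immediately yields that the normalized augmented chain complex $N(\monad\ua M)\to M$ is acyclic.

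The remaining task is to identify this normalized complex with \eqref{eq:barres}. The $n$-simplices of $\monad\ua M$ are $\monad^{n+1}M = A^{\otimes_S(n+1)}\otimes_S M$, with faces $d_i$ ($0\leq i<n$) given by multiplying the $i$-th and $(i{+}1)$-st copies of $A$, $d_n$ given by the action $A\otimes_S M\to M$, and degeneracies $s_i$ ($0\leq i\leq n$) given by inserting $\eta(1)$ between the $i$-th and $(i{+}1)$-st tensor factor (or, for $s_0$, in front of the leftmost $M$-adjacent $A$). Hence the joint image of the degeneracies $s_0,\ldots,s_n:\monad^{n+1}M\to\monad^{n+2}M$ is precisely the subobject of $A^{\otimes_S(n+2)}\otimes_S M$ in which at least one of the last $n+1$ tensor factors of $A$ lies in $\eta(S)$.

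Taking the quotient by these degenerate subobjects gives $A\otimes_S\ol{A}^{\otimes n}\otimes_S M$ in degree $n$ (with $A\otimes_S M$ in degree $0$), and the induced differential is the alternating sum of the surviving faces, i.e.\ the standard bar differential of \eqref{eq:barres}. Consequently the normalized augmented complex $N(\monad\ua M)\to M$ coincides (up to canonical isomorphism) with the complex in \eqref{eq:barres} augmented by $M$, and its acyclicity is exactly the statement of the corollary.

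The only point that requires any real care is the identification of the normalization with the $\ol{A}$-bar complex; this is a standard fact about cotriple resolutions attached to extensions of scalars, but one must check that it still goes through when $A$ is a dg $S$-algebra. Since $F$ and $U$ are additive functors between abelian categories and the simplicial structure on $\monad\ua M$ is defined purely in terms of $F$, $U$, the unit and the counit, no specifically dg arguments are needed: the normalization calculation is formal and relies only on $\eta: S\to A$ being a morphism with cokernel $\ol{A}$ in the abelian category of (dg) $S$-modules.
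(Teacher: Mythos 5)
Your proposal is correct and matches the intended derivation: the paper states Corollary \ref{cor_barres} immediately after Fact \ref{fact_comonad} with no separate argument, the point being precisely to apply the comonad $\monad=FU$ coming from $A\otimes_S -\dashv U_\eta$ and then identify the normalized complex of $\monad\ua M$ with the reduced bar complex of \eqref{eq:barres}. Your indexing of the degeneracies is slightly off in the parenthetical about $s_0$ (what $s_0$ actually does is insert $\eta(1)$ immediately after the leftmost $A$-factor), but the substantive claim — that the degenerate subobject of $\monad^{n+2}M$ is exactly where at least one of the last $n+1$ copies of $A$ lands in $\eta(S)$, giving $A\otimes_S\ol{A}^{\otimes(n+1)}\otimes_S M$ as the quotient — is right, and the rest of the argument goes through as you say.
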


\begin{definition}\label{def_completedbarres}
Let $S$ be a ring, $A$ be a dg $S$-algebra and $M$ an $A$-module. The \textit{completed Bar resolution} of $M$ is the
totalization of the augmented complex \eqref{eq:barres} formed by taking products, and is denoted $B\uprod M\to M$. 
\end{definition}

\begin{lem}\label{lem_barresweakequiv}
Let $S$, $A$ and $M$ be as in Definition \ref{def_completedbarres} and let $q: B\uprod M\to M$ be
the completed Bar resolution. Then $\ker(q)$ is contraacyclic. In other words, the completed Bar resolution $B\uprod M\to
M$ is a trivial fibration in $\calM\uctr\lsing(A)$.
\end{lem}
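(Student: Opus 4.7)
The plan is to identify $\ker(q)$ explicitly as the totalization via products of a bounded above exact sequence of $A$-modules, and then invoke Lemma \ref{lem_totalizations}.

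Set $C_n := A\otimes_S\ol{A}^{\otimes n}\otimes_S M$ for $n\geq 0$, $C_{-1} := M$, and let $\partial_n:C_n\to C_{n-1}$ denote the bar differentials, with $\partial_0:C_0\to M$ the augmentation. By Corollary \ref{cor_barres}, the augmented bar complex $\ldots\to C_2\to C_1\to C_0\to M\to 0$ is exact in $A\Mod$. Put $Z_0 := \ker(\partial_0)\subseteq C_0$, which is a sub-dg-$A$-module since $\partial_0$ is a morphism in $A\Mod$. By exactness, $Z_0=\image(\partial_1)$, so the truncated sequence $\ldots\to C_2\to C_1\to Z_0\to 0$ remains exact and is bounded above in $A\Mod$.

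Next I would identify $\ker(q)$ with the totalization of this truncated sequence. By construction, $B\uprod M=\tot\uprod(\ldots\to C_2\to C_1\to C_0)$, whose total differential combines the internal dg-differentials with the bar differentials $\partial_n$ for $n\geq 1$ only (the map $\partial_0$ is excluded, since $C_{-1}=M$ has been truncated away). The chain map $q:B\uprod M\to M$ is exactly the augmentation, induced by $\partial_0$ on the $C_0$-summand. Thus an element of $B\uprod M$ lies in $\ker(q)$ if and only if its $C_0$-component lies in $Z_0$, and a short check using $\image(\partial_1)\subseteq Z_0$ shows that this subspace is closed under the total differential. Hence $\ker(q)=\tot\uprod(\ldots\to C_2\to C_1\to Z_0\to 0)$ as sub-dg-$A$-modules of $B\uprod M$.

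Applying Lemma \ref{lem_totalizations} to this bounded above exact sequence, $\ker(q)$ is contraacyclic. The ``in other words'' reformulation then follows: $q$ is surjective because $A\otimes_S M\to M$ is split by $m\mapsto 1\otimes m$, so $q$ is an epimorphism with contraacyclic kernel, which by the description of $\calW\cap\calF$ in the left localization $\calM\uctr\lsing(A)=\calM\uproj(A)\backslash\calM\uctr(A)$ (dual of Proposition \ref{prop_localization}) is precisely a trivial fibration. The main obstacle is the identification step in the second paragraph: one must track the total differential of $B\uprod M$ and confirm that the subspace of elements with $C_0$-component in $Z_0$ is closed under it, which relies on both $Z_0$ being a sub-dg-module (so the internal $C_0$-differential preserves $Z_0$) and on $\partial_1$ landing inside $Z_0$ (so the incoming bar contribution preserves the subspace).
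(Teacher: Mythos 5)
Your proof is correct and takes essentially the same route as the paper: the paper's proof simply asserts that $\ker(q)$ is the totalization via products of a bounded above exact sequence and invokes Lemma \ref{lem_totalizations}, while you spell out that this sequence is the truncation $\cdots\to C_2\to C_1\to Z_0\to 0$ of the bar complex. Your verification that the total differential on $B\uprod M$ restricts to this subspace (using $\image(\partial_1)\subseteq Z_0$ and that $Z_0$ is a sub-dg-$A$-module), and the identification of $\calW\cap\calF$ in the left localization with $\calW\uctr(A)$, are both accurate.
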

\begin{proof}
The second statement follows from the first since the contraacyclic $A$-modules are precisely the trivially fibrant
objects in $\calM\uctr\lsing(A)$. That $\ker(q)$ is contraacyclic follows from Lemma \ref{lem_totalizations} as it
is the totalization by taking products of a bounded above exact sequence of $A$-modules. 
\end{proof}

The following gives explicit descriptions of the functors $\sbar\circ\fold\uprod$ and $B\uprod$.

\begin{lem}\label{lem_barconcrete}
Let $(X,\diff,s)$ be a $K_{S,w}$-module. There are natural isomorphisms
$$(\sbar\circ\fold\uprod)(X)^n\cong\prod\limits_{k\in\ZZ} X^k\quad\text{and}\quad (B\uprod X)^n\cong\prod\limits_{k\geq n} X^k.$$
Under these isomorphisms, the $K_{S,w}$-module structure can be described as follows:
\begin{enumerate}
\item\label{item:modstr1} $\diff$ acts on $X^k$ as $\diff+s-\id$ for $k\equiv n\pmod{2}$ and as
$w-\diff-s$ otherwise.
\item\label{item:modstr2} $s$ acts on $X^k$ as $\id$ if $k\equiv n\pmod{2}$ and as $0$ otherwise.
\end{enumerate}
In particular, we have the following:
\begin{enumerate}
\item There is a canonical epimorphism of $K_{S,w}$-modules
$$\alpha: (\sbar\circ\fold\uprod)(X)\longrightarrow B\uprod X$$
with $\ker(\alpha)^n \cong\prod\limits_{k<n} X^k$ and $K_{S,w}$-module structure as in \eqref{item:modstr1} and
\eqref{item:modstr2}. 
\item $\ker(\alpha)$ admits a complete decreasing filtration $...\subset F_2\subset F_1\subset F_0 = \ker(\alpha)$ with
  $F_n / F_{n+1}\cong K_{S,w}\otimes_S \Sigma^{-2n-2}X$. 
\end{enumerate}
\end{lem}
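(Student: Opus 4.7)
The plan is to verify both isomorphisms and the matching of $K_{S,w}$-module structures by a direct, term-by-term computation from the definitions.

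First, I would unpack $(\sbar\circ\fold\uprod)(X)$. By definition, $\fold\uprod(X)$ is the $(S,w)$-duplex with $M^0 = \prod_k X^{2k}$, $M^1 = \prod_k X^{2k+1}$ and structure maps $f = g = \diff + s$. Applying $\sbar$, the degree-$n$ component is $M^0\oplus M^1$ or $M^1\oplus M^0$ according to the parity of $n$, which collapses to $\prod_{k\in\ZZ} X^k$ by placing each $X^k$ into the slot whose parity matches. Substituting $f=g=\diff+s$ into the matrices $\begin{pmatrix} f & w \\ -\id & -g\end{pmatrix}$, $\begin{pmatrix} g & w \\ -\id & -f\end{pmatrix}$ and $\begin{pmatrix} 0 & 0 \\ \id & 0\end{pmatrix}$ from the definition of $\sbar$ and reading off componentwise yields formulas (1) and (2): for $k\equiv n$, $x\in X^k$ sits in the first slot of $\sbar^n$ and $\diff$ produces $\diff(x)+s(x)-x$ while $s$ identifies $X^k\subseteq\sbar^n$ with $X^k\subseteq\sbar^{n-1}$; for $k\not\equiv n$, $x$ sits in the second slot and $\diff$ produces $wx-\diff(x)-s(x)$ while $s(x)=0$.

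Second, for $B\uprod X$, I use $\overline{K_{S,w}} = K_{S,w}/S \cong Ss \cong \Sigma S$ as a graded $S$-module to rewrite each bar term $K_{S,w}\otimes_S\overline{K_{S,w}}^{\otimes p}\otimes_S X \cong K_{S,w}\otimes_S \Sigma^p X$; its degree-$(n+p)$ component splits as $X^{n+2p}\oplus X^{n+2p+1}$, corresponding respectively to the $1\otimes-$ and $s\otimes-$ summands. Placing $C_p$ in bar degree $-p$ and totalizing by products gives $(B\uprod X)^n\cong\prod_{p\geq 0}\bigl(X^{n+2p}\oplus X^{n+2p+1}\bigr)\cong\prod_{k\geq n}X^k$. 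The $s$-action on $B\uprod X$, coming from multiplication on the leftmost $K_{S,w}$, sends $1\otimes-$ identically onto $s\otimes-$ at the next lower total degree and annihilates $s\otimes-$, which is exactly formula (2) restricted to $k\geq n$. The total differential is the sum of the internal Leibniz differential on $K_{S,w}\otimes_S\Sigma^p X$ and the bar differential, and a careful computation using $\diff(s)=w$ and the standard bar face maps shows that the combined action on each $X^k\subseteq(B\uprod X)^n$ reproduces exactly formula (1).

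With the structures matched, the projection $\prod_{k\in\ZZ}X^k\twoheadrightarrow\prod_{k\geq n}X^k$ in each degree defines the $K_{S,w}$-linear epimorphism $\alpha$, identifying $\ker(\alpha)^n$ with $\prod_{k<n}X^k$ carrying the same formulas (1), (2). For the filtration, I set $F_n^m := \prod_{k\leq m-2n-1}X^k\subseteq\ker(\alpha)^m$, so $F_0=\ker(\alpha)$. Closure under $\diff$ and $s$ is a short parity check: both operators shift the $X^k$-index by at most one, and the borderline terms either vanish by the parity constraint in (2) or still satisfy $k\leq m-2n$, the relevant constraint in degree $m+1$. The subquotient $F_n/F_{n+1}$ has degree-$m$ component $X^{m-2n-1}\oplus X^{m-2n-2}$, and the obvious matching of summands with $(K_{S,w}\otimes_S\Sigma^{-2n-2}X)^m = X^{m-2n-2}\oplus s\otimes X^{m-2n-1}$ is a candidate isomorphism. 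A componentwise check using (1), (2) on $F_n/F_{n+1}$ together with $\diff(1\otimes x)=1\otimes\diff(x)$ and $\diff(s\otimes x)=1\otimes wx - s\otimes\diff(x)$ on the right-hand side verifies $K_{S,w}$-linearity, where the terms that should vanish modulo $F_{n+1}$ do so by the parity constraint. Completeness of the filtration is immediate since $\bigcap_n F_n^m=0$ and $\ker(\alpha)^m/F_n^m$ is a finite product.

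The main obstacle is the sign bookkeeping in the second step: the combination of the Leibniz signs from $d_{\Sigma^p X}=(-1)^p d_X$, the alternating signs in the bar differential, and the curvature-like contribution $\diff(s)=w$ must collapse precisely to the formulas $\diff+s-\id$ and $w-\diff-s$. Once those sign conventions are pinned down, the remaining filtration analysis is essentially combinatorial.
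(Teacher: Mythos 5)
Your proposal is correct and follows essentially the same route as the paper: identify $\ol{K_{S,w}}\cong\Sigma S$, rewrite the bar terms as $K_{S,w}\otimes_S\Sigma^p X$, totalize by products to get $\prod_{k\geq n}X^k$, and match the $K_{S,w}$-action formulas; the filtration $F_i^m=\prod_{k\leq m-2i-1}X^k$ and the identification $F_i/F_{i+1}\cong K_{S,w}\otimes_S\Sigma^{-2i-2}X$ also coincide with the paper's. You defer the precise sign bookkeeping for the totalization differential (the paper records it via the explicit isomorphism $\Sigma^k(K_{S,w}\otimes_S\Sigma^k X)\cong K_{S,w}\otimes_S\Sigma^{2k}X$, $a\otimes x\mapsto(-1)^{k|a|+\frac{k(k+1)}{2}}a\otimes x$), but you correctly identify this as the one step needing care, and the rest of your verification is sound.
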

\begin{proof}
To compute $B\uprod X$, note that for the unit $\eta: S\to K_{S,w}$ we have $\ol{K_{S,w}}=\coker(\eta)=\Sigma S$. Hence the
$n$-th term in the augmented Bar resolution \eqref{eq:barres} is given by $K_{S,w}\otimes_S \Sigma^n X$, and the differential
$K_{S,w}\otimes_S \Sigma^n X\to K_{S,w}\otimes_S \Sigma^{n-1}X$ maps $a\otimes x$ to $as\otimes x + (-1)^{n} a\otimes sx$. All in
all, the Bar (bi)complex is given as follows:
\begin{equation*}\begin{tikzpicture}[description/.style={fill=white,inner sep=2pt}]
    \matrix (m) [matrix of math nodes, row sep=2.5em,
                 column sep=2.5em, text height=1.5ex, text depth=0.25ex,
                 inner sep=0pt, nodes={inner xsep=0.3333em, inner ysep=0.3333em}]
    {
      & \vdots & \vdots & \vdots && \vdots \\
      \cdots & X^0\oplus X^1 & X^1\oplus X^2 & X^2\oplus X^3 & \cdots &  K_{S,w}\otimes_S \Sigma X\\
      \cdots & X^{-1}\oplus X^0 & X^0\oplus X^1 & X^1\oplus X^2 & \cdots & K_{S,w}\otimes_S X\\
       \cdots & X^{-1} & X^0 & X^1 & \cdots & X\\
    };
    \draw[->] (m-2-6) -- (m-3-6);
    \draw[->] (m-3-6) -- (m-4-6);
    \draw[->] ($(m-4-1.east) + (0,0.6mm)$) -- node[above,scale=0.6]{$\diff$} ($(m-4-2.west) + (0,0.6mm)$);
    \draw[->] ($(m-4-2.west) - (0,0.6mm)$) -- node[below,scale=0.6]{$s$} ($(m-4-1.east) - (0,0.6mm)$);
    \draw[->] ($(m-4-2.east) + (0,0.6mm)$) -- node[above,scale=0.6]{$\diff$} ($(m-4-3.west) + (0,0.6mm)$);
    \draw[->] ($(m-4-3.west) - (0,0.6mm)$) -- node[below,scale=0.6]{$s$} ($(m-4-2.east) - (0,0.6mm)$);
    \draw[->] ($(m-4-3.east) + (0,0.6mm)$) -- node[above,scale=0.6]{$\diff$} ($(m-4-4.west) + (0,0.6mm)$);
    \draw[->] ($(m-4-4.west) - (0,0.6mm)$) -- node[below,scale=0.6]{$s$} ($(m-4-3.east) - (0,0.6mm)$);
    \draw[->] ($(m-4-4.east) + (0,0.6mm)$) -- node[above,scale=0.6]{$\diff$} ($(m-4-5.west) + (0,0.6mm)$);
    \draw[->] ($(m-4-5.west) - (0,0.6mm)$) -- node[below,scale=0.6]{$s$} ($(m-4-4.east) - (0,0.6mm)$);
    \draw[->] ($(m-4-1.east) + (0,0.6mm)$) -- ($(m-4-2.west) + (0,0.6mm)$);
    \draw[->] ($(m-3-1.east) + (0,0.6mm)$) -- ($(m-3-2.west) + (0,0.6mm)$);
    \draw[->] ($(m-3-2.west) - (0,0.6mm)$) -- ($(m-3-1.east) - (0,0.6mm)$);
    \draw[->] ($(m-3-2.east) + (0,0.6mm)$) -- node[above,scale=0.6]{$\begin{pmatrix} \diff & w\\ 0 & -\diff\end{pmatrix}$}($(m-3-3.west) + (0,0.6mm)$);
    \draw[->] ($(m-3-3.west) - (0,0.6mm)$) -- node[below,scale=0.6]{$\begin{pmatrix} 0 & 0 \\ 1 & 0 \end{pmatrix}$} ($(m-3-2.east) - (0,0.6mm)$);
    \draw[->] ($(m-3-3.east) + (0,0.6mm)$) -- node[above,scale=0.6]{$\begin{pmatrix} \diff & w\\ 0 & -\diff\end{pmatrix}$} ($(m-3-4.west) + (0,0.6mm)$);
    \draw[->] ($(m-3-4.west) - (0,0.6mm)$) -- node[below,scale=0.6]{$\begin{pmatrix} 0 & 0 \\ 1 & 0 \end{pmatrix}$} ($(m-3-3.east) - (0,0.6mm)$);
    \draw[->] ($(m-3-4.east) + (0,0.6mm)$) --  ($(m-3-5.west) + (0,0.6mm)$);
    \draw[->] ($(m-3-5.west) - (0,0.6mm)$) -- ($(m-3-4.east) - (0,0.6mm)$);
    \draw[->] ($(m-2-1.east) + (0,0.6mm)$) -- ($(m-2-2.west) + (0,0.6mm)$);
    \draw[->] ($(m-2-2.west) - (0,0.6mm)$) -- ($(m-2-1.east) - (0,0.6mm)$);
    \draw[->] ($(m-2-2.east) + (0,0.6mm)$) -- node[above,scale=0.6]{$\begin{pmatrix} -\diff & w\\ 0 & \diff\end{pmatrix}$} ($(m-2-3.west) + (0,0.6mm)$);
    \draw[->] ($(m-2-3.west) - (0,0.6mm)$) -- node[below,scale=0.6]{$\begin{pmatrix} 0 & 0 \\ 1 & 0 \end{pmatrix}$} ($(m-2-2.east) - (0,0.6mm)$);
    \draw[->] ($(m-2-3.east) + (0,0.6mm)$) -- node[above,scale=0.6]{$\begin{pmatrix} -\diff & w\\ 0 & \diff\end{pmatrix}$} ($(m-2-4.west) + (0,0.6mm)$);
    \draw[->] ($(m-2-4.west) - (0,0.6mm)$) -- node[below,scale=0.6]{$\begin{pmatrix} 0 & 0 \\ 1 & 0 \end{pmatrix}$} ($(m-2-3.east) - (0,0.6mm)$);
    \draw[->] ($(m-2-4.east) + (0,0.6mm)$) -- ($(m-2-5.west) + (0,0.6mm)$);
    \draw[->] ($(m-2-5.west) - (0,0.6mm)$) -- ($(m-2-4.east) - (0,0.6mm)$);
    \draw[->] (m-2-2) -- node[description,scale=0.6]{$\begin{pmatrix} -s & 0\\ \id & -s \end{pmatrix}$} (m-3-2);
    \draw[->] (m-3-2) -- node[description,scale=0.6]{$\begin{pmatrix} 1 & s \end{pmatrix}$} (m-4-2);
    \draw[->] (m-2-3) -- node[description,scale=0.6]{$\begin{pmatrix} -s & 0\\ \id & -s \end{pmatrix}$} (m-3-3);
    \draw[->] (m-3-3) -- node[description,scale=0.6]{$\begin{pmatrix} 1 & s \end{pmatrix}$} (m-4-3);
    \draw[->] (m-2-4) -- node[description,scale=0.6]{$\begin{pmatrix} -s & 0\\ \id & -s \end{pmatrix}$} (m-3-4);
    \draw[->] (m-3-4) -- node[description,scale=0.6]{$\begin{pmatrix} 1 & s \end{pmatrix}$} (m-4-4);
    \draw[->] (m-1-2) -- node[description,scale=0.6]{$\begin{pmatrix} s & 0\\ \id & s \end{pmatrix}$} (m-2-2);
    \draw[->] (m-1-3) -- node[description,scale=0.6]{$\begin{pmatrix} s & 0\\ \id & s \end{pmatrix}$} (m-2-3);
    \draw[->] (m-1-4) -- node[description,scale=0.6]{$\begin{pmatrix} s & 0\\ \id & s \end{pmatrix}$} (m-2-4);
    \draw[->] (m-1-6) -- (m-2-6);
\end{tikzpicture}\end{equation*}
By definition of the totalization, $B\uprod X$ is equal to $\prod_{k\geq 0} \Sigma^k (K_{S,w}\otimes_S \Sigma^k X)$ as a
$K_{S,w}\us$-module, with differential being the sum of the differentials on the $\Sigma^k (K_{S,w}\otimes_S \Sigma^k X)$
and the maps $K_{S,w}\otimes_S \Sigma^k X\to K_{S,w}\otimes\Sigma^{k-1} X$. As $K_{S,w}\us$-modules we have $\Sigma^k
(K_{S,w}\otimes_S \Sigma^k X)\cong K_{S,w}\otimes_S \Sigma^{2k} X$ via $a\otimes x\mapsto (-1)^{k|a|+\frac{k(k+1)}{2}}
a\otimes x$, and the $n$-th term of $\prod_{k\geq 0} (K_{S,w}\otimes_S \Sigma^{2k} X)$ is given by $\prod_{k\geq n} X^n$. Pulling back the
differential on $\prod_{k\geq 0} \Sigma^k (K_{S,w}\otimes_S \Sigma^k X)$ to $\prod_{k\geq 0} (K_{S,w}\otimes_S
\Sigma^{2k} X)$ via the above sign change, the resulting differential is given as $\diff+s-\id$ on factors $X^k$ with
$n\equiv k\pmod{2}$ and as $w-\diff-s$ on those $X^k$ with $k\not\equiv n\pmod{2}$, as claimed.

The statement about the description of $(\sbar\circ\fold\uprod)(X)$ and the canonical epimorphism
$\alpha: (\sbar\circ\fold\uprod)(X)\to B\uprod X$ is clear. For the last statement about the filtration on $\ker(\alpha)$, define
$F_i\subset\ker(\alpha)$ by $(F_i)^n:=\prod_{k<n-2i} X^k$. Clearly this is a complete decreasing filtration, and the
filtration quotient $F_i/F_{i+1}$ is given by $(F_i/F_{i+1})^n=X^{n-2i-1}\oplus X^{n-2i-2}$. Together with the
explicit description of the differential on $\ker(\alpha)$ we conclude that $F_i/F_{i+1}\cong K_{S,w}\otimes_S \Sigma^{-2i-2}X$. 
\end{proof}

\begin{theorem}\label{thm_singularitymodelofkoszulalgebra}
Let $S$ be a ring and $w\in Z(S)$. Then the adjunctions
\begin{equation*}\begin{tikzpicture}[description/.style={fill=white,inner sep=2pt}]
    \matrix (m) [matrix of math nodes, row sep=1em,
                 column sep=2.5em, text height=1.5ex, text depth=0.25ex,
                 inner sep=0pt, nodes={inner xsep=0.3333em, inner ysep=0.3333em}]
    {
       \makebox[1.2cm][r]{$\sbar:\ $}\makebox[2.3cm][c]{$\calM\uctr(S_w)$} &
       \makebox[2.3cm][l]{$\calM\uctr\lsing(K_{S,w}/S)$}\makebox[1.2cm][l]{$\ :\fold\uprod.$}\\
       \makebox[1.2cm][r]{$\fold^\oplus:\ $}\makebox[2.3cm][r]{$\calM\uctr\lsing(K_{S,w}/S)$} &
       \makebox[2.3cm][c]{$\calM\uctr(S_w)$}\makebox[1.2cm][l]{$\ :\sbar\circ\Sigma$}.\\
    };
    \draw[->] ($(m-1-1.east) + (0,0.7mm)$) -- ($(m-1-2.west) + (0,0.7mm)$);
    \draw[->] ($(m-1-2.west) - (0,0.7mm)$) -- ($(m-1-1.east) - (0,0.7mm)$);
    \draw[->] ($(m-2-1.east) + (0,0.7mm)$) -- ($(m-2-2.west) + (0,0.7mm)$);
    \draw[->] ($(m-2-2.west) - (0,0.7mm)$) -- ($(m-2-1.east) - (0,0.7mm)$);
\end{tikzpicture}\end{equation*}
are Quillen equivalences.
\end{theorem}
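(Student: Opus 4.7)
The plan is to establish each Quillen equivalence by verifying that the relevant counit is a pointwise weak equivalence, using Lemma \ref{lem_barconcrete} as the key computational input. I will focus on the adjunction $\sbar \dashv \fold\uprod$ of \eqref{item:quill2}; the argument for \eqref{item:quill4} is dual, with sums in place of products.

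For the counit $\epsilon_X : \sbar\fold\uprod(X) \to X$ of \eqref{item:quill2}, I will exploit the factorization $\epsilon_X = q \circ \alpha$ through the completed Bar resolution $B\uprod X$ provided by Lemma \ref{lem_barconcrete}. By Lemma \ref{lem_barresweakequiv}, the map $q : B\uprod X \to X$ is a trivial fibration in $\calM\uctr(K_{S,w})$ and hence a weak equivalence in the localization $\calM\uctr\lsing(K_{S,w}/S)$. For the map $\alpha$: Lemma \ref{lem_barconcrete} exhibits a complete decreasing filtration $\{F_i\}_{i\geq 0}$ of $\ker(\alpha)$ with successive quotients $F_i/F_{i+1} \cong K_{S,w}\otimes_S \Sigma^{-2i-2}X$, i.e., $K_{S,w}$-modules obtained by extending scalars from $\Ch(S)$. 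Such induced modules are weakly trivial in $\calM\uctr\lsing(K_{S,w}/S)$, since this model is by construction the left localization of $\calM\uctr(K_{S,w})$ at the pullback model $\varphi\ua\calM\uctr(S)$ (Proposition \ref{prop_localizationdual}) and objects factoring through the pullback become weakly trivial. By the thickness of the weakly trivial class (Theorem \ref{thm_hovey}), each finite quotient $F_0/F_n$ is weakly trivial, and Lemma \ref{lem_totalizations}, applied to the tower of epimorphisms $F_0/F_{n+1} \twoheadrightarrow F_0/F_n$, yields that $\ker(\alpha) = \varprojlim F_0/F_n$ is weakly trivial. Hence $\alpha$, and therefore $\epsilon_X$, is a weak equivalence.

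For the adjunction \eqref{item:quill4}, an entirely dual argument---with product totalizations replaced by sum totalizations of bounded-above Bar complexes, and filtrations of cokernels in place of filtrations of kernels---will show that the counit $\fold^\oplus\sbar\Sigma(M) \to M$ is a weak equivalence in $\calM\uctr(S_w)$ for every $M$. Combining the two counit-equivalences: the counit of \eqref{item:quill2} yields that $\bfR\fold\uprod$ is fully faithful on homotopy categories, equivalently that $\bfL\sbar$ is essentially surjective; the counit of \eqref{item:quill4} yields that $\bfR(\sbar\Sigma)$ is fully faithful on homotopy categories. Since $\sbar\Sigma \cong \Sigma\sbar$ naturally and $\Sigma$ is an autoequivalence of the stable homotopy category (the abelian models involved being hereditary, hence stable by Corollary \ref{cor_hereditarystable}), the latter fully-faithfulness is equivalent to $\bfL\sbar$ being fully faithful. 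Thus $\bfL\sbar$ is both essentially surjective and fully faithful, so an equivalence of homotopy categories, and \eqref{item:quill2} is a Quillen equivalence; the equivalence \eqref{item:quill4} follows symmetrically.

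The main obstacle will be the precise justification that the filtration quotients $K_{S,w}\otimes_S \Sigma^{-2i-2}X$ are weakly trivial in $\calM\uctr\lsing(K_{S,w}/S)$. While the left-localization construction ensures that objects coming from the pullback $\varphi\ua\calM\uctr(S)$ become weakly trivial, translating this into a concrete statement about induced modules---in particular, identifying them with (or reducing them to) objects in the class $\calC_2$ of cofibrant objects of the pullback model that appears in Proposition \ref{prop_localizationdual}---will require careful bookkeeping. The parallel dual verification for the sum-based adjunction \eqref{item:quill4} will be similarly delicate.
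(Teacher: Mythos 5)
Your outline has the right skeleton — factor the counit through the completed Bar resolution, reduce to showing $\ker(\alpha)$ is weakly trivial via the filtration, and combine the two adjunctions at the end to get essential surjectivity and full faithfulness of $\bfL\sbar$ simultaneously — but there are two genuine gaps in the middle that the acknowledged "careful bookkeeping" does not resolve.

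First, the key step "$\ker(\alpha)$ is weakly trivial" cannot be obtained by applying Lemma~\ref{lem_totalizations} to the class $\calW$ of $\calM\uctr\lsing(K_{S,w}/S)$. That lemma is stated and proved for \emph{contraacyclicity}: its proof uses the short exact sequence $0\to\invlim X_n\to\prod X_n\to\prod X_n\to 0$ together with closure of $\calW\uctr(A)$ under products, and the latter is specific to $\calW\uctr(A)=\calC\uctr(A)^\perp$; the mixed class $\calW$ of the localized model has no such closure in general. To make the inverse-limit argument go through you must show the filtration quotients $K_{S,w}\otimes_S\Sigma^{-2i-2}X$ are $K_{S,w}$-\emph{contraacyclic}, not merely weakly trivial, and that forces you to restrict to fibrant $X$, i.e.\ $X$ which is $S$-contraacyclic. (This is in any case the correct restriction for computing the \emph{derived} counit.) The paper does exactly this: it first reduces via a cofibrant resolution $Y\to\fold\uprod X$ to showing $\varepsilon_X$ is a weak equivalence for $S$-contraacyclic $X$, then uses $K_{S,w}\otimes_S X\cong\Sigma\Hom_S(K_{S,w},X)$ together with Proposition~\ref{prop_fouradjunctions}\eqref{item_fouradj5} to get $K_{S,w}$-contraacyclicity of the quotients, and only then invokes Lemma~\ref{lem_totalizations}. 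Your justification "objects factoring through the pullback become weakly trivial" is too vague to carry this: the induced modules $K_{S,w}\otimes_S X$ are not in $\calC_2$ unless $X$ is $S$-projective, and while one can show they lie in $\calW$ for arbitrary $X$ using an $S$-projective resolution, that statement alone does not feed into Lemma~\ref{lem_totalizations}.

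Second, the proposed "dual argument" for the counit of~\eqref{item:quill4} "with sums in place of products" does not exist in the form you describe. The dual of Lemma~\ref{lem_totalizations} lives on the coacyclic/coderived side, not the contraacyclic side, so there is no sum-totalization counterpart available within $\calM\uctr(S_w)$. The paper instead checks the derived \emph{unit} of~\eqref{item:quill2}, staying entirely in the product/contraacyclic framework: using the natural isomorphism $M\cong\fold\uprod(i(M))$ and the splitting of the unit by $\fold\uprod(\varepsilon_{i(M)})$, it reduces the unit-check to the already-established fact that $\fold\uprod(\ker(\alpha))$ is $S_w$-contraacyclic (via the same product-filtration and Lemma~\ref{lem_totalizations}). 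Once~\eqref{item:quill2} is a Quillen equivalence, the statement for~\eqref{item:quill4} follows formally from $\bfR(\sbar\circ\Sigma)=\bfL\sbar\circ\Sigma$ being invertible, with no independent Bar-complex computation needed.
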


\begin{proof}
We already know from Proposition \ref{prop_quillenadjunctions} that the adjunctions in question are Quillen adjunctions,
so it remains to check that unit and counit of the derived adjunctions are isomorphisms. 

To show that the derived counit $\bfL\sbar\circ\bfR\fold\uprod\Rightarrow\id$ is an isomorphism, we have to show
  that for fibrant $X\in\calM\uctr\lsing(K_{S,w})$ and a
  cofibrant resolution $Y\to\fold\uprod X$ in $\calM\uctr(S_w)$ the morphism
\begin{align*}
\sbar(Y)\longrightarrow(\sbar\circ\fold\uprod)(X)\longrightarrow X
\end{align*}
 is a weak equivalence in $\calM\uctr\lsing(K_{S,w})$.
 By definition of a cofibrant resolution, the morphism $Y\to\fold\uprod X$ is a trivial fibration, and hence so is
 $\sbar(Y\to\fold\uprod X)$ by Proposition \ref{prop_quillenadjunctions}\eqref{item:quill4}. Moreover, since the fibrants in
 $\calM\uctr\lsing(K_{S,w}/S)$ are the $S$-contraacyclic $K_{S,w}$-modules, we therefore have to show that for some
 $S$-contraacyclic $X\in K_{S,w}\Mod$ the (ordinary) counit $\varepsilon_X: (\sbar\circ\fold\uprod)(X)\to X$ is a weak
 equivalence in $\calM\uctr\lsing(K_{S,w}/S)$. 
For this, recall from Lemma \ref{lem_barconcrete} that $\varepsilon_X$ factors through the completed Bar resolution $q: B\uprod
X\to X$ via a canonical epimorphism $\alpha: (\sbar\circ\fold\uprod)(X)\to B\uprod X$ described there. Since the completed
Bar resolution $B\uprod X\to X$ is a weak equivalence in $\calM\uctr\lsing(K_{S,w}/S)$ (even in $\calM\uctr(K_{S,w})$) by
Lemma \ref{lem_barresweakequiv}, it is therefore sufficient to check that 
$\alpha$ is a weak equivalence in $\calM\uctr\lsing(K_{S,w}/S)$. In fact, we will show that $\alpha$ is even a trivial
fibration, i.e. that $\ker(\alpha)$ is $K_{S,w}$-contraacyclic: First, by Lemma \ref{lem_barconcrete} we know that $\ker(\alpha)$
admits a complete descending filtration with filtration quotients isomorphic to shifts of $K_{S,w}\otimes_S X$. We have
$\Hom_S(K_{S,w},X)\cong\Hom_S(K_{S,w},S)\otimes_S X$, and since $\Hom_S(K_{S,w},S)\cong\Omega K_{S,w}$ as 
$K_{S,w}$-$S$-bimodules, we get $K_{S,w}\otimes_S X\cong\Sigma\Hom_S(K_{S,w},X)$. Since $K_{S,w}\us$ is free over
$S\us$, Proposition \ref{prop_fouradjunctions}\eqref{item_fouradj5} and the assumption that $X$ is
$S$-contraacyclic yield that $K_{S,w}\otimes_S X$ is $K_{S,w}$-contraacyclic, too. We conclude that $\ker(\alpha)$
admits a complete descending filtration with $K_{S,w}$-contraacyclic filtration quotients; Lemma
\ref{lem_totalizations} then shows that $\ker(\alpha)$ is $K_{S,w}$-contraacyclic, as claimed. 

Similarly, the derived unit $\id\Rightarrow\bfR\fold\uprod\circ\bfL\sbar$ being an isomorphism means that for
any cofibrant duplex $f: M^0\rightleftarrows M^1: g$ and a fibrant resolution $\sbar(M)\to X$ in
$\calM\uctr\lsing(K_{S,w}/S)$ the morphism $$M\to(\fold\uprod\circ\sbar)(M)\to \fold\uprod(X)$$ is a weak equivalence
in $\calM\uctr(S_w)$. By Proposition \ref{prop_quillenadjunctions}\eqref{item:quill4} any object in the image of
$\sbar$ is fibrant in $\calM\uctr\lsing(K_{S,w}/S)$, and hence we have to show that for $M\in S_w\Mod$ with $M^0, M^1$
projective over $S$ the unit $M\to(\fold\uprod\circ\sbar)(M)$ is a weak equivalence in $\calM\uctr(S_w)$. In fact, we
will show that this is true for \textit{any} $S_w$-module $M$.

Note that there is a canonical isomorphism $M\cong\fold\uprod(i(M))$ where $i(M)$ is given by $g:
M^1\rightleftarrows M^0: f$ in cohomological degrees $-1$ and $0$, and $0$ otherwise; it follows that the unit
$M\to(\fold\uprod\circ\sbar)(M)$ is split by the composition
$$(\fold\uprod\circ\sbar)(M)\cong\fold\uprod((\sbar\circ\fold\uprod)(i(M)))\xrightarrow{\fold\uprod(\varepsilon_{i(M)})}\fold\uprod(i(M))=M.$$  
Hence, in order to show that $M\to(\fold\uprod\circ\sbar)(M)$ is a weak equivalence in $\calM\uctr(S_w)$ it is therefore sufficient to show that
$\fold\uprod(\varepsilon_{i(M)})$ is a weak equivalence in $\calM\uctr(S_w)$, and we will show that it is even a trivial
fibration. First, recall that $\varepsilon_{i(M)}$ factors through the completed 
Bar resolution $q: B\uprod(i(M))\to i(M)$ via the map $\alpha: \sbar(M)\to B\uprod(i(M))$. Since $q$ is a trivial fibration and the right
Quillen functor $\fold\uprod$ preserves trivial fibrations, this means that we only have to check that $\fold\uprod(\alpha)$ is a trivial
fibration, i.e. that $\fold\uprod(\ker(\alpha))$ is trivially fibrant in $\calM\uctr(S_w)$. For this, recall from Lemma
\ref{lem_barconcrete} that $\fold\uprod(\ker(\alpha))$ admits a complete decreasing filtration with filtration
quotients being shifts of $\fold\uprod(K_{S,w}\otimes_S i(M))$. $K_{S,w}\otimes_S i(M)$ is an extension of $K_{S,w}\otimes_S M^0$ and $K_{S,w}\otimes_S \Sigma M^1$, and hence
$\fold\uprod(K_{S,w}\otimes_S i(M))$ is an extension of $\fold\uprod(K_{S,w}\otimes_S M^0)$ and
$\fold\uprod(K_{S,w}\otimes_S \Sigma M^1)$, both of which are contractible, hence contraacyclic, by Proposition
\ref{prop_coctracyclic}. Applying Lemma \ref{lem_totalizations} shows that $\fold\uprod(\ker(\alpha))$ is
$S_w$-contraacyclic, as claimed. 

The statement that $\fold^\oplus\dashv\sbar\circ\Sigma$ is a Quillen equivalence follows from the first part since
$\bfR(\sbar\circ\Sigma)=\bfR\sbar\circ\Sigma = \bfL\sbar\circ\Sigma$ is invertible and a Quillen adjunction is a
Quillen equivalence if and only if its derived adjunction is an adjoint equivalence \cite[Proposition
1.3.13]{Hovey_ModelCategories}. 
\end{proof}

From Theorem \ref{thm_singularitymodelofkoszulalgebra} we get the following consequence:

\begin{cor}\label{cor_prodsumfolding} There is an isomorphism
$$\Sigma\circ\bfL\fold^\oplus\cong\bfR\fold\uprod$$
of functors $\Ho(\calM\uctr\lsing(K_{S,w}/S))\to\Ho(\calM\uctr(S_w))$.
\end{cor}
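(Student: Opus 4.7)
The plan is to reduce the claim to an identification of derived functors that follows from the two Quillen equivalences in Theorem \ref{thm_singularitymodelofkoszulalgebra}. From the adjunction $\sbar\dashv\fold\uprod$ we obtain $\bfR\fold\uprod\cong(\bfL\sbar)^{-1}$, and from $\fold^\oplus\dashv\sbar\circ\Sigma$ we obtain $\bfL\fold^\oplus\cong(\bfR(\sbar\circ\Sigma))^{-1}$. Since $\Sigma$ is a Quillen auto-equivalence on both sides, $\bfR(\sbar\circ\Sigma)\cong\bfR\sbar\circ\Sigma$, whence
\[\Sigma\circ\bfL\fold^\oplus\cong\Sigma\circ\Sigma^{-1}\circ(\bfR\sbar)^{-1}\cong(\bfR\sbar)^{-1}.\]
Therefore, to prove the corollary it suffices to exhibit a natural isomorphism $\bfL\sbar\cong\bfR\sbar$ of functors $\Ho(\calM\uctr(S_w))\to\Ho(\calM\uctr\lsing(K_{S,w}/S))$.

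For this, observe that every object in $\calM\uctr(S_w)$ is fibrant (it is a projective model structure), so $\bfR\sbar(M)$ is represented by $\sbar(M)$. On the other hand, $\bfL\sbar(M)$ is represented by $\sbar(c(M))$ for any cofibrant replacement $c(M)\twoheadrightarrow M$ in $\calM\uctr(S_w)$, whose kernel $K$ is weakly trivial, i.e. $S_w$-contraacyclic. Exactness of $\sbar$ yields the short exact sequence
\[0\to\sbar(K)\to\sbar(c(M))\to\sbar(M)\to 0,\]
so the task reduces to showing that $\sbar(K)$ is weakly trivial in $\calM\uctr\lsing(K_{S,w}/S)$ whenever $K\in\calW\uctr(S_w)$.

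The key input, which is the main obstacle and is essentially already in the proof of Proposition \ref{prop_quillenadjunctions}\eqref{item:quill4}, is that $\sbar\circ\Sigma$ maps $S_w$-contraacyclics to $K_{S,w}$-contraacyclics: for $X\in K_{S,w}\Mod\lproj$ one has $\ext^1_{K_{S,w}}(X,(\sbar\circ\Sigma)(K))\cong\ext^1_{S_w}(\fold^\oplus(X),K)=0$, using that $\fold^\oplus(X)$ is $S_w$-projective and $K$ is $S_w$-contraacyclic. Since $\sbar$ commutes with $\Sigma$ up to natural isomorphism and $\calW\uctr(K_{S,w})$ is closed under shifts, this gives $\sbar(K)\in\calW\uctr(K_{S,w})$. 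Finally, Proposition \ref{prop_localizationdual} (applied with the trivial sequence $0\to\sbar(K)\to\sbar(K)\to 0\to 0$, noting that $0\in\calC_2$) shows $\calW\uctr(K_{S,w})$ is contained in the weakly trivial class of $\calM\uctr\lsing(K_{S,w}/S)$. Hence $\sbar(c(M))\to\sbar(M)$ is a weak equivalence there, completing the proof of $\bfL\sbar\cong\bfR\sbar$ and therefore of the corollary.
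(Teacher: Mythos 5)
Your proof is correct and follows the same route as the paper's: both reduce the corollary, via the adjoint equivalences $\bfL\sbar\dashv\bfR\fold\uprod$ and $\Sigma\circ\bfL\fold^\oplus\dashv\bfR\sbar$ from Theorem \ref{thm_singularitymodelofkoszulalgebra}, to the single identity $\bfL\sbar\cong\bfR\sbar$. The only difference is that the paper cites this identity as known from the theorem's proof, where it is in fact only asserted, whereas you verify it directly by checking that $\sbar$ sends the kernel of a cofibrant replacement in $\calM\uctr(S_w)$ into the class of weakly trivial objects in $\calM\uctr\lsing(K_{S,w}/S)$ --- a detail worth spelling out.
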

\begin{proof}
By Theorem \ref{thm_singularitymodelofkoszulalgebra} we know that $\bfL\sbar=\bfR\sbar$ is invertible, and that we have
canonical adjunctions $\bfL\sbar\dashv\bfR\fold\uprod$ and $\Sigma\circ\bfL\fold^\oplus\dashv\bfR\sbar$. 
\end{proof}
\appendix
\label{appendix_pullback}
\renewcommand{\thesubsection}{\thesection}
\setcounter{prop}{0}
\section{Pulling back deconstructible classes}
Throughout the section we use the notions of $<\kappa$-presentable objects and locally $<\kappa$-presentable
categories as defined in \cite[Definition 1.13]{AdamekRosicky}. Note \cite[Section 1]{StovicekHillGrothendieck}
that by \cite[Remark 1.21]{AdamekRosicky} $<\kappa$-presentability is the same as $\kappa$-accessibility in the sense of
\cite[Definition 9.2.7]{KashiwaraShapira}, so it is legitimate to use results from loc.cit. when studying
$<\kappa$-presentable objects. If $\calF\subset\scA$ is a class of objects in a category
$\scA$, $\calF^{<\kappa}$ denotes the class of $<\kappa$-presentable objects in $\calF$.

We begin by recalling the definition of a monad and its category of algebras.

\begin{definition}
Let $\calC$ be a category.
\begin{enumerate}
\item A \textit{monad} on $\calC$ is a triple $(\monad,\eta,\mu)$ consisting of an endofunctor $\monad:
  \calC\to\calC$ and natural transformations $\eta: \id_\calC\to\monad$, $\mu: \monad^2\to\monad$, such
  that $\mu$ and $\eta$ obey the associativity and unit axioms $\mu\circ \monad\mu = \mu\circ
  \mu\monad$ and $\mu\circ \monad\eta = \id_{\monad} =  \mu\circ \eta\monad$.  
\item An \textit{algebra} over $\monad$ is a pair $(X,\rho)$ consisting of an object $X$ of $\calC$ and a
  morphism $\rho: \monad X\to X$ such that $\rho\circ\eta_X=\id_X$ and $\rho\circ\mu_X =
  \rho\circ\monad\rho$.
\end{enumerate}
The category of $\monad$-algebras is denoted $\monad\Alg$. If $\calF$ is a class of objects in
$\calC$, then $\monad\Alg_\calF$ denotes the class of $\monad$-algebras whose underlying objects
belong to $\calF$. The forgetful functor $\monad\Alg\to\calC$ is denoted $U$.
\end{definition}

\begin{ex}
The standard example of a monad is the following. If $F:\calD\rightleftarrows\calC:U$ is an
adjunction, then $\monad := UF$ together with the unit $\eta: \id\to UF$ and the counit $U\varepsilon F:
\monad^2=U(FU)F\to UF$ is a monad on $\calC$. 

For example, given a dg ring $A$, there is the monad associated to the adjunction $G^+:
A\Mod\rightleftarrows A\us\Mod: (-)\us$ defined in Proposition \ref{prop_adjointstoforget}. Its
category of algebras is canonically equivalent to $A\Mod$ (i.e. $(-)\us$ is a
\textit{monadic functor}). 
\end{ex}

\begin{lem}\label{lem_alggroth}
Let $\monad: \scA\to\scA$ be a right exact monad on an abelian category $\scA$.
\begin{enumerate}
\item[(1)] $\monad\Alg$ is abelian.
\item[(2)] The forgetful functor $\monad\Alg\to\scA$ is faithful and exact.
\end{enumerate}
Suppose that, in addition, $\scA$ is Grothendieck and $\monad$ is cocontinuous.
\begin{enumerate}
\item[(3)] $\monad\Alg$ is a Grothendieck category.
\item[(4)] The forgetful functor $U:\monad\Alg\to\scA$ is bicontinuous.
\end{enumerate}
\end{lem}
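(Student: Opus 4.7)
The plan is to establish (1)--(2) by a direct construction of kernels and cokernels in $\monad\Alg$ from those in $\scA$, lifting the monad action through universal properties, and then to use cocontinuity of $\monad$ in (3)--(4) to produce colimits in $\monad\Alg$ underlying those in $\scA$, from which both bicontinuity of $U$ and the Grothendieck property follow.

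For (1) and (2), faithfulness of $U$ is immediate from the definition of algebra morphisms. Given $f: (X,\rho_X) \to (Y,\rho_Y)$ in $\monad\Alg$, let $i: K \to X$ be its kernel in $\scA$. Since $\monad f \circ \monad i = \monad(fi) = 0$, the composition $\rho_X \circ \monad i: \monad K \to X$ lands in $K$ and yields a unique factorization $\rho_K: \monad K \to K$ with $i \rho_K = \rho_X \monad i$, producing a $\monad$-algebra structure on $K$; a routine check identifies this with the kernel of $f$ in $\monad\Alg$. Dually, let $q: Y \to C$ be the cokernel of $f$ in $\scA$. Right exactness of $\monad$ makes $\monad q$ the cokernel of $\monad f$, so $q \rho_Y$ factors uniquely through $\monad q$ as $\rho_C: \monad C \to C$, giving the cokernel of $f$ in $\monad\Alg$. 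By construction $U$ preserves kernels and cokernels, hence is exact, and the coimage-image isomorphism in $\monad\Alg$ reduces via $U$ to that in $\scA$, proving $\monad\Alg$ is abelian.

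For (3) and (4), $U$ has left adjoint $F(X) := (\monad X, \mu_X)$, so $U$ preserves limits. For colimits, given a small diagram $\{(X_i, \rho_i)\}$, set $X := \varinjlim X_i$ in $\scA$. Cocontinuity of $\monad$ identifies $\monad X$ with $\varinjlim \monad X_i$, and the compatible family $\{\rho_i\}$ induces a morphism $\rho: \monad X \to X$; the monad axioms for $(X,\rho)$ pass to the colimit by the universal property, and one checks that $(X,\rho)$ is the colimit of $\{(X_i,\rho_i)\}$ in $\monad\Alg$. This yields bicontinuity of $U$, i.e. (4). For (3), if $G$ generates $\scA$, then $FG$ generates $\monad\Alg$ via the adjunction bijection $\monad\Alg(FG, -) \cong \scA(G, U(-))$ together with faithfulness of $U$. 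Finally, since $\scA$ is (AB5), bicontinuity of $U$ combined with exactness of $U$ shows that filtered colimits in $\monad\Alg$ are exact: such colimits are computed underlying by $U$, are exact in $\scA$, and a faithful exact functor between abelian categories reflects exactness.

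The principal obstacle is the construction of colimits in (4), for which the cocontinuity assumption on $\monad$ is essential: without it there is no natural way to put a $\monad$-algebra structure on $\varinjlim X_i$, since $\monad(\varinjlim X_i)$ need not agree with $\varinjlim \monad X_i$. Everything else is either a diagram chase or a standard appeal to adjunction and universal property arguments.
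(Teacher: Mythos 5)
Your proof is correct and follows essentially the same strategy as the paper: lift kernel/cokernel (and more generally (co)limit) structures from $\scA$ using right exactness resp. cocontinuity of $\monad$, deduce abelianness from the fact that $U$ reflects isomorphisms, and take $\monad G$ as generator. The only cosmetic divergence is that you obtain the generator property of $\monad G$ via the adjunction bijection $\monad\Alg(\monad G,-)\cong\scA(G,U(-))$ and faithfulness, whereas the paper exhibits an explicit epimorphism $(\monad G)^{\coprod I}\to X$ using that $\rho:\monad X\to X$ is split epi; both are standard and equivalent.
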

\begin{proof}
Since $\monad$ is additive, the sum in $\scA$ of two morphisms of $\monad$-algebras is again a morphism of
$\monad$-algebras. Hence $\monad\Alg$ inherits a unique preadditive structure from $\scA$ such that
$U:\monad\Alg\to\scA$ is preadditive. 

Next, let $D: I\to\monad\Alg$, $D(x) = (M(x),\rho_x)$, be a diagram such that the underlying $\scA$-diagram $M: I\to\scA$ has a
colimit $\projlim M$, and assume that $\monad$ commutes with that colimit, i.e. that $\monad(\projlim M)$ is a colimit for
$\monad M$ with respect to the maps $\monad M(x)\to\monad(\projlim M)$. Then there is a unique structure $\rho$ of a
$\monad$-algebra on $\projlim M$ such that all maps $(M(x),\rho_x))\to (\projlim M,\rho)$ are 
morphisms of $\monad$-algebras: take as $\rho: \monad(\projlim M)\to\projlim M$ the unique map such that
for each $x\in I$ the diagram 
\begin{equation*}\begin{tikzpicture}[description/.style={fill=white,inner sep=2pt}]
    \matrix (m) [matrix of math nodes, row sep=3em,
                 column sep=2.5em, text height=1.5ex, text depth=0.25ex,
                 inner sep=0pt, nodes={inner xsep=0.3333em, inner ysep=0.3333em}]
    {
       \monad M(x) & \monad\projlim M\\
       M(x) & \projlim M\\
    };
    \draw[->] (m-1-1) -- (m-1-2);
    \draw[->] (m-2-1) -- (m-2-2);
    \draw[->] (m-1-1) -- node[scale=0.75,left]{$\rho_x$} (m-2-1);
    \draw[->] (m-1-2) -- node[scale=0.75,right]{$\rho$} (m-2-2);
\end{tikzpicture}\end{equation*}
This is justified by our assumption that $\monad$ commutes with $\projlim M$. Unit and associativity axiom
also follow by using the universal property of the colimit, and hence $(\projlim M,\rho)$ indeed is a
$\monad$-algebra. Moreover, it is straightforward to check that $(\projlim M,\rho)$ together with the maps
$D(x)\to(\projlim M,\rho)$ is a colimit of $D$.

Similarly, if $M$ has a limit $\invlim M$ in $\scA$, then $\invlim M$ admits a unique structure $\rho$ of a
$\monad$-algebra such that all maps $(\invlim M,\rho)\to D(x)$ are morphisms of $\monad$-algebras, and, moreover, $(\invlim
M,\rho)$ is then a limit of $D$ in $\monad\Alg$ with respect to these. Note, however, that we don't have to assume
that $\monad$ commutes with $\invlim M$ here.

The preceding arguments show that for right-exact $\scA$ the category $\monad\Alg$ admits arbitrary finite limits and
colimits and $U:\monad\Alg\to\scA$ commutes with these. In particular, we get that $\monad\Alg$ is additive, and that
any morphism admits a kernel and a cokernel. Finally, since $U:\monad\Alg\to\scA$ reflects isomorphisms, we even get
that $\coimage=\image$ in $\monad\Alg$, and hence $\monad\Alg$ is abelian.

If $\scA$ admits arbitrary colimits and $\monad$ is cocontinuous, $\monad\Alg$ also admits arbitrary colimits
and $U:\monad\Alg\to\scA$ is cocontinuous, and since $U$ reflects isomorphisms, directed colimits are exact in
$\monad\Alg$ provided they are exact in $\scA$. Similarly, if $\scA$ admits arbitrary limits, then so does $\monad\Alg$
and $U$ preserves them. Finally, if $\scA$ is Grothendieck with generator $G$, the free
algebra $\monad G$ on $G$ is a generator for $\monad\Alg$. Indeed, given $(X,\rho)\in\monad\Alg$ we can choose an epimorphism
$G^{\coprod I}\to X$ by \cite[Proposition 5.2.4]{KashiwaraShapira}. Applying $\monad$, we get the
morphism of $\monad$-algebras $\monad (G^{\coprod I})\cong(\monad G)^{\coprod I}\to\monad X\to X$, which is an
epimorphism, too, since $\monad$ is cocontinuous and $\rho: \monad X\to X$ is a split epimorphism in $\scA$. Applying
\cite[Proposition 5.2.4]{KashiwaraShapira} again we conclude that $\monad G$ is a generator for $\monad\Alg$, as claimed.
\end{proof}

\begin{lem}\label{lem_completelatticehom}
Let $\scA$ be a Grothendieck category, $\monad$ be a cocontinuous monad on $\scA$ and $(X,\rho)$ be a
$\monad$-algebra. Then the forgetful functor $U:\monad\Alg\to\scA$ induces an injective complete lattice
homomorphism $$\left(\subobj_{\monad\Alg}(X,\rho),\Sigma,
  \cap\right)\longrightarrow\left(\subobj_\scA(X),\Sigma,\cap\right).$$ Its image consists of (the classes of) those 
monomorphisms $\iota: Y\hookrightarrow X$ such that the composite $\monad
Y\stackrel{\monad\iota}{\longrightarrow}\monad X\stackrel{\mu}{\longrightarrow} X$ factors through $\iota$.  
\end{lem}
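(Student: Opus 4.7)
The plan is to leverage Lemma \ref{lem_alggroth}: since $U:\monad\Alg\to\scA$ is faithful, exact, and bicontinuous, it sends monomorphisms to monomorphisms (so the map on subobject lattices is well-defined) and preserves all limits and colimits, which will give preservation of the lattice operations almost for free.

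For injectivity, I would take two subalgebras $(Y_1,\iota_1,\rho_1)$ and $(Y_2,\iota_2,\rho_2)$ of $(X,\rho)$ representing the same subobject of $X$ in $\scA$ via an isomorphism $\phi:Y_1\stackrel{\sim}{\to}Y_2$ with $\iota_2\circ\phi=\iota_1$. Cancelling the monomorphism $\iota_2$ in the chain
\[\iota_2\circ\rho_2\circ\monad\phi\ =\ \rho\circ\monad\iota_2\circ\monad\phi\ =\ \rho\circ\monad\iota_1\ =\ \iota_1\circ\rho_1\ =\ \iota_2\circ\phi\circ\rho_1\]
shows $\phi$ is a morphism of $\monad$-algebras; the symmetric calculation applied to $\phi^{-1}$ (regarded in $\scA$) shows $\phi^{-1}$ is also a morphism of algebras, so $\phi$ is an isomorphism in $\monad\Alg$.

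For the complete-lattice operations, infima of subobjects in an abelian category are intersections (pullbacks of monomorphisms) and, more generally, limits, which $U$ preserves; suprema are computed as images of canonical maps out of coproducts, and exactness together with cocontinuity of $U$ ensures preservation of both the coproducts and the images. The same reasoning applies verbatim to arbitrary families, establishing completeness of the lattice homomorphism.

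Finally, for the description of the image: given a monomorphism $\iota:Y\hookrightarrow X$ in $\scA$, any $\monad$-algebra structure $\rho_Y$ on $Y$ for which $\iota$ becomes a morphism of algebras must satisfy $\iota\circ\rho_Y=\rho\circ\monad\iota$. Since $\iota$ is monic, such $\rho_Y$ exists precisely when $\rho\circ\monad\iota$ factors through $\iota$, and when it exists it is unique. The main remaining obstacle — really just a short diagram chase — is to verify that this $\rho_Y$ automatically satisfies the unit and associativity axioms. I would precompose with $\iota$: for the unit,
\[\iota\circ\rho_Y\circ\eta_Y\ =\ \rho\circ\monad\iota\circ\eta_Y\ =\ \rho\circ\eta_X\circ\iota\ =\ \iota\]
by naturality of $\eta$ and the unit axiom for $\rho$; and the analogous chain using naturality of $\mu$ and the associativity axiom for $\rho$ gives $\iota\circ\rho_Y\circ\mu_Y=\iota\circ\rho_Y\circ\monad\rho_Y$. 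Cancelling $\iota$ in each identity yields the required axioms for $(Y,\rho_Y)$, completing the proof.
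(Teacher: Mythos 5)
Your proof is correct and follows essentially the same route the paper takes: everything hinges on $U$ being exact and bicontinuous (Lemma \ref{lem_alggroth}), so that intersections (as limits) and sums (as images of maps out of coproducts) are preserved. Where the paper dispatches injectivity and the image description with ``the second statement is clear,'' you supply the two short diagram chases — cancelling the monomorphism $\iota_2$ to see that a comparison isomorphism of underlying subobjects is automatically a morphism of $\monad$-algebras, and precomposing with $\iota$ plus naturality of $\eta,\mu$ to check the unit and associativity axioms for the induced structure on a subobject whose image is closed under the action — which is exactly what is needed and adds no new ideas beyond what the paper takes for granted.
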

\begin{proof}
Given an object $X$ in a Grothendieck category and a family $\{X_i\}$ of subobjects, the intersection $\bigcap X_i$ is
the limit of the diagram consisting of the inclusions $X_i\hookrightarrow X$, and the sum $\sum X_i$ is the image of the
canonical map $\bigoplus X_i\to X$. Hence any bicontinuous functor between Grothendieck categories, in particular
$U:\monad\Alg\to\scA$ (Lemma \ref{lem_alggroth}(3)), induces a complete lattice homomorphism on subobjects. The second
statement is clear.
\end{proof}

\begin{fact}
Let $\monad:\scA\to\scA$ be a cocontinuous monad on an abelian category $\scA$, $(X,\rho)$ be a $\monad$-algebra
and $Z\subseteq X$ a subobject of $X$. Then the poset of $\monad$-subalgebras of $(X,\rho)$ containing
$Z$ has a minimal element $\gen_\monad Z:=\image(\monad Z\to\monad X\to X)$.
\end{fact}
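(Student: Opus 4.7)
The plan is to verify three things for $Y := \gen_\monad Z = \image(\monad Z \xrightarrow{\monad i} \monad X \xrightarrow{\rho} X)$, where $i : Z \hookrightarrow X$ denotes the given inclusion: that $Y$ is a $\monad$-subalgebra of $(X,\rho)$, that it contains $Z$, and that it lies inside every $\monad$-subalgebra of $(X,\rho)$ containing $Z$.

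For the subalgebra property I would factor the defining morphism as $\monad Z \xrightarrow{\pi} Y \xrightarrow{\iota} X$ with $\pi$ an epimorphism and $\iota$ a monomorphism, and then invoke the criterion spelled out in Lemma \ref{lem_completelatticehom}: the monomorphism $\iota$ represents a $\monad$-subalgebra precisely when $\rho \circ \monad\iota$ factors through $\iota$. Since $\monad$ is cocontinuous, hence right exact, $\monad\pi$ remains an epimorphism, so it suffices to verify this factorization after precomposition with $\monad\pi$. A direct manipulation using $\iota\pi = \rho \circ \monad i$, the associativity axiom $\rho \circ \monad\rho = \rho \circ \mu_X$ for the algebra $(X,\rho)$, and naturality of $\mu$ will give
\[
\rho \circ \monad\iota \circ \monad\pi \;=\; \rho \circ \monad\rho \circ \monad^2 i \;=\; \rho \circ \monad i \circ \mu_Z \;=\; \iota\pi \circ \mu_Z,
\]
which plainly factors through $\iota$, as required.

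The inclusion $Z \subseteq Y$ follows from the unit axiom $\rho \circ \eta_X = \id_X$ together with naturality of $\eta$, which give $i = \rho \circ \eta_X \circ i = \rho \circ \monad i \circ \eta_Z$; so $i$ itself factors through $\iota$. For minimality, if $(W,\sigma)$ is any $\monad$-subalgebra of $(X,\rho)$ with inclusion $j : W \hookrightarrow X$ and $Z \subseteq W$, then $\monad i$ factors through $\monad W \to \monad X$, and combining this with the subalgebra relation $\rho \circ \monad j = j \circ \sigma$ shows that $\rho \circ \monad i$ factors through $j$; passing to images yields $Y \subseteq W$.

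The only mildly subtle point is that $\monad$ is merely right exact, so $\monad\iota$ need not be a monomorphism and one cannot check the factorization condition on $\monad Y$ by a direct subobject-inclusion argument; the workaround is precisely the epimorphism $\monad\pi$, which lets the algebra axioms do the work. Everything else is formal.
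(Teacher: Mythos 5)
Your proof is correct and takes essentially the same route as the paper's: factor $\rho\circ\monad i$ as an epi $\pi$ followed by a mono $\iota$, use right exactness of $\monad$ to transfer the factorization test along the epimorphism $\monad\pi$, and close the loop with the associativity axiom and naturality of $\mu$. You additionally make explicit the containment $Z\subseteq\gen_\monad Z$ via the unit axiom, a step the paper leaves implicit, and you cite the criterion from Lemma \ref{lem_completelatticehom} rather than re-deriving it; both are sound and amount to the same argument.
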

\begin{proof}
If $Z\p\subseteq X$ is a $\monad$-subalgebra of $(X,\rho)$ with $Z\subseteq Z\p$, then $\monad Z\p\to\monad X\to X$
factors through $Z\p$, and hence so does $\monad Z\to\monad X\to X$. Thus $\gen_\monad Z\subseteq Z\p$. 

It remains to show that $\gen_\monad Z$ is a $\monad$-subalgebra of $(X,\rho)$, i.e. that the composition
$\monad\left(\gen_\monad Z\right)\to\monad X\to X$ factors through $\gen_\monad Z$. By definition of
$\gen_\monad Z$ there is a commutative diagram
\begin{equation*}\begin{tikzpicture}[description/.style={fill=white,inner sep=2pt}]
    \matrix (m) [matrix of math nodes, row sep=3em,
                 column sep=2.5em, text height=1.5ex, text depth=0.25ex,
                 inner sep=0pt, nodes={inner xsep=0.3333em, inner ysep=0.3333em}]
    {
       \monad Z & \monad X & X\\
       & \gen_\monad Z & \\
    };
    \draw[->] (m-1-1) -- (m-1-2);
    \draw[->] (m-1-2) -- (m-1-3);
    \draw[->>] (m-1-1) -- (m-2-2);
    \draw[>->] (m-2-2) -- (m-1-3);
\end{tikzpicture}\end{equation*}
and hence it is sufficient to show that $\monad^2 Z\to\monad^2 X\stackrel{\monad\rho}
{\longrightarrow}\monad X\stackrel{\rho}{\longrightarrow} X$ factors through  
$\gen_\monad Z$. By associativity and naturality, this composition equals $\monad^2
Z\stackrel{\mu_Z}{\longrightarrow} \monad Z\to \monad X\stackrel{\rho}{\longrightarrow} X$, which factors through 
$\gen_\monad Z$ by definition.
\end{proof}

We need the following version of the generalized Hill lemma \cite[Theorem 2.1]{StovicekHillGrothendieck} as a tool
for constructing filtrations.
\begin{prop}[Hill Lemma]\label{prop_hill}
Let $\kappa$ be an infinite regular cardinal and let $\scA$ be a locally
$<\kappa$-presentable Grothendieck category. Further, let $\calS$ be a set of
$<\kappa$-presentable objects and $X\in\filt\calS$. Then there exists a
set $\sigma$ together a subset $\calL\subseteq\calP(\sigma)$ and a map
$l:\calL\to\subobj(X)$ such that the following hold:
\begin{enumerate}
\item[(H1)]\label{hill1} For any family $\{S_i\}\subset\calL$, both $\bigcup_i S_i$ and
  $\bigcap_i S_i$ belong to $\calL$ again, and we have $l\left(\bigcup_i S_i\right)
  = \sum_i l(S_i)$ and $l\left(\bigcap_i S_i\right) = \bigcap_i l(S_i)$.
\item[(H2)]\label{hill2} Given $S, T\in\calL$ with $S\subseteq T$, $l(T)/l(S)$ admits an
  $\calS$-filtration of size $|T\setminus S|$. 
\item[(H3)]\label{hill3} For any $<\kappa$-presentable $Z\subseteq X$ there exists some $S\in\calL$
  satisfying $|S|<\kappa$ and $Z\subseteq l(S)$.  
\end{enumerate}
\end{prop}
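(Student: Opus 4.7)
Fix an $\calS$-filtration $(X_\alpha)_{\alpha\leq\tau}$ witnessing $X\in\filt\calS$ and set $S_\alpha := X_{\alpha+1}/X_\alpha\in\calS$. Since each $S_\alpha$ is $<\kappa$-presentable, I choose for each $\alpha<\tau$ a $<\kappa$-presentable subobject $Z_\alpha\subseteq X_{\alpha+1}$ with $Z_\alpha+X_\alpha = X_{\alpha+1}$. Using that $X_\alpha$ is the directed union of the $<\kappa$-presentable partial sums $\sum_{\beta\in F}Z_\beta$ over finite $F\subseteq\alpha$, and that $Z_\alpha\cap X_\alpha$ is $<\kappa$-presentable (as the kernel of the map between $<\kappa$-presentables $Z_\alpha\twoheadrightarrow S_\alpha$), (AB5) produces for each $\alpha<\tau$ a subset $c(\alpha)\subseteq\alpha$ with $|c(\alpha)|<\kappa$ and $Z_\alpha\cap X_\alpha\subseteq\sum_{\beta\in c(\alpha)}Z_\beta$. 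The function $c$ encodes the dependencies between the filtration steps and drives the whole construction.

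Setting $\sigma:=\tau$, let $\calL\subseteq\calP(\sigma)$ consist of the $c$-closed subsets, i.e.\ those $S$ with $c(\alpha)\subseteq S$ for all $\alpha\in S$, and let $l(S):=\sum_{\alpha\in S}Z_\alpha$. Closure of $\calL$ under arbitrary unions and intersections is immediate from the definition, and the identity $l(\bigcup_i S_i)=\sum_i l(S_i)$ is tautological. The dual identity $l(\bigcap_i S_i)=\bigcap_i l(S_i)$, together with condition (H2), will both be reduced to the structural identity
\begin{equation}\tag{$\star$}
l(S)\cap X_\alpha \ =\ \sum_{\beta\in S,\ \beta<\alpha} Z_\beta \qquad\text{for all }S\in\calL\text{ and all }\alpha\leq\tau,
\end{equation}
which I intend to prove by transfinite induction on $\alpha$. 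At successor stages, the nontrivial inclusion ``$\subseteq$'' writes an element of $l(S)\cap X_{\alpha+1}$ as a sum of terms from $\{Z_\beta:\beta\in S,\ \beta\leq\alpha\}$ modulo $X_\alpha$; the residual piece then lies in $Z_\alpha\cap X_\alpha\subseteq\sum_{\beta\in c(\alpha)}Z_\beta$, which in turn lies in $l(S)$ because $c$-closedness forces $c(\alpha)\subseteq S$ whenever $\alpha\in S$. Limit stages follow from (AB5) and the continuity of the filtration.

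Granted $(\star)$, condition (H2) is established by enumerating $T\setminus S = \{\alpha_\gamma\}_{\gamma<|T\setminus S|}$ in increasing order and forming the continuous chain $Y_0:=l(S)\subseteq Y_1\subseteq\cdots$ with $Y_{\gamma+1}:=Y_\gamma+Z_{\alpha_\gamma}$: applying $(\star)$ to the (easily checked) $c$-closed set $S\cup\{\alpha_\delta:\delta<\gamma\}$ and the ordinal $\alpha_\gamma+1$ shows $Y_\gamma\cap X_{\alpha_\gamma+1}\subseteq X_{\alpha_\gamma}$, while $c$-closedness gives $Z_{\alpha_\gamma}\cap X_{\alpha_\gamma}\subseteq Y_\gamma$; together these yield $Z_{\alpha_\gamma}\cap Y_\gamma = Z_{\alpha_\gamma}\cap X_{\alpha_\gamma}$, hence $Y_{\gamma+1}/Y_\gamma \cong Z_{\alpha_\gamma}/(Z_{\alpha_\gamma}\cap X_{\alpha_\gamma}) \cong S_{\alpha_\gamma}\in\calS$. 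For (H3), a $<\kappa$-presentable $Z\subseteq X$ factors through some $X_\alpha$ by $<\kappa$-presentability and (AB5), hence through a partial sum $\sum_{\beta\in S_0}Z_\beta$ with $|S_0|<\kappa$; iteratively closing $S_0$ under $c$ yields a set $S\in\calL$ of cardinality $<\kappa$ containing $S_0$, by regularity of $\kappa$. The main technical obstacle throughout is establishing $(\star)$, whose successor step crucially exploits the defining property of $c$ and the $c$-closedness of $S$; once $(\star)$ is available, everything else reduces to bookkeeping.
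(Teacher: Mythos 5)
The paper does not actually prove this proposition; it is cited directly from Theorem~2.1 of Stovicek's \emph{Deconstructibility and the Hill lemma in Grothendieck categories}, so your attempt has to stand on its own. Your construction --- picking $<\kappa$-presentable lifts $Z_\alpha$ of the filtration quotients, encoding the dependencies in a map $c$, taking $\calL$ to be the $c$-closed subsets, and reducing (H1)--(H3) to the structural identity $(\star)$ --- is indeed the skeleton of the Hill/Stovicek argument. But there is a genuine gap hidden in the parenthetical ``$Z_\alpha\cap X_\alpha$ is $<\kappa$-presentable (as the kernel of the map between $<\kappa$-presentables $Z_\alpha\twoheadrightarrow S_\alpha$).'' This is not a formal fact about locally $<\kappa$-presentable categories: kernels of morphisms, even surjections, between $<\kappa$-presentable objects need not be $<\kappa$-presentable. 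For $\kappa=\aleph_0$ and $\scA=R\Mod$ with $R$ not coherent, $R\twoheadrightarrow R/I$ for $I$ a finitely generated but not finitely presented ideal is a counterexample. Over $R\Mod$ a Schanuel-type argument still recovers $<\kappa$-\emph{generatedness} of the kernel, which is all you really need to produce $c(\alpha)$, but in a general Grothendieck category there are no projectives and that route closes. What is actually needed here is a separate preparatory lemma --- present in Stovicek's paper, and requiring $\kappa$ uncountable and large enough --- ensuring that $\scA^{<\kappa}$ is stable under the relevant operations; without it the definition of $c$ does not go through.

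Two further, lesser points. Your derivation of (H3) closes $S_0$ under $c$ in $\omega$ stages and appeals to ``regularity of $\kappa$''; for $\kappa=\aleph_0$ regularity does \emph{not} imply that a countable union of finite sets is finite, so this step tacitly assumes $\kappa$ uncountable. (This is consistent with the only place the paper actually uses the Hill Lemma, namely Proposition~\ref{prop_pullbackmonadic}, where $\kappa$ is taken uncountable, but it is not covered by the ``infinite regular cardinal'' in the hypothesis as stated.) Finally, the successor step of your induction for $(\star)$ is not correctly described: an element of $l(S)\cap X_{\alpha+1}$ is a finite sum $\sum z_\gamma$ with indices $\gamma\in S$ possibly far above $\alpha$, so one cannot simply ``reduce modulo $X_\alpha$'' and produce a residual piece lying in $Z_\alpha\cap X_\alpha$. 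The actual argument is a secondary induction on the largest index $\gamma_0$ occurring in a chosen representation: one uses $c$-closedness to replace the top term $z_{\gamma_0}\in Z_{\gamma_0}\cap X_{\gamma_0}$ by a sum supported in $c(\gamma_0)\subseteq\gamma_0$, strictly lowering the top index, and only then does the simple transfinite induction on $\alpha$ close. (Moreover, element-chasing as you do it needs justification, e.g.\ via generalized elements, once one leaves module categories.) You rightly flag $(\star)$ as the crux, but as sketched the step is still a gap.
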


The Hill Lemma allows for recursive constructions of filtrations on $X$ by first constructing continuous
chains of elements in $\calL\subset\calP(\sigma)$ and then applying $l:\calL\to\subobj(X)$ to these chains. The
continuity of the resulting filtration is guaranteed by (H1), control over filtration quotients is given by (H2), and
finally property (H3) is needed for the recursion step. This principle is illustrated in the proof of the following
proposition, which is the main result of this section:

\begin{prop}\label{prop_pullbackmonadic}
Let $\kappa$ be an uncountable regular cardinal and $\scA$ be a locally $<\kappa$-presentable
Grothendieck category. Assume further that $\calF\subset\scA$ is a class of objects and
$\monad: \scA\to\scA$ a cocontinuous monad such that  
\begin{enumerate}
\item $\calF = \filt\calS$, where $\calS$ is a representative set of $<\kappa$-presentable objects in $\calF$,
\item $\monad$ preserves the class of $<\kappa$-presentable objects in $\scA$.
\end{enumerate}
Then $\monad\Alg_\calF = \filt\left(\monad\Alg_\calS\right)$. In particular, $\monad\Alg_\calF$ is
deconstructible. 
\end{prop}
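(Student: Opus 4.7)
The plan is to apply the Hill Lemma (Proposition~\ref{prop_hill}) to $X\in\filt\calS$, producing $\sigma$, $\calL\subseteq\calP(\sigma)$ and $l:\calL\to\subobj(X)$ satisfying (H1)--(H3), and then to carve out a filtration of $(X,\rho)$ by working inside
\[
\calL_\monad \;:=\; \{S\in\calL\mid l(S)\text{ is a }\monad\text{-subalgebra of }(X,\rho)\}.
\]
A first observation I would record is that $\calL_\monad$ is closed under arbitrary unions in $\calL$: directed sums of $\monad$-subalgebras are $\monad$-subalgebras (given a directed family $(A_i)$ of $\monad$-subalgebras, cocontinuity of $\monad$ identifies $\monad(\sum_i A_i)$ with the colimit of the $\monad A_i$, each of which maps into $A_i\subseteq\sum_i A_i$), and unions in $\calL$ translate into sums in $\subobj(X)$ via (H1). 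In particular $\emptyset\in\calL_\monad$ and limit stages of chains in $\calL_\monad$ remain in $\calL_\monad$.

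The main step is an enlargement lemma: given $T\in\calL_\monad$ and a $<\kappa$-presentable subobject $Z\subseteq X$, one can find $S\in\calL_\monad$ with $T\subseteq S$, $Z\subseteq l(S)$ and $|S\setminus T|<\kappa$. I would prove it by alternating ``enlarge to contain $Z$'' (using (H3) together with (H1)) and ``enlarge to absorb the $\monad$-action''. Concretely: pick $S_0\in\calL$ containing $T$ with $Z\subseteq l(S_0)$ and $|S_0\setminus T|<\kappa$; then recursively, having $S_n$, note that $l(S_n)$ admits by (H2) an $\calS$-filtration of size $<\kappa$, so it is $<\kappa$-presentable in $\scA$ (standard consequence of $\kappa$ being regular and $\calS\subseteq\scA^{<\kappa}$); by the hypothesis on $\monad$, $\monad l(S_n)$ is then $<\kappa$-presentable, hence so is $\gen_\monad l(S_n)=\image(\monad l(S_n)\to X)$, and (H3) together with (H1) provides $S_{n+1}\in\calL$ with $S_n\subseteq S_{n+1}$, $\gen_\monad l(S_n)\subseteq l(S_{n+1})$ and $|S_{n+1}\setminus T|<\kappa$. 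Set $S:=\bigcup_n S_n\in\calL$; the uncountable regularity of $\kappa$ ensures $|S\setminus T|<\kappa$. By cocontinuity of $\monad$, $\monad l(S)$ is the directed colimit of the $\monad l(S_n)$, and each $\monad l(S_n)\to X$ factors through $\gen_\monad l(S_n)\subseteq l(S_{n+1})\subseteq l(S)$, so $l(S)$ is a $\monad$-subalgebra and thus $S\in\calL_\monad$.

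With the enlargement lemma in hand, I would construct by transfinite recursion a continuous chain $(T_\alpha)_{\alpha\leq\tau}$ in $\calL_\monad$ with $T_0=\emptyset$ and $l(T_\tau)=X$: at each successor stage $\alpha\mapsto\alpha+1$ with $l(T_\alpha)\neq X$, apply the enlargement lemma with some $<\kappa$-presentable subobject $Z\not\subseteq l(T_\alpha)$; at limit stages take unions. This terminates since the associated chain of subobjects strictly increases inside a fixed object. The resulting chain of $\monad$-subalgebras is a filtration of $(X,\rho)$ in $\monad\Alg$, and by (H2) each successive quotient $l(T_{\alpha+1})/l(T_\alpha)$ admits an $\calS$-filtration of size $|T_{\alpha+1}\setminus T_\alpha|<\kappa$. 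Hence its underlying object lies in $\calF$ and is $<\kappa$-presentable, and is therefore isomorphic to an element of the representative set $\calS$. Thus $(X,\rho)\in\filt(\monad\Alg_\calS)$, as required.

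The main obstacle I expect is the simultaneous bookkeeping in the enlargement lemma: one must keep $S$ inside $\calL$ (to retain access to the Hill structure), close it under the $\monad$-action (to land in $\calL_\monad$), and bound $|S\setminus T|$ strictly below $\kappa$ (so that successive quotients in the final filtration turn out $<\kappa$-presentable and thus lie in $\calS$). The uncountable regularity of $\kappa$ is precisely what allows the countable $\monad$-closure recursion to stay within this size budget.
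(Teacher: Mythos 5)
Your overall strategy --- apply the Hill Lemma, work inside the sublattice $\calL_\monad$ of $S\in\calL$ with $l(S)$ a $\monad$-subalgebra, prove an enlargement lemma, and run a transfinite recursion --- is precisely the one used in the paper. There is, however, a genuine flaw in the inner recursion of your enlargement lemma. You take $S_n\supseteq T$ and claim that ``$l(S_n)$ admits by (H2) an $\calS$-filtration of size $<\kappa$, so it is $<\kappa$-presentable.'' But (H2) applied to $\emptyset\subseteq S_n$ gives an $\calS$-filtration of $l(S_n)$ of size $|S_n|$, and since $S_n\supseteq T$ and $|T|$ will exceed $\kappa$ at most stages of the outer transfinite recursion, $|S_n|$ is \emph{not} $<\kappa$. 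Consequently $l(S_n)$ and $\monad l(S_n)$ need not be $<\kappa$-presentable, and (H3) cannot be applied to $\gen_\monad l(S_n)$ as you do.

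The paper sidesteps this by keeping the inner chain entirely small and disjoint from $T$: it constructs $S_0\subseteq S_1\subseteq\cdots$ in $\calL$ with all $|S_i|<\kappa$ (so $l(S_i)$, hence $\monad l(S_i)$ and $\gen_\monad l(S_i)$, really are $<\kappa$-presentable and (H3) applies), sets $S:=\bigcup_i S_i$ with $|S|<\kappa$, and only at the very end defines $T(\lambda+1):=T\cup S$. One then checks that $l(T\cup S)=l(T)+l(S)$ is a $\monad$-subalgebra as a sum of two $\monad$-subalgebras, using right exactness of $\monad$. An alternative fix of your version is to write $S_n=T\cup U_n$ with $U_n\in\calL$, $|U_n|<\kappa$, and exploit the hypothesis $T\in\calL_\monad$ to split $\gen_\monad l(S_n)\subseteq l(T)+\gen_\monad l(U_n)$, then apply (H3) only to the small piece $\gen_\monad l(U_n)$. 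Notice that the assumption $T\in\calL_\monad$, which you state but never actually invoke in the recursion, becomes essential once you make this split.
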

\begin{lem}[\protect{see \cite[Proposition 9.2.10]{KashiwaraShapira}}]\label{lem_prophill} 
For any Grothendieck category $\scA$ and any infinite cardinal $\kappa$, the class $\scA^{<\kappa}$ of $<\kappa$-presentable
objects is closed under the formation of $\scA$-colimits of diagrams $I\to\scA^{<\kappa}$ with $|\Mor(I)|<\kappa$.
\end{lem}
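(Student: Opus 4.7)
The plan is to reduce the statement to the classical fact that in $\Set$, $\kappa$-filtered colimits commute with limits indexed by categories $I$ with $|\Mor(I)|<\kappa$. Concretely, let $D: I\to\scA^{<\kappa}$ be a diagram with $|\Mor(I)|<\kappa$ and put $X := \projlim_I D$; I want to show that $\Hom_\scA(X,-)$ preserves $\kappa$-filtered colimits.

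First I would fix an arbitrary $\kappa$-filtered diagram $F: J\to\scA$ and let $Y := \projlim_J F$ together with the canonical maps $\iota_j: F(j)\to Y$. Turning the colimit defining $X$ into a limit under $\Hom_\scA(-,Y)$ gives
\begin{equation*}
\Hom_\scA(X,Y)\ \cong\ \invlim_{i\in I}\Hom_\scA(D(i),Y),
\end{equation*}
and since each $D(i)$ is $<\kappa$-presentable, each term on the right is the $\kappa$-filtered colimit $\projlim_{j\in J}\Hom_\scA(D(i),F(j))$. Hence
\begin{equation*}
\Hom_\scA(X,Y)\ \cong\ \invlim_{i\in I}\ \projlim_{j\in J}\Hom_\scA(D(i),F(j)).
\end{equation*}
On the other hand, the comparison map I want to invert is
\begin{equation*}
\projlim_{j\in J}\Hom_\scA(X,F(j))\ \cong\ \projlim_{j\in J}\ \invlim_{i\in I}\Hom_\scA(D(i),F(j))\ \longrightarrow\ \invlim_{i\in I}\ \projlim_{j\in J}\Hom_\scA(D(i),F(j)).
\end{equation*}

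The central step is to invoke the classical commutation theorem \cite[Theorem 3.1.6]{KashiwaraShapira} (or equivalently \cite[Proposition 2.1]{AdamekRosicky}): if $J$ is $\kappa$-filtered and $I$ is a small category with $|\Mor(I)|<\kappa$, then for any bifunctor $I^{\text{op}}\times J\to\Set$ the canonical morphism $\projlim_J\invlim_I\to\invlim_I\projlim_J$ is a bijection. Applied to the bifunctor $(i,j)\mapsto \Hom_\scA(D(i),F(j))$, this gives exactly the required isomorphism $\projlim_J\Hom_\scA(X,F(j))\cong\Hom_\scA(X,Y)$, so $X$ is $<\kappa$-presentable.

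The main obstacle is making sure the cardinality bound on $I$ lines up with the hypotheses of the commutation theorem, since different references phrase the smallness condition differently (number of objects versus number of morphisms, and filtered versus $\kappa$-filtered). Once the conventions are matched, however, the rest of the argument is a formal manipulation.
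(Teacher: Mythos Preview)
Your argument is correct and is precisely the standard one: reduce to the commutation in $\Set$ of $\kappa$-filtered colimits with limits over $\kappa$-small index categories, applied to the bifunctor $(i,j)\mapsto\Hom_\scA(D(i),F(j))$. Note that the paper does not actually supply a proof of this lemma but simply refers to \cite[Proposition 9.2.10]{KashiwaraShapira}; your write-up is essentially an unpacking of that reference, so there is no divergence in approach. The only point worth tightening is the one you already flag: the commutation theorem you invoke needs $\kappa$ regular, and indeed the paper's conventions (via \cite[Definition 1.13]{AdamekRosicky}) implicitly restrict to regular $\kappa$, so the phrase ``any infinite cardinal'' in the lemma should be read accordingly.
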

\begin{proof}[Proof of Proposition \ref{prop_pullbackmonadic}]
Let $(X,\rho)\in\monad\Alg_\calF$. By definition we have $X\in\calF=\filt\calS$, so we may apply Proposition
\ref{prop_hill} to get $l: \calP(\sigma)\supset\calL\to\subobj(X)$ satisfying the properties (H1), (H2), (H3). By
transfinite recursion, we will now define for each ordinal $\lambda$ a subset $T(\lambda)\in\calL$ such that the
following hold: 
\begin{enumerate}
\item\label{item_monadic1} $l(T(\lambda))$ is a $\monad$-subalgebra of $X$.
\item $T(\lambda)\subseteq T(\mu)$ if $\lambda\leq\mu$, and
  $T(\lambda)\subsetneq T(\mu)$ if $\lambda<\mu$ and $l(T(\lambda))\neq X$.
\item\label{item_monadic3} $|T(\lambda+1)\setminus T(\lambda)|<\kappa$.
\item $T(\lambda) = \bigcup_{\mu<\lambda} T(\mu)$ if $\lambda$ is a limit ordinal.
\end{enumerate}
Start with $T(0) := \emptyset$ and assume that we are given an ordinal $\lambda$ such
that we already constructed $T(\mu)$ for all $\mu<\lambda$. If $\lambda$ is a
limit ordinal, we put $T(\lambda) := \bigcup_{\mu<\lambda} T(\mu)$, and if $\lambda =
\mu+1$ with $l(T(\mu))=X$, we put $T(\lambda):=T(\mu)$. In case $\lambda=\mu+1$ with $l(T)\subsetneq X$ for $T:=T(\mu)$,
we proceed as follows: Since $\scA$ is locally $<\kappa$-presentable, there exists some $<\kappa$-presentable $Z\subset X$ with 
$Z\not\subseteq l(T)$, and by (H3) we find $Z\subset l(S_0)$ for some $S_0\in\calL$ with $|S_0|<\kappa$. By Lemma
\ref{lem_prophill}, $l(S_0)$ is $<\kappa$-presentable and hence so is $\gen_\monad l(S_0) = 
\image\left(\monad l(S_0)\to\monad X\to X\right)$. Applying (H3) again, we can find $S_1\in\calL$ with $|S_1|<\kappa$,
$S_0\subseteq S_1$ and $\gen_\monad Z\subseteq l(S_1)$, and again $l(S_1)\in\scA^{<\kappa}$. Continuing this way, we 
find a sequence $S_0\subseteq S_1\subseteq S_2\subseteq ...$ in $\calP(\sigma)$ with $S_i\in\calL$,
$|S_i|<\kappa$ and $\gen_\monad l(S_i)\subseteq l(S_{i+1})$ for all $i\geq 0$. Put $S :=
\bigcup_{i\geq 0} S_i$. We then have $S\in\calL$, $|S|<\kappa$ and $l(S) = \sum_{i\geq 0} l(S_i)$ by (H1).
In particular, as $\monad$ is cocontinuous, $l(S)$ is a $\monad$-subalgebra of $(X,\rho)$. We put $T(\lambda) := T\cup
S$. This finishes the recursion step and the construction of $T$.

Pick $\lambda$ sufficiently large such that $l(T(\lambda))=X$ and consider the filtration
$l\circ T: \{\tau\ |\ \tau\leq\lambda\}\to \subobj(X)$ on $X$. By \eqref{item_monadic1} all its components are
$\monad$-subalgebras of $X$, and its successive quotients are given by $l(T(\mu+1))/l(T(\mu))$, all of which lie in
$\calS$ by \eqref{item_monadic3} and Lemma \ref{lem_prophill}. Finally, since
$\subobj_{\monad\Alg}(X)\hookrightarrow\subobj_\scA(X)$ is a complete lattice homomorphism, $l\circ T$ is also
continuous considered as a filtration of $(X,\rho)$ in $\monad\Alg$. Summing up, $l\circ T$ is the desired $\monad\Alg_\calS$-filtration of $X$. 
\end{proof}

To give a less technical version of Proposition \ref{prop_pullbackmonadic} we need some generalities about
$<\kappa$-presentable objects in Grothendieck categories.

\begin{lem}\label{lem_everythingissmall}
Let $\scA$ be a Grothendieck category.
\begin{enumerate}
\item\label{lem_small_1} For any set $\calS\subset\scA$ there exists some
  cardinal $\kappa$ such that $\calS\subseteq\scA^{<\kappa}$. 
\item\label{lem_small_2} For any cardinal $\kappa$ the category $\scA^{<\kappa}$
  is essentially small.
\end{enumerate}
\end{lem}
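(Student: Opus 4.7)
The plan is to reduce both assertions to standard facts about locally presentable categories. By the Gabriel--Popescu theorem, $\scA$ is equivalent to a reflective subcategory of a module category $R\Mod$ whose reflector is exact and commutes with $\lambda$-filtered colimits for sufficiently large $\lambda$; in particular $\scA$ is locally $<\lambda$-presentable in the sense of \cite{AdamekRosicky}. This identification supplies, for each object $X \in \scA$, a regular cardinal $\kappa_X$ with $X \in \scA^{<\kappa_X}$, which I will use in both parts.

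For part (\ref{lem_small_1}), the plan is purely combinatorial: given the set $\calS$, pick a regular cardinal $\kappa$ satisfying $\kappa \geq \kappa_S$ for every $S \in \calS$ (such a $\kappa$ exists because $\calS$ is a set, so $\sup_{S \in \calS} \kappa_S$ is a legitimate cardinal). Using the obvious inclusion $\scA^{<\kappa'} \subseteq \scA^{<\kappa}$ for $\kappa' \leq \kappa$, I conclude $\calS \subseteq \scA^{<\kappa}$.

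For part (\ref{lem_small_2}), assuming first that $\kappa$ is regular and uncountable, the plan is to show that every $X \in \scA^{<\kappa}$ is a quotient of $G^{\coprod I}$ for a fixed generator $G$ of $\scA$ and some set $I$ with $|I| < \kappa$. To see this, I would express $X$ as the $\kappa$-directed sum $X = \sum_I \image(G^{\coprod I} \to X)$, with $I$ ranging over subsets of $\Hom_\scA(G,X)$ of cardinality $< \kappa$. Since $X$ is $<\kappa$-presentable, $\id_X$ factors through one of the canonical maps into this colimit, forcing $X$ to coincide with $\image(G^{\coprod I} \to X)$ for some such $I$. Essential smallness then follows: quotients of a fixed $G^{\coprod I}$ correspond via kernels to subobjects of $G^{\coprod I}$, and subobjects form a set in a Grothendieck category, while the indexing sets $I$ of cardinality $< \kappa$ themselves form a set up to bijection.

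The main obstacle I anticipate is accommodating small or singular cardinals $\kappa$ in part (\ref{lem_small_2}), for which the $\kappa$-directed colimit argument breaks down. The clean workaround is to enlarge $\kappa$ to a regular uncountable cardinal $\kappa' \geq \kappa$; since $\scA^{<\kappa} \subseteq \scA^{<\kappa'}$, essential smallness of $\scA^{<\kappa'}$ transfers immediately to $\scA^{<\kappa}$, and the argument sketched above applies to $\kappa'$.
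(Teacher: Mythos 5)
Your proposal is correct and gives a self-contained argument, whereas the paper simply cites \cite[Theorem 9.6.1, Corollary 9.3.5(i)]{KashiwaraShapira}; the underlying mathematics (local presentability of Grothendieck categories plus the generator argument) is the same in spirit, so this is more a matter of detail than of a genuinely different route. Your argument for part~(2) is a clean and complete proof that does not actually require the Gabriel--Popescu reduction at all: it only uses (AB5) and the existence of a generator to write $X \in \scA^{<\kappa}$ as the $\kappa$-filtered union of the images of $G^{\coprod I}\to X$ over $I\subseteq\Hom_\scA(G,X)$ with $|I|<\kappa$, then presentability to pick off a single $I$, then well-poweredness (subobjects of $G^{\coprod\lambda}$ form a set, and the cardinals $\lambda<\kappa$ form a set). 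One small imprecision in your setup: you say Gabriel--Popescu gives a reflector that ``commutes with $\lambda$-filtered colimits for sufficiently large $\lambda$''; the reflector, being a left adjoint, commutes with \emph{all} colimits, so this is true but not the relevant point. What you actually need to conclude local $<\lambda$-presentability of $\scA$ is that the \emph{inclusion} $\scA\hookrightarrow R\Mod$ preserves $\lambda$-filtered colimits for $\lambda$ large enough, making $\scA$ a $\lambda$-accessibly embedded reflective subcategory of a locally presentable category. This is indeed a consequence of Gabriel--Popescu (or can be taken as the content of the Kashiwara--Schapira citation), but the justification runs through the inclusion, not the reflector. With that rephrased, the proof is sound.
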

\begin{proof}
Part \eqref{lem_small_1} is contained in \cite[Theorem 9.6.1]{KashiwaraShapira}. Part
\eqref{lem_small_2} follows from \cite[Corollary 9.3.5(i)]{KashiwaraShapira} and the fact that
$\scA^{<\kappa}\subseteq\scA^{<\mu}$ for $\kappa\leq\mu$.  
\end{proof}

\begin{lem}\label{lem_preservesmallness}
Let $\scA$, $\scB$ be Grothendieck categories and $F:\scA\to\scB$ be a cocontinuous functor. Then there exist
arbitrarily large regular cardinals $\kappa$ such that $F$ preserves $<\kappa$-presentable objects, i.e.
$F(\scA^{<\kappa})\subseteq \scB^{<\kappa}$. 
\end{lem}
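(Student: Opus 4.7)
\emph{Plan.} Since $\scA$ is Grothendieck, it is locally $<\lambda$-presentable for some infinite regular cardinal $\lambda$, and by Lemma~\ref{lem_everythingissmall}\eqref{lem_small_2} the subcategory $\scA^{<\lambda}$ is essentially small. Applying $F$ to a set of representatives yields a set of objects of $\scB$, so by Lemma~\ref{lem_everythingissmall}\eqref{lem_small_1} there exists a cardinal $\mu$ with $F(\scA^{<\lambda})\subseteq\scB^{<\mu}$. The aim is to show that every sufficiently large regular $\kappa$ satisfies $F(\scA^{<\kappa})\subseteq\scB^{<\kappa}$.

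The core technical ingredient is the following standard fact from the theory of locally presentable categories (cf.\ Ad\'amek--Rosick\'y, \emph{Locally Presentable and Accessible Categories}, Theorems~2.11 and~2.33): for arbitrarily large regular cardinals $\kappa$ -- concretely, for every regular $\kappa$ satisfying the sharp inequality $\kappa\triangleright\lambda$ -- every $<\kappa$-presentable object $X$ of $\scA$ is a retract of a colimit $\projlim_{i\in I} X_i$ with all $X_i\in\scA^{<\lambda}$ and $|\Mor(I)|<\kappa$. I would fix such a $\kappa$, additionally requiring $\kappa>\mu$, and take $X\in\scA^{<\kappa}$. Writing $X$ as a retract of $\projlim X_i$ as above, cocontinuity of $F$ gives $F(\projlim X_i)=\projlim F(X_i)$; each $F(X_i)$ lies in $F(\scA^{<\lambda})\subseteq\scB^{<\mu}\subseteq\scB^{<\kappa}$; Lemma~\ref{lem_prophill} applied to $\scB$ then ensures $\projlim F(X_i)\in\scB^{<\kappa}$; and because $<\kappa$-presentable objects are closed under retracts in any category, $F(X)\in\scB^{<\kappa}$, as desired.

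The only real obstacle is the citation of the sharp-cardinal machinery that produces arbitrarily large $\kappa$ admitting the retract-plus-$<\kappa$-small-colimit decomposition above; everything else is bookkeeping with Lemmas~\ref{lem_everythingissmall} and~\ref{lem_prophill} together with cocontinuity of $F$. An alternative, more conceptual route would be: $F$, being cocontinuous between locally presentable Grothendieck categories, admits a right adjoint $G$ by the special adjoint functor theorem; $G$ is automatically accessible as a right adjoint between accessible categories, and the general theory of accessible functors guarantees that $G$ preserves $\kappa$-filtered colimits for arbitrarily large regular $\kappa$. Via the $\Hom$-adjunction $\Hom_\scB(F(X),-)=\Hom_\scA(X,G(-))$ this is equivalent to $F$ preserving $<\kappa$-presentable objects, yielding the same conclusion.
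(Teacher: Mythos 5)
Your argument is correct, but it takes a genuinely different route from the paper's. The paper argues directly with a generator $G$ of $\scA$: for a cofinal class of regular $\kappa$, Kashiwara--Schapira characterize $\scA^{<\kappa}$ as $\{X\ :\ |\Hom_\scA(G,X)|<\kappa\}$; choosing $\kappa$ large enough that $G\in\scA^{<\kappa}$ and $F(G)\in\scB^{<\kappa}$, one applies $F$ to the canonical epimorphism $G^{\coprod\Hom_\scA(G,X)}\to X$ and invokes Lemma~\ref{lem_prophill} on the resulting $<\kappa$-small coproduct. You instead reach for the Ad\'amek--Rosick\'y sharp-inequality machinery: for regular $\kappa\triangleright\lambda$, every object of $\scA^{<\kappa}$ is a retract of a $<\kappa$-small ($\lambda$-filtered) colimit of $<\lambda$-presentable objects, to which cocontinuity of $F$ and Lemma~\ref{lem_prophill} apply directly, followed by closure of $\scB^{<\kappa}$ under retracts. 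Both proofs are sound; yours imports the heavier accessible-categories apparatus (and so applies verbatim to cocontinuous functors between arbitrary locally presentable abelian categories), while the paper's stays entirely inside the Kashiwara--Schapira toolbox for Grothendieck categories and is more self-contained given the paper's existing citations. Your alternative route via the special adjoint functor theorem and accessibility of the right adjoint $G$, using the equivalence ``$F$ preserves $<\kappa$-presentables $\Leftrightarrow$ $G$ preserves $\kappa$-filtered colimits'', is also valid and arguably the most conceptual framing, though it replaces an explicit construction with a nonconstructive appeal to accessibility of adjoints.
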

\begin{proof}
Let $G$ be a generator of $\scA$ and pick any cardinal $\kappa$ such that $G\in\scA^{<\kappa}$ and $F(G)\in\scB^{<\kappa}$
hold. This is possible by Lemma \ref{lem_everythingissmall}. Moreover, possibly after enlarging $\kappa$ we get that
$\scA^{<\kappa}=\{X\in\scA\ |\ |\Hom_\scA(G,X)|<\kappa\}$ \cite[Theorem 9.3.4]{KashiwaraShapira} (note, however, that
this characterization doesn't seem to be true for all sufficiently large, but only for a cofinal class of cardinals
$\kappa$). We claim that $F$ preserves $<\kappa$-presentable objects. Indeed, let $X\in\scA^{<\kappa}$ 
is $<\kappa$-presentable. Then the canonical morphism $G^{\coprod\Hom_\scA(G,X)}\to 
X$ is an epimorphism \cite[Proposition 5.2.3(iv)]{KashiwaraShapira}, and hence so is $F(G)^{\coprod\Hom_\scA(G,X)}\to
F(X)$ since $F$ commutes with colimits by assumption. As $F(G)\in\scB^{<\kappa}$ and $|\Hom_\scA(G,X)|<\kappa$ by
assumption, Lemma \ref{lem_prophill} implies $F(X)\in\scB^{<\kappa}$ as claimed.
\end{proof}

\begin{prop}\label{prop_corpullbackmonadic}
Let $U: \scB\to\scA$ be a cocontinuous, monadic functor between Grothendieck categories, and let $\calF\subset\scA$ be a
deconstructible class. Then $U\ua(\calF) := \{X\in\scB\ |\ U(X)\in\calF\}$ is again deconstructible.
\end{prop}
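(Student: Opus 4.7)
The plan is to reduce the statement to Proposition \ref{prop_pullbackmonadic} by exploiting monadicity. Since $U$ is monadic it admits a left adjoint $F$ and the comparison functor is an equivalence $\scB\simeq\monad\Alg$ for the monad $\monad:=UF$ on $\scA$; under this equivalence $U$ becomes the forgetful functor and $U\ua(\calF)$ corresponds to $\monad\Alg_\calF$. The endofunctor $\monad$ is cocontinuous, being the composite of the left adjoint $F$ and the cocontinuous functor $U$, so the setting matches that of Proposition \ref{prop_pullbackmonadic}.

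Next I would fix the cardinal. Starting from some set $\calS_0$ with $\calF=\filt\calS_0$, Lemma \ref{lem_everythingissmall}\eqref{lem_small_1} lets me choose an uncountable regular cardinal $\kappa$ with $\scA$ locally $<\kappa$-presentable and $\calS_0\subseteq\scA^{<\kappa}$, and Lemma \ref{lem_preservesmallness} lets me enlarge $\kappa$ so that $\monad$ preserves $<\kappa$-presentable objects. By Lemma \ref{lem_everythingissmall}\eqref{lem_small_2} the class of $<\kappa$-presentable objects in $\calF$ is essentially small, so it has a representative set $\calS$. Since $\calS_0\subseteq\calS$ up to isomorphism and each object in $\calS$ lies in $\filt\calS_0$, combined with the standard identity $\filt\filt\calS_0=\filt\calS_0$ this yields $\calF=\filt\calS$ with $\calS$ a representative set of $<\kappa$-presentable objects of $\calF$.

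With these choices the hypotheses of Proposition \ref{prop_pullbackmonadic} are in place and give $\monad\Alg_\calF=\filt(\monad\Alg_\calS)$. To conclude deconstructibility I just need to note that $\monad\Alg_\calS$ is essentially a set, since $\calS$ is a set and every $\monad$-algebra structure on some $X\in\calS$ is a specific element of $\Hom_\scA(\monad X,X)$. The main technical work is already absorbed into Proposition \ref{prop_pullbackmonadic}; the only delicate point in this reduction is arranging that $\monad$ preserves $<\kappa$-presentable objects, which I expect to be the subtlest ingredient but is cleanly handled by Lemma \ref{lem_preservesmallness} together with the cocontinuity of $U$.
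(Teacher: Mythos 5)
Your proposal is correct and follows essentially the same route as the paper: reduce via monadicity to $\monad\Alg$ with $\monad=UF$ cocontinuous, choose a regular $\kappa$ large enough that $\calF=\filt(\calF\cap\scA^{<\kappa})$ and (by Lemma \ref{lem_preservesmallness}) $\monad$ preserves $<\kappa$-presentables, then invoke Proposition \ref{prop_pullbackmonadic}. The only tiny imprecision is the phrase ``enlarge $\kappa$'': Lemma \ref{lem_preservesmallness} guarantees arbitrarily large (not all sufficiently large) regular cardinals work for $\monad$, so one should pick a single $\kappa$ satisfying all constraints at once rather than enlarge sequentially, but since the other conditions persist under increasing $\kappa$ this does not affect the argument.
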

\begin{proof}
By definition of monadic functors, we may assume that $U$ is the forgetful functor $\monad\Alg\to\scA$ for a
cocontinuous monad $\monad$ on $\scA$, and then $U\ua(\calF) = \monad\Alg_\calF$. Since
$\calF=\filt\calF$ by \cite[Lemma 1.6]{StovicekHillGrothendieck}, Lemma \ref{lem_prophill} implies that $\calF =
\filt(\calF\cap\scA^{<\kappa})$ for all sufficiently large cardinals $\kappa$. Here, by slight abuse of notation
$\calF\cap\scA^{<\kappa}$ means a representative set of isomorphism classes of objects in
$\calF\cap\scA^{<\kappa}$ (it is a set by Lemma \ref{lem_everythingissmall}\eqref{lem_small_2}). Moreover, by Lemma
\ref{lem_preservesmallness} we may also assume that $\monad$ preserves $\kappa$-presentable objects, and hence the claim
follows from Proposition \ref{prop_pullbackmonadic}.   
\end{proof}

\bibliography{references}{}
\bibliographystyle{halpha}

\end{document}